\newif\ifEWzJhvOnScreen\EWzJhvOnScreentrue 

\RequirePackage{iftex}
\newif\ifEWzJhvPDF\EWzJhvPDFfalse 
\ifpdf\EWzJhvPDFtrue\fi

\newif\ifEWzJhvPageBreak\EWzJhvPageBreakfalse
\ifEWzJhvPDF\EWzJhvPageBreaktrue\fi

\makeatletter
\@ifdefinable\EWzJhvProtectedDefNew{}\protected\def\EWzJhvProtectedDefNew#1{\@ifdefinable#1{}\protected\def#1}
\EWzJhvProtectedDefNew\EWzJhvDefNew#1{\@ifdefinable#1{}\def#1}
\EWzJhvProtectedDefNew\EWzJhvLetNew#1{\@ifdefinable#1{}\let#1}
\EWzJhvProtectedDefNew\EWzJhvProtectedLongDefNew#1{\@ifdefinable#1{}\protected\long\def#1}
\makeatother

\documentclass[11pt, a4paper]{article}

\makeatletter\EWzJhvProtectedDefNew\EWzJhvTocBeginGroupSectionNoPageNumber{\begingroup\EWzJhvLetNew\EWzJhvOriginalLAtSection=\l@section\protected\def\l@section##1##2{\EWzJhvOriginalLAtSection{##1}{}}}\makeatother
\EWzJhvProtectedDefNew\EWzJhvTocEndGroup{\endgroup}
\EWzJhvProtectedDefNew\EWzJhvTocNoIndentBeginMinipage{\noindent\begin{minipage}{\linewidth}}
\EWzJhvProtectedDefNew\EWzJhvTocEndMinipage{\end{minipage}}

\makeatletter
\EWzJhvProtectedDefNew\EWzJhvSubsectionStarWithSecondLine#1#2{\@startsection{subsection}{2}{0pt}{-3.25ex plus -1ex minus -.2ex}{1sp}{\normalfont\large\bfseries}*{#1}\begingroup\interlinepenalty=10000\relax#2\par\endgroup\nobreak\vspace{1.5ex plus .2ex}\@afterheading}
\EWzJhvProtectedDefNew\EWzJhvMiniSubsection{\@startsection{subsection}{2}{0pt}{-3.25ex plus -1ex minus -.2ex}{1.5ex plus .2ex}{\normalfont\normalsize\bfseries}}
\EWzJhvProtectedDefNew\EWzJhvParagraphNoBeforeskip{\@startsection{paragraph}{4}{0pt}{0pt}{-1em}{\normalfont\normalsize\bfseries}}
\makeatother

\ifEWzJhvPDF
\usepackage[OT1]{fontenc}
\makeatletter
\EWzJhvProtectedDefNew\EWzJhvSwitchToTOneLM@from@cmr{\fontencoding{T1}\fontfamily{lmr}\selectfont}
\EWzJhvProtectedDefNew\EWzJhvSwitchToTOneLM@from@cmss{\fontencoding{T1}\fontfamily{lmss}\selectfont}
\EWzJhvProtectedDefNew\EWzJhvSwitchToTOneLM@from@cmtt{\fontencoding{T1}\fontfamily{lmtt}\selectfont}
\EWzJhvProtectedDefNew\EWzJhvSwitchToTOneLM{\ifcsname EWzJhvSwitchToTOneLM@from@\f@family\endcsname\csname EWzJhvSwitchToTOneLM@from@\f@family\endcsname\else\errmessage{\string\EWzJhvSwitchToTOneLM: current value "\f@family" of \string\f@family\space is unsupported.}\fontencoding{T1}\selectfont\fi}
\DeclareTextCommand{\"}{OT1}[1]{\leavevmode\begingroup\EWzJhvSwitchToTOneLM\"{#1}\endgroup}
\DeclareTextCommand{\textasteriskcentered}{OT1}{\leavevmode\begingroup\EWzJhvSwitchToTOneLM\textasteriskcentered\endgroup}
\DeclareTextCommand{\textbullet}{OT1}{\leavevmode\begingroup\EWzJhvSwitchToTOneLM\textbullet\endgroup}
\EWzJhvProtectedDefNew\EWzJhvText@cmr@m@n@bab{ba\kern-0.02em\relax b}
\EWzJhvProtectedDefNew\EWzJhvText@cmr@m@n@cab{ca\kern-0.03em\relax b}
\EWzJhvProtectedDefNew\EWzJhvText@cmr@m@n@kab{k\kern-0.01em\relax a\kern-0.04em\relax b}
\EWzJhvProtectedDefNew\EWzJhvText@cmr@m@n@mab{ma\kern-0.02em\relax b}
\EWzJhvProtectedDefNew\EWzJhvText@cmr@m@n@nab{na\kern-0.02em\relax b}
\EWzJhvProtectedDefNew\EWzJhvText@cmr@m@n@rab{ra\kern-0.03em\relax b}
\EWzJhvProtectedDefNew\EWzJhvText@cmr@bx@n@rab{ra\kern-0.03em\relax b}
\EWzJhvProtectedDefNew\EWzJhvText@cmr@m@n@Tab{Ta\kern-0.03em\relax b}
\expandafter\EWzJhvProtectedDefNew\csname EWzJhvText@cmr@m@n@\detokenize{käh}\endcsname{kä\kern-0.03em\relax h}
\EWzJhvProtectedDefNew\EWzJhvText#1{\ifcsname EWzJhvText@\f@family @\f@series @\f@shape @\detokenize{#1}\endcsname\csname EWzJhvText@\f@family @\f@series @\f@shape @\detokenize{#1}\endcsname\else\errmessage{\string\EWzJhvText: EWzJhvText@\f@family @\f@series @\f@shape @\detokenize{#1} not found.}#1\fi}
\makeatother
\else
\EWzJhvProtectedDefNew\EWzJhvText#1{#1}
\fi

\usepackage[hmargin=1.25in, vmargin=1in]{geometry}
\usepackage{indentfirst}
\usepackage{enumitem}
\usepackage{amsmath}
\usepackage{amsfonts}
\usepackage{amssymb}
\usepackage{amsthm}
\usepackage{mathrsfs}
\usepackage{aliascnt}

\usepackage[activate={compatibility,false}, expansion={true,compatibility}, patch=none]{microtype}
\DeclareMicrotypeSet*[expansion]{EWzJhvAllTextNotT}{
	encoding = {OT1,T1,T2A,LY1,OT4,QX,T5,TS1,EU1,EU2,TU},
	family = {cmr,cmss,lmr,lmss}}

\usepackage{xcolor}

\ifEWzJhvOnScreen
\usepackage[psdextra, colorlinks, allcolors=blue!40!black]{hyperref}
\else
\usepackage[psdextra, hidelinks]{hyperref}
\fi

\ifEWzJhvPDF\pdfinterwordspaceon\fi

\newtheorem{EWzJhvDefinition}{Definition}[section]
\EWzJhvDefNew\EWzJhvDefinitionautorefname{Definition}

\newaliascnt{EWzJhvProposition}{EWzJhvDefinition}
\newtheorem{EWzJhvProposition}[EWzJhvProposition]{Proposition}
\aliascntresetthe{EWzJhvProposition}
\EWzJhvDefNew\EWzJhvPropositionautorefname{Proposition}

\newaliascnt{EWzJhvLemma}{EWzJhvDefinition}
\newtheorem{EWzJhvLemma}[EWzJhvLemma]{Lemma}
\aliascntresetthe{EWzJhvLemma}
\EWzJhvDefNew\EWzJhvLemmaautorefname{Lemma}

\newaliascnt{EWzJhvTheorem}{EWzJhvDefinition}
\newtheorem{EWzJhvTheorem}[EWzJhvTheorem]{Theorem}
\aliascntresetthe{EWzJhvTheorem}
\EWzJhvDefNew\EWzJhvTheoremautorefname{Theorem}

\newaliascnt{EWzJhvCorollary}{EWzJhvDefinition}
\newtheorem{EWzJhvCorollary}[EWzJhvCorollary]{Corollary}
\aliascntresetthe{EWzJhvCorollary}
\EWzJhvDefNew\EWzJhvCorollaryautorefname{Corollary}

\newtheorem*{EWzJhvDefinitionUnnumbered}{Definition}
\newtheorem*{EWzJhvPropositionUnnumbered}{Proposition}
\newtheorem*{EWzJhvTheoremUnnumbered}{Theorem}

\theoremstyle{remark}
\newtheorem*{EWzJhvRemark}{Remark}

\ExplSyntaxOn

\cs_new_nopar:Nn \EWzJhv_bibcite_sanitize_char:n {
	\bool_lazy_any:nT
	{
		{ !( \str_compare_p:nNn { #1 } < { 0 } ) && !( \str_compare_p:nNn { #1 } > { 9 } ) }
		{ !( \str_compare_p:nNn { #1 } < { A } ) && !( \str_compare_p:nNn { #1 } > { Z } ) }
		{ !( \str_compare_p:nNn { #1 } < { a } ) && !( \str_compare_p:nNn { #1 } > { z } ) }
	}
	{ #1 }
}

\str_new:N \l_EWzJhv_bibcite_temp_str
\EWzJhvLetNew\EWzJhvOriginalBibcite=
\protected\def\bibcite#1#2{
	\str_gclear_new:c { EWzJhv@b@ #1 }
	\str_gset:cn { EWzJhv@b@ #1 } { #2 }
	\str_greplace_all:cnn { EWzJhv@b@ #1 } { ä } { ae }
	\group_begin:
		\str_set:Ne \l_EWzJhv_bibcite_temp_str { \str_map_function:cN { EWzJhv@b@ #1 } \EWzJhv_bibcite_sanitize_char:n }
		\str_if_eq:NcF \l_EWzJhv_bibcite_temp_str { EWzJhv@b@ #1 }
		{
			\PackageWarning { EWzJhv } { bibcite~sanitized:~" \str_use:c { EWzJhv@b@ #1 } " }
			\str_gset_eq:cN { EWzJhv@b@ #1 } \l_EWzJhv_bibcite_temp_str
		}
	\group_end:
	\EWzJhvOriginalBibcite{#1}{#2}
}

\int_const:Nn \c_EWzJhv_prefix_length_int { \str_count:n { EWzJhv } }
\int_const:Nn \c_EWzJhv_prefix_length_plus_one_int { \c_EWzJhv_prefix_length_int + 1 }

\cs_new_nopar:Nn \EWzJhv_hyper_dest_name_filter:n {
	\str_if_eq:eeTF
	{ \str_range:nnn { #1 } { 1 } { \c_EWzJhv_prefix_length_int } }
	{ EWzJhv }
	{ \str_range:nnn { #1 } { \c_EWzJhv_prefix_length_plus_one_int } { -1 } }
	{
		\str_if_eq:eeTF
		{ \str_range:nnn { #1 } { 1 } { 5 } }
		{ cite. }
		{
			cite.
			\cs_if_exist_use:cF
			{ EWzJhv@b@ \str_range:nnn { #1 } { 6 } { -1 } }
			{ missing. \str_range:nnn { #1 } { 6 } { -1 } }
		}
		{ #1 }
	}
}
\cs_generate_variant:Nn \EWzJhv_hyper_dest_name_filter:n { e }

\EWzJhvLetNew\EWzJhvOriginalHyperDestNameFilter=\HyperDestNameFilter
\def\HyperDestNameFilter#1{ \EWzJhv_hyper_dest_name_filter:e { \EWzJhvOriginalHyperDestNameFilter{#1} } }

\makeatletter
\EWzJhvProtectedDefNew\EWzJhvTextNormal@nocorr#1{\textnormal{\nocorr#1\nocorr}}
\EWzJhvProtectedDefNew\EWzJhvTextNormal{
	\mode_leave_vertical:
	\bool_lazy_all:nTF
	{
		{ \str_if_eq_p:ee { \f@family } { \familydefault } }
		{ \str_if_eq_p:ee { \f@series } { \seriesdefault } }
		{ \str_if_eq_p:ee { \f@shape } { \shapedefault } }
	}
	{ \EWzJhvTextNormal@nocorr }
	{ \textnormal }
}
\makeatother

\ExplSyntaxOff

\makeatletter
\ifx\@cite@ofmt\hbox\else\errmessage{Unexpected definition of \string\@cite@ofmt.}\fi
\protected\def\@cite@ofmt{}
\makeatother

\makeatletter
\EWzJhvProtectedDefNew\EWzJhvMathOP#1{\mathop{\kern\z@#1}\nolimits}
\EWzJhvProtectedDefNew\EWzJhvMathVCenter@#1#2{\vcenter{\hbox{$#1#2$}}}
\EWzJhvProtectedDefNew\EWzJhvMathVCenter{\mathpalette\EWzJhvMathVCenter@}
\EWzJhvProtectedDefNew\EWzJhvMathFracVCentered#1#2{\frac{\EWzJhvMathVCenter{#1}}{\EWzJhvMathVCenter{#2}}}
\makeatother

\EWzJhvProtectedDefNew\EWzJhvDisplayMathPeriod#1{\makebox[0pt][l]{\raisebox{-0.2ex}[0pt][0pt]{\:#1}}}
\ifEWzJhvPDF
\EWzJhvProtectedDefNew\EWzJhvDisplayMathPeriodOverfull#1{\makebox[0pt][l]{\raisebox{-0.2ex}[0pt][0pt]{#1}}}
\else
\EWzJhvLetNew\EWzJhvDisplayMathPeriodOverfull=\EWzJhvDisplayMathPeriod
\fi

\EWzJhvProtectedDefNew\EWzJhvMathSepColon{\mskip2mu\relax:\mskip2mu\relax}

\EWzJhvProtectedDefNew\EWzJhvMathTimes{\mathrm{\mathchar45\relax times}\mskip4mu\relax}

\ifEWzJhvPDF
\EWzJhvProtectedDefNew\EWzJhvTpStar#1#2{\makebox[0pt][l]{$\displaystyle#1_{#2}$}\makebox[0pt][l]{$\displaystyle\phantom{#1}^*$}\phantom{\displaystyle#1_{#2}}}
\else
\EWzJhvProtectedDefNew\EWzJhvTpStar#1#2{{\displaystyle#1_{#2}^*}}
\fi

\makeatletter
\EWzJhvProtectedDefNew\EWzJhvMathRaiseABit@#1#2{\raisebox{0.15ex}{$#1#2$}}
\EWzJhvProtectedDefNew\EWzJhvMathRaiseABit{\mathpalette\EWzJhvMathRaiseABit@}
\EWzJhvProtectedDefNew\EWzJhvlvertRaised{\mathopen{\EWzJhvMathRaiseABit\lvert}}
\EWzJhvProtectedDefNew\EWzJhvrvertRaised{\mathclose{\EWzJhvMathRaiseABit\rvert}}
\makeatother

\EWzJhvProtectedDefNew\EWzJhvCDotInd{{\,\cdot\,}}

\EWzJhvProtectedDefNew\EWzJhvPar{\par}

\EWzJhvProtectedDefNew\EWzJhvEqRef#1{\EWzJhvTextNormal{\hyperref[{#1}]{(\ref*{#1})}}}

\ifEWzJhvPDF
\EWzJhvProtectedDefNew\EWzJhvSlantedRightarrow{\mbox{\pdfsave\pdfsetmatrix{1 0 0.2679492 1}\makebox[0pt][l]{$\Rightarrow$}\pdfrestore\phantom{$\Rightarrow$}}}
\else
\EWzJhvProtectedDefNew\EWzJhvSlantedRightarrow{\mbox{\normalfont\slshape ⇒}}
\fi

\EWzJhvProtectedDefNew\EWzJhvOptionallyFixedWidth#1#2{#2}
\EWzJhvProtectedDefNew\EWzJhvOptionallyFixedWidthEnabled#1#2{\makebox[#1]{#2}}

\makeatletter
\EWzJhvProtectedDefNew\EWzJhv@sw@slant{\leavevmode\begingroup
	\ifdim\lastskip=\z@
		\EWzJhv@fix@penalty
	\else
		\skip@\lastskip
		\unskip
		\EWzJhv@fix@penalty
		\hskip\skip@
	\fi
	\endgroup}

\EWzJhvProtectedDefNew\EWzJhv@fix@penalty{%
	\ifnum\lastpenalty=\z@
		\pdfprimitive\/%
	\else
		\count@\lastpenalty
		\unpenalty
		\pdfprimitive\/%
		\penalty\count@
	\fi
}
\makeatother

\makeatletter
\EWzJhvProtectedLongDefNew\EWzJhvFootnote{\EWzJhv@sw@slant\footnote}

\EWzJhvProtectedLongDefNew\EWzJhvPrefixFootnote#1{\EWzJhv@sw@slant\begingroup
	\EWzJhvLetNew\EWzJhvPrefixFootnote@Original@makefnmark=\@makefnmark
	\EWzJhvDefNew\EWzJhvPrefixFootnote@@makefnmark{\hbox{\hspace*{\scriptspace}\scriptspace=0pt\relax\textsuperscript{\normalfont\@thefnmark}}}%
	\def\@makefnmark{\EWzJhvPrefixFootnote@@makefnmark}%
	\footnotemark
	\let\EWzJhvPrefixFootnote@@makefnmark=\EWzJhvPrefixFootnote@Original@makefnmark
	\footnotetext{#1}%
	\endgroup}
\EWzJhvProtectedLongDefNew\EWzJhvInfixFootnote#1{\EWzJhv@sw@slant\begingroup
	\EWzJhvLetNew\EWzJhvInfixFootnote@Original@makefnmark=\@makefnmark
	\EWzJhvDefNew\EWzJhvInfixFootnote@@makefnmark{\hbox{\scriptspace=0pt\relax\textsuperscript{\normalfont\@thefnmark}}}%
	\def\@makefnmark{\EWzJhvInfixFootnote@@makefnmark}%
	\footnotemark
	\let\EWzJhvInfixFootnote@@makefnmark=\EWzJhvInfixFootnote@Original@makefnmark
	\footnotetext{#1}%
	\endgroup}
\makeatother

\makeatletter
\newcount\EWzJhvFootRef@spacefactor
\EWzJhvProtectedDefNew\EWzJhvFootRef#1{\EWzJhv@sw@slant\begingroup
	\ifhmode
		\EWzJhvFootRef@spacefactor\spacefactor
		\nobreak
	\fi
	\hyperref[{#1}]{\mbox{\textsuperscript{\normalfont\ref*{#1}}}}%
	\ifhmode\spacefactor\EWzJhvFootRef@spacefactor\fi
	\endgroup}
\EWzJhvProtectedDefNew\EWzJhvPrefixFootRef#1{\EWzJhv@sw@slant\begingroup
	\ifhmode
		\EWzJhvFootRef@spacefactor\spacefactor
		\nobreak
	\fi
	\hyperref[{#1}]{\mbox{\hspace*{\scriptspace}\scriptspace=0pt\relax\textsuperscript{\normalfont\ref*{#1}}}}%
	\ifhmode\spacefactor\EWzJhvFootRef@spacefactor\fi
	\endgroup}
\makeatother

\makeatletter
\EWzJhvProtectedDefNew\EWzJhvLimItCorr#1{\leavevmode\begingroup
	\dimen@=#1\relax
	\ifdim\lastkern=\z@
		\pdfprimitive\/%
		\ifdim\lastkern>\dimen@
			\unkern
			\kern\dimen@
		\fi
	\fi
	\endgroup}
\makeatother

\hypersetup{
	pdftitle={Recovering complex structure from (𝑛,0)-form with implications for Vafa--Witten equation},
	pdfauthor={Zhengxiong Cao}}

\begin{document}
	\title{Recovering complex structure from $(n,0)$-form with implications for Vafa--Witten equation}
	\author{Zhengxiong Cao\thanks{Institut für Mathematik, Humboldt-Universität zu Berlin, Berlin, Germany}}
	\date{}
	\maketitle
	
	\begin{abstract}
		Given an even-dimensional smooth manifold and a (co)(closed) bundle-valued $n$-form $a$ that ``resembles'' a scalar $(n,0)$-form, we may (or may not) recover a ``$\mathbb{Z}_2$-(almost) complex structure'' under certain assumptions. In the case of a $4$-manifold or a quaternion-Kähler manifold, we can recover a compatible $\mathbb{Z}_2$-complex structure on the whole manifold including the zero locus of $a$. In general, we can recover a $\mathbb{Z}_2$-(almost) complex structure wherever $a$ satisfies a certain nondegeneracy condition, but not necessarily on the whole manifold, with a counterexample identified.
		
		The above condition of ``resembling'' a scalar $(2,0)$-form is satisfied by any $\mathrm{U}(1)$-invariant solution to the Vafa--Witten equation on a compact Kähler surface. This hints at a possibility to generalise the notion of $\mathrm{U}(1)$-invariant solutions to oriented Riemannian $4$-manifolds. However, our results above imply any solution satisfying the condition, and thus seemingly any potential ``generalised $\mathrm{U}(1)$-invariant solution'', must be trivial if the $4$-manifold admits no compatible $\mathbb{Z}_2$-complex structure.
	\end{abstract}
	
	\tableofcontents
	
	\ifEWzJhvPageBreak\pagebreak\fi
	
	\section{Introduction}
	
	This article discusses, roughly speaking, given an even-dimensional smooth manifold $M$, a vector bundle $V \rightarrow M$, and a (co)(closed) $V$\hspace*{-0.03em}-valued $n$-form $a$ that pointwise ``resembles'' a scalar $(n,0)$-form, how we may (or may not) (locally) recover an (almost) complex structure on $M$, possibly under suitable additional assumptions. This article also discusses what this might imply about the Vafa--Witten equation.
	
	\EWzJhvSubsectionStarWithSecondLine{Four dimensional case, or quaternion-Kähler case (\hspace*{-0.08em}``$\mathrm{Hol}(g) \subset \mathrm{Sp}(N)\hspace{0.0625em}\mathrm{Sp}(1)$''\hspace*{-0.08em})}
	{\noindent\hspace*{0.5\parindent}{\small\slshape (\hspace*{0.07em}\EWzJhvSlantedRightarrow\hspace{0.2em}\autoref{secSpNSp1})}}
	
	Let $(M,g)$ be a smooth oriented Riemannian $4$-manifold (or a quaternion-Kähler manifold\EWzJhvFootnote{For our purposes, the setting of quaternion-Kähler manifolds appears as a natural generalisation of the setting of smooth oriented Riemannian $4$-manifolds. Originally, the author developed the main result with only smooth oriented Riemannian $4$-manifolds in mind. It was then noticed that it generalises straightforwardly to quaternion-Kähler manifolds. It should be noted that it is not clear what implications the generalised result may have for quaternion-Kähler manifolds.}). Let $V \rightarrow M$ be a smooth real vector bundle with inner product, endowed with a connection $A$. Let $a \in \Omega^{2,+}(V)$ be a $V$\hspace*{-0.03em}-valued self-dual $2$-form (or in higher dimensions, ``coefficient'' $2$-form; note that $(2,0)$-forms are self-dual in real dimension~$4$) that pointwise ``resembles'' a scalar $(2,0)$-form in the following sense:\vspace{-0.4ex plus -0.1ex minus -0.3ex}
	\begin{EWzJhvDefinitionUnnumbered}
		$a \in \Omega^{2,+}(V)$ is said to be a \emph{parascalar $(2,0)$-form}, if at each\/~$p \in M$, $a\rvert_p = \lambda\hspace{0.0625em}\bigl(v_1 \otimes \omega^1 + v_2 \otimes \omega^2\bigr)$ where\/ $\lambda$ is some (nonnegative) real number, $(v_1\,\,v_2)$ are some orthonormal vectors in\/ $V\rvert_p$, and\/ $(\omega^1\,\,\omega^2)$ are some orthonormal vectors in\/ $\Omega^{2,+}\rvert_p$.
		
		\noindent\hspace*{0.5\parindent}{\small\slshape (\hspace*{0.07em}\EWzJhvSlantedRightarrow\hspace{0.17em}\autoref{defn2plusParascalar})}
		
		{\small\slshape\noindent\hspace*{0.5\parindent}(This definition is related to ``\/$\mathrm{U}(1)$-invariant'' or ``nilpotent'' solutions to the Vafa--Witten equation. We will explain this in a moment.)\par}\vspace{-0ex plus -0.5ex}
	\end{EWzJhvDefinitionUnnumbered}
	
	We shall easily see that $a$ induces a so-called ``$\mathbb{Z}_2$-almost complex structure'' on the complement of the zero locus of $a$, compatible with the metric and the orientation (or with the quaternion-Kähler structure) {\small (see the description following \autoref{defn2plusParascalar})}, and if $a$ is coclosed, then this $\mathbb{Z}_2$-almost complex structure is integ\EWzJhvText{rab}le {\small (\autoref{lmaJaIntegrability})}.
	
	\medskip
	
	Our main result is:\vspace{-0.8ex plus -0.2ex minus -0.4ex}
	\begin{EWzJhvTheoremUnnumbered}
		\hspace{-0.333333em}(informal) If parascalar\/ $(2,0)$-form\/ $a$ is coclosed and is not trivially zero, then its induced\/ $\mathbb{Z}_2$-almost complex structure can be smoothly extended to an integrable\/ $\mathbb{Z}_2$-(almost) complex structure over the whole manifold including the zero locus of\/\hspace{-0.08em} $a$.
		
		\noindent\hspace*{0.5\parindent}{\small\slshape (\hspace*{0.07em}\EWzJhvSlantedRightarrow\hspace{0.17em}\autoref{thm2})}\vspace{-0ex plus -0.5ex}
	\end{EWzJhvTheoremUnnumbered}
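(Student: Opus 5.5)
Away from the zero locus $Z = a^{-1}(0)$ the structure is already available: at $p \notin Z$ the plane $\operatorname{span}(\omega^1,\omega^2) \subset \Omega^{2,+}\rvert_p$ is determined by $a\rvert_p$, being the image of the map $V\rvert_p^* \to \Omega^{2,+}\rvert_p$, $\xi \mapsto \langle \xi, a\rangle$. Its unit normal $\pm\omega^3$ is therefore determined up to sign, and $\pm\omega^3$ is a $\mathbb{Z}_2$-almost complex structure $\pm J$. By \autoref{lmaJaIntegrability}, coclosedness of $a$ makes $\pm J$ integrable on $M\setminus Z$. To extend across $Z$, I will (i) show that $Z$ is small, (ii) find a replacement for the image-plane construction that still makes sense near $Z$, and (iii) prove that the resulting line field in $\Omega^{2,+}$ extends smoothly. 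Integrability on all of $M$ then follows by continuity, since the Nijenhuis tensor vanishes on the dense set $M\setminus Z$.

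For (i): a coclosed self-dual form satisfies $d a = d_A a = \pm d_A^* a = 0$, so $(d_A + d_A^*)a = 0$, an elliptic first-order equation. Aronszajn-type unique continuation then shows that $a$ vanishes to finite order at every point of $Z$ unless $a \equiv 0$, which is excluded by hypothesis. For (ii): I use the endomorphism $Q = a a^{T}$ of $\Omega^{2,+}$, which is smooth everywhere. Pointwise $Q = \lambda^2 \bigl(\omega^1\otimes\omega^1 + \omega^2\otimes\omega^2\bigr) = \lambda^2(\mathrm{Id} - \omega^3\otimes\omega^3)$. Hence $\lambda^2 = \tfrac12 \operatorname{tr} Q$, and
\begin{equation*}
	P := \omega^3 \otimes \omega^3 = \mathrm{Id} - \frac{Q}{\tfrac12 \operatorname{tr}Q}
\end{equation*}
on $M\setminus Z$. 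The task becomes showing that this rank-one projection extends smoothly across $Z$.

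For (iii), the main obstacle: near $p \in Z$, let $k$ be the vanishing order and take the leading homogeneous Taylor term $a_k$. It satisfies the constant-coefficient equation $(d+d^*)a_k = 0$ and, by the closedness of the parascalar condition under limits, is itself parascalar. So $a_k$ is a parascalar harmonic self-dual form on flat $\mathbb{R}^4$ with homogeneous polynomial coefficients, and on $\mathbb{R}^4 \setminus Z(a_k)$ it defines an integrable orthogonal complex structure. I first try to show that such a structure is constant; this is plausible by the classification of orthogonal complex structures on open subsets of $\mathbb{R}^4$ (Salamon et al.: they are conformally constant or come from twistor-rigid families), combined with homogeneity. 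Then $Q = r^{2k}\bigl(Q_0 + O(r)\bigr)$ with $Q_0$ of rank $2$ away from a small set, and $P$ extends continuously. For smoothness I would divide out by the leading factor, writing $Q = f\cdot\tilde Q$ with $f$ the greatest common factor, possibly after passing to local complex coordinates that are built from the holomorphic structure on $M\setminus Z$ (where $a$ becomes a holomorphic-section-valued $(2,0)$-form, so that $Z$ is analytic). Hartogs / Riemann extension for the resulting holomorphic line field in the twistor space then removes the singularity. In the quaternion-Kähler case the same argument runs with $\Omega^{2,+}$ replaced by the rank-$3$ bundle of the $\mathrm{Sp}(1)$ factor.
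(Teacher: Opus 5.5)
There is a genuine gap in step (iii), which is where the difficulty of the theorem lies.

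First, you only call constancy of the structure induced by the leading term $a_k$ ``plausible'', and the argument you suggest does not give it. The Salamon--Viaclovsky type classifications constrain structures whose singular set is very small. Here the structure of $a_k$ is a priori defined only off a cone that may be real $2$-dimensional. Homogeneity alone is not enough either: pulling back a constant structure by the orientation-preserving conformal map $x\mapsto x^{-1}$ of $\mathbb{H}\setminus\{0\}$ gives the non-constant, dilation-invariant, integrable compatible structure $v\mapsto xix^{-1}v$. What rules this out is the finite polynomial degree of $a_k$. The paper proves constancy (\autoref{lmaEuclideanQK4}) by inserting $\nabla^{d+1}_{(d+1)\times\bar e_{1,2}}a^{2,0}=0$ into the higher-order identity of \autoref{lmaHigherDer}.

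Second, and more seriously, continuity of $P$ does not follow even if constancy holds at every zero. Whenever $a^{-1}(0)$ is not isolated at $p$, your $Q_0=a_ka_k^{T}$ vanishes on a nontrivial cone of directions, which contains the tangent directions of $a^{-1}(0)$. In a neighbourhood of that cone, present at every scale, $P$ is governed by the $O(r)$ remainder, about which you know nothing. The expansion is also not uniform as the base point moves along $a^{-1}(0)$, since vanishing orders and leading coefficients degenerate. The ``small set'' you set aside is exactly where control is needed.

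The smoothness step is circular or unavailable.
\begin{itemize}
\item Local complex coordinates and analyticity of the zero set presuppose a complex structure near $a^{-1}(0)$, which is what you are trying to prove.
\item A ``greatest common factor'' of the merely smooth $Q$ has no meaning.
\item The twistor space of a general oriented Riemannian $4$-manifold is only almost complex (integrable iff $W^+=0$), so Hartogs/Riemann extension is unavailable in the main case of the theorem. Even when it is complex, removable-singularity theorems for the graph would first need continuity.
\end{itemize}
Your projector $P$ neatly removes the sign ambiguity from the statement. The analysis, however, needs $a^{2,0}$ and hence a sign.

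The missing idea is quantitative. The paper uses leading-order analysis only to control the codimension of $a^{-1}(0)$ and of its tangent cone (\autoref{lmaResultant}, \autoref{lmaZeroLocusDim}, \autoref{lmaTZeroLocusDim}). This allows a consistent local sign of $J_a$ near the zero set (\autoref{lmaJaZ2Choice}), so that $a^{2,0}$ is defined and, by elliptic regularity, smooth across it. Differentiating $\mathrm{d}_A^*a^{2,0}=0$ to high order gives
\[|\nabla J_a|\le C+C\bigl(|(\nabla^A)^na^{2,0}|/|a^{2,0}|\bigr)^{1/n}\]
(\autoref{corHigherDer}). Together with $|a|^{-\alpha}\in L^1_{\mathrm{loc}}$ for small $\alpha$ (\autoref{lmaFMinus1n}), this yields $J_a\in W^{1,p}_{\mathrm{loc}}$ for all $p$ (\autoref{lmaJaW1p}). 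Smoothness then follows by bootstrapping the harmonic-map-type equation of \autoref{lmaPdeJa}.

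A minor point in step (i): $\mathrm{d}_Aa=0\Leftrightarrow\mathrm{d}_A^*a=0$ holds only for $N=1$. For $N\ge2$ closedness is far stronger (\autoref{propDDStar2plus}), so finite-order vanishing should come from the Weitzenb\"ock formula, \autoref{propWeitzen2plus}.
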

	
	The proof of this smooth extension, including all those lemmas used, shall take up a large number of pages. To prove this smooth extension, we shall first do some pointwise calculation involving derivatives of arbitrarily high order\EWzJhvFootnote{The exact order of derivatives needed pro\EWzJhvText{bab}ly depends on the dimension of $M$ and the maximal order of the zeros of $a$ (basically the least $m$ for which $\sum_{j=0}^m{}\bigl|(\nabla^A)^j a\bigr| >0$). A curious effect of relying on derivatives of arbitrarily high order is that, seemingly, our method would fail if things were only assumed to be, say, $C^{100}$, instead of being $C^\infty$, for $a$ that possibly has zeros of order higher than, say, $1000$.} to obtain a bound on the derivative of this $\mathbb{Z}_2$-complex structure {\small (\autoref{corHigherDer})}. After some further technical work including working around a subtle problem with \cite{ZeroSetsSolutionsElliptic} {\small (\autoref{subsecA20ZeroLocus})}, we shall prove that this $\mathbb{Z}_2$-complex structure extends to the whole manifold in $W^{1,p}_{\mathrm{loc}}$ {\small (\autoref{lmaJaW1p})}. With the help of an elliptic PDE satisfied by such $\mathbb{Z}_2$-complex structure compatible with the metric (or the quaternion-Kähler structure) {\small (\autoref{lmaPdeJa})}, we shall finally prove the smooth extension. For details, see \autoref{secSpNSp1}.
	
	Note that a general smooth connected oriented Riemannian $4$-manifold (or quaternion-Kähler manifold) may admit no compatible $\mathbb{Z}_2$-complex structure at all, in which case our result simply implies that any such coclosed $a$ must be zero. In particular, it is known that a compact quaternion-Kähler manifold of dimension at least~$8$ that admits a compatible complex structure must be finitely covered by a hyper\EWzJhvText{käh}ler manifold, and any such compatible complex structure must be Kähler (\cite{CplxStrsOnQuarternionicManifolds}, see also \cite{CompatibleAlmostCplxStrsOnQKManifolds}).
	
	\EWzJhvSubsectionStarWithSecondLine{Case of general even-dimensional manifolds}
	{\noindent\hspace*{0.5\parindent}{\small\slshape (\hspace*{0.07em}\EWzJhvSlantedRightarrow\hspace{0.2em}\autoref{secGeneral})}}
	
	On general even-dimensional smooth manifolds, we shall observe similarly that a so-called ``parascalar $(n,0)$-form'' {\small (\autoref{defnParascalarN})} induces a $\mathbb{Z}_2$-almost complex structure wherever the parascalar $(n,0)$-form satisfies a certain nondegeneracy condition \ifEWzJhvPDF\linebreak\fi {\small (\autoref{propParascalarNInducesCplxStr})}, and this $\mathbb{Z}_2$-almost complex structure is integ\EWzJhvText{rab}le if the parascalar $(n,0)$-form is closed {\small (\autoref{propParascalarNClosedToIntegrab})}. In particular, these results hold for a form that pointwise ``resembles'' a complex volume form {\small (\autoref{corParaVolumeForm})}. However, in this case, smooth extension of the integ\EWzJhvText{rab}le $\mathbb{Z}_2$-(almost) complex structure does not necessarily hold, as we can easily give a counterexample in complex dimension~$4$ {\small (\autoref{subsecCounterexampleSmoothExtensib})}. Our understanding of this case is still rudimentary. For details, see \autoref{secGeneral}.
	
	\EWzJhvSubsectionStarWithSecondLine{Motivation, Implications for Vafa--Witten equation}
	{\noindent\hspace*{0.5\parindent}{\small\slshape (\hspace*{0.07em}\EWzJhvSlantedRightarrow\hspace{0.2em}\autoref{secVW})}}
	
	The author's motivation to consider the question of recovering a $\mathbb{Z}_2$-(almost) complex structure from a parascalar $(n,0)$-form comes from the study of the Vafa--Witten equation. Let $(M,g)$ be a smooth oriented Riemannian $4$-manifold. Let $G$ be a compact Lie group, $\mathrm{SU}(2)$ and $\mathrm{SO}(3)$ being two simple cases. Let $P \rightarrow M$ be a $G$-principal bundle. Denote the space of principal connections on $P$ by $\mathscr{A}(P)$. Denote the space of self-dual \ifEWzJhvPDF\linebreak\fi $2$-forms valued in the adjoint bundle by $\Omega^{2,+}(\mathrm{ad}(P))$. The Vafa--Witten equation \cite{VAFA19943}, following \ifEWzJhvPDF\hspace{0pt minus 1.3pt}\fi \cite{MaresPhD}, \ifEWzJhvPDF\hspace{0pt minus 0.9pt}\fi is the following equation system on a pair\EWzJhvFootnote{We ignore the seemingly less significant $\Gamma(\mathrm{ad}(P))$ component of a solution, which vanishes in many interesting cases anyway \cite[Remark~2.1.2]{MaresPhD}.} $(A,a) \ifEWzJhvPDF\hspace*{0pt minus 2pt}\fi\in\ifEWzJhvPDF\hspace*{0pt minus 2pt}\fi \mathscr{A}(P) \times \Omega^{2,+}(\mathrm{ad}(P))$
	\begin{align*}
		\mathrm{d}_A^* a &= 0\\
		F_A^+ + \EWzJhvMathFracVCentered{1}{8}\hspace{0.0625em}[a \centerdot a] &= 0
	\end{align*}
	where $F_A^+ \in \Omega^{2,+}(\mathrm{ad}(P))$ is the self-dual part of the curvature of $A$, $[\EWzJhvCDotInd\centerdot\EWzJhvCDotInd]$ is a certain antisymmetric linear map $\mskip2mu\relax \EWzJhvCDotInd\centerdot\EWzJhvCDotInd \mskip2mu\relax\EWzJhvMathSepColon\mskip2mu\relax \Omega^{2,+}\rvert_p \otimes \Omega^{2,+}\rvert_p \mskip2mu\relax\rightarrow\mskip2mu\relax \Omega^{2,+}\rvert_p \mskip2mu\relax$ tensored with the Lie bracket on the adjoint bundle.
	
	In case $(M,g)$ is a compact Kähler surface, it is known that $a \in \Omega^{2,+}(\mathrm{ad}(P))$ splits into $a = \beta - \beta^* + \gamma \otimes_{\mathbb{R}} \omega$, where $\beta \in \Omega^{2,0}(\mathrm{ad}(P) \otimes_{\mathbb{R}} \mathbb{C})$, $\gamma \in \Gamma(\mathrm{ad}(P))$, $\omega$ is the Kähler form, and the Vafa--Witten equation can be rewritten in terms of $(A,\beta,\gamma)$ \cite[\begingroup\small Theorem~7.1.2\endgroup]{MaresPhD} {\small (\autoref{propVWequivalenceKaehler})}. In many interesting cases $\gamma = 0$, for example if $A$ is an irreducible $\mathrm{SU}(2)$-connection. The space of solutions $(A,\beta,0)$ modulo the gauge group action admits a $\mathrm{U}(1)$-action\EWzJhvFootnote{If one considers the other side of the Kobayashi--Hitchin correspondence, it is actually a $\mathbb{C}^*$-action. More precisely, it is actually this $\mathbb{C}^*$-action that was considered in \cite{Tanaka_2019}\cite{Tanaka_2017}\cite{chen2024VWeqKaehler}.}: $\beta \mapsto e^{i\theta}\beta$, whose invariant locus is used to define Vafa--Witten invariants for projective surfaces \cite{Tanaka_2019}\cite{Tanaka_2017}. Previously, it has been observed\EWzJhvFootnote{These results from \cite{chen2024VWeqKaehler} were actually proved for the Vafa--Witten equations over compact Kähler manifolds of arbitrary dimension, not just compact Kähler surfaces. Our \autoref{propU1invToParascalar},~\ref{propNilpotentParascalar} hold for almost complex manifolds of arbitrary (higher) dimension. On the other hand, as noted above, our result of smooth extension does not seem to generalise to general even-dimensional smooth manifolds.} that for~$G = \mathrm{U}(n)$, if $(A,\beta,0)$ is a $\mathrm{U}(1)$-invariant solution, then $\beta$ must be pointwise nilpotent \cite[\begingroup\small Lemma~2.12\endgroup]{chen2024VWeqKaehler}, and solutions $(A,\beta,0)$ where $\beta$ is pointwise nilpotent\EWzJhvFootnote{or more generally, solutions with uniformly bounded spectral covers} satisfy a $C^0$ a priori estimate and, as a consequence, a version of Uhlenbeck compactness \cite[\begingroup\small section~5\endgroup]{chen2024VWeqKaehler}.
	
	\medskip
	
	We shall observe:\vspace{-0.8ex plus -0.2ex minus -0.4ex}
	\begin{EWzJhvPropositionUnnumbered}
		\hspace{-0.333333em}(informal) If\/\hspace{-0.08em} $(A,\beta,0)$ is a\/ $\mathrm{U}(1)$-invariant solution, then\/ $a = \beta - \beta^*$ must be a parascalar\/ $(2,0)$-form in the sense defined above.
		
		\noindent\hspace*{0.5\parindent}{\small\slshape (\hspace*{0.07em}\EWzJhvSlantedRightarrow\hspace{0.17em}\autoref{propU1invToParascalar})}\vspace{-0.5ex plus -0.1ex minus -0.3ex}
	\end{EWzJhvPropositionUnnumbered}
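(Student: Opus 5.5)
The plan is to reduce parascalarity to the vanishing of a single pointwise quadratic invariant of $\beta$. I then show that this invariant is killed by $\mathrm{U}(1)$-invariance, either directly from invariance of the inner product on $\mathrm{ad}(P)$ or via the pointwise nilpotency of \cite[Lemma~2.12]{chen2024VWeqKaehler} in the case $G=\mathrm{U}(n)$.

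\emph{Step 1 (local normal form).} Fix $p\in M$ and a unit $(2,0)$-covector $\theta$ at $p$. On a Kähler surface $\Omega^{2,+}\rvert_p=\mathbb{R}\,\omega\oplus\mathrm{Re}\,\Lambda^{2,0}$. After normalising $\theta$, the forms $\omega^1=c\,\mathrm{Re}\,\theta$ and $\omega^2=-c\,\mathrm{Im}\,\theta$ are orthonormal in $\Omega^{2,+}\rvert_p$ for a universal constant $c>0$. Write $\beta\rvert_p=s\otimes\theta$ with $s\in\mathrm{ad}(P)\rvert_p\otimes_{\mathbb{R}}\mathbb{C}$, and split $s=X+iY$ with $X,Y\in\mathrm{ad}(P)\rvert_p$. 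Using the convention that $\beta^*$ conjugates both the form part and the Lie algebra part (so $s^*=-X+iY$), a short computation gives
\[
a\rvert_p=s\otimes\theta-s^*\otimes\bar\theta=\tfrac{2}{c}\bigl(X\otimes\omega^1+Y\otimes\omega^2\bigr).
\]

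\emph{Step 2 (linear algebra criterion).} An element $X\otimes\omega^1+Y\otimes\omega^2$ with $\omega^1,\omega^2$ orthonormal is of the form $\lambda(v_1\otimes\omega^1+v_2\otimes\omega^2)$ with $v_1,v_2$ orthonormal exactly when $|X|=|Y|$ and $\langle X,Y\rangle=0$. In that case take $\lambda=\tfrac{2}{c}|X|$ and $v_1=X/|X|$, $v_2=Y/|Y|$; when $X=Y=0$ take $\lambda=0$ and any orthonormal pair. Let $B$ denote the complex-bilinear extension of the (Ad-invariant) inner product on $\mathrm{ad}(P)$. Then
\[
B(s,s)=|X|^2-|Y|^2+2i\langle X,Y\rangle,
\]
so parascalarity at $p$ is equivalent to $B(s,s)=0$. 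The condition $B(s,s)=0$ is independent of the choice of $\theta$, since changing $\theta$ by a phase multiplies $s$ by a phase. For $\mathrm{U}(n)$ with $\langle X,Y\rangle=-\mathrm{tr}(XY)$, this condition reads $\mathrm{tr}(s^2)=0$.

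\emph{Step 3 (invariance kills the quadratic invariant).} $\mathrm{U}(1)$-invariance of the gauge equivalence class means that for each $e^{i\phi}$ there is a gauge transformation $g_\phi$ with $g_\phi\cdot A=A$ and $\mathrm{Ad}(g_\phi)\beta=e^{i\phi}\beta$. Since $B$ is Ad-invariant, this gives
\[
B(s,s)=B\bigl(\mathrm{Ad}(g_\phi)s,\mathrm{Ad}(g_\phi)s\bigr)=e^{2i\phi}B(s,s)\quad\text{for all }\phi,
\]
hence $B(s,s)=0$ at every point. For $G=\mathrm{U}(n)$ one can alternatively quote pointwise nilpotency of $\beta$ \cite[Lemma~2.12]{chen2024VWeqKaehler}, since a nilpotent $s$ has $\mathrm{tr}(s^2)=0$.

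The main obstacle I expect is bookkeeping rather than conceptual. The normalisation constant $c$ and the sign conventions for $\beta^*$ must be pinned down so that $\mathrm{Re}\,\theta$ and $\mathrm{Im}\,\theta$ are genuinely orthonormal in $\Omega^{2,+}$ up to a common scale. One must also check that the notion of ``$\mathrm{U}(1)$-invariant'' used in the paper supplies the gauge transformations $g_\phi$ pointwise. If it is only meant infinitesimally, I would differentiate $\mathrm{Ad}(g_\phi)\beta=e^{i\phi}\beta$ at $\phi=0$ to get $[\xi,\beta]=i\beta$ for some $\xi\in\Gamma(\mathrm{ad}(P))$. Invariance of $B$ then gives $0=2B([\xi,s],s)=2iB(s,s)$, which again yields $B(s,s)=0$.
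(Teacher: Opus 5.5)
Your proposal is correct and is essentially the paper's own argument. Writing $\beta=(\xi_1+i\xi_2)\otimes\nu$ locally, parascalarity reduces to $\langle \xi_1+i\xi_2,\,\xi_1+i\xi_2\rangle=0$ for the $\mathbb{C}$-bilinear extension of the invariant inner product; gauge invariance of that bilinear form together with $e^{i\theta}\beta\sim\beta$ forces it to equal $e^{2i\theta}$ times itself, hence zero. The only difference is that the paper phrases this on almost complex manifolds of arbitrary dimension using the general notion of parascalar $(N,0)$-form, whereas you check the $\Omega^{2,+}$ definition directly on the Kähler surface, which is exactly what the informal statement asks for.
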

	\begin{EWzJhvPropositionUnnumbered}
		\hspace{-0.333333em}(informal) For\/~$\mathfrak{g} = \mathfrak{u}(n)$, if\/\hspace{-0.08em} $\beta$ is pointwise nilpotent, then\/ $a = \beta - \beta^*$ must be a parascalar\/ $(2,0)$-form. For\/~$\mathfrak{g} = \mathfrak{su}(2)$, the converse also holds.
		
		\noindent\hspace*{0.5\parindent}{\small\slshape (\hspace*{0.07em}\EWzJhvSlantedRightarrow\hspace{0.17em}\autoref{propNilpotentParascalar})}\vspace{-0ex plus -0.5ex}
	\end{EWzJhvPropositionUnnumbered}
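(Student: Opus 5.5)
The plan is to reduce the parascalar condition, at each point $p$, to one algebraic identity for the endomorphism part of $\beta$, namely $\operatorname{tr}(B^2)=0$. Both directions then come from elementary linear algebra of $\mathfrak{gl}(n,\mathbb{C})$.

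\emph{Setup at a point.} Fix $p\in M$ and a $(2,0)$-form $\Omega$ at $p$, normalised so that $\omega^1 := \sqrt{2}\,\mathrm{Re}\,\Omega$ and $\omega^2 := \sqrt{2}\,\mathrm{Im}\,\Omega$ are orthonormal in $\Omega^{2,+}\rvert_p$. On a Kähler surface $\Omega^{2,+}\rvert_p=\operatorname{span}\{\omega^1,\omega^2,\omega\}$, so such an $\Omega$ exists. Write $\beta\rvert_p = B\otimes\Omega$ with $B\in \mathfrak{g}\otimes_{\mathbb{R}}\mathbb{C}$. For $\mathfrak{g}=\mathfrak{u}(n)$ this complexification is $\mathfrak{gl}(n,\mathbb{C})$, and the conjugation defining $\beta^*$ is $B\otimes\Omega\mapsto B^*\otimes\overline{\Omega}$, with $B^*$ the conjugate transpose. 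I would check this against the convention in \autoref{propVWequivalenceKaehler}. Then
\[
a\rvert_p = B\otimes\Omega - B^*\otimes\overline{\Omega} = X\otimes\omega^1 + Y\otimes\omega^2,
\qquad X=\tfrac{1}{\sqrt2}(B-B^*),\quad Y=\tfrac{i}{\sqrt2}(B+B^*).
\]
Both $X$ and $Y$ are skew-Hermitian, so they lie in $\mathfrak{u}(n)$. Moreover $X-iY=\sqrt2\,B$, so $B$ is recovered from $(X,Y)$.

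\emph{Characterising parascalar.} I would first show that $a=X\otimes\omega^1+Y\otimes\omega^2$, with $\omega^1,\omega^2$ orthonormal, is parascalar if and only if $|X|=|Y|$ and $\langle X,Y\rangle=0$.
\begin{itemize}
\item If these hold, set $\lambda=|X|$. When $\lambda>0$ put $v_1=X/\lambda$ and $v_2=Y/\lambda$. When $\lambda=0$ choose any orthonormal pair $v_1,v_2$; this needs $\dim\mathfrak{g}\ge 2$, and otherwise the degenerate case is vacuous.
\item Conversely, suppose $a\rvert_p=\lambda(v_1\otimes\eta^1+v_2\otimes\eta^2)$ for some orthonormal $\eta^1,\eta^2$. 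Then, viewed as a linear map $\Omega^{2,+}\rvert_p\to V\rvert_p$, $a\rvert_p$ is $\lambda$ times a partial isometry. Hence its restriction to $\operatorname{span}\{\omega^1,\omega^2\}$ sends orthonormal vectors to orthogonal vectors of equal length.
\end{itemize}
For the converse direction one must verify that the image plane is exactly $\operatorname{span}\{\eta^1,\eta^2\}$. This holds because $a$ has no $\omega$-component, so the span of $\eta^1,\eta^2$ must equal the span of $\omega^1,\omega^2$ whenever $\lambda\ne0$.

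Next I take the invariant inner product on $\mathfrak{u}(n)$ to be $\langle X,Y\rangle=-c\operatorname{tr}(XY)$ with $c>0$. Its complex-bilinear extension gives
\[
-c\operatorname{tr}\bigl((X-iY)^2\bigr) = |X|^2-|Y|^2-2i\langle X,Y\rangle .
\]
Since $(X-iY)^2=2B^2$, the parascalar condition at $p$ is equivalent to $\operatorname{tr}(B^2)=0$. If $\mathfrak{u}(n)$ carries a non-trace inner product, namely different multiples on the centre and on $\mathfrak{su}(n)$, I would split $B=B_0+\tfrac{1}{n}(\operatorname{tr}B)I$ and handle the two orthogonal pieces separately. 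For nilpotent $B$ both $\operatorname{tr}B=0$ and $\operatorname{tr}B^2=0$, so each piece still satisfies the condition.

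\emph{Conclusion.} If $\beta$ is pointwise nilpotent, every eigenvalue of $B$ vanishes, so $\operatorname{tr}(B^2)=\sum\mu_j^2=0$ and $a$ is parascalar at every point. For $\mathfrak{g}=\mathfrak{su}(2)$ we have $B\in\mathfrak{sl}(2,\mathbb{C})$, so its eigenvalues are $\pm\mu$ and $\operatorname{tr}(B^2)=2\mu^2$. Thus $\operatorname{tr}(B^2)=0$ forces $\mu=0$, and by Cayley--Hamilton $B^2=0$, so $B$ is nilpotent; this is the converse.

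The main obstacle is bookkeeping rather than substance. One must confirm the paper's normalisations ($\beta^*$, $\Omega$, and the inner product on $\mathrm{ad}(P)$) so that the equivalence "parascalar $\iff \operatorname{tr}(B^2)=0$" holds exactly. One must also handle the converse of the partial-isometry characterisation cleanly, in particular that the plane $\operatorname{span}\{\eta^1,\eta^2\}$ must equal $\operatorname{span}\{\omega^1,\omega^2\}$.
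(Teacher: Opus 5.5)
Your proposal is correct and follows essentially the same argument as the paper. Both reduce the parascalar condition at a point to the vanishing of the complex-bilinear square $\langle \xi_1+i\xi_2,\,\xi_1+i\xi_2\rangle$ of the coefficient of $\beta$, identify this with a multiple of $\operatorname{tr}(B^2)$ via simplicity of $\mathfrak{su}(n)$ and $\operatorname{tr}B=0$, and finish with the eigenvalue argument. The only difference is that you prove the characterisation ``$|X|=|Y|$, $\langle X,Y\rangle=0$'' directly on a surface, via the kernel of $a\rvert_p$ viewed as a map $\Omega^{2,+}\rvert_p\to V\rvert_p$, whereas the paper obtains it from \autoref{propParascalarNAlCplxMfld} (which is where its hypothesis $N\geqslant 2$ in the converse comes from) together with \autoref{corParascalar2plusVsN}.
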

	
	In addition, for~$\mathfrak{g} = \mathfrak{su}(2)$, on a compact smooth oriented Riemannian $4$-manifold, solutions in the form of $(A,a)$ where $a$ is a parascalar $(2,0)$-form also satisfy a $C^0$ a priori estimate {\small (\autoref{propParascalarC0Estimate})}, and, as a consequence, a version of Uhlenbeck compactness.
	
	\medskip
	
	This is a close relation between $\mathrm{U}(1)$-invariant (or nilpotent) solutions and the notion \ifEWzJhvPDF\linebreak\fi of parascalar $(2,0)$-forms. Note that the condition of being a parascalar $(2,0)$-form \ifEWzJhvPDF\nolinebreak\fi makes sense on general smooth $4$-manifolds with no Kähler structure specified. This hints at a possibility to generalise the notion of $\mathrm{U}(1)$-invariant solutions from the Kähler setting \ifEWzJhvPDF\linebreak\fi to general smooth oriented Riemannian $4$-manifolds. Here, the study of parascalar $(2,0)$-forms is motivated. The study, however, turns out to indicate that for a nontrivial parascalar $(2,0)$-form forming part of a solution to exist, it is necessary that the smooth oriented Riemannian $4$-manifold admits a compatible $\mathbb{Z}_2$-complex structure. Therefore, it rather seems that the notion of $\mathrm{U}(1)$-invariant solutions of the Kähler setting cannot be generalised to very general smooth oriented Riemannian $4$-manifolds in this seemingly quite natural manner. Some further questions may be asked. For details, see \autoref{secVW}.
	
	\section{\texorpdfstring{$\mathbb{Z}_2$}{ℤ₂}-complex structure}
	
	In this short section, we give the definition of a $\mathbb{Z}_2$-(almost) complex structure.
	
	\medskip
	
	Let $M$ be an even-dimensional smooth manifold.
	
	Let $\mathcal{J}(M)\rvert_{p \in M} := \{\,J_p \in \operatorname{End}(T_p M) \mid J_p^2 = -\mathrm{id}\,\}$ be the fibre bundle whose (local) sections are the (local) almost complex structures. $\{1, -1\}$ acts on $\mathcal{J}(M)$ fibrewise by scalar multiplication. Denote the quotient fibre bundle by $\widetilde{\mathcal{J}}(M)$.
	
	\begin{EWzJhvDefinition}
		A \emph{$\mathbb{Z}_2$-almost complex structure} $\tilde{J}$ on\/ $M$ is a smooth section of\/\hspace{-0.08em} $\widetilde{\mathcal{J}}(M)$. $\tilde{J}$ is said to be integrable, if locally\/ $\tilde{J}$ comes from an integrable almost complex structure, in which case\/ $\tilde{J}$ is also called a \emph{$\mathbb{Z}_2$-complex structure}. $(M, \tilde{J})$ is called a \emph{$\mathbb{Z}_2$-(almost) complex manifold}.
		
		In case\/ $M$ is endowed with a Riemannian metric, $\tilde{J}$ is said to be compatible with the metric (alias ``orthogonal with respect to the metric''), if locally\/ $\tilde{J}$ comes from an almost complex structure compatible with the metric.
		
		In case\/ $M$ is oriented and\/ $\dim_{\mathbb{R}} M$ is a multiple of\/\hspace*{0em plus -0.08em}~$4$, $\tilde{J}$ is said to be compatible with the orientation, if locally\/ $\tilde{J}$ comes from an almost complex structure compatible with the orientation.
	\end{EWzJhvDefinition}
	
	\paragraph*{Constructing $\mathbb{Z}_2$-complex manifold} Here we describe a way to construct a $\mathbb{Z}_2$-complex manifold that is not a complex manifold, in the sense that its $\mathbb{Z}_2$-complex structure does not \emph{globally} come from a complex structure. Take a homogeneous polynomial with real coefficients (or more generally, not just one polynomial, but an ideal). Denote the polynomial by $f$. The complex conjugation is an antiholomorphic involution on the zero locus of $f$ in $\mathbb{C}\mathrm{P}^N$. If the zero locus of $f$ in $\mathbb{C}\mathrm{P}^N$ is smooth, and the zero locus of $f$ in $\mathbb{R}\mathrm{P}^N$ is empty (e.g.\ $f(x_0,\ldots,x_N) = x_0^{2k} + \cdots + x_N^{2k}$), then the quotient of the zero locus of $f$ in $\mathbb{C}\mathrm{P}^N$ by the complex conjugation is a $\mathbb{Z}_2$-complex manifold. \ifEWzJhvPDF\linebreak\fi If furthermore, the zero locus of $f$ in $\mathbb{C}\mathrm{P}^N$ is connected, then the quotient is not a complex manifold.
	
	\section{Case of \texorpdfstring{$\mathrm{Hol}(g) \subset \mathrm{Sp}(N)\hspace{0.0625em}\mathrm{Sp}(1)$}{Hol(𝑔)\unichar{"2009}⊂\unichar{"2009}Sp(𝑁)\unichar{"200A}Sp(1)}}\label{secSpNSp1}
	
	\subsection{Introduction}
	
	\subsubsection*{Preliminaries: The setting of $\mathrm{Hol}(g) \subset \mathrm{Sp}(N)\hspace{0.0625em}\mathrm{Sp}(1)$}
	
	For the purpose of this section, the setting of $\mathrm{Hol}(g) \subset \mathrm{Sp}(N)\hspace{0.0625em}\mathrm{Sp}(1)$ appears as a natural generalisation\EWzJhvFootnote{Originally, the author developed the main results of this section with only smooth oriented Riemannian $4$-manifolds in mind, but it was then noticed that these results generalise straightforwardly to the setting of $\mathrm{Hol}(g) \subset \mathrm{Sp}(N)\hspace{0.0625em}\mathrm{Sp}(1)$, so they shall be presented in this general setting. It should be noted that the setting of $\mathrm{Hol}(g) \subset \mathrm{Sp}(N)\hspace{0.0625em}\mathrm{Sp}(1)$ with $N \geqslant 2$ is a kind of \emph{special holonomy} and, for general considerations, is of rather different nature than that of smooth oriented Riemannian $4$-manifolds. It should also be noted that it is not clear what implications the generalised results may have for $N \geqslant 2$.} of the setting of smooth oriented Riemannian $4$-manifolds. The setting is described as below:
	
	\smallskip
	
	Let $(M,g)$ be a smooth $4N$\hspace*{-0.03em}-dimensional Riemannian manifold\EWzJhvFootnote{$(M,g)$ is not assumed to be complete, so the discussions can work locally.} ($N \geqslant 1$). Let $Q$ be a rank-$3$ subbundle of $\operatorname{Hom}(TM,TM)$ satisfying the following conditions:\vspace{-\parskip}
	\begin{enumerate}[topsep=1ex plus 0.25ex minus 0.25ex, itemsep=1ex plus 0.25ex minus 0.25ex, parsep=0pt]
		\item\label{eiQCond1} For any~$p \in M$, there exists an orthogonal basis $(J_1\,\,J_2\,\,J_3)$ of the $3$-dimensional vector space $Q\rvert_p \subset \operatorname{Hom}(T_p M,T_p M)$, such that $J_1,J_2,J_3$ are isometries on $T_p M$, and moreover $J_1^2=J_2^2=J_3^2=-\mathrm{id}$, $J_1J_2=-J_2J_1=J_3$, $J_2J_3=-J_3J_2=J_1$, $J_3J_1=-J_1J_3=J_2$.
		\item\label{eiQCond2} For any section $q \in \Gamma(Q)$ and vector field $X$, the covariant derivative $\nabla_{\!X} q$, which a priori is a section of $\operatorname{Hom}(TM,TM)$, is actually a section of\/ $Q$.
	\end{enumerate}
	
	If such $Q$ exists, $(M,g)$ or $(M,g,Q)$ is what we call a smooth $4N$\hspace*{-0.03em}-dimensional \ifEWzJhvPDF\linebreak\fi Riemannian manifold with holonomy $\mathrm{Hol}(g) \ifEWzJhvPDF\hspace{0em minus 0.2em}\fi\subset\ifEWzJhvPDF\hspace{0em minus 0.2em}\fi \mathrm{Sp}(N)\hspace{0.0625em}\mathrm{Sp}(1)$, and $Q$ is called a ``coefficient \ifEWzJhvPDF\linebreak\fi bundle''. This includes any smooth oriented Riemannian $4$-manifold $(M,g)$, in which case $Q$ is locally spanned by local almost complex structures compatible with $g$ and the orientation.\EWzJhvFootnote{Note that $\mathrm{Sp}(N)\hspace{0.0625em}\mathrm{Sp}(1)$ coincides with $\mathrm{SO}(4N)$ when $N=1$.} In general, $(M,g)$ or $(M,g,Q)$ is often said to be quaternion-Kähler\EWzJhvInfixFootnote{Note that in general, a quaternion-Kähler manifold is not necessarily Kähler. In addition, note that one often imposes some extra condition on $(M,g,Q)$ in the definition of a quaternion-Kähler manifold when $N=1$.}. For references, see e.g.\ \cite{QKManifoldsIshihara, QKManifoldsSalamon}.
	
	\bigskip
	
	\EWzJhvParagraphNoBeforeskip*{The bundle\hspace{0.5em plus 0.166667em minus 0.066667em}$\Omega^{2,+} \subset \Omega^2$.} For $(M,g,Q)$ with $\mathrm{Hol}(g) \subset \mathrm{Sp}(N)\hspace{0.0625em}\mathrm{Sp}(1)$, let $\Omega^{2,+} \subset \Omega^2$ be the rank-$3$ subbundle of\/ $2$-forms whose sections are given by $g(q\EWzJhvCDotInd,\EWzJhvCDotInd)$ where $q \in \Gamma(Q)$. For a smooth oriented Riemannian $4$-manifold $(M,g)$, this is just the bundle of self-dual $2$-forms\EWzJhvInfixFootnote{i.e.\ the eigenvectors with eigenvalue~$1$ of the Hodge star operator $*$ on $\Omega^2$}. As any rank-$1$ subbundle of\/ $Q$ gives a compatible\EWzJhvFootnote{A ``compatible'' ($\mathbb{Z}_2$-)(almost) complex structure on $(M,g,Q)$ with $\mathrm{Hol}(g) \subset \mathrm{Sp}(N)\hspace{0.0625em}\mathrm{Sp}(1)$ is so defined. See \autoref{subsecCplxStr} for more on compatible complex structures.} $\mathbb{Z}_2$-almost complex structure, any rank-$1$ subbundle of\/ $\Omega^{2,+}$ also gives a compatible $\mathbb{Z}_2$-almost \ifEWzJhvPageBreak\pagebreak\fi complex structure. For any~$p \in M$ and any oriented\EWzJhvFootnote{The orientation of\/ $\Omega^{2,+}$ is defined to be the one that allows \EWzJhvEqRef{eqOrthonBasis2plus}.} orthonormal basis $(\omega^1\,\,\omega^2\,\,\omega^3)$ of\/ $\Omega^{2,+}\rvert_p$, we may choose an oriented\EWzJhvFootnote{The orientation of an $(M,g,Q)$ with $\mathrm{Hol}(g) \subset \mathrm{Sp}(N)\hspace{0.0625em}\mathrm{Sp}(1)$ is so defined.} orthonormal basis $(e^{j,k})_{1 \leqslant j \leqslant N,1 \leqslant k \leqslant 4}$ of\/ $\EWzJhvTpStar Tp M$ and express $(\omega^1\,\,\omega^2\,\,\omega^3)$ explicitly as follows:\EWzJhvFootnote{\label{ftnInnPdExtPdTenPd}Note how our inner product for exterior products differs from the one for tensor products, i.e.\ $\bigl| e^{1,1} \otimes e^{1,2} - e^{1,2} \otimes e^{1,1}\bigr|=\sqrt{2}$, but $\bigl| e^{1,1} \wedge e^{1,2}\bigr|=1$, etc. This is different from those conventions used by e.g.\ \cite{CompatibleAlmostCplxStrsOnQKManifolds}.}
	\begin{equation}\label{eqOrthonBasis2plus}
		\begin{gathered}
			\omega^1=\EWzJhvMathFracVCentered{1}{\sqrt{2N}\,}\sum_{j=1}^N\bigl(e^{j,2} \wedge e^{j,3} + e^{j,1} \wedge e^{j,4}\bigr)\\
			\omega^2=\EWzJhvMathFracVCentered{1}{\sqrt{2N}\,}\sum_{j=1}^N\bigl(e^{j,3} \wedge e^{j,1} + e^{j,2} \wedge e^{j,4}\bigr)\\
			\omega^3=\EWzJhvMathFracVCentered{1}{\sqrt{2N}\,}\sum_{j=1}^N\bigl(e^{j,1} \wedge e^{j,2} + e^{j,3} \wedge e^{j,4}\bigr)
		\end{gathered}
	\end{equation}
	
	\subsubsection*{Main results}
	
	Let $(M,g,Q)$ be with $\mathrm{Hol}(g) \subset \mathrm{Sp}(N)\hspace{0.0625em}\mathrm{Sp}(1)$. Let $V \rightarrow M$ be a smooth real vector bundle with inner product. Let $A$ be a connection on $V$ that is compatible with the inner product on $V$. Let $\Omega^{2,+}(V) := V \otimes \Omega^{2,+}$.
	
	\begin{EWzJhvDefinition}\label{defn2plusParascalar}
		$a \in \Omega^{2,+}(V)$ is said to be a \emph{parascalar $(2,0)$-form}, if at each\/~$p \in M$, $a\rvert_p = \lambda\hspace{0.0625em}\bigl(v_1 \otimes \omega^1 + v_2 \otimes \omega^2\bigr)$ where\/ $\lambda$ is some (nonnegative) real number, $(v_1\,\,v_2)$ are some orthonormal vectors in\/ $V\rvert_p$, and\/ $(\omega^1\,\,\omega^2)$ are some orthonormal vectors in\/ $\Omega^{2,+}\rvert_p$.
	\end{EWzJhvDefinition}
	
	\noindent\emph{Induced\/ $\mathbb{Z}_2$-almost complex structure:}\enskip Let $a \in \Omega^{2,+}(V)$ be a parascalar $(2,0)$-form. For~$p \in M$ with $a\rvert_p \neq 0$, write $a\rvert_p = \lambda\hspace{0.0625em}\bigl(v_1 \otimes \omega^1 + v_2 \otimes \omega^2\bigr)$ where $\lambda$ is some positive real number, $(v_1\,\,v_2)$ are some orthonormal vectors in $V\rvert_p$, and $(\omega^1\,\,\omega^2)$ are some orthonormal vectors in $\Omega^{2,+}\rvert_p$. The orthogonal complement of\/ $\operatorname{span}_{\mathbb{R}}\{\omega^1,\omega^2\}$ in $\Omega^{2,+}\rvert_p$ defines a compatible $\mathbb{Z}_2$-almost complex structure on the complement of the zero locus of $a$. This $\mathbb{Z}_2$-almost complex structure is said to be the one \emph{induced} by $a$. We denote this compatible $\mathbb{Z}_2$-almost complex structure by $\tilde{J}_a$.
	
	\medskip
	
	The main result of this section is
	
	\begin{EWzJhvTheorem}\label{thm2}
		Let\/ $(M,g,Q)$ be a smooth\/ $4N$\hspace*{-0.08em}-dimensional Riemannian manifold with holonomy\/ $\mathrm{Hol}(g) \subset \mathrm{Sp}(N)\hspace{0.0625em}\mathrm{Sp}(1)$. Let\/ $V \rightarrow M$ be a smooth real vector bundle with inner product. Let\/ $A$ be a connection on\/ $V$ that is compatible with the inner product on\/ $V$. \ifEWzJhvPDF\linebreak\fi Let\/ $a \in \Omega^{2,+}(V)$ be a parascalar\/ $(2,0)$-form. Suppose\/ $\displaystyle \mathrm{d}_A^* a = 0$ and\/ $a$ is not the zero section on any connected component of\/\hspace{-0.08em} $M$, then the\/ $\mathbb{Z}_2$-almost complex structure\/ $\tilde{J}_a$ defined on the complement of the zero locus of\/\hspace{-0.08em} $a$ can be uniquely extended to an integrable\/ $\mathbb{Z}_2$-(almost) complex structure on\/ $M$ (also denoted by\/ $\tilde{J}_a$).
	\end{EWzJhvTheorem}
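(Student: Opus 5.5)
The plan follows the route sketched in the introduction. There are four steps: integrability off the zero locus, pointwise derivative bounds, a $W^{1,p}_{\mathrm{loc}}$ extension, and elliptic bootstrapping.

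\emph{Step 1 (off the zero locus).} Near a point $p$ with $a\rvert_p \neq 0$, choose a local lift $J$ of $\tilde{J}_a$. Let $\omega^3 = g(J\EWzJhvCDotInd,\EWzJhvCDotInd)/\sqrt{2N}$ be the corresponding unit section of $\Omega^{2,+}$. Complete it to a local oriented orthonormal frame $(\omega^1\,\,\omega^2\,\,\omega^3)$ of $\Omega^{2,+}$. Then $a = u\otimes(\omega^1+i\omega^2)$ after complexifying, with $u = \lambda(v_1 - i v_2)/2$ an isotropic section of $V\otimes\mathbb{C}$, so that $a$ is a $V_{\mathbb{C}}$-valued $(2,0)$-form for $J$. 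The condition $\mathrm{d}_A^*a=0$ is equivalent to $\mathrm{d}_A(a)=0$ up to Hodge-type identities on $\Omega^{2,+}$: in dimension $4$, $*a=a$; in general I would use the $\mathrm{Sp}(N)\mathrm{Sp}(1)$ algebra. Decompose into types. The $(3,0)$-component vanishes trivially. The $(1,2)$-type obstruction comes from the Nijenhuis tensor paired with $u$. Using $\lambda>0$ and the isotropy of $u$, I will show this forces $N_J=0$. The ingredients are that the connection on $\Omega^{2,+}$ preserves it (condition~\ref{eiQCond2}) and that $\nabla\omega^3$ lies in $\operatorname{span}\{\omega^1,\omega^2\}$. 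I take this as the content of \autoref{lmaJaIntegrability}, which gives integrability on $M\setminus a^{-1}(0)$. Uniqueness of the extension is automatic once the zero locus has empty interior, which Step 3 provides.

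\emph{Step 2 (pointwise bounds near zeros).} Write $J$ locally as a section of the unit sphere bundle of $Q$; equivalently $J \leftrightarrow \omega^3$, with $\omega^3$ characterised by $a\wedge$-type algebra. For instance, $\omega^3$ is proportional to the cross product $\omega^1\times\omega^2$, which can be read off from $a$ via the map $a\mapsto \langle a\times a\rangle_V$, taking the $V$-trace of the $\Omega^{2,+}$ cross product. This gives $\lambda^2\omega^3 = \Phi(a)$, quadratic in $a$. The strategy is then as follows. At a zero $p$ of finite order $m$, Taylor expand: the leading term $(\nabla^A)^m a\rvert_p$ is a homogeneous polynomial, itself parascalar pointwise in directions, and coclosed to leading order. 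Comparing $\nabla\Phi(a)$ with $|a|^2$ should give $|\nabla\omega^3| \lesssim |\nabla a|/|a|$. I would refine this to a bound like $|\nabla \omega^3|\le C/\mathrm{dist}(\cdot,a^{-1}(0))$ in the spirit of \autoref{corHigherDer}. Here $C^\infty$ and high-order derivatives are needed to control the ratio. To rule out infinite-order zeros I would invoke unique continuation: $a$ satisfies an elliptic first-order system, since $\mathrm{d}_A+\mathrm{d}_A^*$ restricted appropriately is elliptic once closedness is available. Unique continuation then implies that $a$, nonzero on each component, has zeros only of finite order.

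\emph{Step 3 (extension across the zero set) --- main obstacle.} I would use the structure theorem for zero sets of solutions of elliptic systems (\cite{ZeroSetsSolutionsElliptic}): $a^{-1}(0)$ has Hausdorff codimension $\ge 2$, or at least finite $(n-2)$-measure locally. The subtle issue flagged in \autoref{subsecA20ZeroLocus} must be patched here, presumably by applying the result to a suitable scalar or elliptic reformulation rather than directly. Together with the $1/\mathrm{dist}$ bound, this gives $\nabla\omega^3\in L^p_{\mathrm{loc}}$ for $p<2$. It also gives $\omega^3\in W^{1,p}_{\mathrm{loc}}$ across the zero set as a $\mathbb{Z}_2$-valued section, because a sign ambiguity cannot wind around a codimension-$2$ set in a $W^{1,p}$ way; this is \autoref{lmaJaW1p}. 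If only $p<2$ is available, I would try to improve it using the higher-order Taylor structure. I expect this step, and particularly pushing $p$ beyond the critical exponent, to be the hardest.

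\emph{Step 4 (regularity).} A compatible integrable $J$ satisfies the elliptic (harmonic-map-type) equation of \autoref{lmaPdeJa}, which is nonlinear in $\nabla J$. I would first show that the $W^{1,p}$ extension is a weak solution across the zero set, which is removable by the capacity of a codimension-$2$ set. Then I would bootstrap, via $\varepsilon$-regularity or Morrey-type estimates, to $C^\infty$. Integrability extends by continuity of the Nijenhuis tensor.
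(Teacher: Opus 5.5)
The architecture matches the paper's: integrability off $\mathcal{N}(a)$, a derivative bound, a $W^{1,p}_{\mathrm{loc}}$ extension, then a bootstrap via \autoref{lmaPdeJa}. But the analytic core is missing, so Steps~3--4 do not close.

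The equation of \autoref{lmaPdeJa} has the critical nonlinearity $\lvert\nabla\omega^{J_a}\rvert^2\omega^{J_a}$, so the bootstrap needs $\nabla J_a\in L^p_{\mathrm{loc}}$ for large $p$; the naive iteration closes once $p>4N$. With only $p<2$ the weak form is not even defined, since $\lvert\nabla\omega^{J_a}\rvert^2$ need not lie in $L^1_{\mathrm{loc}}$. Below the critical exponent, weak solutions of such harmonic-map-type systems are also not regular without extra structure, so Step~4 has nothing to start from.

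Your bound $\lvert\nabla\omega^3\rvert\lesssim\lvert\nabla a\rvert/\lvert a\rvert$ is exactly the naive one the paper discards, because $\lvert a\rvert^{-1}$ is far too singular near $\mathcal{N}(a)$. The refinement to $C/\mathrm{dist}$ is unproved, and even if granted it stops at $p<2$.

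The missing idea is \autoref{corHigherDer}. Differentiating $\mathrm{d}_A^*a^{2,0}=0$ repeatedly (\autoref{lmaHigherDer}) shows that the $n$-th derivative of $a^{2,0}$ along a suitable $(0,1)$-direction equals $n!\,(\tfrac{i}{2}\overline{\iota_{\bar e_{1,1}}\phi})^na^{2,0}$ up to lower-order terms in $\phi$. This gives $\lvert\nabla J_a\rvert\le\max\{C,\,C(N,n)(\lvert(\nabla^A)^na^{2,0}\rvert/\lvert a^{2,0}\rvert)^{1/n}\}$ for \emph{every} $n$. Using it needs two further facts you do not have.
First, $a^{2,0}$ is smooth across $\mathcal{N}(a)$, so the numerator is locally bounded. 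This follows from $\lvert a^{2,0}\rvert=\lvert a\rvert/\sqrt2$, $\lvert\nabla^Aa^{2,0}\rvert=\lvert\nabla^Aa\rvert/\sqrt2$, \autoref{lmaW1pSingularCodim1Plus}, and elliptic regularity for $\mathrm{d}_A^*a^{2,0}=0$.
Second, $\lvert a\rvert^{-\alpha}\in L^1_{\mathrm{loc}}$ for some $\alpha>0$ (\autoref{lmaFMinus1n}).
Then $\lvert\nabla J_a\rvert\lesssim1+\lvert a\rvert^{-1/n}$, with $n$ arbitrarily large, gives $J_a\in W^{1,p}_{\mathrm{loc}}$ for all $p<\infty$.

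Defining $a^{2,0}$ near $\mathcal{N}(a)$ requires lifting $\tilde J_a$ to an honest $J_a$ on $U\setminus\mathcal{N}(a)$ \emph{before} any Sobolev estimate. Your heuristic that a sign ambiguity cannot wind around a codimension-$2$ set in a $W^{1,p}$ way is false exactly in your range $p<2$. A half-turn line field $\pm e^{i\theta/2}$ around a codimension-$2$ set has gradient of order $1/r$ and lies in $W^{1,p}$ for every $p<2$.

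The paper proves the lift (\autoref{lmaJaZ2Choice}) by transversality, using the countable $(8N-4)$-rectifiability of the tangent cone $T\mathcal{N}(a)$ (\autoref{lmaTZeroLocusDim}). A perturbed spanning disc then meets $\mathcal{N}(a)$ only at isolated points where the leading Taylor term restricted to the disc is nondegenerate. The parity $a_p(-x)=(-1)^da_p(x)$ then kills the monodromy around each such point. That rectifiability in turn rests on \autoref{lmaResultant}, which you leave as ``presumably''.

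Finally, in Step~1, coclosedness is not equivalent to closedness when $N\ge2$. By \autoref{propDDStar2plus}, $\lvert\mathrm{d}_Aa\rvert^2=\lvert\mathrm{d}_A^*a\rvert^2+\tfrac{N-1}{N}\lvert\nabla^Aa\rvert^2$, so $\mathrm{d}_Aa=0$ would force $\nabla^Aa=0$. The paper works with $\mathrm{d}_A^*$ directly (\autoref{lmaA20eq}, \autoref{lmaJaIntegrability}). Finite-order vanishing, which you need in Step~2, follows from the Weitzenb\"ock formula (\autoref{propWeitzen2plus}), not from a closedness-based first-order system.
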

	
	In addition, we can also prove a continuity result:
	\begin{EWzJhvTheorem}\label{thm2Continuity}
		Let\/ $M$ be a smooth\/ $4N$\hspace*{-0.08em}-dimensional manifold. Let\/ $\{(g_j,Q_j)\}_{j\in\mathbb{N}\cup\{\infty\}}$ be a sequence where\/ $(g_j,Q_j)$ makes\/ $(M,g_j,Q_j)$ a Riemannian manifold with holonomy\/ $\mathrm{Hol}(g_j) \subset \mathrm{Sp}(N)\hspace{0.0625em}\mathrm{Sp}(1)$ for any\/~$j\in\mathbb{N}\cup\{\infty\}$. Let\/ $V \rightarrow M$ be a smooth real vector bundle with inner product. Let\/ $\{(A_j,a_j)\}_{j\in\mathbb{N}\cup\{\infty\}}$ be a sequence where\/ $A_j$ is a connection on\/ $V$ \ifEWzJhvPDF\linebreak\fi compatible with the inner product on\/ $V$ and\/ $a_j \in \Omega^{2,+}(M,g_j,Q_j;V)$ is a parascalar\/ $(2,0)$-form for any\/~$j\in\mathbb{N}\cup\{\infty\}$. Suppose\/ $\displaystyle \mathrm{d}_{A_j,g_j}^* a_j = 0$, $a_j$ is not the zero section on any connected component of\/\hspace{-0.08em} $M$ for any\/~$j\in\mathbb{N}\cup\{\infty\}$, and\/ $(g_j,A_j,a_j)$ converges\EWzJhvFootnote{It is not necessary to assume $Q_j \rightarrow Q_\infty$. In fact, given the assumptions of \autoref{thm2Continuity}, using \autoref{defn2plusParascalar} and \hyperref[eiQCond1]{the first condition satisfied by $Q$ under ``Preliminaries: \ldots'' \ifEWzJhvPDF on \autopageref*{eiQCond1}\else of this subsection\fi}, we easily see $Q_j \rightarrow Q_\infty$ in $\displaystyle C^\infty_{\mathrm{loc}}$ over the complement of the zero locus of $a_\infty$, which then leads to $Q_j \rightarrow Q_\infty$ in $\displaystyle C^\infty_{\mathrm{loc}}$ over the whole $M$ due to \hyperref[eiQCond2]{the second condition satisfied by $Q$\ifEWzJhvPDF\space on \autopageref*{eiQCond2}\fi}.} to\/ $(g_\infty,A_\infty,a_\infty)$ in\/ $\displaystyle C^\infty_{\mathrm{loc}}$ over\/ $M$ as\/ $j\rightarrow\infty$, then the\/ $\mathbb{Z}_2$-complex structure\/ $\tilde{J}_{a_j}$ also converges to\/ $\tilde{J}_{a_\infty}$ in\/ $\displaystyle C^\infty_{\mathrm{loc}}$ over\/ $M$ as\/ $j\rightarrow\infty$.
	\end{EWzJhvTheorem}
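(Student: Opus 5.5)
The plan is to rerun the proof of \autoref{thm2} with constants made uniform in $j$, and then use an elliptic bootstrap on the limit equation. The informal outline in the introduction has four inputs: a pointwise bound on derivatives of $\tilde{J}_a$ coming from high-order derivatives of $a$ (\autoref{corHigherDer}), a structural analysis of the zero locus of $a$ (\autoref{subsecA20ZeroLocus}), the $W^{1,p}_{\mathrm{loc}}$ extension (\autoref{lmaJaW1p}), and the elliptic PDE satisfied by compatible $\mathbb{Z}_2$-complex structures (\autoref{lmaPdeJa}). Since the statement is local, I fix a relatively compact coordinate ball $B \Subset M$. On $B$ I work with local lifts: genuine almost complex structures $J_j$ representing $\tilde{J}_{a_j}$, with signs chosen consistently with $J_\infty$. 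I will show that every subsequence of $(J_j)$ has a further subsequence converging in $C^\infty(B')$ to $J_\infty$, for $B' \Subset B$. This gives convergence of the whole sequence, because the limit is unique by \autoref{thm2}.

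\textbf{Step 1 (convergence off the zero locus).} Let $Z_\infty$ be the zero locus of $a_\infty$. On any compact $K \subset M \setminus Z_\infty$ we have $|a_j| \geqslant c > 0$ for large $j$. There, $\tilde{J}_{a_j}$ is a smooth algebraic function of $(g_j, a_j)$, namely the orthogonal complement of the $2$-plane spanned by the $\Omega^{2,+}$-part of $a_j$. Hence $\tilde{J}_{a_j} \to \tilde{J}_{a_\infty}$ in $C^\infty(K)$. Because $Z_\infty$ is of measure zero (by the unique-continuation analysis of \autoref{subsecA20ZeroLocus}), this gives pointwise a.e.\ convergence on $B$.

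\textbf{Step 2 (uniform $W^{1,p}$ bound).} This is the main obstacle. I would revisit \autoref{corHigherDer} and \autoref{lmaJaW1p} and check that their constants depend only on finitely many $C^k$ norms of $(g,A,a)$ on $B$, together with a lower bound on $\sum_{i\leqslant m}|(\nabla^{A})^i a|$ for some fixed $m$. These quantities are uniform in $j$. Indeed, $a_\infty \not\equiv 0$ on each component, and by unique continuation it vanishes nowhere to infinite order. So there are $m$ and $c > 0$ with $\sum_{i\leqslant m}|(\nabla^{A_\infty})^i a_\infty| \geqslant 2c$ on $\bar B$. $C^\infty$ convergence then gives the bound $\geqslant c$ for $(g_j, A_j, a_j)$ and all large $j$. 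The delicate point is whether the zero-locus workaround in \autoref{subsecA20ZeroLocus} (e.g.\ Hausdorff-measure or covering estimates for the zero sets) can be made uniform. I would first try to derive it from the uniform vanishing-order bound via a Łojasiewicz/Bär-type argument, with constants controlled by the same data.

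\textbf{Step 3 (bootstrap).} With $\|J_j\|_{W^{1,p}(B)} \leqslant C$ for some $p > 4N$, Morrey's embedding gives $C^{0,\alpha}$ bounds. Passing to a subsequence, $J_j \to J'$ weakly in $W^{1,p}$ and uniformly. By Step 1, $J' = J_\infty$ a.e., hence everywhere. Next I use the elliptic PDE of \autoref{lmaPdeJa}, written with respect to $g_j$. It is a second-order equation whose coefficients converge smoothly in $j$, with quadratic first-order nonlinearity. Interior $L^p$ and Schauder estimates, iterated, give uniform $C^{k}(B')$ bounds for every $k$. Arzelà--Ascoli plus the identification of the limit then yield $J_j \to J_\infty$ in $C^\infty(B')$, completing the proof.
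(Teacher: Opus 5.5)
Your outline has the same structure as the paper's proof: a uniform $W^{1,p}$ bound on lifts $J_{a_j}$ over a simply connected, relatively compact $U$ obtained from \autoref{corHigherDer}, a bootstrap with the PDE of \autoref{lmaPdeJa}, and identification of the limit through convergence off $\mathcal{N}(a_\infty)$. Steps 1 and 3 are fine. The gap is Step 2, which you yourself call the main obstacle but leave as a plan to ``revisit and check constants.'' The difficulty you single out there is also not the one that has to be overcome. You do not need the zero-set analysis of \autoref{subsecA20ZeroLocus} uniformly in $j$, nor any uniform covering estimate. That analysis enters only qualitatively, for each fixed $j$, and it is already packaged in \autoref{thm2}. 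Each $\tilde{J}_{a_j}$ is smooth on $M$, so on $U$ it lifts to a smooth $J_{a_j}$. Then $a_j^{2,0}$ is smooth on $U$ and satisfies $\mathrm{d}^*_{A_j,g_j}a_j^{2,0}=0$ there (\autoref{lmaA20eq}), and $\mathcal{N}(a_j)$ is Lebesgue-null. A pointwise bound for $|\nabla J_{a_j}|$ on $U\setminus\mathcal{N}(a_j)$ therefore integrates directly, with no information about the zero sets beyond this.

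What you actually need, and do not supply, are two uniform estimates.

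First, uniform $C^n$ bounds on $a_j^{2,0}$. \autoref{corHigherDer} involves $(\nabla^{A})^n a^{2,0}$, not $(\nabla^{A})^n a$. Beyond first order (\autoref{corNablaAA20}) there is no evident control of the former by derivatives of $a$; the paper remarks on this after \autoref{corHigherDer}. So ``constants depending on $C^k$ norms of $(g,A,a)$'' needs an argument. The paper gets these bounds on $U'\subset\subset U$ from interior elliptic estimates for $\mathrm{d}^*_{A_j,g_j}a_j^{2,0}=0$, which becomes second order via \autoref{propWeitzen2plus}. The estimates use only $\sup|a_j^{2,0}|=\sup|a_j|/\sqrt{2}$ and the smooth convergence of $(g_j,A_j)$.

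Second, uniform integrability of negative powers of $|a_j|$. \autoref{lmaFMinus1n} is stated for every $f_1$ that is $C^m$-close to a fixed $f_0$. Apply it with $f_0=|a_\infty|_{g_\infty}^2$ and $f_1=|a_j|_{g_j}^2$, using the uniform vanishing-order bound you correctly identified. This gives $\{|a_j|^{-\alpha}\}_{j\geqslant j_0}$ bounded in $L^1(U')$ for some $\alpha>0$.

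Combining the two, \autoref{corHigherDer} yields $|\nabla J_{a_j}|\leqslant C+C|a_j|^{-1/n}$ on $U\setminus\mathcal{N}(a_j)$ with $C$ independent of $j$. Taking $n\geqslant p/\alpha$ gives a uniform $W^{1,p}(U')$ bound for every $p<\infty$. Your Step 3 then goes through as written, and that is exactly the paper's bootstrap.
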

	
	We shall see a ``converse'' to \autoref{thm2} in the form of \autoref{prop2n0plusHolm} and its remark, which justifies the assumption of a parascalar $(2,0)$-form $a \in \Omega^{2,+}(V)$ with $\displaystyle \mathrm{d}_A^* a = 0$ by showing that roughly speaking, given a compatible $\mathbb{Z}_2$-complex structure, such $a$ (for trivial $V,A$) exists locally in the form of a holomorphic section of a certain holomorphic line bundle.
	
	On the other hand, due to \autoref{propDDStar2plus}, when $N \geqslant 2$, the condition $\mathrm{d}_A a = 0$ is too strong to be a suitable assumption.
	
	The conclusion of \autoref{thm2} can be divided into two parts: the integ\EWzJhvText{rab}ility of $\tilde{J}_a$ on the complement of the zero locus of $a$, and the smooth extensibility of $\tilde{J}_a$ to the whole $M$. Proving the first part is easy and straightforward {\small (\autoref{lmaJaIntegrability})}. The major difficulty is about proving the second part.
	
	Note that in general, a compatible ($\mathbb{Z}_2$-)complex structure defined on a dense open set (not in the form of $\tilde{J}_a$ for some suitable $a$) does not necessarily extend smoothly to the whole $M$. For example, for~$(M,g) = (\mathbb{R}^4,g_{\mathbb{R}^{4\mathstrut}})$, the singular set of a compatible complex structure can be a point (in which case the compatible complex structure is related to a constant one by a conformal transformation), a round circle or a straight line \cite{OrthogonalCplxStrR4}.
	
	In addition, note that a general $(M,g,Q)$ with $\mathrm{Hol}(g) \subset \mathrm{Sp}(N)\hspace{0.0625em}\mathrm{Sp}(1)$ may admit no compatible $\mathbb{Z}_2$-complex structure at all, in which case our result simply implies that no $a$ satisfies the assumptions of \autoref{thm2}. In particular, it is known that a compact quaternion-Kähler manifold of dimension at least~$8$ that admits a compatible complex structure must be finitely covered by a hyper\EWzJhvText{käh}ler manifold, and any such compatible complex structure must be Kähler (\cite{CplxStrsOnQuarternionicManifolds}, see also \cite{CompatibleAlmostCplxStrsOnQKManifolds};{\small\space also brief\EWzJhvLimItCorr{0.05em}ly discussed after \autoref{propIntegrabEq}}).
	
	\paragraph*{Organisation of section} Before actually proving \autoref{thm2},~\ref{thm2Continuity}, we shall first do some preparation in \autoref{subsecWeitzen},~\ref{subsecCplxStr}. \hyperref[subsecWeitzen]{Subsection~\ref*{subsecWeitzen}} is about the operators $\displaystyle \mathrm{d},\mathrm{d}^*$ on $\Omega^{2,+}$ and the relevant Weitzenböck formula. \hyperref[subsecCplxStr]{Subsection~\ref*{subsecCplxStr}} is about some properties of $(M,g,Q)$ endowed with a compatible complex structure, notably the ``converse'' mentioned above. We shall start with the first step of the proof of \autoref{thm2} in \autoref{subsecIntegrability}, where it is shown without much difficulty that $\tilde{J}_a$ is integ\EWzJhvText{rab}le on the complement of the zero locus of $a$. The hard part of the proof mostly resides in \autoref{subsecHigherDerivatives}, where we shall do some pointwise calculation involving derivatives of arbitrarily high order to obtain a bound on $\bigl|\nabla \tilde{J}_a\bigr|$. Before actually using this bound to prove that $\tilde{J}_a$ extends to the whole $M$ in $W^{1,p}_{\mathrm{loc}}$, we shall first do some further technical work in \autoref{subsecA20ZeroLocus},~\ref{subsecFMinus1n}. The exact purposes of \autoref{subsecA20ZeroLocus},~\ref{subsecFMinus1n} are described at the end of \autoref{subsecHigherDerivatives}. In \autoref{subsecA20ZeroLocus},\EWzJhvFootnote{Somehow \autoref{subsecA20ZeroLocus} also requires smoothness, in addition to \EWzJhvPrefixFootRef{ftnSecSpNSp1OrgReqDer}below.} we shall work near the zero locus of $a$, also working around a subtle problem with \cite{ZeroSetsSolutionsElliptic}. \hyperref[subsecFMinus1n]{Subsection~\ref*{subsecFMinus1n}} is about some rather general properties of (smooth) functions, related to the Chebyshev polynomials. In \autoref{subsecFinishUp}, we shall finally prove the $W^{1,p}_{\mathrm{loc}}$ extension\EWzJhvInfixFootnote{\label{ftnSecSpNSp1OrgReqDer}This step (\autoref{lmaJaW1p}), which uses \autoref{corHigherDer}, requires the existence of derivatives of arbitrarily high order. The exact order of derivatives needed in this step depends on $N$ and the maximal order of the zeros of $a$.}, and then with the help of an elliptic PDE \ifEWzJhvPDF\linebreak\\*[-\baselineskip]\fi satisfied by $\tilde{J}_a$ due to its compatibility and integ\EWzJhvText{rab}ility, \autoref{thm2},~\ref{thm2Continuity}.
	
	\subsection{Operators $\displaystyle \mathrm{d},\mathrm{d}^*$ on \texorpdfstring{$\Omega^{2,+}$}{Ω²ʼ⁺}, Weitzenböck formula}\label{subsecWeitzen}
	
	On $\Omega^{2,+}(V)$, the operators $\displaystyle \mathrm{d}_A,\mathrm{d}_A^*$ are remar\EWzJhvText{kab}ly related in the following manner:
	\begin{EWzJhvProposition}\label{propDDStar2plus}
		Let\/ $(M,g,Q)$ be a smooth\/ $4N$\hspace*{-0.08em}-dimensional Riemannian manifold with holonomy\/ $\mathrm{Hol}(g) \subset \mathrm{Sp}(N)\hspace{0.0625em}\mathrm{Sp}(1)$. Let\/ $V \rightarrow M$ be a smooth real vector bundle with inner product. Let\/ $A$ be a connection on\/ $V$ that is compatible with the inner product on\/ $V$. For any\/~$a,a' \in \Omega^{2,+}(V)$, we have
		\[\bigl\langle \mathrm{d}_A a, \mathrm{d}_A a' \bigr\rangle = \bigl\langle \mathrm{d}_A^* a, \mathrm{d}_A^* a' \bigr\rangle + \EWzJhvMathFracVCentered{N-1}{N}\bigl\langle \nabla^A a, \nabla^A a' \bigr\rangle \EWzJhvDisplayMathPeriod.\]
	\end{EWzJhvProposition}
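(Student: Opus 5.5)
The identity is pointwise and algebraic, so I would reduce it to linear algebra on $\EWzJhvTpStar Tp M\otimes\Omega^{2,+}\rvert_p$. By the second condition on $Q$, $\nabla$ preserves $\Omega^{2,+}$, so $\nabla^A a$ is a section of $\EWzJhvTpStar{T}{}M\otimes\Omega^{2,+}(V)$. Both $\mathrm{d}_A a$ and $\mathrm{d}_A^* a$ are linear images of $\nabla^A a$:
\[\mathrm{d}_A a=\sum_i e^i\wedge\nabla^A_{e_i}a,\qquad \mathrm{d}_A^*a=-\sum_i\iota_{e_i}\nabla^A_{e_i}a,\]
with $(e_i)$ an orthonormal frame. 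Both sides of the claimed identity are symmetric bilinear in $(a,a')$, so by polarization it suffices to treat $a=a'$. Choosing an orthonormal basis of $V\rvert_p$, the $V$-factor splits off orthogonally, since $A$ is metric and the operators act only on the form part. It therefore suffices to show, for every scalar tensor $\alpha=\sum_{i,k}\alpha_{ik}\,e^i\otimes\omega^k\in \EWzJhvTpStar Tp M\otimes\Omega^{2,+}\rvert_p$, that
\[\Bigl|\sum_{i,k}\alpha_{ik}\,e^i\wedge\omega^k\Bigr|^2=\Bigl|\sum_{i,k}\alpha_{ik}\,\iota_{e_i}\omega^k\Bigr|^2+\EWzJhvMathFracVCentered{N-1}{N}\,|\alpha|^2 .\]
Here $(\omega^k)$ is an orthonormal basis as in \EWzJhvEqRef{eqOrthonBasis2plus}.

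For the left side I would use the standard formula for the inner product of wedges with $1$-forms,
\[\langle e^i\wedge\omega^k,\,e^j\wedge\omega^l\rangle=\delta_{ij}\langle\omega^k,\omega^l\rangle-\langle\iota_{e_j}\omega^k,\,\iota_{e_i}\omega^l\rangle .\]
This formula is consistent with the exterior-product norm convention of the paper's footnote. From \EWzJhvEqRef{eqOrthonBasis2plus} we have $\omega^k=\tfrac{1}{\sqrt{2N}}\,g(J_k\EWzJhvCDotInd,\EWzJhvCDotInd)$, where $J_1,J_2,J_3$ are the quaternionic triple of the first condition on $Q$. Hence $\iota_{e_j}\omega^k=\pm\tfrac{1}{\sqrt{2N}}(J_ke_j)^\flat$, with a uniform sign. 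Substituting gives $|\mathrm d\alpha|^2=|\alpha|^2-X'$ and $|\mathrm d^*\alpha|^2=X$, where
\[X=\tfrac{1}{2N}\sum\alpha_{ik}\alpha_{jl}\langle J_ke_i,J_le_j\rangle,\qquad X'=\tfrac{1}{2N}\sum\alpha_{ik}\alpha_{jl}\langle J_ke_j,J_le_i\rangle .\]
The claim becomes $X+X'=\tfrac1N|\alpha|^2$.

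To prove $X+X'=\tfrac1N|\alpha|^2$, I would use that the quadratic form is invariant under swapping $(i,k)\leftrightarrow(j,l)$. Each $J_k$ is an orthogonal, skew map, so $\langle J_ku,J_lw\rangle=-\langle u,J_kJ_lw\rangle$. Symmetrizing the summand of $X'$ over the swap, the $J_kJ_l$ terms pair up into the anticommutator $J_kJ_l+J_lJ_k=-2\delta_{kl}\,\mathrm{id}$. That produces a multiple of $\delta_{kl}\delta_{ij}$, that is, a multiple of $|\alpha|^2$.

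The main obstacle is the bookkeeping in this step. The summand of $X$ has the shape $\langle e_i,J_kJ_le_j\rangle$, while that of $X'$ has the shape $\langle e_j,J_kJ_le_i\rangle$, so the two must be combined carefully, not just naively added. One also has to get the signs and the factor $\tfrac{1}{2N}$ exactly right so that the constant comes out as $\tfrac1N$ and not, say, $\tfrac{2}{N}$ or $0$.

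As a cross-check I would run a representation-theoretic argument. Under $\mathrm{Sp}(N)\mathrm{Sp}(1)$ one has $T^*\otimes\Omega^{2,+}\cong EH\otimes S^2H=EH\oplus ES^3H$. The map $\mathrm d^*$ kills $ES^3H$, and by Schur's lemma $\mathrm d$ acts on each summand as a scalar multiple of an isometry. It then suffices to test two explicit elements: $\alpha=e^{1,1}\otimes\omega^1$, which has components in both summands, and a pure $EH$ element, which is the image of a $1$-form under the adjoint of $\mathrm d^*$. For these two elements the constants can be checked by hand from \EWzJhvEqRef{eqOrthonBasis2plus}.
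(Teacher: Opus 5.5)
Your proposal is correct and follows the same overall strategy as the paper. You use that $\nabla$ preserves $\Omega^{2,+}$ to reduce pointwise to linear algebra on $T^*_pM\otimes\Omega^{2,+}\rvert_p$. There you compare the Gram matrix of the $3$-forms $e^i\wedge\omega^k$ with that of the $1$-forms $\iota_{e_i}\omega^k$, and show that they differ by $\frac{N-1}{N}$ times the identity.

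The only difference is how that Gram identity is verified.
\begin{itemize}
\item The paper works by hand in the frame \EWzJhvEqRef{eqOrthonBasis2plus}. For $k=l$ it computes $|e^{1,1}\wedge\omega^1|^2=\frac{2N-1}{2N}$ against $|\iota_{e_{1,1}}\omega^1|^2=\frac{1}{2N}$. For $k\neq l$ it expands both cross terms (e.g.\ $\omega^1$ against $\omega^2$) in coordinates and observes that they coincide.
\item You obtain both cases at once from the adjunction identity $\langle u\wedge\beta,v\wedge\gamma\rangle=\langle u,v\rangle\langle\beta,\gamma\rangle-\langle\iota_{v^\sharp}\beta,\iota_{u^\sharp}\gamma\rangle$ together with the algebra of the $J_k$. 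This is frame-free, and it isolates what is actually used: the $J_k$ are orthogonal, square to $-\mathrm{id}$, and pairwise anticommute. The specific product rule $J_1J_2=J_3$ is never needed.
\end{itemize}

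The bookkeeping step you flag is in fact immediate and needs no symmetrisation. Since each $J_k$ is skew-adjoint, the adjoint of $J_kJ_l$ is $J_lJ_k$. Hence the summand of $X'$ equals $-\frac{1}{2N}\alpha_{ik}\alpha_{jl}\langle e_i,J_lJ_ke_j\rangle$, and
\[X+X'=-\frac{1}{2N}\sum_{i,j,k,l}\alpha_{ik}\alpha_{jl}\bigl\langle e_i,(J_kJ_l+J_lJ_k)e_j\bigr\rangle=\frac{1}{N}\sum_{i,k}\alpha_{ik}^2 .\]
The constant is exactly right. The $k=l$ terms contribute $\frac{1}{2N}|\alpha|^2$ from each of $X$ and $X'$, which reproduces the paper's $\frac{2N-1}{2N}$ versus $\frac{1}{2N}$. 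The $k\neq l$ terms of $X$ and $X'$ cancel, which is precisely the paper's coordinate check that the wedge and interior cross terms agree.

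Your representation-theoretic cross-check via $EH\oplus ES^3H$ is valid, but it is not needed.
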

	As a corollary, if $N \geqslant 2$ and $a \in \Omega^{2,+}(V)$ satisfies $\mathrm{d}_A a =0$, then $\nabla^A a=0$, which was already proved in \cite{CompatibleAlmostCplxStrsOnQKManifolds} (for~$a \in \Omega^{2,+}$).
	\begin{proof}
		Following the expressions~\EWzJhvEqRef{eqOrthonBasis2plus}, we easily compute $\bigl|e^{1,1} \wedge \omega^1\bigr|\ifEWzJhvPDF{\rule{0pt}{1.88ex}}\fi^2=\frac{2N-1}{2N}$ and also $\bigl\langle e^{1,1} \wedge \omega^1,\: e^{j,k} \wedge \omega^1 \bigr\rangle=0$ for~$(j,k)\neq(1,1)$. Therefore, for any~$\alpha,\beta \in \EWzJhvTpStar Tp M$ and any~$j\in\{1,2,3\}$, we have \ifEWzJhvPDF\vspace{1pt}\fi $\bigl\langle \alpha \wedge \omega^j,\: \beta \wedge \omega^j\bigr\rangle = \frac{2N-1}{2N}\langle \alpha, \beta \rangle = (2N-1) \bigl\langle \iota_{\alpha^\sharp} \omega^j, \iota_{\beta^\sharp} \omega^j\bigr\rangle$, where $\sharp$ is the musical isomorphism. For~$a_{j,k},b_{j,k} \in \mathbb{R}$, we compute
		\begin{align*}
			&\Bigl\langle \sum_{j=1}^N\sum_{k=1}^4 a_{j,k}\,e^{j,k} \wedge \omega^1,\: \sum_{j=1}^N\sum_{k=1}^4 b_{j,k}\,e^{j,k} \wedge \omega^2 \Bigr\rangle\\ ={}& \EWzJhvMathFracVCentered{1}{2N}\sum_{j=1}^N \bigl(a_{j,1}b_{j,2}-a_{j,2}b_{j,1}+a_{j,3}b_{j,4}-a_{j,4}b_{j,3}\bigr)\\ ={}& \Bigl\langle \sum_{j=1}^N\sum_{k=1}^4 a_{j,k}\,\iota_{e_{j,k}} \omega^1,\: \sum_{j=1}^N\sum_{k=1}^4 b_{j,k}\,\iota_{e_{j,k}} \omega^2 \Bigr\rangle \EWzJhvDisplayMathPeriod.
		\end{align*}
		Then we have for~$\alpha_j,\beta_j \in \EWzJhvTpStar Tp M$,
		\[\Bigl\langle \sum_{j=1}^3 \alpha_j \wedge \omega^j,\: \sum_{j=1}^3 \beta_j \wedge \omega^j \Bigr\rangle = \Bigl\langle \sum_{j=1}^3 \iota_{\alpha_j^\sharp} \omega^j,\: \sum_{j=1}^3 \iota_{\beta_j^\sharp} \omega^j \Bigr\rangle + \EWzJhvMathFracVCentered{N-1}{N}\sum_{j=1}^3\langle \alpha_j, \beta_j \rangle \EWzJhvDisplayMathPeriod.\]
		
		For~$a \in \Omega^{2,+}(V)$, we have locally
		\[\mathrm{d}_A a = \sum_{j=1}^{4N} e^j \wedge \nabla_{e_j}^A a = \sum_{j=1}^{4N}\sum_{k=1}^3{}\bigl\langle \nabla_{e_j}^A a, \omega^k \bigr\rangle \otimes e^j \wedge \omega^k\]
		where $(e_j)_{j=1}^{4N}$ is any local orthonormal basis of the tangent bundle and $(e^j)_{j=1}^{4N}$ is its dual. Note locally $\nabla_{e_j}^A a \in \Omega^{2,+}(V)$. For~$a,a' \in \Omega^{2,+}(V)$, we then have locally
		\begin{align*}
			\bigl\langle \mathrm{d}_A a, \mathrm{d}_A a' \bigr\rangle ={}& \Bigl\langle \sum_{k=1}^3\sum_{j=1}^{4N}{}\bigl\langle \nabla_{e_j}^A a, \omega^k \bigr\rangle \otimes \iota_{e_j} \omega^k,\: \sum_{k=1}^3\sum_{j=1}^{4N}{}\bigl\langle \nabla_{e_j}^A a', \omega^k \bigr\rangle \otimes \iota_{e_j} \omega^k \Bigr\rangle \\&+ \EWzJhvMathFracVCentered{N-1}{N}\sum_{k=1}^3 \Bigl\langle \sum_{j=1}^{4N}{}\bigl\langle \nabla_{e_j}^A a, \omega^k \bigr\rangle \otimes e^j,\: \sum_{j=1}^{4N}{}\bigl\langle \nabla_{e_j}^A a', \omega^k \bigr\rangle \otimes e^j \Bigr\rangle \EWzJhvDisplayMathPeriod.
		\end{align*}
		For $\displaystyle \mathrm{d}_A^*$, we have locally
		\begin{equation}\label{eqDAStar}
			\mathrm{d}_A^* a = -\sum_{j=1}^{4N} \iota_{e_j} \nabla_{e_j}^A a = -\sum_{j=1}^{4N}\sum_{k=1}^3{}\bigl\langle \nabla_{e_j}^A a, \omega^k \bigr\rangle \otimes \iota_{e_j} \omega^k \EWzJhvDisplayMathPeriod.
		\end{equation}
		Combine the two equations above to get the conclusion.
	\end{proof}
	
	\noindent\emph{Notation of\/\hspace{-0.08em} $F_A,R,\EWzJhvCDotInd\circledast\EWzJhvCDotInd$\hspace*{0.1em}:}\enskip In the following, we denote the curvature of $A$ by $F_A$, and the Riemann curvature tensor by $R$. For convenience, we use $\EWzJhvCDotInd\circledast\EWzJhvCDotInd$ to denote an unspecified sum of contractions, raising and lowering indices, and multiplications with scalar constants of the tensor product of two tensors. Notably, $\EWzJhvlvertRaised X \mathbin{\EWzJhvMathRaiseABit\circledast} Y\EWzJhvrvertRaised \mathrel{\EWzJhvMathRaiseABit\leqslant} C\hspace{0.0625em}\EWzJhvlvertRaised X\EWzJhvrvertRaised\EWzJhvlvertRaised Y\EWzJhvrvertRaised$, where $C$ is some unspecified real constant.
	
	\begin{EWzJhvProposition}[Weitzenböck formula on $\Omega^{2,+}(V)$]\label{propWeitzen2plus}
		Let\/ $(M,g,Q)$ be a smooth\/ $4N$\hspace*{-0.08em}-dimensional Riemannian manifold with holonomy\/ $\mathrm{Hol}(g) \subset \mathrm{Sp}(N)\hspace{0.0625em}\mathrm{Sp}(1)$. Let\/ $V \rightarrow M$ be a smooth real vector bundle with inner product. Let\/ $A$ be a connection on\/ $V$ that is compatible with the inner product on\/ $V$. For\/~$a \in \Omega^{2,+}(V)$,
		\[\bigl(\mathrm{d}_A\mathrm{d}_A^* a\bigr)^{\mskip-2mu\relax 2,+}=\EWzJhvMathFracVCentered{1}{2N}\bigl(\nabla^A\bigr)^{\mskip-5mu\relax *} \nabla^A a + (F_A \oplus R) \circledast a\]
		where\/ $(\EWzJhvCDotInd)^{2,+}$ denotes the orthogonal projection onto\/ $\Omega^{2,+}(V)$, and the operator\EWzJhvFootnote{\label{ftnPropWeitzen2plus}The operator $\EWzJhvCDotInd\circledast\EWzJhvCDotInd$ can be written out explicitly (see e.g.\ \cite[Theorem~B.2.1]{MaresPhD}). In particular, the part $R \circledast a$ of $(F_A \oplus R) \circledast a$ can actually be written as $R^+ \circledast a$ where $R^+$ is the orthogonal projection of $R$ onto $\operatorname{Sym}^2(\Omega^{2,+})$. In the case of $N \geqslant 2$, to show this, we can use \cite[Theorem~3.1]{QKManifoldsSalamon} (see also the proof of \autoref{propCurvQK1304}), which even implies $R \circledast a$ is actually proportional to $ta$ where $t$ is the scalar curvature.} $\EWzJhvCDotInd\circledast\EWzJhvCDotInd$ only depends on\/ $N$.
	\end{EWzJhvProposition}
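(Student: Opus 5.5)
We compute $\mathrm{d}_A\mathrm{d}_A^* a$ in a local orthonormal frame $(e_j)_{j=1}^{4N}$ that is synchronous at the point $p$, i.e.\ $\nabla e_j=0$ at $p$. By \EWzJhvEqRef{eqDAStar}, $\mathrm{d}_A^* a = -\sum_j \iota_{e_j}\nabla^A_{e_j} a$, so at $p$
\[\mathrm{d}_A\mathrm{d}_A^* a = -\sum_{i,j} e^i\wedge \iota_{e_j}\nabla^A_{e_i}\nabla^A_{e_j} a.\]
We split $\nabla^A_{e_i}\nabla^A_{e_j}$ into its symmetric and antisymmetric parts in $(i,j)$. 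The antisymmetric part is $\tfrac12$ times the curvature of the connection on $V\otimes\Omega^{2,+}$, acting on $a$. It contributes a term of the form $(F_A\oplus R)\circledast a$ whose coefficients are universal contractions depending only on $N$. This holds because $\Omega^{2,+}$ is a parallel subbundle: by condition~\ref{eiQCond2}, the curvature of the induced connection on $\Omega^{2,+}$ is a contraction of $R$. The symmetric part gives $-\sum_{i,j}e^i\wedge\iota_{e_j}\,\mathrm{Sym}\bigl(\nabla^2 a\bigr)_{ij}$, which is a purely algebraic linear map applied to the symmetric Hessian. It remains to project this onto $\Omega^{2,+}$ and identify the result.

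The key step is a pointwise algebraic identity. For $\omega,\omega'\in\Omega^{2,+}\rvert_p$ and $i,j$, the expression $\bigl\langle e^i\wedge\iota_{e_j}\omega,\ \omega'\bigr\rangle+(i\leftrightarrow j)$ should equal $\frac{1}{2N}\cdot 2\,\delta_{ij}\langle\omega,\omega'\rangle$ plus a term antisymmetric in $(\omega,\omega')$. We verify this on the explicit basis \EWzJhvEqRef{eqOrthonBasis2plus}. The relevant quantity is $\langle \iota_{e_j}\omega^k,\iota_{e_i}\omega^l\rangle$. Writing $\omega^k=g(J_k\cdot,\cdot)/\sqrt{2N}$ (up to normalisation), this becomes $\frac{1}{2N}\langle J_k e_j,J_l e_i\rangle$. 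Its symmetrisation in $(i,j)$ equals $\frac{1}{2N}\cdot\frac12\langle (J_l^{*}J_k+J_k^{*}J_l)e_j,e_i\rangle$. Using the quaternionic relations in condition~\ref{eiQCond1}, $J_l^*J_k+J_k^*J_l=2\delta_{kl}\,\mathrm{id}$, so the symmetrisation is $\frac{1}{2N}\delta_{kl}\delta_{ij}$. This is the same kind of computation as in the proof of \autoref{propDDStar2plus}, where $\langle\iota_{\alpha^\sharp}\omega^j,\iota_{\beta^\sharp}\omega^j\rangle=\frac{1}{2N}\langle\alpha,\beta\rangle$ and the mixed terms are antisymmetric.

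Feeding this in, the symmetric Hessian contracted against $\delta_{ij}$ gives $-\frac{1}{2N}\sum_i\nabla^A_{e_i}\nabla^A_{e_i}a$. At $p$ this is $\frac{1}{2N}(\nabla^A)^*\nabla^A a$. The parts antisymmetric in $(k,l)$ pair the symmetric Hessian with an antisymmetric structure. I would check that these cancel, or else reduce to curvature terms via the commutator $[\nabla_i,\nabla_j]$, and this bookkeeping is the main obstacle. A cleaner route that avoids it is to prove the identity in weak form by polarisation of \autoref{propDDStar2plus}. First compute $\langle \mathrm{d}_A^*a,\mathrm{d}_A^*a'\rangle$ from \EWzJhvEqRef{eqDAStar}. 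Then integrate by parts against compactly supported $a'\in\Omega^{2,+}(V)$: this gives $\int\langle(\mathrm{d}_A\mathrm{d}_A^*a)^{2,+},a'\rangle=\int\langle \mathrm{d}_A^*a,\mathrm{d}_A^*a'\rangle$. Express the right side as $\frac{1}{2N}\int\langle\nabla^A a,\nabla^A a'\rangle$ plus a term antisymmetric in the indices of $\nabla_{e_i} a$ and $\nabla_{e_j} a'$. After a further integration by parts, that antisymmetric term becomes a commutator $[\nabla^A_{e_i},\nabla^A_{e_j}]$, i.e.\ a term $(F_A\oplus R)\circledast a$. Since $a'$ is arbitrary, the pointwise identity follows, and the constants in $\circledast$ visibly depend only on $N$ through the universal basis \EWzJhvEqRef{eqOrthonBasis2plus}.
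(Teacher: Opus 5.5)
Your proof is correct, but it is organised differently from the paper's. The paper never expands $\mathrm{d}_A\mathrm{d}_A^*a$. It quotes the general Weitzenböck formula for $V$-valued $2$-forms in integrated form, $\int\langle\nabla^Aa,\nabla^Aa'\rangle=\int\bigl(\langle\mathrm{d}_Aa,\mathrm{d}_Aa'\rangle+\langle\mathrm{d}_A^*a,\mathrm{d}_A^*a'\rangle\bigr)+\int(F_A\oplus R)\circledast a\circledast a'$. It then uses the \emph{statement} of \autoref{propDDStar2plus} to replace $\langle\mathrm{d}_Aa,\mathrm{d}_Aa'\rangle$ by $\langle\mathrm{d}_A^*a,\mathrm{d}_A^*a'\rangle+\frac{N-1}{N}\langle\nabla^Aa,\nabla^Aa'\rangle$. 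This leaves $\int\langle\mathrm{d}_A^*a,\mathrm{d}_A^*a'\rangle=\frac{1}{2N}\int\langle\nabla^Aa,\nabla^Aa'\rangle+\text{curvature}$ in two lines.

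You work only with $\mathrm{d}_A^*$. The algebraic fact that the $(i,j)$-symmetrisation of $\langle\iota_{e_j}\omega^k,\iota_{e_i}\omega^l\rangle$ equals $\frac{1}{2N}\delta_{kl}\delta_{ij}$ turns the symmetric Hessian into $\frac{1}{2N}(\nabla^A)^*\nabla^Aa$. The commutator on the parallel subbundle $V\otimes\Omega^{2,+}$ then supplies the curvature term. Your route is self-contained: it needs neither the external Weitzenböck formula nor the $\mathrm{d}_A$-half of \autoref{propDDStar2plus}, and it shows explicitly where $(F_A\oplus R)\circledast a$ comes from. The paper's version is shorter and recycles a proposition it needs anyway.

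The ``main obstacle'' you flag does not exist. For $k\neq l$ the operator $J_lJ_k=\pm J_m$ is skew, so $\langle J_ke_j,J_le_i\rangle$ is antisymmetric in $(i,j)$. That is exactly why your symmetrisation came out as $\frac{1}{2N}\delta_{kl}\delta_{ij}$ with no remainder. These pieces therefore vanish identically against the symmetric Hessian, and likewise the ``term antisymmetric in $(\omega,\omega')$'' in your key identity is zero. Your first route is thus already complete.

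The weak-form route is just its integrated version. There, integrating the antisymmetric part by parts uses that the bilinear form $(\xi\otimes\omega,\eta\otimes\omega')\mapsto\langle\iota_{\xi^\sharp}\omega,\iota_{\eta^\sharp}\omega'\rangle-\frac{1}{2N}\langle\xi,\eta\rangle\langle\omega,\omega'\rangle$ on $T^*M\otimes\Omega^{2,+}$ is parallel, which holds because it is built from $g$ and the parallel bundle $\Omega^{2,+}$. Also, what you actually invoke there is the computation inside the proof of \autoref{propDDStar2plus}, not its statement.
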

	\begin{proof}
		The Weitzenböck formula for bundle-valued differential forms (see e.g.\ \cite[Theorem~B.2.1]{MaresPhD}) implies for any~$a,a' \in \Omega^{2,+}(V)$ with $a'$ compactly supported,
		\[\int_M{}\Bigl(\bigl\langle \nabla^A a, \nabla^A a' \bigr\rangle - \bigl\langle \mathrm{d}_A a, \mathrm{d}_A a' \bigr\rangle - \bigl\langle \mathrm{d}_A^* a, \mathrm{d}_A^* a' \bigr\rangle + (F_A \oplus R) \circledast a \circledast a'\Bigr)\,\mathrm{d}\mathrm{Vol}_g = 0\]
		where the operators $\EWzJhvCDotInd\circledast\EWzJhvCDotInd$ only depend on $N$. This together with \autoref{propDDStar2plus} implies the conclusion.
	\end{proof}
	
	See \autoref{propIntegrabEq} for the context of the following proposition. In particular, it is basically part of \cite[Remark~2.6~(1)]{CompatibleAlmostCplxStrsOnQKManifolds}. We put it here as we prefer to prove it using the notation of this subsection.
	
	\begin{EWzJhvProposition}\label{propIntegrabIneq}
		Let\/ \ifEWzJhvPDF\vspace{0.5pt}\fi $(M,g,Q)$ be a smooth\/ $4N$\hspace*{-0.08em}-dimensional Riemannian manifold with holonomy\/ $\mathrm{Hol}(g) \subset \mathrm{Sp}(N)\hspace{0.0625em}\mathrm{Sp}(1)$. Let\/ $\omega \in \Omega^{2,+}$. If\/\hspace{-0.08em} $\lvert\omega\rvert$ is a constant function over\/ \ifEWzJhvPDF\vspace{1pt}\fi $M$, \ifEWzJhvPDF\linebreak\fi then\/ $\displaystyle N\hspace{0.0625em}\bigl|\mathrm{d}^* \omega\bigr|\ifEWzJhvPDF{\rule{0pt}{1.88ex}}\fi^2 \leqslant \bigl|\nabla \omega\bigr|\ifEWzJhvPDF{\rule{0pt}{1.88ex}}\fi^2$.\vspace{-1pt}
	\end{EWzJhvProposition}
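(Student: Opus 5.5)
The plan is a pointwise linear-algebra computation in the notation of the proof of \autoref{propDDStar2plus}. Fix~$p \in M$. If $\omega\rvert_p = 0$, the constant $\lvert\omega\rvert$ is zero, so $\omega \equiv 0$ and there is nothing to prove. Otherwise choose an oriented orthonormal basis $(\omega^1\,\,\omega^2\,\,\omega^3)$ of $\Omega^{2,+}\rvert_p$ with $\omega\rvert_p = \lvert\omega\rvert\,\omega^1$. Express it as in \EWzJhvEqRef{eqOrthonBasis2plus} with respect to a basis $(e^{j,k})$, and let $(e_j)_{j=1}^{4N}$ denote the corresponding orthonormal frame of $T_pM$.

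By the second condition on $Q$, $\nabla_{e_j}\omega \in \Omega^{2,+}\rvert_p$. Since $\lvert\omega\rvert$ is constant, $\langle \nabla_{e_j}\omega, \omega\rangle = \frac12 e_j\lvert\omega\rvert^2 = 0$. Hence $\nabla_{e_j}\omega = b_j\,\omega^2 + c_j\,\omega^3$ for real numbers $b_j, c_j$. Set $\beta := \sum_j b_j e^j$ and $\gamma := \sum_j c_j e^j$. Then
\[\lvert\nabla\omega\rvert^2 = \lvert\beta\rvert^2 + \lvert\gamma\rvert^2 ,\]
and by \EWzJhvEqRef{eqDAStar},
\[\mathrm{d}^*\omega = -\iota_{\beta^\sharp}\omega^2 - \iota_{\gamma^\sharp}\omega^3 .\]

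Next I use the pointwise identities established in the proof of \autoref{propDDStar2plus}: $\bigl\langle \iota_{\alpha^\sharp}\omega^k, \iota_{\alpha'^\sharp}\omega^k\bigr\rangle = \frac{1}{2N}\langle\alpha,\alpha'\rangle$. (This follows from $\frac{2N-1}{2N}\langle\alpha,\alpha'\rangle = (2N-1)\langle\iota\omega^k,\iota\omega^k\rangle$ there, or is checked directly from \EWzJhvEqRef{eqOrthonBasis2plus}.) In particular $\lvert\iota_{\beta^\sharp}\omega^2\rvert = \lvert\beta\rvert/\sqrt{2N}$ and $\lvert\iota_{\gamma^\sharp}\omega^3\rvert = \lvert\gamma\rvert/\sqrt{2N}$. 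Expanding $\lvert\mathrm{d}^*\omega\rvert^2$ and bounding the cross term by Cauchy--Schwarz gives
\[\lvert\mathrm{d}^*\omega\rvert^2 \leqslant \tfrac{1}{2N}\bigl(\lvert\beta\rvert + \lvert\gamma\rvert\bigr)^2 \leqslant \tfrac{1}{N}\bigl(\lvert\beta\rvert^2 + \lvert\gamma\rvert^2\bigr) = \tfrac1N\lvert\nabla\omega\rvert^2 ,\]
which is the claim.

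There is no real obstacle here. The only points needing care are confirming the normalisation $\lvert\iota_{e}\omega^k\rvert^2 = \frac{1}{2N}$ for a unit vector~$e$ under the paper's inner-product conventions (footnote~\ref{ftnInnPdExtPdTenPd}), and confirming that $\nabla_X\omega$ stays in $\Omega^{2,+}$. I would verify the former directly from \EWzJhvEqRef{eqOrthonBasis2plus}: $\iota_{e_{1,1}}\omega^1 = \frac{1}{\sqrt{2N}}e^{1,4}$. One could sharpen the cross term using the explicit antisymmetric pairing computed in \autoref{propDDStar2plus}, but Cauchy--Schwarz already suffices.
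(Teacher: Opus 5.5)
Your proposal is correct and is essentially the paper's own proof. The paper also normalises $\omega^1=\omega/\lvert\omega\rvert$ and drops the $\omega^1$-component of $\nabla_{e_j}\omega$ because $\lvert\omega\rvert$ is constant. It then uses $\bigl\lvert\sum_j\langle\nabla_{e_j}\omega,\omega^k\rangle\,\iota_{e_j}\omega^k\bigr\rvert^2=\frac{1}{2N}\sum_j\langle\nabla_{e_j}\omega,\omega^k\rangle^2$ for $k=2,3$ and concludes with $\lvert x+y\rvert^2\leqslant 2\lvert x\rvert^2+2\lvert y\rvert^2$, which is exactly your Cauchy--Schwarz step followed by $(\lvert\beta\rvert+\lvert\gamma\rvert)^2\leqslant 2(\lvert\beta\rvert^2+\lvert\gamma\rvert^2)$.
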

	\begin{proof}
		\linespread{1.11}\selectfont
		
		WLOG, we assume $\lvert\omega\rvert=1$. Let $\omega^1 := \omega$. Locally, we choose $\omega^2,\omega^3$ such that $(\omega^1\,\,\omega^2\,\,\omega^3)$ forms a local orthonormal basis of\/ $\Omega^{2,+}$. For $\omega=\omega^1$, as $\nabla_{\!e_j}\langle \omega^1, \omega^1 \rangle=0$, $\bigl\langle \nabla_{\!e_j}\omega, \omega^1 \bigr\rangle=0$, so \EWzJhvEqRef{eqDAStar} becomes ${\displaystyle \mathrm{d}^* \omega} = -\sum_{k=2}^3\sum_{j=1}^{4N}{}\bigl\langle \nabla_{\!e_j} \omega, \omega^k \bigr\rangle\,\iota_{e_j} \omega^k$. From \EWzJhvEqRef{eqOrthonBasis2plus} we can see $\bigl|\sum_{j=1}^{4N}{}\bigl\langle \nabla_{\!e_j} \omega, \omega^k \bigr\rangle\,\iota_{e_j} \omega^k\bigr|\ifEWzJhvPDF{\rule{0pt}{1.88ex}}\fi^2 = \frac{1}{2N}\sum_{j=1}^{4N} \bigl\langle \nabla_{\!e_j} \omega, \omega^k \bigr\rangle\ifEWzJhvPDF{\rule{0pt}{1.88ex}}\fi^{\mskip-1mu\relax 2}$. Therefore,
		\[\bigl|\mathrm{d}^* \omega\bigr|^2 \leqslant 2\hspace{0.0625em}\sum_{k=2}^3\left|\sum_{j=1}^{4N}{}\bigl\langle \nabla_{\!e_j} \omega, \omega^k \bigr\rangle\,\iota_{e_j} \omega^k\right|^2 = \EWzJhvMathFracVCentered{1}{N}\sum_{k=2}^3\sum_{j=1}^{4N} \bigl\langle \nabla_{\!e_j} \omega, \omega^k \bigr\rangle^{\mskip-1mu\relax 2} = \EWzJhvMathFracVCentered{1}{N}\hspace{0.0625em}\bigl|\nabla \omega\bigr|^2 \EWzJhvDisplayMathPeriod.\qedhere\]\par
	\end{proof}
	
	\subsection{Compatible complex structure, the bundles \texorpdfstring{$\Omega^{2n,0,+}$}{Ω²ⁿʼ⁰ʼ⁺}}\label{subsecCplxStr}
	
	Let $(M,g,Q)$ be a smooth Riemannian manifold with $\mathrm{Hol}(g) \subset \mathrm{Sp}(N)\hspace{0.0625em}\mathrm{Sp}(1)$. Let $J$ be a compatible complex structure on $(M,g,Q)$, that is, an integ\EWzJhvText{rab}le (almost) complex structure on $M$ that is also a section of\/ $Q$. Let $\Omega^{2,0,+} := \bigl(\Omega^{2,+}\otimes_{\mathbb{R}}\mathbb{C}\bigr) \cap \Omega^{2,0}$. In this subsection, after a brief introduction, we shall prove some simple results about $(M,g,Q,J)$, $\Omega^{2,0,+}$ (and its generalisations $\Omega^{2n,0,+}$), and some $\bar{\partial}$-operators. In particular, we shall see that the operator $\displaystyle \mathrm{d}^*$ on $\Omega^{2,0,+}$ can be seen as the $\bar{\partial}$-operator of some holomorphic structure on $\Omega^{2,0,+}$ (\autoref{prop2n0plusHolm}).
	
	\ifEWzJhvPageBreak\pagebreak\fi
	
	\bigskip\vspace{-1pt}
	
	\begingroup
	\linespread{1.08}\selectfont
	\EWzJhvParagraphNoBeforeskip*{Notation of $e_{j,k}, \bar{e}_{j,k}, e^{j,k}, \bar{e}^{j,k}, \langle\EWzJhvCDotInd,\EWzJhvCDotInd\rangle$:} Let $(e_{j,k})_{1 \leqslant j \leqslant N,1 \leqslant k \leqslant 2}$ be local smooth sections of\/ $T^{1,0}\mskip-2mu\relax M$ satisfying $\langle e_{j,k}, \bar{e}_{l,m} \rangle = \delta_{jl}\delta_{km}$, where $\bar{e}_{l,m} \in \Gamma(T^{0,1}\mskip-2mu\relax M)$ is the complex conjugate, and $\langle\EWzJhvCDotInd,\EWzJhvCDotInd\rangle \in \Gamma\bigl({\displaystyle (\EWzJhvTpStar T{\hphantom{p}} M)^{\otimes\EWzJhvMathVCenter 2}}\otimes_{\mathbb{R}}\mathbb{C}\bigr)$ is \emph{the\/ $\mathbb{C}$-linear extension} of the inner product on $TM$. Let $e^{j,k} \in \Omega^{1,0}$ and $\bar{e}^{j,k} \in \Omega^{0,1}$ be the dual covector fields. Note that $(e_{j,k})^\flat=\bar{e}^{j,k}$ and $(\bar{e}_{j,k})^\flat=e^{j,k}$, where the musical isomorphisms are also considered to be $\mathbb{C}$-linear. \ifEWzJhvPDF\linebreak\fi In addition, we require the choice of $(e_{j,k})_{1 \leqslant j \leqslant N,1 \leqslant k \leqslant 2}$ to make $\sum_{j=1}^N e^{j,1} \wedge e^{j,2}$ a local section of the complex line bundle\EWzJhvFootnote{For us, a ``complex line bundle'' is just a smooth rank-$1$ complex vector bundle. It is not endowed with a holomorphic structure, otherwise we would call it a \emph{holomorphic} line bundle.} $\Omega^{2,0,+}$. To see why we can make this requirement, just note the local description \EWzJhvEqRef{eqOrthonBasis2plus} of\/ $\Omega^{2,+}$ and that the complex structure $J$ is compatible.\par
	\endgroup
	
	\bigskip
	
	Now that the complex line bundle $\Omega^{2,0,+}$ can be described through the local section $\sum_{j=1}^N e^{j,1} \wedge e^{j,2}$, we easily see:\vspace{-\parskip}
	\begin{itemize}[topsep=1ex plus 0.25ex minus 0.25ex, itemsep=1ex plus 0.25ex minus 0.25ex, parsep=0pt]
		\item When $N=1$, $\Omega^{2,0,+}$ is simply $\Omega^{2,0}$.
		\item Through the splitting into real and imaginary parts, any~$a \in \Omega^{2,0,+}$ is a parascalar $(2,0)$-form. Its induced $\mathbb{Z}_2$-almost complex structure is simply given by $\pm J$.
	\end{itemize}
	
	\subsubsection*{Covariant derivatives on $\Omega^{s,t}$}
	
	\begin{EWzJhvProposition}
		$(\nabla_{\!e_{j,k}} e_{l,m})^{0,1} = \overline{(\nabla_{\!\bar{e}_{j,k}} \bar{e}_{l,m})^{1,0}} = 0$, where\/ $(\EWzJhvCDotInd)^{s,t}$ denotes the\/ $(s,t)$-part.
	\end{EWzJhvProposition}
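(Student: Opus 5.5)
The plan is to show that the $(0,1)$-part of $\nabla_{\!Z} W$ vanishes for arbitrary local sections $Z,W$ of $T^{1,0}M$. This uses only two facts: $J$ is integrable, and $J$ is compatible with $g$ (so $T^{1,0}M$ is isotropic for the $\mathbb{C}$-bilinear extension $\langle\EWzJhvCDotInd,\EWzJhvCDotInd\rangle$). The second equality, $\overline{(\nabla_{\!\bar{e}_{j,k}} \bar{e}_{l,m})^{1,0}} = (\nabla_{\!e_{j,k}} e_{l,m})^{0,1}$, is immediate: the Levi-Civita connection is real, i.e.\ commutes with complex conjugation, and conjugation swaps $(1,0)$- and $(0,1)$-parts. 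So only the first vanishing needs an argument.

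\textbf{Step 1: detecting the $(0,1)$-part by pairing.} Because $J$ is orthogonal, $\langle T^{1,0}, T^{1,0}\rangle = 0$ and $\langle\EWzJhvCDotInd,\EWzJhvCDotInd\rangle$ pairs $T^{0,1}$ nondegenerately with $T^{1,0}$. Hence a complex vector $X$ satisfies $X^{0,1}=0$ if and only if $\langle X, U\rangle = 0$ for all $U \in T^{1,0}$. I will therefore study the trilinear expression
\[S(Z,W,U) := \langle \nabla_{\!Z} W, U\rangle \quad\text{for } Z,W,U \in \Gamma(T^{1,0}M).\]
This is tensorial in $U$ and $Z$, and also in $W$ modulo a term $Z(f)\langle W,U\rangle = 0$. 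So $S$ is a genuine tensor on $T^{1,0}M$.

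\textbf{Step 2: two symmetries of $S$.}
\begin{itemize}
\item \emph{Antisymmetry in $(W,U)$.} By metric compatibility of $\nabla$ (extended $\mathbb{C}$-bilinearly), $0 = Z\langle W,U\rangle = S(Z,W,U) + S(Z,U,W)$.
\item \emph{Symmetry in $(Z,W)$.} By torsion-freeness, $\nabla_{\!Z} W - \nabla_{\!W} Z = [Z,W]$. Integrability of $J$ (Newlander--Nirenberg, or simply vanishing of the Nijenhuis tensor) gives $[Z,W] \in \Gamma(T^{1,0}M)$. Hence $S(Z,W,U) - S(W,Z,U) = \langle [Z,W], U\rangle = 0$.
\end{itemize}

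\textbf{Step 3: conclusion.} A tensor that is symmetric in its first two slots and antisymmetric in its last two vanishes. Cycling the two symmetries,
\[S(Z,W,U) = S(W,Z,U) = -S(W,U,Z) = -S(U,W,Z) = S(U,Z,W) = S(Z,U,W) = -S(Z,W,U),\]
so $S = 0$. By Step 1, $(\nabla_{\!e_{j,k}} e_{l,m})^{0,1} = 0$.

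No step is a real obstacle. The only point needing care is Step 1: one must be using the $\mathbb{C}$-bilinear (not Hermitian) extension of $g$ and observe that orthogonality of $J$ makes $T^{1,0}$ isotropic. Note also that neither the Kähler condition nor the $\mathrm{Sp}(N)\hspace{0.0625em}\mathrm{Sp}(1)$ holonomy is used; only integrability and compatibility of $J$ with $g$ enter.
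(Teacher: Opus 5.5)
Your argument is correct and is essentially the paper's. The paper writes $\langle\nabla_{e_{j,k}}e_{l,m},e_{p,q}\rangle$ via the Koszul formula; its derivative terms drop out because $T^{1,0}M$ is isotropic, leaving $\frac12$ times a combination of terms $\langle[\cdot,\cdot],\cdot\rangle$ in $(1,0)$-fields, each of which vanishes because $T^{1,0}M$ is closed under brackets. Your cycling of $S$, using symmetry in the first pair of slots and antisymmetry in the last pair, is exactly the computation that produces that Koszul identity, so both routes rest on the same two facts (isotropy of $T^{1,0}M$ from compatibility, involutivity from integrability), with the second equality handled by complex conjugation in both cases.
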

	\begin{proof}
		This is a direct consequence of
		\[\bigl\langle\nabla_{\!e_{j,k}} e_{l,m}, e_{p,q}\bigr\rangle = \EWzJhvMathFracVCentered{1}{2}\Bigl(\bigl\langle[e_{j,k},e_{l,m}],e_{p,q}\bigr\rangle + \bigl\langle[e_{p,q},e_{j,k}],e_{l,m}\bigr\rangle - \bigl\langle[e_{l,m},e_{p,q}],e_{j,k}\bigr\rangle\Bigr)\]
		and the fact that because of the integ\EWzJhvText{rab}ility of $J$, the Lie bracket of two $(1,0)$-vector fields is still of type~$(1,0)$.
	\end{proof}
	
	Using the musical isomorphisms, we get
	\begin{EWzJhvCorollary}\label{corSTFormDer}
		For any\/ $(1,0)$-vector field\/ $X$ and any\/~$b \in \Omega^{s,t}$, the covariant derivative\/ $\nabla_{\!X} b$ which a priori is a section of\/\hspace{0em plus -0.08em} $\Omega^{s+t} \otimes_{\mathbb{R}}\mathbb{C}$, is actually a section of\/\hspace{0em plus -0.08em} $\Omega^{s,t} \oplus \Omega^{s-1,t+1}$, and\/ $\nabla_{\!\overline{X}} b$ is a section of\/\hspace{0em plus -0.08em} $\Omega^{s,t} \oplus \Omega^{s+1,t-1}$.
	\end{EWzJhvCorollary}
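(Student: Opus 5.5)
The proof works pointwise via the Leibniz rule. I first determine how $\nabla_{\!X}$ acts on $\Omega^{1,0}$ and $\Omega^{0,1}$, then extend to $\Omega^{s,t}$ by decomposing $b$ locally in the frame $e^{j,k},\bar{e}^{j,k}$. Since $\nabla_{\!X}$ is $C^\infty$-linear in $X$, it suffices to take $X = e_{j,k}$; and since $\nabla$ is a derivation, the coefficient functions of $b$ only contribute terms of the same type $(s,t)$.

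\emph{Step 1: $\Omega^{0,1}$.} Use the musical isomorphism as the paper indicates: $\bar{e}^{l,m} = (e_{l,m})^\flat$. The Levi-Civita connection is metric, and its $\mathbb{C}$-linear extension commutes with $\flat$, so $\nabla_{\!e_{j,k}} \bar{e}^{l,m} = \bigl(\nabla_{\!e_{j,k}} e_{l,m}\bigr)^\flat$. By the preceding Proposition, $\nabla_{\!e_{j,k}} e_{l,m}$ has vanishing $(0,1)$-part, so it is a $(1,0)$-vector. Its flat is therefore a $(0,1)$-form. Hence $\nabla_{\!X}$ maps $\Omega^{0,1}$ into $\Omega^{0,1}$.

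\emph{Step 2: $\Omega^{1,0}$.} Here I claim nothing better than $\nabla_{\!X}\Omega^{1,0} \subset \Omega^{1,0}\oplus\Omega^{0,1} = \Omega^1\otimes_{\mathbb{R}}\mathbb{C}$, which is trivially true. To see directly why no improvement is available, note $e^{l,m} = (\bar{e}_{l,m})^\flat$, and the Proposition gives no control over the $(1,0)$-part of $\nabla_{\!e_{j,k}}\bar{e}_{l,m}$. Equivalently, dualising Step 1: $(\nabla_{\!X}e^{l,m})(Y) = -e^{l,m}(\nabla_{\!X}Y)$ vanishes for $Y$ of type $(1,0)$ only when the form is of type $(0,1)$, which is the statement of Step 1.

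\emph{Step 3: Leibniz.} Write $b$ locally as a sum of terms $f\,e^{I}\wedge\bar{e}^{K}$, where $|I| = s$ and $|K| = t$. Then $\nabla_{\!X}$ of such a term splits into three kinds of contribution:
\begin{itemize}
\item $(Xf)\,e^I\wedge\bar{e}^K$, which lies in $\Omega^{s,t}$;
\item terms where one factor $\bar{e}^{k}$ is differentiated, which stay in $\Omega^{s,t}$ by Step 1;
\item terms where one factor $e^{i}$ is differentiated, which by Step 2 lie in $\Omega^{s,t}$ (from the $\Omega^{1,0}$-component) or in $\Omega^{s-1,t+1}$ (from the $\Omega^{0,1}$-component, replacing one holomorphic factor by an antiholomorphic one).
\end{itemize}
Hence $\nabla_{\!X} b \in \Omega^{s,t}\oplus\Omega^{s-1,t+1}$, as stated.

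\emph{Step 4: the conjugate statement.} Complex conjugation commutes with $\nabla$, because $\nabla$ is real. It also exchanges $\Omega^{s,t}\leftrightarrow\Omega^{t,s}$ and $X\leftrightarrow\overline{X}$. Applying the first statement to $\bar{b}\in\Omega^{t,s}$ gives $\nabla_{\!\overline X} b = \overline{\nabla_{\!X}\bar b} \in \Omega^{s,t}\oplus\Omega^{s+1,t-1}$.

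The main point requiring care is Step 1. One must check that the $\mathbb{C}$-linear musical isomorphism really sends $T^{1,0}M$ to $\Omega^{0,1}$; this holds because $\langle e_{j,k},\bar{e}_{l,m}\rangle = \delta_{jl}\delta_{km}$ while $\langle e_{j,k},e_{l,m}\rangle = 0$. One must also check that it commutes with $\nabla$, which follows from metric compatibility. Everything else is bookkeeping.
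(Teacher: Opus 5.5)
Your proof is correct and takes the paper's route: the paper derives the corollary from the preceding Proposition ``using the musical isomorphisms'', which is exactly your Step~1, and leaves the Leibniz expansion and the conjugation step implicit. The justification offered in Step~2 is garbled, since for $(1,0)$-vector fields $X$ the missing control is on the $(1,0)$-part of $\nabla_{\!X}\bar{e}_{l,m}$, not what you wrote; but nothing depends on it, because Step~2 only asserts the trivial inclusion.
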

	
	\subsubsection*{Covariant derivatives on $\Omega^{2,+}$, ``$\,\omega^J\,$''``$\,\omega^+\,$''``$\,\phi\,$''``$\,\psi\,$''}
	
	Let $\omega^J := \langle J\EWzJhvCDotInd,\EWzJhvCDotInd \rangle = i\sum_{j=1}^N\bigl(e^{j,1} \wedge \bar{e}^{j,1} + e^{j,2} \wedge \bar{e}^{j,2}\bigr) \in \Omega^{1,1} \cap \Omega^{2,+}$ be the ``Kähler form''. Locally, let $\omega^+ := \sum_{j=1}^N e^{j,1} \wedge e^{j,2}$ be a section of\/ $\Omega^{2,0,+}$. For any $(0,1)$-vector field $X$, \ifEWzJhvPDF\vspace{1pt}\fi $\nabla_{\!X} \omega^J \in \bigl(\Omega^{1,1} \oplus \Omega^{2,0}\bigr) \cap \bigl(\Omega^{2,+} \otimes_{\mathbb{R}}\mathbb{C}\bigr)$. Note that $\omega^J$ is a section of the real line bundle $\Omega^{1,1} \cap \Omega^{2,+}$ with constant norm, \hypertarget{nablaOmegaJ}{therefore} \ifEWzJhvPDF\vspace{1pt}\fi $\nabla_{\!X} \omega^J \in \Omega^{2,0,+}$. Locally, let $\phi \in \Omega^{0,1}$ satisfy for any $(0,1)$-vector field $X$, \ifEWzJhvPDF\vspace{1pt}\fi $\nabla_{\!X} \omega^J = (\iota_X \phi)\,\omega^+$. This implies for any $(1,0)$-vector field $X$, $\nabla_{\!X} \omega^J = \overline{(\iota_{\overline{X}} \phi)\,\omega^+}$ and
	\[\bigl(\nabla_{\!X} \omega^+\bigr)^{\mskip-2mu\relax 1,1} = \EWzJhvMathFracVCentered{1}{2N}\,\bigl\langle \nabla_{\!X} \omega^+, {\textstyle \omega^J} \bigr\rangle\,{\textstyle \omega^J} = -\EWzJhvMathFracVCentered{1}{2N}\,\bigl\langle \nabla_{\!X} {\textstyle \omega^J}, \omega^+ \bigr\rangle\,{\textstyle \omega^J} = -\EWzJhvMathFracVCentered{1}{2}\,(\overline{\iota_{\overline{X}} \phi})\,{\textstyle \omega^J} \EWzJhvDisplayMathPeriod.\]
	In addition, locally, let $\psi \in \Omega^{0,1}$ satisfy for any $(0,1)$-vector field $X$, $\nabla_{\!X} \omega^+ = (\iota_X \psi)\,\omega^+$. Then for any $(0,1)$-vector field $X$,
	\[\bigl(\nabla_{\!X} \overline{\omega^+}\bigr)^{\mskip-2mu\relax 0,2} = \EWzJhvMathFracVCentered{1}{N}\,\bigl\langle \nabla_{\!X} \overline{\omega^+}, \omega^+ \bigr\rangle\,\overline{\omega^+} = -\EWzJhvMathFracVCentered{1}{N}\,\bigl\langle \nabla_{\!X} \omega^+, \overline{\omega^+} \bigr\rangle\,\overline{\omega^+} = -\,(\iota_X \psi)\,\overline{\omega^+} \EWzJhvDisplayMathPeriod.\]
	
	\ifEWzJhvPageBreak\pagebreak\fi
	
	\subsubsection*{The $\bar{\partial}$-operators\EWzJhvFootnote{An additional $\bar{\partial}$-operator, $\bar{\partial}^{\mathtt{tw}}$, is discussed in \autoref{subsecCplxStrAddResults}.} $\bar{\partial}^{\mathtt{CM}},\bar{\partial}^{\mathtt{LC}},\bar{\partial}^{\mathtt{D}}$, the bundles $\Omega^{2n,0,+}$}
	
	We shall discuss several $\bar{\partial}$-operators\EWzJhvInfixFootnote{that is, the $(0,1)$-part of some connection. The $(0,2)$-part of the curvature is not required to vanish.}. We denote the standard $\bar{\partial}$-operators on the bundle of $(1,0)$-(co)tangent vectors, and also bundles of their various tensor products, in particular $\Omega^{s,0}$, of the complex manifold $(M,J)$ by $\bar{\partial}^{\mathtt{CM}}$. Due to \autoref{corSTFormDer}, the $(0,1)$-part \ifEWzJhvPDF\vspace{0.5pt}\fi of the Levi-Civita connection $\nabla$ is a $\bar{\partial}$-operator on $\Omega^{s,0}$ and in particular $\Omega^{2,0,+}$, denoted by $\bar{\partial}^{\mathtt{LC}}$. Note $\nabla_{\!\bar{e}_{j,k}}(u\,\omega^+) = (\partial_{\bar{e}_{j,k}}u+u\,\iota_{\bar{e}_{j,k}}\psi)\,\omega^+$ for~$u\,\omega^+ \in \Omega^{2,0,+}$ where $u$ is some complex-valued function.
	
	For integer~$n$ satisfying $0 \leqslant n \leqslant N$, let $\Omega^{2n,0,+}$ be the image of the embedding between complex vector bundles $(\Omega^{2,0,+})^{\otimes_{\mathbb{C}}\mskip1.5mu\relax\EWzJhvMathVCenter n} \hookrightarrow \Omega^{2n,0}$ given by wedge product. Note that $\Omega^{2N,0,+}$ is just $\Omega^{2N,0}$.
	
	\begin{EWzJhvProposition}\label{propDBarD2n0plus}
		For any integer\/~$n$ satisfying\/ $1 \leqslant n \leqslant N$, there is a unique\/ $\bar{\partial}$-operator on\/ $\Omega^{2n,0,+}$, denoted by\/ $\bar{\partial}^{\mathtt{D}}$, such that for any\/~$b \in \Omega^{2n,0,+}$ and any\/~$p \in M$, $\displaystyle \mathrm{d}^*b$ vanishes at\/ $p$ if and only if\/\hspace{-0.08em} $\bar{\partial}^{\mathtt{D}}b$ vanishes at\/ $p$.
		
		Moreover, for any integer\/~$n$ satisfying\/ $1 \leqslant n \leqslant N$, we have an isomorphism of complex line bundles with\/ $\bar{\partial}$-operator\EWzJhvFootnote{Later we shall see that both sides of the isomorphism are actually holomorphic line bundles.} $(\Omega^{2,0,+}, \bar{\partial}^{\mathtt{D}}) \otimes_{\mathbb{C}} (\Omega^{2,0,+}, \bar{\partial}^{\mathtt{LC}})^{\otimes_{\mathbb{C}}(n-1)} \cong (\Omega^{2n,0,+}, \bar{\partial}^{\mathtt{D}})$, where the underlying isomorphism of complex line bundles is given by wedge product.
	\end{EWzJhvProposition}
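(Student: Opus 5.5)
The plan is to compute $\mathrm{d}^* b$ explicitly for $b = u\,\Omega$, where $\Omega := (\omega^+)^{\wedge n}$ is the local frame of $\Omega^{2n,0,+}$ and $u$ is a complex-valued function. Then I read off that the $(0,1)$-form-valued information in $\mathrm{d}^* b$ is an injective, $u$-linear image of something of the form $\bar\partial u + u\,\theta$, where $\theta\in\Omega^{0,1}$ is locally determined.

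First step: write $\mathrm{d}^* b = -\sum \iota_{e_{j,k}}\nabla_{\bar e_{j,k}} b - \sum \iota_{\bar e_{j,k}} \nabla_{e_{j,k}} b$, using the complexified form of \EWzJhvEqRef{eqDAStar} with the $\mathbb{C}$-linear metric ($e_{j,k}^\flat=\bar e^{j,k}$). Since $b\in\Omega^{2n,0}$, the second sum vanishes. The reason is \autoref{corSTFormDer}: $\nabla_{e_{j,k}}b \in \Omega^{2n,0}\oplus\Omega^{2n-1,1}$, and contracting with a $(0,1)$-vector kills the $(2n,0)$ part. For the $(2n-1,1)$ part I need to check that it is absent. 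Indeed, $\nabla_{e_{j,k}}\omega^+ \in \Omega^{2,+}\otimes\mathbb{C}$ and has type $(2,0)\oplus(1,1)$. Its $(1,1)$ part was computed to be proportional to $\omega^J$, so $\nabla_X \Omega$ contains $n(\omega^+)^{n-1}\wedge \omega^J$-type terms. Contracting $\omega^J$ with a $(0,1)$-vector gives a $(1,0)$-form, so these terms yield a $(2n-1,0)$-form, not zero. So in fact I keep both sums.

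Both sums produce $(2n-1,0)$-forms. Concretely:
\begin{itemize}
\item the first sum gives $-\sum_{j,k}(\partial_{\bar e_{j,k}}u + u\,\iota_{\bar e_{j,k}}(n\psi))\,\iota_{e_{j,k}}\Omega$, using $\nabla_{\bar e}\Omega = n(\iota_{\bar e}\psi)\Omega$;
\item the second sum gives $u$ times terms $\tfrac{n}{2}\,\overline{\iota_{\bar e_{j,k}}\phi}\;\iota_{\bar e_{j,k}}(\omega^J\wedge(\omega^+)^{n-1})$-type, up to a sign.
\end{itemize}

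The key algebraic step is to express $\iota_{\bar e_{j,k}}\omega^J = \pm i\,e^{j,k}$ and to show that $e^{j,k}\wedge(\omega^+)^{n-1}$ is a constant multiple of $\iota_{e_{j',k'}}\Omega$ under the index swap $(j,1)\leftrightarrow(j,2)$ with a sign. This holds because $\iota_{e_{j,1}}\omega^+=e^{j,2}$ and $\iota_{e_{j,2}}\omega^+=-e^{j,1}$. Hence both contributions lie in the image of the linear map $L:\Omega^{0,1}\to\Omega^{2n-1,0}$, $\xi\mapsto\sum_{j,k}(\iota_{\bar e_{j,k}}\xi)\,\iota_{e_{j,k}}\Omega$. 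The second contribution becomes $L(c_n\,\sigma(\bar\phi))$, where $\sigma$ is the conjugate-linear map converting the $(1,0)$-form $\bar\phi$ into a $(0,1)$-form via $\omega^+$ (i.e.\ $\bar e^{j,1}\leftrightarrow \bar e^{j,2}$ with sign). This yields
\[\mathrm{d}^*(u\Omega) = -L\bigl(\bar\partial u + u\,(n\psi + c_n\,\sigma(\phi))\bigr).\]
$L$ is injective for $n\ge1$, because the forms $\iota_{e_{j,k}}\Omega$ are linearly independent: each contains $(\omega^+)^{n-1}$ wedged with a distinct $e^{j,k'}$, and $n\le N$ leaves room. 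Therefore defining $\bar\partial^{\mathtt{D}}(u\Omega) := (\bar\partial u + u\,\theta_n)\otimes\Omega$ with $\theta_n:= n\psi+c_n\sigma(\phi)$ gives a $\bar\partial$-operator with the required vanishing property. Its independence of the local frame follows from this characterisation, or directly by checking the transformation rule of $\theta_n$ under $\omega^+\mapsto f\omega^+$.

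Uniqueness: two $\bar\partial$-operators on a line bundle differ by $\beta\otimes$, with $\beta\in\Omega^{0,1}$. If they have the same pointwise zero sets for all sections, then for any $p$ choose $b$ with $\bar\partial^{\mathtt{D}}b(p)=0$ and $b(p)\neq0$ (possible by prescribing the $1$-jet). This forces $\beta(p)=0$.

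Tensor statement: $\bar\partial^{\mathtt{LC}}$ on $\Omega^{2,0,+}$ corresponds to $\theta=\psi$, and $\bar\partial^{\mathtt{D}}$ on $\Omega^{2,0,+}$ to $\theta_1=\psi+c_1\sigma(\phi)$. The tensor product operator on $\omega^+\otimes(\omega^+)^{\otimes(n-1)}\mapsto\Omega$ has connection form $\theta_1+(n-1)\psi = n\psi + c_1\sigma(\phi)$. So the claim reduces to $c_n=c_1$, i.e.\ that the $\phi$-coefficient does not grow with $n$.

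I expect the main obstacle to be this bookkeeping of constants. The $(\omega^+)^{n-1}\wedge\omega^J$ contribution carries a factor $n$ from the Leibniz rule. However, $\iota_{e}\Omega = n(\iota_e\omega^+)\wedge(\omega^+)^{n-1}$ also carries a factor $n$, so the two should cancel and leave $c_n$ independent of $n$. I would verify this carefully with the normalisations $\nabla_X\omega^J=(\iota_X\phi)\omega^+$ and $(\nabla_X\omega^+)^{1,1}=-\tfrac12\overline{\iota_{\bar X}\phi}\,\omega^J$.
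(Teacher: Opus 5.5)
Your proposal is correct and takes essentially the same route as the paper. The paper also writes $b=u\,(\omega^+)^n$ and splits $\mathrm{d}^*$ into the $\iota_{e_{j,k}}\nabla_{\bar e_{j,k}}$ and $\iota_{\bar e_{j,k}}\nabla_{e_{j,k}}$ sums. It reads off $\bar\partial^{\mathtt{D}}$ with a $\psi$-term carrying the factor $n$ and a $\phi$-term with coefficient $\pm\frac{i}{2}$ independent of $n$. That is exactly the cancellation you anticipate, and the tensor isomorphism follows from it. Your uniqueness argument, your frame-independence argument, and the injectivity of $\alpha\mapsto\alpha\wedge(\omega^+)^{n-1}$ for $n\leqslant N$ all fill in points the paper leaves implicit. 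In a final write-up, just delete the initial claim that the second sum vanishes, which you yourself then correct.
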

	\begin{proof}
		Locally, let $u\,(\omega^+)^n \in \Omega^{2n,0,+}$ where $u$ is some complex-valued function.\\
		Noting $\nabla_{\!\bar{e}_{j,k}}(\omega^+)^n = n\,(\iota_{\bar{e}_{j,k}}\psi)\,(\omega^+)^n$, we calculate
		\begin{align*}
			&{-\sum_{j=1}^N{}\bigl(\iota_{e_{j,1}}\nabla_{\!\bar{e}_{j,1}} + \iota_{e_{j,2}}\nabla_{\!\bar{e}_{j,2}}\bigr)\bigl(u\,(\omega^+)^n\bigr)}\\
			={}& n\,(\omega^+)^{n-1} \wedge \sum_{j=1}^N\Bigl(\bigl(\partial_{\bar{e}_{j,2}}u+n\,u\,\iota_{\bar{e}_{j,2}}\psi\bigr)\,e^{j,1} - \bigl(\partial_{\bar{e}_{j,1}}u+n\,u\,\iota_{\bar{e}_{j,1}}\psi\bigr)\,e^{j,2}\Bigr) \EWzJhvDisplayMathPeriod.
		\end{align*}
		For $\iota_{\bar{e}_{j,k}}\nabla_{\!e_{j,k}}$, we calculate
		\begin{align*}
			&{-\sum_{j=1}^N{}\bigl(\iota_{\bar{e}_{j,1}}\nabla_{\!e_{j,1}} + \iota_{\bar{e}_{j,2}}\nabla_{\!e_{j,2}}\bigr)\bigl(u\,(\omega^+)^n\bigr)}\\
			={}& {-}\,n\,u\,(\omega^+)^{n-1} \wedge \sum_{j=1}^N{}\bigl(\iota_{\bar{e}_{j,1}}\nabla_{\!e_{j,1}} + \iota_{\bar{e}_{j,2}}\nabla_{\!e_{j,2}}\bigr)\,\omega^+\\
			={}& {-\EWzJhvMathFracVCentered{i}{2}}\,n\,u\,(\omega^+)^{n-1} \wedge \sum_{j=1}^N\bigl((\overline{\iota_{\bar{e}_{j,1}} \phi})\,e^{j,1} + (\overline{\iota_{\bar{e}_{j,2}} \phi})\,e^{j,2}\bigr) \EWzJhvDisplayMathPeriod.
		\end{align*}
		The sum of the two equations equals to ${\displaystyle \mathrm{d}^*}\bigl(u\,(\omega^+)^n\bigr)$, so $\bar{\partial}^{\mathtt{D}}$ is locally given by
		\begin{equation}\label{eqDBarD2n0plus}
		\begin{aligned}
			\bar{\partial}^{\mathtt{D}}\bigl(u\,(\omega^+)^n\bigr) = (\omega^+)^n \otimes_{\mathbb{C}} \sum_{j=1}^N\Bigl(&\bigl(\partial_{\bar{e}_{j,1}}u+n\,u\,\iota_{\bar{e}_{j,1}}\psi + \EWzJhvMathFracVCentered{i}{2}\,u\,\overline{\iota_{\bar{e}_{j,2}} \phi}\bigr)\,\bar{e}^{j,1} \\{}+{}& \bigl(\partial_{\bar{e}_{j,2}}u+n\,u\,\iota_{\bar{e}_{j,2}}\psi - \EWzJhvMathFracVCentered{i}{2}\,u\,\overline{\iota_{\bar{e}_{j,1}} \phi}\bigr)\,\bar{e}^{j,2}\Bigr) \EWzJhvDisplayMathPeriod.
		\end{aligned}
		\end{equation}
		As a consequence, we get $(\Omega^{2,0,+}, \bar{\partial}^{\mathtt{D}}) \otimes_{\mathbb{C}} (\Omega^{2,0,+}, \bar{\partial}^{\mathtt{LC}})^{\otimes_{\mathbb{C}}(n-1)} \cong (\Omega^{2n,0,+}, \bar{\partial}^{\mathtt{D}})$.\ifEWzJhvPageBreak\pagebreak\fi
	\end{proof}
	
	\begin{EWzJhvProposition}\label{propCurvQK1304}
		If\/\hspace{-0.08em} $N \geqslant 2$, then\/ $R(\bar{e}_{j_1,k_1},\bar{e}_{j_2,k_2})\bar{e}_{j_3,k_3}=0$, that is, the\/ $(1,3)$- and\/ $(0,4)$-parts of the Riemann curvature tensor vanish.
	\end{EWzJhvProposition}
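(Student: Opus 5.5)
The plan is to use the algebraic structure of the curvature of a quaternion-Kähler manifold when $N \geqslant 2$. By \cite[Theorem~3.1]{QKManifoldsSalamon}, already used in \autoref{ftnPropWeitzen2plus}, the Riemann curvature tensor splits pointwise as
\[R = \nu\, R_0 + R_W \EWzJhvDisplayMathPeriod.\]
Here $\nu$ is a constant multiple of the scalar curvature. $R_0$ is the curvature tensor of quaternionic projective space $\mathbb{H}\mathrm{P}^N$, written in terms of $g$ and a local orthonormal frame $(J_1\,\,J_2\,\,J_3)$ of $Q$ as in \hyperref[eiQCond1]{the first condition on $Q$}. $R_W$ is an algebraic curvature tensor of hyper\EWzJhvText{käh}ler type: for all $X,Y$, the endomorphism $R_W(X,Y)$ lies in $\mathfrak{sp}(N)$, i.e.\ it commutes with $J_1,J_2,J_3$. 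By the pair symmetry of $R_W$, the $2$-form $(X,Y)\mapsto\langle R_W(X,Y)Z,W\rangle$ then also lies in $\mathfrak{sp}(N)$ for fixed $Z,W$. I will show separately that $R_W$ and $R_0$ vanish when all three arguments are $(0,1)$-vectors.

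For $R_W$: choose the frame of $Q$ with $J_1 = J$, which is possible since $J$ is a section of $Q$ of unit length. Every $2$-form in $\mathfrak{sp}(N) \subset \mathfrak{u}(2N)$ is $J$-invariant, i.e.\ of type $(1,1)$. It therefore vanishes on a pair of $(0,1)$-vectors, so $\langle R_W(\bar{e}_{j_1,k_1},\bar{e}_{j_2,k_2})Z,W\rangle = 0$ for every $Z,W$. Hence $R_W(\bar{e}_{j_1,k_1},\bar{e}_{j_2,k_2})=0$ already as an endomorphism.

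For $R_0$, I will use the standard explicit formula
\[R_0(X,Y)Z = \tfrac14\Bigl(g(Y,Z)X - g(X,Z)Y + \sum_{a=1}^3\bigl(g(J_aY,Z)J_aX - g(J_aX,Z)J_aY + 2\,g(X,J_aY)J_aZ\bigr)\Bigr),\]
extended $\mathbb{C}$-linearly (the normalisation constant is irrelevant). Take $X,Y,Z \in T^{0,1}$. Since $T^{0,1}$ is isotropic for the $\mathbb{C}$-bilinear metric, all terms $g(\,\cdot\,,\,\cdot\,)$ with two $(0,1)$ entries vanish. This also disposes of the $a=1$ terms, because $J_1 = J$ preserves $T^{0,1}$. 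For $a=2,3$, set $J_\pm := J_2 \pm iJ_3$, so that
\[\sum_{a=2,3} g(J_aY,Z)\,J_aX = \tfrac12\bigl(g(J_+Y,Z)\,J_-X + g(J_-Y,Z)\,J_+X\bigr),\]
and similarly for the other two kinds of terms. For $v \in T^{0,1}$ we have $J_2v \in T^{1,0}$, because $J_2$ anticommutes with $J$. Then $J_3v = J_1J_2v = iJ_2v$, hence $J_+v = 0$. In each product above one factor is $J_+$ applied to a $(0,1)$-vector, so all these terms vanish. This gives $R_0(\bar{e}_{j_1,k_1},\bar{e}_{j_2,k_2})\bar{e}_{j_3,k_3}=0$.

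Combining the two parts gives the claim. The $(0,4)$-part vanishes, and by the symmetries of $R$ the $(1,3)$-part vanishes as well, since it is the pairing of this vector with a $(1,0)$-vector.

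The main obstacle I expect is bookkeeping rather than ideas. One must quote Salamon's decomposition with the right conventions: $R_W$ genuinely takes values in $\mathfrak{sp}(N)$ in both pairs of slots, and $R_0$ has the stated form up to constants. One must also check the signs in $J_3 = J_1J_2$ against \hyperref[eiQCond1]{the first condition on $Q$}, so that it is indeed $J_+$ (and not $J_-$) that annihilates $T^{0,1}$. If the sign is opposite, the roles of $J_\pm$ swap, which does not affect the argument. The hypothesis $N \geqslant 2$ enters only through Salamon's theorem; for $N=1$ the anti-self-dual Weyl part breaks the argument.
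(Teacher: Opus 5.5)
Your proof is correct and is essentially the paper's argument. Both use \cite[Theorem~3.1]{QKManifoldsSalamon} to write $R = tR_0 + R_1$, dispose of the $\mathfrak{sp}(N)$-part because $\mathfrak{sp}(N)\subset\mathfrak{u}(2N)$ consists of $(1,1)$-forms, and check the model $\mathbb{H}\mathrm{P}^N$ part by hand. The only difference is in that last check: the paper uses (3.3) of \cite{QKManifoldsSalamon} to write $R_0=-c'(B_1+B_2)$, absorbs the Killing form $B_1$ of $\mathfrak{sp}(N)$ into the $\mathfrak{sp}(N)$-part, and only notes that $B_2\propto\omega^J\otimes\omega^J+2\,\omega^+\otimes\overline{\omega^+}+2\,\overline{\omega^+}\otimes\omega^+$ has no $(1,3)$- or $(0,4)$-part, whereas you verify all of $R_0$ from its explicit formula using $(J_2+iJ_3)\,T^{0,1}=0$, which works equally well.
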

	\begin{proof}
		Theorem~3.1 of \cite{QKManifoldsSalamon} decomposes the Riemann curvature tensor as $R = tR_0 + R_1$ \ifEWzJhvPDF\linebreak\fi where $t$ is the scalar curvature of $(M,g)$, $R_0$ is the Riemann curvature tensor of the quaternionic projective space $\mathbb{H}P^N$, and $R_1$ is a section of the symmetric square \ifEWzJhvPDF\linebreak\fi $\operatorname{Sym}^2({\displaystyle \mathfrak{sp}(N)^*}) \subset \operatorname{Sym}^2\bigl(\bigl(\mathfrak{sp}(N)\oplus\mathfrak{sp}(1)\bigr)^{\mskip-2mu\relax *}\bigr) \subset \operatorname{Sym}^2({\displaystyle \mathfrak{so}(4N)^*})$. Moreover, \EWzJhvTextNormal{(3.3)} of \cite{QKManifoldsSalamon} writes $R_0 = -c'(B_1 + B_2)$ where $c'$ is a real constant, and $B_1, B_2$ are the Killing forms on $\mathfrak{sp}(N), \mathfrak{sp}(1)$ respectively. Therefore, $R = -c'tB_2 + R_1'$ where $R_1'$ is a section of\/ $\operatorname{Sym}^2({\displaystyle \mathfrak{sp}(N)^*})$. We shall check $B_2$ and $R_1'$ individually.
		
		For $B_2$, after raising or lowering indices, locally $B_2$ can be written in our notation as $-c''\hspace{0.0625em}\bigl(\mskip2mu\relax\omega^J\otimes_{\mathbb{C}}\omega^J \mskip2mu\relax+\mskip2mu\relax 2\mskip5mu\relax\omega^+\otimes_{\mathbb{C}}\overline{\omega^+} \mskip2mu\relax+\mskip2mu\relax 2\mskip5mu\relax\overline{\omega^+}\otimes_{\mathbb{C}}\omega^+\mskip2mu\relax\bigr)$, where there is no $(1,3)$- or $(0,4)$-part.
		
		For $R_1'$, note that $R_1'$ is a section of\/ $\operatorname{Sym}^2({\displaystyle \mathfrak{sp}(N)^*}) \subset \operatorname{Sym}^2({\displaystyle \mathfrak{u}(2N)^*})$. Therefore, $R_1'(\bar{e}_{j_1,k_1},\bar{e}_{j_2,k_2})=0$. One can say that $R_1'$ behaves like the Riemann curvature tensor of a hyper\EWzJhvText{käh}ler manifold.
	\end{proof}
	
	\begin{EWzJhvCorollary}\label{cor20plusLCHolo}
		If\/\hspace{-0.08em} $N \geqslant 2$, then\/ $(\Omega^{2,0,+}, \bar{\partial}^{\mathtt{LC}})$ is a holomorphic line bundle.
	\end{EWzJhvCorollary}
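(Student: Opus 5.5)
Since a complex line bundle with a $\bar{\partial}$-operator is holomorphic exactly when the $(0,2)$-part of the curvature of the associated connection vanishes (Newlander--Nirenberg / Koszul--Malgrange for line bundles), the plan is to show that the $(0,2)$-part of the curvature of the Levi-Civita connection restricted to $\Omega^{2,0,+}$ vanishes. Equivalently, in the local frame $\omega^+$, we have $\bar{\partial}^{\mathtt{LC}}(u\,\omega^+) = (\bar{\partial}u + u\,\psi)\,\omega^+$ with $\psi \in \Omega^{0,1}$ as defined above. So $(\bar{\partial}^{\mathtt{LC}})^2 = 0$ amounts to $\bar{\partial}\psi = 0$ locally, i.e.\ the $(0,2)$-part of $\mathrm{d}\psi$ vanishes.

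First, I would justify that the $(0,2)$-part of the curvature of $\bar{\partial}^{\mathtt{LC}}$ really is the $(0,2)$-part of the Riemann curvature acting on $\Omega^{2,0,+}$. By \autoref{corSTFormDer}, $\nabla_{\bar{X}}$ preserves $\Omega^{2,0}\oplus\ldots$ only up to the $(s+1,t-1)$-component, which is zero for $t=0$; so $\nabla_{\bar X}$ maps $\Omega^{2,0}$ to $\Omega^{2,0}$. Moreover, for $(0,1)$-vector fields $X$, $\nabla_X\omega^+$ is proportional to $\omega^+$ by the definition of $\psi$, so $\nabla_X$ preserves $\Omega^{2,0,+}$. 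Hence, for $(0,1)$-vector fields $X,Y$, the integrability of $J$ gives that $[X,Y]$ is again of type $(0,1)$. It follows that
\[\bigl(\bar{\partial}^{\mathtt{LC}}\bigr)^2(u\,\omega^+)(X,Y) = \bigl(\nabla_X\nabla_Y - \nabla_Y\nabla_X - \nabla_{[X,Y]}\bigr)(u\,\omega^+) = u\,R(X,Y)\,\omega^+ ,\]
where $R(X,Y)$ acts as a derivation on $2$-forms.

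Second, I would compute $R(\bar e_{j_1,k_1},\bar e_{j_2,k_2})\,\omega^+$. Since $\omega^+ = \sum_j e^{j,1}\wedge e^{j,2}$, its value on vectors $Z,W$ changes by terms involving $\langle R(X,Y)Z, \cdot\rangle$, i.e.\ components $R(\bar e,\bar e, Z, W)$ with $Z,W$ of type $(1,0)$ after lowering. These are exactly components of the curvature tensor with at least two barred slots and the remaining slots determined by pairing with $e^{j,k}$. Concretely, they are of the form $\langle R(\bar e_{j_1,k_1},\bar e_{j_2,k_2})\bar e_{l,m}, \bar e_{p,q}\rangle$, which is a $(0,4)$-component, possibly together with $(1,3)$-components. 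By \autoref{propCurvQK1304}, since $N\geqslant 2$, we have $R(\bar e,\bar e)\bar e = 0$, so all of these components vanish. Hence the curvature has no $(0,2)$-part on $\Omega^{2,0,+}$, and $(\bar{\partial}^{\mathtt{LC}})^2 = 0$.

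Third, I would conclude that the bundle is holomorphic. For a line bundle, $(\bar{\partial}^{\mathtt{LC}})^2 = 0$ means $\bar{\partial}\psi = 0$ locally. The Dolbeault lemma then gives locally $\psi = -\bar{\partial} f$, so $e^{f}\omega^+$ is a local nonvanishing holomorphic frame, and the bundle is holomorphic.

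The main obstacle is the index bookkeeping in the second step: checking that, after lowering with the $\mathbb{C}$-linear metric, the action of $R(\bar e,\bar e)$ on $e^{j,k}$ produces only components of the form $\langle R(\bar e,\bar e)\bar e,\bar e\rangle$ (or the mixed $(1,3)$ ones). One must verify that the pairing conventions $(e_{j,k})^\flat = \bar e^{j,k}$ place the barred vectors in the correct slots; once this is settled, the vanishing follows directly from \autoref{propCurvQK1304}.
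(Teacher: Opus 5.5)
Your proposal is correct and is essentially the paper's argument. The paper states the corollary without proof, as an immediate consequence of \autoref{propCurvQK1304}: the $(0,2)$-part of the curvature of $\bar{\partial}^{\mathtt{LC}}$ on $\Omega^{2,0,+}$ is $R(X,Y)$ acting on $\omega^+$ for $(0,1)$-vectors $X,Y$, and that proposition makes it vanish; for a line bundle your Dolbeault-lemma step then gives local holomorphic frames. The index bookkeeping you flag as the main obstacle is immediate: since the metric is parallel, $R(X,Y)e^{j,k} = \bigl(R(X,Y)\bar{e}_{j,k}\bigr)^\flat$, which is zero for $(0,1)$-vectors $X,Y$ by \autoref{propCurvQK1304}, so $R(X,Y)\omega^+ = 0$ term by term.
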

	
	\begin{EWzJhvProposition}\label{prop2N0plusDCanonical}
		$(\Omega^{2N,0,+}, \bar{\partial}^{\mathtt{D}}) = (\Omega^{2N,0}, \bar{\partial}^{\mathtt{CM}})$
	\end{EWzJhvProposition}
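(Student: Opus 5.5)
Both sides are $\bar{\partial}$-operators on the same complex line bundle $\Omega^{2N,0,+}=\Omega^{2N,0}$, so they differ by a $(0,1)$-form. It therefore suffices to show that they agree on one local nonvanishing section, namely $(\omega^+)^N$, which is a nonzero multiple of $e^{1,1}\wedge e^{1,2}\wedge\cdots\wedge e^{N,1}\wedge e^{N,2}$. By \EWzJhvEqRef{eqDBarD2n0plus} with $n=N$, we need $\bar{\partial}^{\mathtt{CM}}$ to satisfy
\[\bar{\partial}^{\mathtt{CM}}\bigl(u\,(\omega^+)^N\bigr) = (\omega^+)^N \otimes_{\mathbb{C}} \Bigl(\bar{\partial}u + u\sum_{j=1}^N\bigl((N\iota_{\bar{e}_{j,1}}\psi + \tfrac{i}{2}\overline{\iota_{\bar{e}_{j,2}}\phi})\bar{e}^{j,1} + (N\iota_{\bar{e}_{j,2}}\psi - \tfrac{i}{2}\overline{\iota_{\bar{e}_{j,1}}\phi})\bar{e}^{j,2}\bigr)\Bigr).\]
Equivalently, the claim reduces to computing the $(0,1)$-form $\theta$ defined by $\bar{\partial}^{\mathtt{CM}}(\omega^+)^N=(\omega^+)^N\otimes\theta$, and then checking that $\theta$ equals the bracketed correction.

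To compute $\theta$, I would use the definition of $\bar{\partial}^{\mathtt{CM}}$ on the canonical bundle: $\bar{\partial}^{\mathtt{CM}}b$ is determined by $\bar{\partial}b=\mathrm{d}b$ for a $(2N,0)$-form $b$. Since $\nabla$ is torsion-free, for $b\in\Omega^{2N,0}$ we have $\mathrm{d}b=\sum_{j,k}\bigl(\bar{e}^{j,k}\wedge\nabla_{\bar{e}_{j,k}}b + e^{j,k}\wedge\nabla_{e_{j,k}}b\bigr)$. The second sum vanishes because of top holomorphic degree combined with \autoref{corSTFormDer}. In the first sum, by \autoref{corSTFormDer}, $\nabla_{\bar{e}_{j,k}}b$ lies in $\Omega^{2N,0}\oplus\Omega^{2N+1,-1}$, and the second component is zero. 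Hence $\nabla_{\bar{e}_{j,k}}b$ lies in $\Omega^{2N,0}$, and $\bar{\partial}^{\mathtt{CM}}=\bar{\partial}^{\mathtt{LC}}$ on $\Omega^{2N,0}$. This identification is standard, and I would justify it in exactly this way. It follows that $\theta=N\psi$, using $\nabla_{\bar{e}_{j,k}}(\omega^+)^N=N(\iota_{\bar{e}_{j,k}}\psi)(\omega^+)^N$.

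After this reduction, the proposition is equivalent to the vanishing of the $\phi$-terms:
\[\sum_{j}\bigl(\overline{\iota_{\bar{e}_{j,2}}\phi}\,\bar{e}^{j,1} - \overline{\iota_{\bar{e}_{j,1}}\phi}\,\bar{e}^{j,2}\bigr)\wedge(\omega^+)^N\ \text{``}={}\text{''}\ 0.\]
Read literally, this says that $\phi$ vanishes, which is false in general. I therefore expect the main obstacle to be the bookkeeping in the second computation of \autoref{propDBarD2n0plus}. When $n=N$, the form $(\omega^+)^{N-1}\wedge e^{j,k}$ is contracted and wedged so that $\iota_{\bar{e}_{j,k}}\nabla_{e_{j,k}}$ acts on a top-degree holomorphic form. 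The $\phi$-contribution there should combine with the antiholomorphic-direction term to give exactly $\mathrm{d}^*$ of a top form, that is, $-\star\bar{\partial}\star$-type behaviour. For $n=N$, $\nabla_{e_{j,k}}(\omega^+)^N$ has no $(2N-1,1)$-part beyond what \autoref{corSTFormDer} allows, and the $(2N-1,1)$-part coming from $\nabla_{e}\omega^+$ wedged with $(\omega^+)^{N-1}$ is of the form $-\tfrac12\overline{\iota\phi}\,\omega^J\wedge(\omega^+)^{N-1}$. I would check carefully that $\omega^J\wedge(\omega^+)^{N-1}$ followed by the contractions $\iota_{\bar{e}_{j,k}}$ produces the stated terms. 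If these $\phi$-terms in fact cancel for $n=N$, either because $(\omega^+)^{N}\otimes\bar{e}$ combinations vanish under the isomorphism or through the Kähler-identity mechanism $\mathrm{d}^*=-\star\mathrm{d}\star$ on top forms, the proof closes.

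Concretely, as the alternative and cleaner route, I would verify directly that for $b\in\Omega^{2N,0}$ one has $\mathrm{d}^*b=\pm\star\overline{\partial}$-type expression. More precisely, $\mathrm{d}^*b=-\star\mathrm{d}\star b$, and $\star b$ is a multiple of $b\wedge(\omega^J)^{0}$ of bidegree $(2N,0)$ again, up to a constant. Hence $\mathrm{d}^*b$ vanishes at $p$ iff $\mathrm{d}b=\bar{\partial}b$ vanishes at $p$. The uniqueness clause of \autoref{propDBarD2n0plus}, that a $\bar{\partial}$-operator is determined by its pointwise vanishing locus, then gives $\bar{\partial}^{\mathtt{D}}=\bar{\partial}^{\mathtt{CM}}$. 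This route avoids the explicit $\phi$-bookkeeping, and it is the one I would try first.
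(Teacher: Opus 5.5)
Your second (``cleaner'') route is correct and is exactly the paper's proof. Every $b\in\Omega^{2N,0}$ is self-dual, so $\mathrm{d}^*b=-*\mathrm{d}*b=-*\mathrm{d}b$ vanishes at $p$ if and only if $\mathrm{d}b=\bar{\partial}b$ does, and the uniqueness clause of \autoref{propDBarD2n0plus} then forces $\bar{\partial}^{\mathtt{D}}=\bar{\partial}^{\mathtt{CM}}$. Do state plainly that $*b$ is a \emph{constant} multiple of $b$ (in fact $*b=b$), since that is what lets $\mathrm{d}$ pass through $*$.

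Your first route, however, contains a false step. The contradiction you hit comes from that step, not from any bookkeeping error in \EWzJhvEqRef{eqDBarD2n0plus}. For $b\in\Omega^{2N,0}$ the sum $\sum_{j,k}e^{j,k}\wedge\nabla_{e_{j,k}}b$ does not vanish. \autoref{corSTFormDer} only gives $\nabla_{e_{j,k}}b\in\Omega^{2N,0}\oplus\Omega^{2N-1,1}$, and wedging the $(2N-1,1)$-component with $e^{j,k}$ produces a $(2N,1)$-form. Concretely,
\[(\nabla_{e_{j,k}}(\omega^+)^N)^{2N-1,1}=-\tfrac{N}{2}\,\overline{\iota_{\bar{e}_{j,k}}\phi}\,(\omega^+)^{N-1}\wedge\omega^J .\]
Moreover $e^{j,1}\wedge(\omega^+)^{N-1}\wedge\omega^J=\tfrac{i}{N}(\omega^+)^N\wedge\bar{e}^{j,2}$ and $e^{j,2}\wedge(\omega^+)^{N-1}\wedge\omega^J=-\tfrac{i}{N}(\omega^+)^N\wedge\bar{e}^{j,1}$. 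Putting these together, the sum you discarded equals
\[(\omega^+)^N\wedge\sum_{j=1}^N\Bigl(\tfrac{i}{2}\,\overline{\iota_{\bar{e}_{j,2}}\phi}\,\bar{e}^{j,1}-\tfrac{i}{2}\,\overline{\iota_{\bar{e}_{j,1}}\phi}\,\bar{e}^{j,2}\Bigr),\]
which is precisely the $\phi$-part of \EWzJhvEqRef{eqDBarD2n0plus} for $n=N$.

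So $\bar{\partial}^{\mathtt{CM}}\neq\bar{\partial}^{\mathtt{LC}}$ on the canonical bundle unless $\phi=0$, i.e.\ unless $(M,g,J)$ is Kähler. This is not a standard identity, and the paper's remarks confirm the operators differ in the non-Kähler case. With this term restored, your first route becomes a valid direct verification of the proposition. It is more computational than the self-duality argument, but it also independently confirms the formula \EWzJhvEqRef{eqDBarD2n0plus}.
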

	\begin{proof}
		Any~$b \in \Omega^{2N,0,+} = \Omega^{2N,0}$ is self-dual, so (locally) $\displaystyle \mathrm{d}^*b=0$ if and only if\/ $\mathrm{d}b=0$.
	\end{proof}
	
	\begin{EWzJhvProposition}\label{prop2n0plusHolm}
		For any integer\/~$n$ satisfying\/ $1 \leqslant n \leqslant N$, $(\Omega^{2n,0,+}, \bar{\partial}^{\mathtt{D}})$ is a holomorphic line bundle.
	\end{EWzJhvProposition}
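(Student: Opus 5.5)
The plan is to reduce everything to two facts already available: the canonical bundle $(\Omega^{2N,0},\bar{\partial}^{\mathtt{CM}})$ is holomorphic, and for $N \geqslant 2$ the line bundle $(\Omega^{2,0,+},\bar{\partial}^{\mathtt{LC}})$ is holomorphic. These will be combined through the tensor product isomorphism of \autoref{propDBarD2n0plus}. For a complex line bundle $L$ with $\bar{\partial}$-operator $\bar{\partial}^L$ on the complex manifold $(M,J)$, being holomorphic is equivalent to $(\bar{\partial}^L)^2 = 0$. Write $F^{0,2}(L)$ for the $(0,2)$-form with $(\bar{\partial}^L)^2 s = F^{0,2}(L)\, s$. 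Sufficiency can be seen as follows. Locally pick a nonvanishing section $s_0$ and write $\bar{\partial}^L s_0 = \alpha \otimes s_0$ with $\alpha \in \Omega^{0,1}$. Then $(\bar{\partial}^L)^2=0$ means $\bar{\partial}^{\mathtt{CM}}\alpha = 0$. Since $J$ is integrable, the Dolbeault--Grothendieck lemma gives locally a function $f$ with $\bar{\partial}^{\mathtt{CM}} f = \alpha$, and then $e^{-f}s_0$ is a $\bar{\partial}^L$-parallel local frame. These frames define the holomorphic structure.

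The key bookkeeping fact is that $F^{0,2}$ is additive under tensor products of line bundles carrying the induced $\bar{\partial}$-operators. This holds because the Leibniz rule gives
\[
\bar{\partial}(s_1\otimes s_2)=\bar{\partial}s_1\otimes s_2+s_1\otimes\bar{\partial}s_2,
\]
and the cross terms cancel in the square by antisymmetry of the wedge product of the $(0,1)$-forms. It is also invariant under isomorphisms of line bundles that intertwine the $\bar{\partial}$-operators.

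\emph{Case $N=1$.} Here $n=1=N$, so \autoref{prop2N0plusDCanonical} identifies $(\Omega^{2,0,+},\bar{\partial}^{\mathtt{D}})$ with $(\Omega^{2,0},\bar{\partial}^{\mathtt{CM}})$. This is the canonical bundle and is holomorphic. (Alternatively, note that $(0,2)$-forms vanish identically in complex dimension $2$.)

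\emph{Case $N\geqslant 2$.} First apply \autoref{propDBarD2n0plus} with $n=N$ and combine it with \autoref{prop2N0plusDCanonical}. This yields
\[
F^{0,2}\bigl(\Omega^{2,0,+},\bar{\partial}^{\mathtt{D}}\bigr) + (N-1)\,F^{0,2}\bigl(\Omega^{2,0,+},\bar{\partial}^{\mathtt{LC}}\bigr) = F^{0,2}\bigl(\Omega^{2N,0},\bar{\partial}^{\mathtt{CM}}\bigr)=0 .
\]
By \autoref{cor20plusLCHolo} the $\bar{\partial}^{\mathtt{LC}}$ term vanishes, so $F^{0,2}(\Omega^{2,0,+},\bar{\partial}^{\mathtt{D}})=0$. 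Next apply \autoref{propDBarD2n0plus} with general $n$ and use additivity again:
\[
F^{0,2}\bigl(\Omega^{2n,0,+},\bar{\partial}^{\mathtt{D}}\bigr)=F^{0,2}\bigl(\Omega^{2,0,+},\bar{\partial}^{\mathtt{D}}\bigr)+(n-1)\,F^{0,2}\bigl(\Omega^{2,0,+},\bar{\partial}^{\mathtt{LC}}\bigr)=0 .
\]
By the first paragraph, $(\Omega^{2n,0,+},\bar{\partial}^{\mathtt{D}})$ is then holomorphic.

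The main point needing care is the additivity of $F^{0,2}$ and its compatibility with the wedge-product isomorphism. The isomorphism in \autoref{propDBarD2n0plus} is stated as an isomorphism of line bundles with $\bar{\partial}$-operators, so what remains is the short Leibniz computation. I would verify it directly using the local formula \EWzJhvEqRef{eqDBarD2n0plus}: in the frame $(\omega^+)^n$, the connection $(0,1)$-form of $\bar{\partial}^{\mathtt{D}}$ is $n\psi$ plus an $n$-independent term built from $\bar{\phi}$. Its $\bar{\partial}^{\mathtt{CM}}$ is therefore affine in $n$, which makes the whole argument transparent.
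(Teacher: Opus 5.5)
Your proposal is correct and follows essentially the paper's argument: $n=N$ comes from \autoref{prop2N0plusDCanonical}; for $N\geqslant 2$, the case $n=1$ comes from the isomorphism of \autoref{propDBarD2n0plus} with $n=N$ combined with \autoref{cor20plusLCHolo}; general $n$ then follows from the same isomorphism, and your additivity of $F^{0,2}$ just makes explicit why tensor products and ``quotients'' of such line bundles remain holomorphic. One correction: your parenthetical alternative for $N=1$ is wrong, because $(M,J)$ then has complex dimension $2N=2$, where $(0,2)$-forms do not vanish (this is exactly why the paper does not claim $(\Omega^{2,0,+},\bar{\partial}^{\mathtt{LC}})$ is holomorphic when $N=1$); your main argument via \autoref{prop2N0plusDCanonical} does not need that remark.
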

	\begin{proof}
		For~$n=N$, use \autoref{prop2N0plusDCanonical}.
		
		For~$n \ifEWzJhvPDF\hspace*{0pt minus 0.82pt}\fi=\ifEWzJhvPDF\hspace*{0pt minus 0.82pt}\fi 1 \ifEWzJhvPDF\hspace*{0pt minus 0.82pt}\fi<\ifEWzJhvPDF\hspace*{0pt minus 0.82pt}\fi N$, use the isomorphism $(\Omega^{2,0,+}\ifEWzJhvPDF\hspace*{0pt minus 0.82pt}\fi,\ifEWzJhvPDF\hspace*{0pt minus 0.82pt}\fi \bar{\partial}^{\mathtt{D}}) \otimes_{\mathbb{C}} (\Omega^{2,0,+}\ifEWzJhvPDF\hspace*{0pt minus 0.82pt}\fi,\ifEWzJhvPDF\hspace*{0pt minus 0.82pt}\fi \bar{\partial}^{\mathtt{LC}})^{\otimes_{\mathbb{C}}(N-1)} \ifEWzJhvPDF\hspace*{0pt minus 0.82pt}\fi\cong\ifEWzJhvPDF\hspace*{0pt minus 0.82pt}\fi (\Omega^{2N,0,+}\ifEWzJhvPDF\hspace*{0pt minus 0.82pt}\fi,\ifEWzJhvPDF\hspace*{0pt minus 0.82pt}\fi \bar{\partial}^{\mathtt{D}})$ from \autoref{propDBarD2n0plus} together with \autoref{cor20plusLCHolo}.
		
		For~$1 \ifEWzJhvPDF\hspace*{0pt minus 0.45pt}\fi<\ifEWzJhvPDF\hspace*{0pt minus 0.45pt}\fi n \ifEWzJhvPDF\hspace*{0pt minus 0.45pt}\fi<\ifEWzJhvPDF\hspace*{0pt minus 0.45pt}\fi N$, use the isomorphism $(\Omega^{2,0,+}\ifEWzJhvPDF\hspace*{0pt minus 0.45pt}\fi,\ifEWzJhvPDF\hspace*{0pt minus 0.45pt}\fi \bar{\partial}^{\mathtt{D}}) \otimes_{\mathbb{C}} (\Omega^{2,0,+}\ifEWzJhvPDF\hspace*{0pt minus 0.45pt}\fi,\ifEWzJhvPDF\hspace*{0pt minus 0.45pt}\fi \bar{\partial}^{\mathtt{LC}})^{\otimes_{\mathbb{C}}(n-1)} \ifEWzJhvPDF\hspace*{0pt minus 0.45pt}\fi\cong\ifEWzJhvPDF\hspace*{0pt minus 0.45pt}\fi (\Omega^{2n,0,+}\ifEWzJhvPDF\hspace*{0pt minus 0.45pt}\fi,\ifEWzJhvPDF\hspace*{0pt minus 0.45pt}\fi \bar{\partial}^{\mathtt{D}})$ from \autoref{propDBarD2n0plus} together with \autoref{cor20plusLCHolo}.
	\end{proof}
	\begin{EWzJhvRemark}
		As a consequence, we have a ``converse'' to \autoref{thm2} in the following sense: given $(M,g,Q)$ with $\mathrm{Hol}(g) \subset \mathrm{Sp}(N)\hspace{0.0625em}\mathrm{Sp}(1)$ and given a compatible complex structure $J$, \ifEWzJhvPDF\linebreak\fi \emph{locally} there always exist nonzero coclosed sections of\/ $\Omega^{2,0,+}$, which are parascalar $(2,0)$-forms with induced $\mathbb{Z}_2$-almost complex structure given by $\pm J$.
	\end{EWzJhvRemark}
	
	\subsubsection*{Extremal property of compatible complex structure}
	
	The following proposition indicates that $\omega^J$ attends the equality in \autoref{propIntegrabIneq}.\EWzJhvFootnote{\label{ftnPrePropIntegrabEq}%
		It is also easy to prove the converse, i.e.\ for a compatible almost complex structure $J$, if\/ $\omega^J$ attends the equality in \autoref{propIntegrabIneq}, then $J$ is integ\EWzJhvText{rab}le \cite[Remark~2.6~(1)]{CompatibleAlmostCplxStrsOnQKManifolds}.\ifEWzJhvPDF\vspace{0.4pt}\fi
		
		Due to \autoref{propDDStar2plus}, \autoref{propIntegrabEq} is equivalent to $\bigl|\mathrm{d} \omega^J\bigr|\ifEWzJhvPDF{\rule{0pt}{1.88ex}}\fi^2=\bigl|\nabla \omega^J\bigr|\ifEWzJhvPDF{\rule{0pt}{1.88ex}}\fi^2$, which actually holds for general hermitian manifolds and reflects the basic fact that given a hermitian manifold, if its ``Kähler form'' is closed, then it is Kähler. However, the inequality of \autoref{propIntegrabIneq} does not generalise to general almost hermitian manifolds in this manner---both $\bigl|\mathrm{d} \omega^J\bigr|\ifEWzJhvPDF{\rule{0pt}{1.88ex}}\fi^2>\bigl|\nabla \omega^J\bigr|\ifEWzJhvPDF{\rule{0pt}{1.88ex}}\fi^2$ and $\bigl|\mathrm{d} \omega^J\bigr|\ifEWzJhvPDF{\rule{0pt}{1.88ex}}\fi^2<\bigl|\nabla \omega^J\bigr|\ifEWzJhvPDF{\rule{0pt}{1.88ex}}\fi^2$ \ifEWzJhvPDF\vspace{-1.2pt}\fi are possible. See \cite[Remark~2]{GauduchonCplxStrCConfMNegT}, which also gives a general formula for $\bigl|\nabla \omega^J\bigr|\ifEWzJhvPDF{\rule{0pt}{1.88ex}}\fi^2-\bigl|\mathrm{d} \omega^J\bigr|\ifEWzJhvPDF{\rule{0pt}{1.88ex}}\fi^2$.\ifEWzJhvPDF\vspace{0.4pt}\fi}
	It is basically \cite[Remark~2.6~(1)]{CompatibleAlmostCplxStrsOnQKManifolds}.
	
	\begin{EWzJhvProposition}\label{propIntegrabEq}\hspace*{-0.5em}\EWzJhvFootnote{Keep in mind\EWzJhvFootRef{ftnInnPdExtPdTenPd} that our inner product for exterior products is different from the one for tensor products, i.e.\ $\bigl|\nabla J\bigr|=\sqrt{2}\mskip2mu\relax\bigl|\nabla \omega^J\bigr|$, etc.}
		$N\hspace{0.0625em}\bigl|{\displaystyle \mathrm{d}^*} \omega^J\bigr|\ifEWzJhvPDF{\rule{0pt}{1.88ex}}\fi^2=\bigl|\nabla \omega^J\bigr|\ifEWzJhvPDF{\rule{0pt}{1.88ex}}\fi^2$
	\end{EWzJhvProposition}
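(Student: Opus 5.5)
We prove the equality $N\,\bigl|\mathrm{d}^*\omega^J\bigr|^2=\bigl|\nabla\omega^J\bigr|^2$ by computing both sides pointwise in the local frame $(e_{j,k})$ of \autoref{subsecCplxStr}, using the $(0,1)$-form $\phi$. Since $\omega^J$ is real with constant norm, all of $\nabla\omega^J$ is encoded in $\phi$. By definition $\nabla_{\!X}\omega^J=(\iota_X\phi)\,\omega^+$ for $(0,1)$-vectors $X$. For $(1,0)$-vectors $X$ we have $\nabla_{\!X}\omega^J=\overline{(\iota_{\overline X}\phi)\,\omega^+}$. Both facts hold because $J$ is integrable: \hyperlink{nablaOmegaJ}{this step} gives $\nabla_{\!X}\omega^J\in\Omega^{2,0,+}$ and not merely $\Omega^{2,+}\otimes\mathbb{C}$.

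\textbf{Step 1: the norm of $\nabla\omega^J$.} Expand the real vector fields in terms of $e_{j,k},\bar e_{j,k}$, using the $\mathbb{C}$-linearly extended metric with $\langle e_{j,k},\bar e_{l,m}\rangle=\delta_{jl}\delta_{km}$. This gives
\[\bigl|\nabla\omega^J\bigr|^2=\sum_{j,k}\Bigl(\bigl\langle\nabla_{\!e_{j,k}}\omega^J,\nabla_{\!\bar e_{j,k}}\omega^J\bigr\rangle+\bigl\langle\nabla_{\!\bar e_{j,k}}\omega^J,\nabla_{\!e_{j,k}}\omega^J\bigr\rangle\Bigr).\]
Next, $\langle\omega^+,\overline{\omega^+}\rangle=N$, since $\omega^+$ consists of $N$ orthonormal wedge terms in the paper's convention. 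Hence $|\nabla\omega^J|^2=2N\sum_{j,k}\bigl|\iota_{\bar e_{j,k}}\phi\bigr|^2$, possibly up to a uniform normalisation factor that Step 3 must track.

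\textbf{Step 2: compute $\mathrm{d}^*\omega^J$.} Use \EWzJhvEqRef{eqDAStar} in the complex frame: $\mathrm{d}^*\omega^J=-\sum_{j,k}\bigl(\iota_{e_{j,k}}\nabla_{\!\bar e_{j,k}}+\iota_{\bar e_{j,k}}\nabla_{\!e_{j,k}}\bigr)\omega^J$. Substitute the formulas above. With $\omega^+=\sum_j e^{j,1}\wedge e^{j,2}$, the first sum gives
\[-\sum_j\Bigl((\iota_{\bar e_{j,1}}\phi)\,e^{j,2}-(\iota_{\bar e_{j,2}}\phi)\,e^{j,1}\Bigr),\]
and the second sum is its complex conjugate. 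This is essentially the computation already carried out in the proof of \autoref{propDBarD2n0plus}. Thus $\mathrm{d}^*\omega^J=2\,\mathrm{Re}(\eta)$, where $\eta\in\Omega^{1,0}$ has coefficients $\pm\iota_{\bar e_{j,k}}\phi$ in a unitary coframe.

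\textbf{Step 3: compare norms.} For $\eta\in\Omega^{1,0}$ we have $|\eta+\bar\eta|^2=2\langle\eta,\bar\eta\rangle$, because $\langle\eta,\eta\rangle=0$. So $|\mathrm{d}^*\omega^J|^2=2\sum_{j,k}|\iota_{\bar e_{j,k}}\phi|^2$. Multiplying by $N$ gives exactly $|\nabla\omega^J|^2$ from Step 1.

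\textbf{Main obstacle.} The real risk is bookkeeping: the normalisation of the complex frame relative to the real orthonormal one, and the paper's convention $|e^{1,1}\wedge e^{1,2}|=1$. A stray factor of $2$ here would break the equality. To fix the constants, I would first check everything in the flat model $\mathbb{R}^{4N}$, or verify the identity on a single $\phi$-component. As a sanity check, the proof of \autoref{propIntegrabIneq} shows equality holds exactly when the two terms $k=2,3$ there are orthogonal with equal norms. This matches Step 2, where the $\omega^2$ and $\omega^3$ components are the real and imaginary parts of a single $\omega^+$-component, with $\mathrm{d}^*$-images related by $J$.
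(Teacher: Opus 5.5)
Your proposal is correct and is essentially the paper's own proof. Both sides are expressed through $\phi$ in the frame $(e_{j,k})$, giving $\bigl|\nabla\omega^J\bigr|^2=2N\lvert\phi\rvert^2$ and $\bigl|\mathrm{d}^*\omega^J\bigr|^2=2\lvert\phi\rvert^2$; your constants are already exactly right, so there is no extra normalisation factor to track.

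One aside on your sanity check, which plays no role in the argument: in the step $\bigl|\mathrm{d}^*\omega\bigr|^2\leqslant 2\sum_{k=2}^3\lvert\cdots\rvert^2$ of the proof of \autoref{propIntegrabIneq}, equality holds if and only if the two $k=2,3$ terms coincide. It does not hold when they are orthogonal with equal norms.
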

	\begin{proof}
		Locally, for ${\displaystyle \mathrm{d}^*} \omega^J$, we have
		\[-\sum_{j=1}^N{}\bigl(\iota_{e_{j,1}}\nabla_{\!\bar{e}_{j,1}} + \iota_{e_{j,2}}\nabla_{\!\bar{e}_{j,2}}\bigr)\,\omega^J = \sum_{j=1}^N\bigl((\iota_{\bar{e}_{j,2}} \phi)\,e^{j,1} - (\iota_{\bar{e}_{j,1}} \phi)\,e^{j,2}\bigr) \EWzJhvDisplayMathPeriod.\]\ifEWzJhvPageBreak\par\pagebreak\noindent\fi
		Then \ifEWzJhvPDF\vspace{-2pt}\fi ${\displaystyle \mathrm{d}^*}\omega^J = ({\displaystyle \mathrm{d}^*}\omega^J)^{1,0} + \overline{({\displaystyle \mathrm{d}^*}\omega^J)^{1,0}}$ and $({\displaystyle \mathrm{d}^*}\omega^J)^{1,0} = \sum_{j=1}^N\bigl((\iota_{\bar{e}_{j,2}} \phi)\,e^{j,1} - (\iota_{\bar{e}_{j,1}} \phi)\,e^{j,2}\bigr)$, \ifEWzJhvPDF\\\fi so $\displaystyle \bigl|\mathrm{d}^* {\textstyle \omega^J}\bigr|\ifEWzJhvPDF{\rule{0pt}{1.88ex}}\fi^2 = 2\hspace{0.0625em}\lvert\phi\rvert^2$. On the other hand,
		\[\bigl|\nabla {\textstyle \omega^J}\bigr|^2 = 2\hspace{0.0625em}\sum_{j=1}^N\sum_{k=1}^2{}\bigl|\nabla_{\!\bar{e}_{j,k}} {\textstyle \omega^J}\bigr|^2 = 2\hspace{0.0625em}\sum_{j=1}^N\sum_{k=1}^2{}\bigl|(\iota_{\bar{e}_{j,k}} \phi)\,\omega^+\bigr|^2 = 2N\hspace{0.0625em}\lvert\phi\rvert^2 \EWzJhvDisplayMathPeriod.\]
		Therefore, $N\hspace{0.0625em}\bigl|{\displaystyle \mathrm{d}^*} \omega^J\bigr|\ifEWzJhvPDF{\rule{0pt}{1.88ex}}\fi^2=\bigl|\nabla \omega^J\bigr|\ifEWzJhvPDF{\rule{0pt}{1.88ex}}\fi^2$.
	\end{proof}
	
	\begin{EWzJhvRemark}
		When $M$ is compact, we observe two relations between $\bigl\|{\displaystyle \mathrm{d}^*} \omega^J\bigr\|_{L^2}$ and $\bigl\|\nabla \omega^J\bigr\|_{L^2}$, one deduced from \autoref{propIntegrabEq} and one deduced from the Weitzenböck formula {\small (\autoref{propWeitzen2plus})}. The interplay of these two relations leads to a quick proof that if $M$ is compact, $N \geqslant 2$ (or $N = 1$ and the Weyl curvature tensor of $(M,g,Q)$ is anti-self-dual), and the scalar curvature is nonpositive\EWzJhvInfixFootnote{When $N \geqslant 2$, a compact $(M,g,Q)$ with $\mathrm{Hol}(g) \subset \mathrm{Sp}(N)\hspace{0.0625em}\mathrm{Sp}(1)$ and \emph{positive} scalar curvature \ifEWzJhvPDF\linebreak\fi (necessarily simply-connected by \cite[Theorem~6.6]{QKManifoldsSalamon}) admits no compatible \emph{almost} complex structure \ifEWzJhvPDF\linebreak\fi at all \cite[Theorem~3.8]{CompatibleAlmostCplxStrsOnQKManifolds}. When $N = 1$, a compact anti-self-dual $(M,g,Q)$ with positive scalar curvature and \emph{even first Betti number} admits no compatible almost complex structure by a topological \ifEWzJhvPDF\linebreak\fi argument (see e.g.\ \cite[subsection~1.1.7]{Geometry4Manifolds}), as its intersection form must be negative definite due to the Weitzenböck formula {\scriptsize (\autoref{propWeitzen2plus})}. On the other hand, a Hopf surface can be an example of a compact anti-self-dual $(M,g,Q)$ with positive scalar curvature that admits a compatible complex structure.}, then $(M,J)$ must be Kähler and the scalar curvature must actually vanish. This is basically part of \cite[Theorem~4.3]{CompatibleAlmostCplxStrsOnQKManifolds}.
	\end{EWzJhvRemark}
	
	The interplay of \autoref{propIntegrabIneq},~\ref{propIntegrabEq} and the Weitzenböck formula {\small (\autoref{propWeitzen2plus})} also leads to an elliptic PDE satisfied by $J$, \autoref{lmaPdeJa}, as we will see. For that purpose, \ifEWzJhvPDF\linebreak\fi we prove a lemma here as preparation.
	\begin{EWzJhvLemma}\label{lmaJaEqPre}\hspace*{-0.5em}\EWzJhvFootnote{\label{ftnLmaJaEqPre}Even though we do not have a generalisation of \autoref{propIntegrabIneq} to general almost hermitian manifolds (see \EWzJhvPrefixFootRef{ftnPrePropIntegrabEq}above), \autoref{lmaJaEqPre} still generalises to general hermitian manifolds: over a hermitian manifold, for~$b \in \Omega^{2,0}\oplus\Omega^{0,2}$, $\bigl\langle \mathrm{d} \omega^J, \mathrm{d} b \bigr\rangle = \bigl\langle \nabla\omega^J, \nabla b \bigr\rangle$ (\autoref{lmaJaEqPre} is a special case of this due to \autoref{propDDStar2plus}). This generalised statement follows from a direct calculation. This also follows from a variational argument like that of \autoref{lmaJaEqPre}, but instead of using \autoref{propIntegrabIneq}, we use \EWzJhvTextNormal{(23)} of \cite[Remark~2]{GauduchonCplxStrCConfMNegT}, which expresses $\bigl|\nabla \omega^J\bigr|\ifEWzJhvPDF{\rule{0pt}{1.88ex}}\fi^2 - \bigl|\mathrm{d} \omega^J\bigr|\ifEWzJhvPDF{\rule{0pt}{1.88ex}}\fi^2$ as the difference between two nonnegative quantities, both of which vanish when $J$ is integ\EWzJhvText{rab}le.}
		For\/~$b \in \Omega^{2,+}$ with\/ $\langle b, \omega^J \rangle = 0$, $N \bigl\langle {\displaystyle \mathrm{d}^*} \omega^J, {\displaystyle \mathrm{d}^*} b \bigr\rangle = \bigl\langle \nabla\omega^J, \nabla b \bigr\rangle$.\vspace{-1pt}
	\end{EWzJhvLemma}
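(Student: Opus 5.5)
The plan is a first-variation argument that combines \autoref{propIntegrabIneq} with \autoref{propIntegrabEq}. Both sides of the claimed identity are pointwise expressions involving only $b$ and its first derivatives. So it suffices to work on a small neighbourhood $U$ of an arbitrary point $p$, where $b$ is a smooth section of $\Omega^{2,+}$ satisfying $\langle b, \omega^J\rangle = 0$. Since $\omega^J = \langle J\EWzJhvCDotInd,\EWzJhvCDotInd\rangle$ with $J$ an orthogonal complex structure in $Q$, the norm $\lvert\omega^J\rvert$ is a constant $c>0$ (with our conventions $c^2 = 2N$, but the value does not matter).

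First, for $t \in \mathbb{R}$ small, I will consider the renormalised family
\[\omega_t := c\,\EWzJhvMathFracVCentered{\omega^J + t\,b}{\lvert\omega^J + t\,b\rvert} \in \Omega^{2,+}(U).\]
This is well defined because $\lvert\omega^J + tb\rvert^2 = c^2 + t^2\lvert b\rvert^2 > 0$, by the orthogonality assumption. Each $\omega_t$ has constant norm $c$ on $U$. Hence \autoref{propIntegrabIneq}, applied on $U$ as a manifold in its own right (this is allowed, as $(M,g)$ is not assumed complete), shows that
\[f(t) := \bigl|\nabla\omega_t\bigr|^2 - N\,\bigl|\mathrm{d}^*\omega_t\bigr|^2\]
satisfies $f(t) \geqslant 0$ at every point, and in particular at $p$. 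On the other hand, $\omega_0 = \omega^J$, so \autoref{propIntegrabEq} gives $f(0) = 0$. Thus $t=0$ is a minimum of the smooth function $t \mapsto f(t)(p)$, and therefore $f'(0) = 0$ at $p$.

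Next, I will compute $f'(0)$. The normalising factor is $h_t := c\,(c^2 + t^2\lvert b\rvert^2)^{-1/2} = 1 + O(t^2)$, and its differential is $\mathrm{d}h_t = O(t^2)$, with constants uniform near $p$. Consequently $\omega_t = \omega^J + t\,b + O(t^2)$ in $C^1$ near $p$. It follows that $\nabla\omega_t = \nabla\omega^J + t\,\nabla b + O(t^2)$ and $\mathrm{d}^*\omega_t = \mathrm{d}^*\omega^J + t\,\mathrm{d}^*b + O(t^2)$ at $p$. Differentiating at $t=0$ gives
\[f'(0) = 2\bigl\langle \nabla\omega^J, \nabla b\bigr\rangle - 2N\bigl\langle \mathrm{d}^*\omega^J, \mathrm{d}^*b\bigr\rangle,\]
and setting this to zero is exactly the claimed identity at $p$.

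The only step that needs care is the bookkeeping showing that the normalisation contributes nothing at first order. This covers both the value of $h_t$ and its gradient, since the gradient enters $\nabla\omega_t$ and $\mathrm{d}^*\omega_t$. Both are $O(t^2)$ precisely because $\langle b,\omega^J\rangle = 0$, which is where the hypothesis is used. If one wants to avoid the localisation remark, an alternative is a purely pointwise check: $\nabla b$ and $\mathrm{d}^*b$ at $p$ depend only on the $1$-jet of $b$ at $p$. One may therefore replace $b$ by any section with the same $1$-jet at $p$ that is still orthogonal to $\omega^J$, for instance by taking components in the local frame $\operatorname{Re}\omega^+,\operatorname{Im}\omega^+$, which are orthogonal to $\omega^J$.
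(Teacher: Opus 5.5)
Your proposal is correct and is the paper's proof. The paper also takes a constant-norm variation $\omega_t$ with $\omega_0=\omega^J$ and $t$-derivative $b$ at $t=0$, uses \autoref{propIntegrabIneq} and \autoref{propIntegrabEq} to see that $\lvert\nabla\omega_t\rvert^2-N\lvert\mathrm{d}^*\omega_t\rvert^2$ is nonnegative and vanishes at $t=0$, and reads the identity off from the vanishing first derivative. Your normalised family $c\,(\omega^J+tb)/\lvert\omega^J+tb\rvert$ just makes explicit that such a variation exists and handles the first-order bookkeeping (where $\langle b,\omega^J\rangle=0$ is used), which the paper leaves implicit.
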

	\begin{proof}
		\linespread{1.11}\selectfont
		
		Let $\omega_t\ (t \in \mathbb{R})$ be a smooth variation of smooth sections of\/ $\Omega^{2,+}$ with constant norm satisfying $\omega_0 \ifEWzJhvPDF\hspace*{0pt minus 1pt}\fi=\ifEWzJhvPDF\hspace*{0pt minus 1pt}\fi \omega^J$ and $\frac{\mathrm{d}}{\mathrm{d}t}\omega_t\bigr|_{t=0} \ifEWzJhvPDF\hspace*{0pt minus 1pt}\fi=\ifEWzJhvPDF\hspace*{0pt minus 1pt}\fi b$. \autoref{propIntegrabIneq} implies \ifEWzJhvPDF\vspace{1pt}\fi $\bigl|\nabla \omega_t\bigr|\ifEWzJhvPDF{\rule{0pt}{1.88ex}}\fi^2 - N\hspace{0.0625em}\bigl|{\displaystyle \mathrm{d}^*} \omega_t\bigr|\ifEWzJhvPDF{\rule{0pt}{1.88ex}}\fi^2 \ifEWzJhvPDF\hspace*{0pt minus 1pt}\fi\geqslant\ifEWzJhvPDF\hspace*{0pt minus 1pt}\fi 0$, and \autoref{propIntegrabEq} implies $\bigl|\nabla \omega_0\bigr|\ifEWzJhvPDF{\rule{0pt}{1.88ex}}\fi^2 - N\hspace{0.0625em}\bigl|{\displaystyle \mathrm{d}^*} \omega_0\bigr|\ifEWzJhvPDF{\rule{0pt}{1.88ex}}\fi^2 = 0$, so $\frac{\mbox{\footnotesize$\mathrm{d}$}}{\mbox{\footnotesize$\mathrm{d}t$}}\bigl(\bigl|\nabla \omega_t\bigr|\ifEWzJhvPDF{\rule{0pt}{1.88ex}}\fi^2 - N\hspace{0.0625em}\bigl|{\displaystyle \mathrm{d}^*} \omega_t\bigr|\ifEWzJhvPDF{\rule{0pt}{1.88ex}}\fi^2\bigr)\bigr|_{t=0} = 0$, which when rearranged is just the conclusion.\qedhere\par
	\end{proof}
	
	\addvspace{2.75ex plus 1ex minus 0.2ex}
	{\addtolength{\leftskip}{2cm}\addtolength{\rightskip}{2cm}\noindent\emph{In addition, some more results about\/ $\Omega^{2n,0,+}$ and the several\/ $\bar{\partial}$-operators are presented in \autoref{subsecCplxStrAddResults}. Those results are not needed elsewhere in this article, but still interesting.}\par}
	\addvspace{2.75ex plus 1ex minus 0.2ex}
	
	\subsection{First step of proof: Integ\EWzJhvText{rab}ility}\label{subsecIntegrability}
	
	We now work towards \autoref{thm2}. Assume the assumptions of \autoref{thm2}. \ifEWzJhvPDF\linebreak\fi Let $p_{\oldstylenums{0}} \in M$ satisfy $a\rvert_{p_{\oldstylenums{0}}} \neq 0$. Let $U \subset M$ be an open neighbourhood of $p_{\oldstylenums{0}}$ such that $a$ is nowhere zero on $U$, the $\mathbb{Z}_2$-almost complex structure $\tilde{J}_a$ comes from an almost complex structure denoted by $J_a$ over $U$, and the tangent bundle $TM$ together with $Q$ and $J_a$ can be trivialised over $U$. In this subsection, we shall only work over $U$. \ifEWzJhvPDF\linebreak\fi We shall do some pointwise calculation and prove that $J_a$ is integ\EWzJhvText{rab}le. We reuse the notation of\/ $\Omega^{2,0,+}, e_{j,k}, \bar{e}_{j,k}, e^{j,k}, \bar{e}^{j,k}, \langle\EWzJhvCDotInd,\EWzJhvCDotInd\rangle, \omega^{J_a}, \omega^+$ from \autoref{subsecCplxStr}, which here are defined over $U$ with respect to $J_a$. Note that we must prove the integ\EWzJhvText{rab}ility of $J_a$ first before we can apply most of the results from \autoref{subsecCplxStr}.
	
	$\omega^+ + \overline{\omega^+}$ and $i(\omega^+ - \overline{\omega^+})$ are everywhere orthogonal with equal norm in $\Omega^{2,+}(U)$, both of which are everywhere orthogonal with $\omega^{J_a}$. Because $a \in \Omega^{2,+}(V)$ is assumed to be a parascalar $(2,0)$-form and nowhere zero over $U$, let $(v_1\,\,v_2)$ be smooth sections of\/ $V$ \ifEWzJhvPDF\linebreak\fi over $U$ that are everywhere orthonormal such that
	\begin{equation}\label{eqACoord}
		\begin{aligned}
			a&=\lambda\,\bigl(v_1\otimes_{\mathbb{R}} (\omega^+ + \overline{\omega^+}) + v_2\otimes_{\mathbb{R}} i(\omega^+ - \overline{\omega^+})\bigr)\\&=\lambda\,(v_1+iv_2)\otimes_{\mathbb{C}}\omega^+ + \lambda\,(v_1-iv_2)\otimes_{\mathbb{C}}\overline{\omega^+}
		\end{aligned}
	\end{equation}
	where $\lambda$ is some smooth positive real-valued function on $U$.
	
	Let
	\begin{equation}\label{eqA20}
		a^{2,0}=a\biggl(\EWzJhvMathFracVCentered{\mathrm{id}-iJ_a}{2}\EWzJhvCDotInd,\EWzJhvMathFracVCentered{\mathrm{id}-iJ_a}{2}\EWzJhvCDotInd\biggr)=\lambda\,(v_1+iv_2)\otimes_{\mathbb{C}}\omega^+ \EWzJhvDisplayMathPeriod.
	\end{equation}
	Then $a=a^{2,0}+\overline{a^{2,0}}$ and $\lvert a\rvert=\sqrt{2}\mskip2mu\relax\bigl|a^{2,0}\bigr|$.
	
	\medskip
	
	Recall we use $\langle\EWzJhvCDotInd,\EWzJhvCDotInd\rangle$ to denote \emph{the\/ $\mathbb{C}$-linear extension} of some real inner product.
	\begin{EWzJhvLemma}
		For any (complex)\EWzJhvFootnote{For us, a ``complex'' vector field (over $M$) just means a smooth section of\/ $TM \otimes_{\mathbb{R}} \mathbb{C}$.} vector field $X$, $\bigl\langle \nabla^A_{\!X} a^{2,0}, \nabla^A_{\!X} a^{2,0} \bigr\rangle = 0$.
	\end{EWzJhvLemma}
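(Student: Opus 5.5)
The plan is to exploit the fact that, over $U$, $a^{2,0}$ is pointwise a tensor product of two \emph{isotropic} vectors for the $\mathbb{C}$-bilinear inner products. The identity then follows from the Leibniz rule and metric compatibility, with no curvature or integrability input. Concretely, by \EWzJhvEqRef{eqA20} we write $a^{2,0}=u\otimes_{\mathbb{C}}w$ with $u:=\lambda\,(v_1+iv_2)$, a smooth section of $V\otimes_{\mathbb{R}}\mathbb{C}$ over $U$, and $w:=\omega^+$, a smooth local section of $\Omega^{2,0,+}$. We may shrink $U$ if needed so that $\omega^+$ is defined on it.

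The first step is to record the isotropy relations. Since $(v_1\,\,v_2)$ are orthonormal, $\langle u,u\rangle=\lambda^2\bigl(|v_1|^2-|v_2|^2+2i\langle v_1,v_2\rangle\bigr)=0$. Since $\omega^+=\sum_j e^{j,1}\wedge e^{j,2}$ is of type $(2,0)$ and the $\mathbb{C}$-bilinear extension of the metric pairs $e^{j,k}$ only with $\bar e^{j,k}$, we also get $\langle w,w\rangle=0$. Next we use that the $\mathbb{C}$-bilinear inner product on $V_{\mathbb{C}}\otimes\Omega^2_{\mathbb{C}}$ is the product of the two factors, so $\langle u\otimes w,u'\otimes w'\rangle=\langle u,u'\rangle\langle w,w'\rangle$.

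The second step is to differentiate. For a complex vector field $X$ we define $\nabla^A_X$ by $\mathbb{C}$-linear extension in $X$, so the Leibniz rule $\nabla^A_X(u\otimes w)=\nabla^A_X u\otimes w+u\otimes\nabla_X w$ still holds. Expanding the bilinear form gives
\[\bigl\langle \nabla^A_X a^{2,0},\nabla^A_X a^{2,0}\bigr\rangle=\langle\nabla^A_Xu,\nabla^A_Xu\rangle\langle w,w\rangle+2\langle\nabla^A_Xu,u\rangle\langle\nabla_Xw,w\rangle+\langle u,u\rangle\langle\nabla_Xw,\nabla_Xw\rangle.\]
The first and last terms vanish by isotropy. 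For the middle term, compatibility of $A$ with the inner product, extended $\mathbb{C}$-bilinearly and $\mathbb{C}$-linearly in $X$, gives $2\langle\nabla^A_Xu,u\rangle=X\langle u,u\rangle=0$. This completes the argument; equivalently, one could use $\langle\nabla_Xw,w\rangle=\tfrac12X\langle w,w\rangle=0$ instead.

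I do not expect a real obstacle here. The only points needing care are the bookkeeping ones: the pairing is the $\mathbb{C}$-bilinear (not Hermitian) extension throughout, so metric compatibility indeed reads $X\langle s,s\rangle=2\langle\nabla_Xs,s\rangle$ for complex $X$ and complex sections $s$; and the inner product on the tensor product factorises. Notably, the integrability of $J_a$ is not used, which is consistent with this lemma presumably being an input to proving that integrability.
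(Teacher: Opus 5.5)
Your proof is correct. It rests on the same two isotropy facts as the paper's proof, but it organises the computation differently.

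\textbf{The paper's route.} The paper splits $\nabla^A_X a^{2,0}$ by form type. It first notes that $\nabla^A_X a^{2,0}$ has no $(0,2)$-part. The $(2,0)$-part then drops out, being isotropic and bilinearly orthogonal to $(1,1)$-forms. What remains is
\[\bigl(\nabla^A_X a^{2,0}\bigr)^{1,1}=\lambda\,(v_1+iv_2)\otimes_{\mathbb{C}}\bigl(\nabla_X\omega^+\bigr)^{1,1},\]
which is isotropic because $v_1+iv_2$ is.

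\textbf{Your route.} You split along the two tensor factors via the Leibniz rule. You then kill the three terms using $\langle u,u\rangle=0$, $\langle w,w\rangle=0$ and $\langle\nabla^A_Xu,u\rangle=\tfrac12X\langle u,u\rangle=0$.

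\textbf{What each route buys.} Your version needs no information about $\nabla_X\omega^+$. The paper's claim that $\nabla^A_X a^{2,0}$ has no $(0,2)$-part cannot yet use \autoref{corSTFormDer}, since integrability of $J_a$ is not known at this stage. It has to come from $\nabla$ preserving $\Omega^{2,+}$ together with $\langle\nabla_X\omega^+,\omega^+\rangle=\tfrac12X\langle\omega^+,\omega^+\rangle=0$, which is essentially your isotropy argument. So your proof is shorter and holds for any tensor product of two isotropic sections under metric connections. The paper's decomposition, in turn, yields the explicit $(1,1)$-part formula above. That formula is what gets reused in \autoref{lmaIotaNabla} and the higher-derivative computations that follow.

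A minor correction to your closing remark: this lemma is not an input to the integrability of $J_a$. It feeds into \autoref{corNablaAA20}, which is later used for the $W^{1,\infty}_{\mathrm{loc}}$ bound on $a^{2,0}$.
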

	\begin{proof}
		$\nabla^A_{\!X} a^{2,0}$ has no $(0,2)$-part, that is,\\[\glueexpr-\baselineskip/2\relax]\mbox{}
		\[\nabla^A_{\!X} a^{2,0} = \bigl(\nabla^A_{\!X} a^{2,0}\bigr)^{\mskip-2mu\relax 2,0} + \bigl(\nabla^A_{\!X} a^{2,0}\bigr)^{\mskip-2mu\relax 1,1} \EWzJhvDisplayMathPeriod.\]
		Then
		\[\bigl\langle \nabla^A_{\!X} a^{2,0}, \nabla^A_{\!X} a^{2,0} \bigr\rangle = \bigl\langle \bigl(\nabla^A_{\!X} a^{2,0}\bigr)^{\mskip-2mu\relax 1,1}, \bigl(\nabla^A_{\!X} a^{2,0}\bigr)^{\mskip-2mu\relax 1,1} \bigr\rangle \EWzJhvDisplayMathPeriod.\]
		However,
		\[\bigl(\nabla^A_{\!X} a^{2,0}\bigr)^{\mskip-2mu\relax 1,1} = \lambda\,(v_1+iv_2)\otimes_{\mathbb{C}}\bigl(\nabla_{\!X} \omega^+\bigr)^{\mskip-2mu\relax 1,1} \EWzJhvDisplayMathPeriod.\]
		Because \ifEWzJhvPDF\vspace{1pt}\fi $(v_1\,\,v_2)$ are everywhere orthonormal, $\bigl\langle v_1+iv_2,\: v_1+iv_2\bigr\rangle=0$. As a consequence, $\bigl\langle \nabla^A_{\!X} a^{2,0}, \nabla^A_{\!X} a^{2,0} \bigr\rangle = 0$.
	\end{proof}
	
	\begin{EWzJhvCorollary}\label{corNablaAA20}
		$\bigl|\nabla^A a\bigr| = \sqrt{2}\mskip2mu\relax\bigl|\nabla^A a^{2,0}\bigr|$
	\end{EWzJhvCorollary}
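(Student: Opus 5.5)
Since $a$ is real, we have $a=a^{2,0}+\overline{a^{2,0}}$ with $\overline{a^{2,0}}$ the complex conjugate of $a^{2,0}$. We therefore plan to compute $\bigl|\nabla^A a\bigr|^2$ by expanding this decomposition and using the preceding lemma to kill the cross terms. Fix a local orthonormal frame $(e_j)_{j=1}^{4N}$ of $TM$ over $U$; these are real vector fields. Then
\[\bigl|\nabla^A a\bigr|^2=\sum_{j=1}^{4N}\bigl\langle \nabla^A_{e_j} a, \nabla^A_{e_j} a\bigr\rangle ,\]
where $\langle\EWzJhvCDotInd,\EWzJhvCDotInd\rangle$ is the $\mathbb{C}$-linear extension. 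Because $e_j$ is real and the connection is real, $\nabla^A_{e_j}\overline{a^{2,0}}=\overline{\nabla^A_{e_j} a^{2,0}}$.

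Expanding bilinearly gives
\[\bigl\langle \nabla^A_{e_j} a, \nabla^A_{e_j} a\bigr\rangle = \bigl\langle \nabla^A_{e_j} a^{2,0}, \nabla^A_{e_j} a^{2,0}\bigr\rangle + \overline{\bigl\langle \nabla^A_{e_j} a^{2,0}, \nabla^A_{e_j} a^{2,0}\bigr\rangle} + 2\bigl\langle \nabla^A_{e_j} a^{2,0}, \overline{\nabla^A_{e_j} a^{2,0}}\bigr\rangle .\]
Apply the preceding lemma with $X=e_j$. It says the first term vanishes, and then so does its conjugate. The last term is $2\bigl|\nabla^A_{e_j}a^{2,0}\bigr|^2$, measured in the hermitian norm.

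Summing over $j$ gives $\bigl|\nabla^A a\bigr|^2=2\bigl|\nabla^A a^{2,0}\bigr|^2$, and taking square roots gives the corollary. This is the same mechanism that gives $\lvert a\rvert=\sqrt{2}\mskip2mu\relax\bigl|a^{2,0}\bigr|$: there $\langle a^{2,0},a^{2,0}\rangle=0$ because $\langle v_1+iv_2, v_1+iv_2\rangle=0$.

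The only point needing care is consistency of norms. The norm on complex tensors must be the hermitian one, $|z|^2=\langle z,\bar z\rangle$, and the statement should be read over $U$, where $a^{2,0}$ is defined. No real obstacle is expected.
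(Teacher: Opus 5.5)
Your proposal is correct and follows the paper's argument: for a real vector field $X$, write $\nabla^A_X a = \nabla^A_X a^{2,0} + \overline{\nabla^A_X a^{2,0}}$, use the preceding lemma $\langle \nabla^A_X a^{2,0}, \nabla^A_X a^{2,0}\rangle = 0$ and its conjugate to kill the cross terms, and sum over directions. The only difference is bookkeeping: the paper expands the hermitian norm $|\nabla^A_X a|^2$ rather than the bilinear pairing, and the resulting four terms are the same.
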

	\begin{proof}
		For any real vector field $X$,
		\[\bigl|\nabla^A_{\!X} a\bigr|^2 = \bigl|\nabla^A_{\!X} a^{2,0}\bigr|^2 + \bigl\langle \nabla^A_{\!X} a^{2,0}, \overline{\nabla^A_{\!X} \overline{a^{2,0}}} \bigr\rangle + \bigl\langle \nabla^A_{\!X} \overline{a^{2,0}}, \overline{\nabla^A_{\!X} a^{2,0}} \bigr\rangle + \bigl|\nabla^A_{\!X} \overline{a^{2,0}}\bigr|^2 \EWzJhvDisplayMathPeriod.\]
		Among the terms, note \ifEWzJhvPDF\vspace{1pt}\fi $\bigl\langle \nabla^A_{\!X} a^{2,0}, \overline{\nabla^A_{\!X} \overline{a^{2,0}}} \bigr\rangle = \overline{\bigl\langle \nabla^A_{\!X} \overline{a^{2,0}}, \overline{\nabla^A_{\!X} a^{2,0}} \bigr\rangle} = \bigl\langle \nabla^A_{\!X} a^{2,0}, \nabla^A_{\!X} a^{2,0} \bigr\rangle = 0$ and $\bigl|\nabla^A_{\!X} a^{2,0}\bigr|\ifEWzJhvPDF{\rule{0pt}{1.88ex}}\fi^2 = \bigl|\nabla^A_{\!X} \overline{a^{2,0}}\bigr|\ifEWzJhvPDF{\rule{0pt}{1.88ex}}\fi^2$, therefore $\bigl|\nabla^A_{\!X} a\bigr|\ifEWzJhvPDF{\rule{0pt}{1.88ex}}\fi^2 = 2\hspace{0.0625em}\bigl|\nabla^A_{\!X} a^{2,0}\bigr|\ifEWzJhvPDF{\rule{0pt}{1.88ex}}\fi^2$, so $\bigl|\nabla^A a\bigr| = \sqrt{2}\mskip2mu\relax\bigl|\nabla^A a^{2,0}\bigr|$.
	\end{proof}
	
	In addition, we use the formula
	\begin{equation}\label{eq8}
		\mathrm{d}_A^*=-\sum_{j=1}^N\sum_{k=1}^2\bigl(\iota_{e_{j,k}}\nabla^A_{\bar{e}_{j,k}} + \iota_{\bar{e}_{j,k}}\nabla^A_{e_{j,k}}\bigr) \EWzJhvDisplayMathPeriod.
	\end{equation}
	
	\begin{EWzJhvLemma}\label{lmaA20eq}
		${\displaystyle \mathrm{d}_A^*}a^{2,0}=0$
	\end{EWzJhvLemma}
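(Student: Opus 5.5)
The plan is to exploit two facts: $a^{2,0}$ and its conjugate together make up the real form $a$, and $a^{2,0}$ takes values along the isotropic vector $v_1+iv_2$ of $V\otimes_{\mathbb{R}}\mathbb{C}$. Set $\alpha := \mathrm{d}_A^* a^{2,0}$, a $(V\otimes_{\mathbb{R}}\mathbb{C})$-valued $1$-form on $U$.

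\emph{Step 1: reality.} Since $a=a^{2,0}+\overline{a^{2,0}}$, $a$ is real and $\mathrm{d}_A^*$ is a real operator, the hypothesis $\mathrm{d}_A^* a=0$ gives $\alpha+\overline{\alpha}=0$. Splitting into types with respect to $J_a$, this says $\alpha^{0,1}=-\overline{\alpha^{1,0}}$. Note that integrability of $J_a$ is not used anywhere in this argument.

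\emph{Step 2: type analysis via \EWzJhvEqRef{eq8}.} For any complex vector field $X$ we have
\[\nabla^A_{\!X} a^{2,0} = \nabla^A_{\!X}\bigl(\lambda(v_1+iv_2)\bigr)\otimes_{\mathbb{C}}\omega^+ + \lambda(v_1+iv_2)\otimes_{\mathbb{C}}\nabla_{\!X}\omega^+ .\]
As in the proof of the lemma preceding \autoref{corNablaAA20}, $\nabla_{\!X}\omega^+$ has no $(0,2)$-part. This holds because $\nabla$ preserves $\Omega^{2,+}\otimes_{\mathbb{R}}\mathbb{C}=\operatorname{span}\{\omega^+,\omega^{J_a},\overline{\omega^+}\}$, and the $\overline{\omega^+}$-component of $\nabla_{\!X}\omega^+$ is proportional to $\langle\nabla_{\!X}\omega^+,\omega^+\rangle=\tfrac12 X\langle\omega^+,\omega^+\rangle=0$. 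Hence $\nabla_{\!X}\omega^+$ lies in the span of $\omega^+$ and $\omega^{J_a}$. Contracting with $\iota_{e_{j,k}}$ or $\iota_{\bar e_{j,k}}$ in \EWzJhvEqRef{eq8}, the $(2,0)$-pieces produce only $(1,0)$-forms, while $\omega^{J_a}$ produces both types. Consequently $\alpha^{0,1}$ comes only from the $\omega^{J_a}$-component of $\lambda(v_1+iv_2)\otimes\nabla\omega^+$. So $\alpha^{0,1}=(v_1+iv_2)\otimes\beta$ for some scalar $(0,1)$-form $\beta$.

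\emph{Step 3: isotropy.} Next I show that the $\mathbb{C}$-bilinear pairing of $\alpha^{1,0}$ with $v_1+iv_2$ in the $V$-factor vanishes. Every term of $\alpha$ has $V$-coefficient either a multiple of $v_1+iv_2$ or a component of $\nabla^A(\lambda(v_1+iv_2))$. The former pair to zero because $\langle v_1+iv_2,v_1+iv_2\rangle=|v_1|^2-|v_2|^2+2i\langle v_1,v_2\rangle=0$. The latter pair to
\[\lambda\langle\nabla^A(v_1+iv_2),v_1+iv_2\rangle=\tfrac{\lambda}{2}\,\mathrm{d}\langle v_1+iv_2,v_1+iv_2\rangle=0\]
plus a multiple of $\langle v_1+iv_2,v_1+iv_2\rangle=0$. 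This uses that $A$ is compatible with the inner product. On the other hand, Step 1 gives $\alpha^{1,0}=-(v_1-iv_2)\otimes\overline{\beta}$, and $\langle v_1-iv_2,v_1+iv_2\rangle=2$. The pairing therefore equals $-2\overline{\beta}$, so $\beta=0$. Thus $\alpha^{0,1}=0$, and by Step 1 also $\alpha^{1,0}=0$, which gives $\alpha=0$.

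The main obstacle is Step 2. One must carefully check which contractions in \EWzJhvEqRef{eq8} can produce a $(0,1)$-component, and verify that all of them carry the $V$-factor $v_1+iv_2$. This requires the absence of a $(0,2)$-part in $\nabla\omega^+$, and it must be established without assuming integrability of $J_a$, i.e.\ without \autoref{corSTFormDer}. I would verify it using only the $\Omega^{2,+}$-preservation property of $\nabla$ (condition~\ref{eiQCond2}) together with $\langle\omega^+,\omega^+\rangle=0$.
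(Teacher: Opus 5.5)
Your proof is correct, but it is organised differently from the paper's. The paper decomposes $\alpha:=\mathrm{d}_A^*a^{2,0}$ by directions in $V$:
\begin{itemize}
\item Your isotropy computation in Step~3 is its \EWzJhvEqRef{eq9}, namely $\langle\alpha,v_1+iv_2\rangle=0$.
\item Conjugating this and subtracting it from $\langle\mathrm{d}_A^*a,v_1-iv_2\rangle=0$ gives \EWzJhvEqRef{eq11}, namely $\langle\alpha,v_1-iv_2\rangle=0$. So the $v_1$- and $v_2$-components vanish by isotropy and reality alone, with no type considerations.
\item Types enter only for components along sections $v_\perp$ orthogonal to $v_1,v_2$. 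There the derivative must fall on $\lambda(v_1+iv_2)$ and $\omega^+$ itself is contracted, so $\langle\alpha,v_\perp\rangle$ is of type $(1,0)$ and $\langle\overline{\alpha},v_\perp\rangle$ is of type $(0,1)$.
\end{itemize}

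You decompose by form type instead. One type argument fixes the $V$-direction of all of $\alpha^{0,1}$, reality transfers it to $\alpha^{1,0}$, and a single isotropy pairing finishes. This treats all directions in $V$ uniformly. Your Step~2 is also the same computation of $(\mathrm{d}_A^*a^{2,0})^{0,1}$ that the paper carries out again in the proof of \autoref{lmaJaIntegrability}.

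What you call the main obstacle, the absence of a $(0,2)$-part in $\nabla_{\!X}\omega^+$, is true and you derive it correctly, but you do not need it. Every term of $\alpha$ whose $V$-factor is not a multiple of $v_1+iv_2$ comes from the derivative hitting $\lambda(v_1+iv_2)$, and such a term contracts $\omega^+$, giving only $(1,0)$-forms or zero. So $\alpha^{0,1}$ is a multiple of $v_1+iv_2$ whatever $\nabla\omega^+$ is. This is exactly the observation the paper uses for $v_\perp$.
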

	\begin{proof}
		Because $(v_1\,\,v_2)$ are everywhere orthonormal, we have $\bigl\langle v_1+iv_2,\: v_1+iv_2\bigr\rangle=0$, where $\langle\EWzJhvCDotInd,\EWzJhvCDotInd\rangle$ is the $\mathbb{C}$-linear extension of the inner product on $V$. As a consequence, $\bigl\langle\nabla^A(v_1+iv_2),\: v_1+iv_2\bigr\rangle=\frac{1}{2}\nabla\bigl\langle v_1+iv_2,\: v_1+iv_2\bigr\rangle=0$. Using \EWzJhvEqRef{eq8}, we get
		\begin{equation}\label{eq9}
			\bigl\langle\mathrm{d}_A^*\bigl(\lambda\,(v_1+iv_2)\otimes_{\mathbb{C}}\omega^+\bigr),\: v_1+iv_2\bigr\rangle=0 \EWzJhvDisplayMathPeriod.
		\end{equation}\ifEWzJhvPageBreak\par\pagebreak\noindent\fi
		Similarly,\ifEWzJhvPDF\makebox[0.5\linewidth]{}\vspace{\glueexpr-\baselineskip/2\relax}\fi
		\begin{equation}\label{eq10}
			\bigl\langle\mathrm{d}_A^*\bigl(\lambda\,(v_1-iv_2)\otimes_{\mathbb{C}}\overline{\omega^+}\bigr),\: v_1-iv_2\bigr\rangle=0 \EWzJhvDisplayMathPeriod.
		\end{equation}
		However, $\displaystyle \bigl\langle\mathrm{d}_A^*a,\: v_1-iv_2\bigr\rangle=0$, whose difference with \EWzJhvEqRef{eq10} gives
		\begin{equation}\label{eq11}
			\bigl\langle\mathrm{d}_A^*\bigl(\lambda\,(v_1+iv_2)\otimes_{\mathbb{C}}\omega^+\bigr),\: v_1-iv_2\bigr\rangle=0 \EWzJhvDisplayMathPeriod.
		\end{equation}
		\EWzJhvEqRef{eq9} together with \EWzJhvEqRef{eq11} gives $\bigl\langle{\displaystyle \mathrm{d}_A^*}a^{2,0},v_1\bigr\rangle = \bigl\langle{\displaystyle \mathrm{d}_A^*}a^{2,0},v_2\bigr\rangle = 0$.
		
		\smallskip
		
		We still need to show $\bigl\langle{\displaystyle \mathrm{d}_A^*}a^{2,0},v_\perp\bigr\rangle=0$ for any section $v_\perp$ of\/ $V$ on $U$ satisfying $\langle v_\perp,v_1\rangle = \langle v_\perp,v_2\rangle = 0$. Note that in ${\displaystyle \mathrm{d}_A^*}a^{2,0}$, the derivation must hit $(v_1+iv_2)$ to make the inner product with $v_\perp$ nonzero, thus $\bigl\langle{\displaystyle \mathrm{d}_A^*}a^{2,0},v_\perp\bigr\rangle \in \Omega^{1,0}(U)$ and similarly $\bigl\langle{\displaystyle \mathrm{d}_A^*}\overline{a^{2,0}},v_\perp\bigr\rangle \in \Omega^{0,1}(U)$, which together with $\bigl\langle{\displaystyle \mathrm{d}_A^*}a,v_\perp\bigr\rangle=0$ imply $\bigl\langle{\displaystyle \mathrm{d}_A^*}a^{2,0},v_\perp\bigr\rangle=0$.
	\end{proof}
	
	\begin{EWzJhvLemma}\label{lmaJaIntegrability}
		$J_a$ is integrable over\/ $U$. Therefore, $\tilde{J}_a$ is a\/ $\mathbb{Z}_2$-complex structure on the complement of the zero locus of\/\hspace{-0.08em} $a$.\vspace{-1pt}
	\end{EWzJhvLemma}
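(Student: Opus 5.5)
We aim to show that the Nijenhuis tensor of $J_a$ vanishes on $U$. Equivalently, for $(1,0)$-vector fields $X,Y$ the bracket $[X,Y]$ has no $(0,1)$-part. For a compatible almost complex structure this amounts to controlling the $\Omega^{0,2}$-valued (Nijenhuis) component of $\nabla \omega^{J_a}$. The starting point is \autoref{lmaA20eq}, $\mathrm{d}_A^* a^{2,0}=0$, combined with the algebraic nullity $\langle v_1+iv_2, v_1+iv_2\rangle=0$ and its derivative $\langle \nabla^A(v_1+iv_2), v_1+iv_2\rangle=0$. Write $a^{2,0}=s\otimes_{\mathbb{C}}\omega^+$ with $s=\lambda(v_1+iv_2)$. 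Pairing $\mathrm{d}_A^*a^{2,0}$ against $\bar s=\lambda(v_1-iv_2)$ isolates a scalar equation, because $\langle s,\bar s\rangle=2\lambda^2\neq0$ while $\langle s,s\rangle=0$:
\[0=\bigl\langle \mathrm{d}_A^* a^{2,0},\bar s\bigr\rangle = \mathrm{d}^*\bigl(\langle s,\bar s\rangle\,\omega^+\bigr)+(\text{terms where }\nabla\text{ hits }\bar s).\]
The correction terms involve $\langle s,\nabla^A\bar s\rangle$ contracted against $\omega^+$. This leaves a first-order condition on $u\,\omega^+$ with $u=2\lambda^2$, up to a $1$-form coefficient coming from the connection. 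The pairing against $s$ itself, which is \EWzJhvEqRef{eq9}, is automatic and carries no information.

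Next I would write $\mathrm{d}^*$ of a section of the form $f\,\omega^+$ using \EWzJhvEqRef{eq8}, without yet assuming integrability. Here $\omega^+$ is a local section of the $(2,0)$-part of $\Omega^{2,+}\otimes\mathbb{C}$ defined by $J_a$, as in the setup of this subsection. Decomposing $\mathrm{d}^*(f\omega^+)$ into types gives a $(1,0)$-part and a $(0,1)$-part. The $(1,0)$-part comes from $\iota_{e_{j,k}}\nabla_{\bar e_{j,k}}$, and it is where $\partial_{\bar e}f$ and the connection $1$-forms ($\psi$-type terms) appear. The $(0,1)$-part comes only from $\iota_{\bar e_{j,k}}$ applied to the $(1,1)$- and $(0,2)$-components of $\nabla_{e_{j,k}}\omega^+$. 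In the integrable case, \autoref{corSTFormDer} kills the $(0,2)$-component. In general, however, that component is exactly the Nijenhuis tensor, expressed through $\langle \nabla_{e_{j,k}}\omega^+,\overline{\omega^+}\rangle$, or equivalently $\nabla_{e}\omega^{J_a}$ having an $\Omega^{0,2}$-component along $\overline{\omega^+}$. The key claim is then the following. The coefficient on $\nabla^A\bar s$ only contributes multiples of $\langle s,\nabla^A_{X}\bar s\rangle$ times $\iota_{\bar X}$-type contractions of $\omega^+$, which lie in $\Omega^{1,0}$. Therefore the $(0,1)$-part of the scalar equation consists of $u$ times the Nijenhuis contraction, and it must vanish. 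Since $u=2\lambda^2>0$, this forces the $(0,2)$-component of $\nabla_{e}\omega^+$, or equivalently of $\nabla_X\omega^{J_a}$ for $X$ of type $(1,0)$, to vanish.

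Finally, I would check that the vanishing contraction really recovers the whole Nijenhuis tensor, and this is the step I expect to be the main obstacle. Set $\nabla_X\omega^{J_a}=(\iota_X\chi)\,\overline{\omega^+}$ for $(1,0)$-vectors $X$; the constant norm of $\omega^{J_a}$ and the orthogonality relations place the "bad" component in this form, with $\chi\in\Omega^{1,0}$. The $(0,1)$-part of the equation then becomes $\sum_j\bigl((\iota_{e_{j,2}}\chi)\bar e^{j,1}-(\iota_{e_{j,1}}\chi)\bar e^{j,2}\bigr)$, up to constants. This mirrors the calculation in \autoref{propIntegrabEq} and is injective in $\chi$. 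Hence $\chi=0$, so $\nabla_X J_a$ commutes appropriately, i.e. $\nabla_{J_aX}J_a=J_a\nabla_XJ_a$, and $J_a$ is integrable. The second sentence of the lemma follows because integrability is local and invariant under $J\mapsto -J$. The delicate point is careful type bookkeeping in the non-integrable setting: the $(0,1)$-part must contain no other unknown terms, and the $\mathbb{C}$-bilinear pairings must not conjugate anything unexpectedly.
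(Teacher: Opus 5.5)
Your overall plan matches the paper's: take the $(0,1)$-part of $\mathrm{d}_A^*a^{2,0}=0$, note that every term in which the derivative falls on $s=\lambda(v_1+iv_2)$ comes with $\iota_{e_{j,k}}\omega^+\in\Omega^{1,0}$, and read off a Nijenhuis-type quantity multiplied by the nonvanishing factor $\lambda$. Pairing with $\bar s$ first is harmless but unnecessary. The gap is in the step you flagged: the type bookkeeping is reversed, so you isolate the wrong component.

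Since $\nabla$ preserves $\Omega^{2,+}\otimes_{\mathbb{R}}\mathbb{C}=\operatorname{span}\{\omega^+,\omega^{J_a},\overline{\omega^+}\}$ and $\langle\omega^+,\omega^+\rangle\equiv0$, the $\overline{\omega^+}$-coefficient $\frac1N\langle\nabla_Y\omega^+,\omega^+\rangle$ of $\nabla_Y\omega^+$ vanishes for every $Y$, whether or not $J_a$ is integrable. So $\nabla_{e_{j,k}}\omega^+$ never has a $(0,2)$-part. The pairing $\langle\nabla_{e_{j,k}}\omega^+,\overline{\omega^+}\rangle$ is $N$ times the $\omega^+$-coefficient, which is a $\psi$-type term, not a Nijenhuis term. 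Moreover, $\iota_{\bar e_{j,k}}$ sends $(1,1)$-forms to $(1,0)$-forms. Hence $\iota_{\bar e_{j,k}}\nabla_{e_{j,k}}(u\,\omega^+)$ contributes nothing of type $(0,1)$. The entire $(0,1)$-part of $\mathrm{d}^*(u\,\omega^+)$ is $-u\sum_{j,k}\iota_{e_{j,k}}(\nabla_{\bar e_{j,k}}\omega^+)^{1,1}$, which is the opposite of your assignment.

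Correspondingly, your $\chi$ (the $\overline{\omega^+}$-component of $\nabla_X\omega^{J_a}$ for $X$ of type $(1,0)$) is not the Nijenhuis tensor. It is the conjugate of the paper's $\phi$, the quantity that determines $\mathrm{d}^*\omega^{J_a}$ in \autoref{propIntegrabEq}. Its vanishing expresses $\nabla_{J_aX}J_a=-J_a\nabla_XJ_a$ rather than $+J_a\nabla_XJ_a$, and it does not vanish identically when $J_a$ is integrable but not Kähler. Had your identification been right, you would have proved $\nabla J_a=0$ for every coclosed parascalar $a$. That contradicts the remark after \autoref{prop2n0plusHolm}: coclosed sections of $\Omega^{2,0,+}$ exist locally for every compatible complex structure, e.g.\ the standard $J$ with a non-Kähler conformal rescaling of flat $\mathbb{R}^4$. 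Your ansatz $\nabla_X\omega^{J_a}=(\iota_X\chi)\,\overline{\omega^+}$ also discards the $\omega^+$-component, which is exactly where the Nijenhuis tensor sits for $X$ of type $(1,0)$.

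The repair is the paper's computation. Define $\phi'\in\Omega^{0,1}(U)$ by $(\nabla_Z\omega^{J_a})^{0,2}=(\iota_Z\phi')\,\overline{\omega^+}$ for $Z$ of type $(0,1)$; this is the Nijenhuis component. Then $(\nabla_Z\omega^+)^{1,1}=-\frac{1}{2N}\langle\omega^+,\nabla_Z\omega^{J_a}\rangle\,\omega^{J_a}=-\frac12(\iota_Z\phi')\,\omega^{J_a}$. Since $\iota_{e_{j,k}}\omega^{J_a}=i\,\bar e^{j,k}$, the $(0,1)$-part of $\mathrm{d}_A^*a^{2,0}$ is a nonzero constant multiple of $\lambda(v_1+iv_2)\otimes_{\mathbb{C}}\phi'$. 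This is a multiple of $\phi'$ itself, so no twisted injectivity check is needed. Hence $\phi'=0$, i.e.\ $\nabla_Z\omega^{J_a}$ is of type $(2,0)$ for $Z$ of type $(0,1)$. Then $(\nabla_ZJ_a)W=0$ for $(0,1)$-fields $Z,W$, so $[Z,W]^{1,0}=(\nabla_ZW)^{1,0}-(\nabla_WZ)^{1,0}=0$. Newlander--Nirenberg then gives integrability, and your remark about $J\mapsto-J$ handles $\tilde J_a$.
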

	\begin{proof}
		\linespread{1.08}\selectfont
		
		Let $\phi' \in \Omega^{0,1}(U)$ satisfy $\bigl(\nabla_{\!X} \omega^{J_a}\bigr)\ifEWzJhvPDF{\rule{0pt}{1.88ex}}\fi^{\mskip-2mu\relax 0,2} = (\iota_X\phi')\,\overline{\omega^+}$ for any $(0,1)$-vector field $X$ over $U$. Then for any $(0,1)$-vector field $X$ over $U$, we have
		\[\bigl(\nabla_{\!X} \omega^+\bigr)^{\mskip-2mu\relax 1,1} = \EWzJhvMathFracVCentered{1}{2N}\,\bigl\langle \nabla_{\!X} \omega^+, {\textstyle \omega^{J_a}} \bigr\rangle\,{\textstyle \omega^{J_a}} = -\EWzJhvMathFracVCentered{1}{2N}\,\bigl\langle \nabla_{\!X} {\textstyle \omega^{J_a}}, \omega^+ \bigr\rangle\,{\textstyle \omega^{J_a}} = -\EWzJhvMathFracVCentered{1}{2}\,(\iota_X\phi')\,{\textstyle \omega^{J_a}} \EWzJhvDisplayMathPeriod.\]
		Using \EWzJhvEqRef{eq8}, we compute the $(0,1)$-part of\/ ${\displaystyle \mathrm{d}_A^*}a^{2,0}$:
		\begin{align*}
			0&=-\bigl(\mathrm{d}_A^*a^{2,0}\bigr)^{\mskip-2mu\relax 0,1}\\
			&=\lambda\,(v_1+iv_2)\otimes_{\mathbb{C}}\sum_{j=1}^N\sum_{k=1}^2\iota_{e_{j,k}}\bigl(\nabla_{\!\bar{e}_{j,k}}\omega^+\bigr)^{\mskip-2mu\relax 1,1}\\
			&=-\EWzJhvMathFracVCentered{i\lambda}{2}\,(v_1+iv_2)\otimes_{\mathbb{C}}\sum_{j=1}^N\sum_{k=1}^2{}(\iota_{\bar{e}_{j,k}}\phi')\,\bar{e}^{j,k}
		\end{align*}
		Then $\phi'=0$. For any $(0,1)$-vector fields $X,Y$ over $U$, $\nabla_{\!X} \omega^{J_a}$ has no $(0,2)$-part and $\bigl\langle \nabla_{\!X} \omega^{J_a}, \omega^{J_a} \bigr\rangle =0$, so $\nabla_{\!X} \omega^{J_a}$ is of type~$(2,0)$ and thus $\bigl(\nabla_{\!X} J_a\bigr)Y = \bigl(\iota_Y \nabla_{\!X} \omega^{J_a}\bigr)\ifEWzJhvPDF{\rule{0pt}{1.79ex}}\fi^{\mskip-2mu\relax \sharp} =0$. Then $\bigl(\nabla_{\!X} Y\bigr){}^{\mskip-2mu\relax 1,0} = \frac{\mathrm{id}-iJ_a}{2}\nabla_{\!X} Y = \nabla_{\!X}\bigl(\frac{\mathrm{id}-iJ_a}{2}Y\bigr) =0$ and similarly $\bigl(\nabla_{\mskip-1mu\relax Y} X\bigr){}^{\mskip-2mu\relax 1,0}=0$, so $[X,Y]^{1,0} = \bigl(\nabla_{\!X} Y\bigr){}^{\mskip-2mu\relax 1,0} - \bigl(\nabla_{\mskip-1mu\relax Y} X\bigr){}^{\mskip-2mu\relax 1,0} =0$, that is, the Lie bracket of any two $(0,1)$-vector fields over $U$ is still of type~$(0,1)$. Then we can apply the Newlander--Nirenberg theorem to conclude the integ\EWzJhvText{rab}ility of $J_a$ over $U$.\qedhere\par
	\end{proof}
	
	The difficulty remains to prove that $\tilde{J}_a$ extends smoothly to the whole $M$.
	
	\subsection{Bounding \texorpdfstring{$\bigl|\nabla J_a\bigr|$}{|∇𝐽ₐ|} with higher-order derivatives}\label{subsecHigherDerivatives}
	
	We keep the assumptions and notation from the last subsection, and continue with our pointwise calculation. Recall $\phi$ from \autoref{subsecCplxStr}: there is $\phi \in \Omega^{0,1}(U)$ satisfying $\nabla_{\!X} \omega^{J_a} = (\iota_X \phi)\,\omega^+$ for any $(0,1)$-vector field $X$ over $U$. We now work towards an upper bound\EWzJhvFootnote{Jump to \autoref{corHigherDer} for the bound, also the main result of this subsection.} on $\bigl|\nabla J_a\bigr|$, or equivalently, on $\lvert\phi\rvert$. Formally, we shall bound $\bigl|\iota_{\bar{e}_{1,1}}\phi\bigr|$, which actually leads to a bound on $\lvert\phi\rvert$, as the choice of local frame $(e_{j,k})_{1 \leqslant j \leqslant N,1 \leqslant k \leqslant 2}$ is rather arbitrary. The starting point is
	\begin{EWzJhvLemma}\label{lmaIotaNabla}
		For\/~$b \in \Omega^{2,0,+}(U,V \otimes_{\mathbb{R}} \mathbb{C})$, $\iota_{\bar{e}_{1,1}}\nabla^A_{e_{1,1}} b = -\frac{i}{2}\,(\overline{\iota_{\bar{e}_{1,1}}\phi})\,\iota_{e_{1,2}}b$.
	\end{EWzJhvLemma}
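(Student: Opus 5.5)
The plan is to reduce the statement to the local formulas of \autoref{subsecCplxStr}. These apply over $U$ because $J_a$ is integrable there by \autoref{lmaJaIntegrability}. Since $\Omega^{2,0,+}$ is a complex line bundle locally spanned by $\omega^+ = \sum_{j=1}^N e^{j,1}\wedge e^{j,2}$, I will write $b = w \otimes_{\mathbb{C}} \omega^+$, where $w$ is a local section of $V\otimes_{\mathbb{R}}\mathbb{C}$. By the Leibniz rule,
\[\nabla^A_{e_{1,1}} b = \bigl(\nabla^A_{e_{1,1}} w\bigr)\otimes_{\mathbb{C}}\omega^+ + w\otimes_{\mathbb{C}}\nabla_{\!e_{1,1}}\omega^+ \EWzJhvDisplayMathPeriod.\]
Because $\omega^+$ is of type $(2,0)$, $\iota_{\bar{e}_{1,1}}\omega^+ = 0$, so the first term contributes nothing after contraction with $\bar{e}_{1,1}$.

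For the second term, \autoref{corSTFormDer} gives $\nabla_{\!e_{1,1}}\omega^+ \in \Omega^{2,0}\oplus\Omega^{1,1}$. The $(2,0)$-part is again annihilated by $\iota_{\bar{e}_{1,1}}$, so only the $(1,1)$-part survives. For a $(1,0)$-vector field $X$, \autoref{subsecCplxStr} identifies this part as
\[\bigl(\nabla_{\!X}\omega^+\bigr)^{1,1} = -\tfrac{1}{2}\,\bigl(\overline{\iota_{\overline{X}}\phi}\bigr)\,\omega^{J_a} \EWzJhvDisplayMathPeriod.\]
That identity came from $\nabla_{\!X}\omega^{J_a} \in \Omega^{2,0,+}$, metric compatibility and $|\omega^{J_a}|^2 = 2N$, and all of these hold over $U$ once integrability is known. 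Taking $X = e_{1,1}$ reduces the problem to computing $\iota_{\bar{e}_{1,1}}\omega^{J_a}$.

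The remaining step is bookkeeping with the $\mathbb{C}$-linear conventions, namely duality $e^{j,k}(e_{l,m}) = \delta_{jl}\delta_{km}$ and $e^{j,k}$ vanishing on $(0,1)$-vectors. From $\omega^{J_a} = i\sum_{j,k} e^{j,k}\wedge\bar{e}^{j,k}$ one gets $\iota_{\bar{e}_{1,1}}\omega^{J_a} = -i\,e^{1,1}$. Hence
\[\iota_{\bar{e}_{1,1}}\nabla^A_{e_{1,1}} b = \tfrac{i}{2}\,\bigl(\overline{\iota_{\bar{e}_{1,1}}\phi}\bigr)\,w\otimes e^{1,1} \EWzJhvDisplayMathPeriod.\]
On the other side, $\iota_{e_{1,2}}b = w\otimes\iota_{e_{1,2}}(e^{1,1}\wedge e^{1,2}) = -w\otimes e^{1,1}$. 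So $-\tfrac{i}{2}\bigl(\overline{\iota_{\bar{e}_{1,1}}\phi}\bigr)\,\iota_{e_{1,2}}b$ gives the same expression, which proves the claim.

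The main (mild) obstacle is keeping the signs and conventions straight: the sign in the $(1,1)$-part formula, and the sign of the interior products of wedge products. I would double-check these by verifying the formula for $(\nabla_{\!X}\omega^+)^{1,1}$ with $X$ of type $(1,0)$ directly from $\langle\omega^{J_a},\omega^+\rangle = 0$ and $\nabla_{\!X}\omega^{J_a} = \overline{(\iota_{\overline{X}}\phi)\,\omega^+}$.
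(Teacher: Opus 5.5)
Your proposal is correct and is essentially the paper's proof. Both write $b = u\otimes_{\mathbb{C}}\omega^+$ and note that after applying $\iota_{\bar{e}_{1,1}}$ only $u\otimes_{\mathbb{C}}\iota_{\bar{e}_{1,1}}\bigl(\nabla_{\!e_{1,1}}\omega^+\bigr)^{1,1}$ survives, identify $\bigl(\nabla_{\!e_{1,1}}\omega^+\bigr)^{1,1} = -\frac{1}{2}\,(\overline{\iota_{\bar{e}_{1,1}}\phi})\,\omega^{J_a}$ by projecting onto $\omega^{J_a}$, and then contract; your sign checks $\iota_{\bar{e}_{1,1}}\omega^{J_a} = -i\,e^{1,1}$ and $\iota_{e_{1,2}}b = -u\otimes e^{1,1}$ match the paper's computation.
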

	\begin{proof}
		Write $b = u \otimes_{\mathbb{C}} \omega^+$ where $u \in \Gamma(U,V \otimes_{\mathbb{R}} \mathbb{C})$ and compute:
		\[\iota_{\bar{e}_{1,1}}\nabla^A_{e_{1,1}} b = \iota_{\bar{e}_{1,1}}\nabla^A_{e_{1,1}} \bigl(u \otimes_{\mathbb{C}} \omega^+\bigr) = u \otimes_{\mathbb{C}} \iota_{\bar{e}_{1,1}}\bigl(\nabla_{\!e_{1,1}}\omega^+\bigr)^{\mskip-2mu\relax 1,1}\]\ifEWzJhvPageBreak\par\pagebreak\noindent\fi
		We compute $\bigl(\nabla_{\!e_{1,1}}\omega^+\bigr)\ifEWzJhvPDF{\rule{0pt}{1.88ex}}\fi^{\mskip-2mu\relax 1,1}$ as follows:
		\[\bigl(\nabla_{\!e_{1,1}}\omega^+\bigr)^{\mskip-2mu\relax 1,1} = \EWzJhvMathFracVCentered{1}{2N}\,\bigl\langle \nabla_{\!e_{1,1}}\omega^+, {\textstyle \omega^{J_a}} \bigr\rangle\,{\textstyle \omega^{J_a}} = -\EWzJhvMathFracVCentered{1}{2N}\,\bigl\langle \nabla_{\!e_{1,1}}{\textstyle \omega^{J_a}}, \omega^+ \bigr\rangle\,{\textstyle \omega^{J_a}} = -\EWzJhvMathFracVCentered{1}{2}\,(\overline{\iota_{\bar{e}_{1,1}}\phi})\,{\textstyle \omega^{J_a}}\]
		Then $\iota_{\bar{e}_{1,1}}\nabla^A_{e_{1,1}} b = \frac{i}{2}\,(\overline{\iota_{\bar{e}_{1,1}}\phi})\,u \otimes_{\mathbb{C}} e^{1,1}$. We then conclude.
	\end{proof}
	
	By letting $b=a^{2,0}$ in the lemma above and taking advantage of \autoref{corNablaAA20}, it would be possible to bound $\bigl|\iota_{\bar{e}_{1,1}}\phi\bigr|$ and thus $\bigl|\nabla J_a\bigr|$ with $\frac{\text{$\bigl|\nabla^A a\bigr|$}}{\lvert a\rvert}$. However, this would not be enough, \ifEWzJhvPDF\vspace{1pt}\fi as $\lvert a\rvert^{-1}$ could be too large near the zero locus of $a$. To improve the bound, we make use of the higher-order derivatives of $a$.
	
	\bigskip
	
	We use the following notation for higher-order covariant derivatives:
	\begin{equation}\label{eqHigherCovDerivative}
		\nabla^{n\vphantom{n-1}}_{\!X_1,\ldots,X_n}Y := \nabla^{\vphantom{n-1}}_{\!X_1}\bigl(\nabla^{n-1}_{\!X_2,\ldots,X_n}Y\bigr) - \sum_{j=2}^{n}\nabla^{n-1}_{\!X_2,\ldots,X_{j-1},\nabla_{\!X_1}X_j,X_{j+1},\ldots,X_n}Y
	\end{equation}
	
	\begin{EWzJhvProposition}\label{propNablasBarS0}
		For any\/ $(0,1)$-vector fields\/ $X_1,\ldots,X_n$ over\/ $U$ and any\/~$b \in \Omega^{s,0}(U)$, ${\displaystyle \nabla^n_{\!X_1,\ldots,X_n}}b \in \Omega^{s,0}$.
	\end{EWzJhvProposition}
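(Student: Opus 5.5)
Since $J_a$ is integrable on $U$ by \autoref{lmaJaIntegrability}, \autoref{corSTFormDer} applies on $U$. In particular, for any $(0,1)$-vector field $X$ and any $b \in \Omega^{s,0}(U)$, the covariant derivative $\nabla_{\!X} b$ lies in $\Omega^{s,0} \oplus \Omega^{s+1,-1} = \Omega^{s,0}$, since $\Omega^{s+1,-1}=0$. This settles the case $n=1$. The plan is to prove the general case by induction on $n$, using the recursive definition \EWzJhvEqRef{eqHigherCovDerivative}.

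For the inductive step, assume the claim holds for $n-1$ and all choices of $(0,1)$-vector fields. By \EWzJhvEqRef{eqHigherCovDerivative},
\[\nabla^n_{\!X_1,\ldots,X_n} b = \nabla_{\!X_1}\bigl(\nabla^{n-1}_{\!X_2,\ldots,X_n} b\bigr) - \sum_{j=2}^n \nabla^{n-1}_{\!X_2,\ldots,\nabla_{\!X_1}X_j,\ldots,X_n} b \EWzJhvDisplayMathPeriod.\]
We handle the two kinds of terms separately.

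\emph{First term.} By the inductive hypothesis, $\nabla^{n-1}_{\!X_2,\ldots,X_n} b$ is a section of $\Omega^{s,0}$ over $U$. Applying the case $n=1$ with the $(0,1)$-field $X_1$ then shows that $\nabla_{\!X_1}\bigl(\nabla^{n-1}_{\!X_2,\ldots,X_n} b\bigr)$ is also in $\Omega^{s,0}$.

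\emph{Remaining terms.} Here we need $\nabla_{\!X_1}X_j$ to again be a $(0,1)$-vector field, so that the inductive hypothesis applies. This follows from the proposition preceding \autoref{corSTFormDer}: $(\nabla_{\!\bar{e}_{j,k}} \bar{e}_{l,m})^{1,0} = 0$. Expanding $X_1$ and $X_j$ in the frame $\bar{e}_{l,m}$ with function coefficients, the derivative of the coefficients contributes only multiples of the $\bar{e}_{l,m}$, so $\nabla_{\!X_1}X_j$ is of type $(0,1)$. Equivalently, $\nabla J_a$ vanishes along $(0,1)$-directions, as shown at the end of the proof of \autoref{lmaJaIntegrability}. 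The inductive hypothesis then places each remaining term in $\Omega^{s,0}$.

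Since both kinds of terms lie in $\Omega^{s,0}$, the induction closes. Tensoriality in the $X_i$ is never needed; we only use linearity of the recursion. I expect no real obstacle beyond tracking that every vector field appearing in the recursion stays of type $(0,1)$, which is exactly what the proposition preceding \autoref{corSTFormDer} guarantees.
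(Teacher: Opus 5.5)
Your proposal is correct and is essentially the paper's proof: you take the case $n=1$ from \autoref{corSTFormDer}, then induct via \EWzJhvEqRef{eqHigherCovDerivative}, using that $\nabla_{\!X_1}X_j$ is again of type $(0,1)$, which you justify correctly from the proposition preceding \autoref{corSTFormDer}. One small correction to your parenthetical aside: the end of the proof of \autoref{lmaJaIntegrability} gives only $(\nabla_{\!X} J_a)Y=0$ for $X,Y$ both of type $(0,1)$, not $\nabla_{\!X}J_a=0$ for every $(0,1)$-direction $X$ (that would force $\nabla J_a=0$), but this weaker statement is already enough to show $\nabla_{\!X_1}X_j$ is of type $(0,1)$.
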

	\begin{proof}
		For~$n=1$, this follows from \autoref{corSTFormDer}. Use \EWzJhvEqRef{eqHigherCovDerivative} to do an induction, noting $\nabla_{\!X_1}X_j$ is also of type~$(0,1)$.
	\end{proof}
	
	Recall the notation of curvatures $F_A, R$ and the notation $\EWzJhvCDotInd\circledast\EWzJhvCDotInd$ from the description before \autoref{propWeitzen2plus}. In addition, we use $\displaystyle \nabla^n_{n\EWzJhvMathTimes X}$ to denote $\displaystyle \nabla^n_{\!X,\ldots,X}$ where $X$ is repeated $n$~times. Recall from the beginning of \autoref{subsecIntegrability} that $p_{\oldstylenums{0}}$ is just some arbitrarily chosen point in $M$ with $a\rvert_{p_{\oldstylenums{0}}} \neq 0$.
	
	\begin{EWzJhvLemma}\label{lmaHigherDer}
		Suppose\EWzJhvFootnote{When $N \geqslant 2$, due to \autoref{propCurvQK1304} and \autoref{corSTFormDer}, we can simply let $s_n=0$.} $\max\limits_{0 \leqslant l \leqslant n}{}\bigl|(\nabla^l_{l\EWzJhvMathTimes\bar{e}_{1,2}}R)(e_{1,1},\bar{e}_{1,2},\bar{e}_{1,1},\bar{e}_{1,2})\bigr| \leqslant s_n$ at\/ $p_{\oldstylenums{0}}$ for some\/ $s_n \geqslant 0$ for each integer\/~$n \geqslant 0$. For any integers\/ $n,m$ satisfying\/ $0 \leqslant m \leqslant n$, \ifEWzJhvPDF\vspace{1pt}\fi the following three inequalities\EWzJhvFootnote{$C({\cdots})$ without subscript denote possibly different constants from place to place.} hold at\/ $p_{\oldstylenums{0}}$:\\[\glueexpr-\baselineskip/4\relax]
		\begingroup
			\ifEWzJhvPDF
			\newcommand*{\EWzJhvLemmaHigherDerEquationContent}[2]{
				\makebox[0pt][l]{\hspace*{-0.5\linewidth}$\displaystyle\begin{aligned}\mbox{\normalfont\Large\bfseries\textbullet}&\phantom{#1}\\&\phantom{#2}\end{aligned}$}
				\makebox[0pt]{\hspace*{-2em}$\displaystyle\begin{aligned}&#1\\\leqslant{}&#2\end{aligned}$}
				\makebox[0pt][r]{$\displaystyle\left.\rule{0pt}{8.2ex}\right\}$\hspace*{1.8em}\hspace*{-0.5\linewidth}}}
			\hbadness=99999\relax
			\else
			\newcommand*{\EWzJhvLemmaHigherDerEquationContent}[2]{
				\displaystyle\left.\begin{aligned}&#1\\\leqslant{}&#2\end{aligned}\qquad\right\}}
			\fi
			\begin{equation}\label{eqLmaHigherDer1}
				\EWzJhvLemmaHigherDerEquationContent{\biggl|(\nabla^A)^n_{n\EWzJhvMathTimes\bar{e}_{1,2}}a^{2,0} - n!\, \biggl(\EWzJhvMathFracVCentered{i}{2}\,\overline{\iota_{\bar{e}_{1,1}}\phi}\biggr)^{\mskip-4mu\relax n} a^{2,0}\biggr|}{C(n,s_{n-2}) \Biggl(\sum_{l=0}^{n-2}{}\bigl|\iota_{\bar{e}_{1,1}}\phi\bigr|^l\Biggr) \bigl|a^{2,0}\bigr|}
			\end{equation}
			\begin{equation}\label{eqLmaHigherDer2}
				\EWzJhvLemmaHigherDerEquationContent{\biggl|\iota_{\bar{e}_{1,1}}(\nabla^A)^{n+1}_{e_{1,1},n\EWzJhvMathTimes\bar{e}_{1,2}}a^{2,0} + (n+1)!\,\biggl(\EWzJhvMathFracVCentered{i}{2}\,\overline{\iota_{\bar{e}_{1,1}}\phi}\biggr)^{\mskip-4mu\relax n+1} \iota_{e_{1,2}}a^{2,0}\biggr|}{C(n,s_{n-2}) \Biggl(\sum_{l=1}^{n-1}{}\bigl|\iota_{\bar{e}_{1,1}}\phi\bigr|^l\Biggr) \bigl|a^{2,0}\bigr|}
			\end{equation}
			\begin{equation}\label{eqLmaHigherDer3}
				\EWzJhvLemmaHigherDerEquationContent{\biggl|\iota_{\bar{e}_{1,1}}(\nabla^A)^{n+1}_{m\EWzJhvMathTimes\bar{e}_{1,2},e_{1,1},(n-m)\EWzJhvMathTimes\bar{e}_{1,2}}a^{2,0} + (n+1)!\,\biggl(\EWzJhvMathFracVCentered{i}{2}\,\overline{\iota_{\bar{e}_{1,1}}\phi}\biggr)^{\mskip-4mu\relax n+1} \iota_{e_{1,2}}a^{2,0}\biggr|}{C(n,s_{n-1}) \Biggl(\sum_{l=0}^{n-1}{}\bigl|\iota_{\bar{e}_{1,1}}\phi\bigr|^l\Biggr) \bigl|a^{2,0}\bigr|}
			\end{equation}\\[\glueexpr-\baselineskip/4\relax]
		\endgroup
		where\EWzJhvFootnote{\label{ftnCIndepN}We can make $C(n,s_{n-j})$ independent of $N$, because when $N \geqslant 2$, we can simply let $s_{n-j}=0$, in which case $C(n,s_{n-j})$ can be taken to be zero, so the constants for~$N=1$ work for all $N$.} for\/~$j=1,2$, $C(n,s_{n-j})$ can be taken to be zero if\/\hspace{-0.08em} $s_{n-j}=0$.\ifEWzJhvPageBreak\pagebreak\fi
	\end{EWzJhvLemma}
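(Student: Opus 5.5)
We prove the three inequalities by a simultaneous induction on $n$, at the fixed point $p_{\oldstylenums{0}}$ but using identities valid on all of $U$. Write $\mu := \frac{i}{2}\,\overline{\iota_{\bar{e}_{1,1}}\phi}$. The basic input is \autoref{lmaIotaNabla}, which gives $\iota_{\bar{e}_{1,1}}\nabla^A_{e_{1,1}} b = -\mu\,\iota_{e_{1,2}} b$ for $b \in \Omega^{2,0,+}(U,V\otimes_{\mathbb{R}}\mathbb{C})$. Two further facts are used. First, by \autoref{propNablasBarS0} all derivatives $(\nabla^A)^l_{l\EWzJhvMathTimes\bar{e}_{1,2}}a^{2,0}$ stay of type $(2,0)$; since $\Omega^{2,0,+}$ is only a line subbundle, for $N\geqslant2$ we also need these derivatives to stay in $\Omega^{2,0,+}\otimes V$, which holds because $\bar\partial^{\mathtt{LC}}$ preserves $\Omega^{2,0,+}$. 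Second, $\mathrm{d}_A^* a^{2,0}=0$ (\autoref{lmaA20eq}), so by \autoref{propDBarD2n0plus} the $(1,0)$-part of $\mathrm{d}_A^*$ applied to $u\otimes\omega^+$ is a $\bar\partial$-type expression. Concretely, $\iota_{e_{1,1}}$ of the vanishing $(1,0)$-part links $\nabla^A_{\bar{e}_{1,2}}a^{2,0}$, through the $\iota$'s, to the multiple $\mu\,a^{2,0}$. Summing over $j$ is not needed if we choose the frame at $p_{\oldstylenums{0}}$ adapted to $\phi$, i.e.\ so that $\iota_{\bar{e}_{j,k}}\phi=0$ except for $(j,k)=(1,1)$. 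This yields the base case $n=1$ of \eqref{eqLmaHigherDer1}, $\nabla^A_{\bar{e}_{1,2}}a^{2,0}=\mu\,a^{2,0}$ up to the $\psi$-gauge term, which we absorb by normalising $\omega^+$ at $p_{\oldstylenums{0}}$.

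The induction then runs as follows. Assuming \eqref{eqLmaHigherDer1} up to order $n$, we obtain \eqref{eqLmaHigherDer3} for $m=n$ by applying $\iota_{\bar{e}_{1,1}}\nabla^A_{e_{1,1}}$ to $(\nabla^A)^n_{n\EWzJhvMathTimes\bar{e}_{1,2}}a^{2,0}$ and using \autoref{lmaIotaNabla}. This needs $(n+1)\mu$ rather than $\mu$: the factor $n+1$ comes from $\nabla_{e_{1,1}}$ also hitting the $n$ factors of $\mu$ (via $\partial_{e_{1,1}}\mu$, which is related to $\mu^2$ by the holomorphic structure) together with the base $\mu$. Equivalently, we move $e_{1,1}$ past each $\bar{e}_{1,2}$ by commutation. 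Each commutation $[\nabla_{e_{1,1}},\nabla_{\bar{e}_{1,2}}]$ produces the curvature $F_A + R$ plus a torsion term $\nabla_{\nabla_{e_{1,1}}\bar{e}_{1,2}-\nabla_{\bar{e}_{1,2}}e_{1,1}}$. The torsion term yields another factor $\mu$ from the connection coefficients, since $\nabla \bar{e}_{1,2}$ involves $\nabla J_a$, i.e.\ $\phi$. This is what makes the coefficients factorial.

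The $F_A$ term should drop out. It acts on the $V$ factor, and after contraction with $\iota_{\bar{e}_{1,1}}$ it only contributes terms already controlled, because $F_A$ applied to $a^{2,0}$ has type $(2,0)$ and is then killed by the type argument. The only surviving term is the curvature component $R(e_{1,1},\bar{e}_{1,2},\bar{e}_{1,1},\bar{e}_{1,2})$ and its $\bar{e}_{1,2}$-derivatives, which are bounded by $s_{n-1}$ or $s_{n-2}$. Successive lower-order terms pick up at most $l\le n-1$ powers of $\mu$, which gives the error sums. Going from \eqref{eqLmaHigherDer3} to \eqref{eqLmaHigherDer2} ($m=0$) uses one more commutation, and contracting \eqref{eqLmaHigherDer2} through the coclosedness relation gives \eqref{eqLmaHigherDer1} at order $n+1$, which closes the induction. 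The statement that $C$ vanishes when $s=0$ follows because every error term carries a curvature factor.

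The main obstacle is the bookkeeping of the commutator terms. We must verify that the lower-order contributions organise into exactly $(n+1)!\,\mu^{n+1}$ plus errors of degree at most $n-1$ in $|\phi|$ multiplied by curvature. Any error term of full degree $n$ or $n+1$ without a curvature factor would ruin the estimate. We would first try to show that these pure-connection terms vanish identically, using the integrability identities $\phi'=0$ and the type constraints of \autoref{propNablasBarS0}, and only then estimate what remains.
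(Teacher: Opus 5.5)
Your proposal has a genuine gap where the lemma has its content: you never correctly account for the coefficient $(n+1)!$, and the two mechanisms you offer do not work.

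\autoref{lmaIotaNabla} is algebraic in $b$. Applied to the section $b=(\nabla^A)^n_{n\EWzJhvMathTimes\bar{e}_{1,2}}a^{2,0}$ of $\Omega^{2,0,+}(U,V\otimes_{\mathbb{R}}\mathbb{C})$, it gives exactly $-\mu\,\iota_{e_{1,2}}b$. That is only $-n!\,\mu^{n+1}\iota_{e_{1,2}}a^{2,0}$ up to errors. No derivative ``hits the factors of $\mu$'': \eqref{eqLmaHigherDer1} is a pointwise estimate at $p_{\oldstylenums{0}}$ and cannot be differentiated, and no relation $\partial_{e_{1,1}}\mu\sim\mu^2$ is available.

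A torsion term does not supply the rest either. Commuting covariant-derivative \emph{tensors} gives curvature only. Commuting the operators $\nabla_{e_{1,1}},\nabla_{\bar{e}_{1,2}}$ instead brings in $[e_{1,1},\bar{e}_{1,2}]$, which contains uncontrolled frame coefficients such as $(\nabla_{\bar{e}_{1,2}}e_{1,1})^{1,0}$, not just $\mu$. So the ``pure-connection'' error terms you defer as the main obstacle remain unresolved.

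Your remark that $\nabla\bar{e}_{1,2}$ involves $\nabla J_a$ is the right ingredient, but it enters through the correction terms in \eqref{eqHigherCovDerivative}, not through commutators. The $(0,1)$-part of $\nabla_{e_{1,1}}\bar{e}_{1,2}$ only produces $(2,0)$-forms (\autoref{propNablasBarS0}), which $\iota_{\bar{e}_{1,1}}$ kills. Moreover $(\nabla_{e_{1,1}}\bar{e}_{1,2})^{1,0}=-\bigl(\nabla_{e_{1,1}}\tfrac{\mathrm{id}-iJ_a}{2}\bigr)\bar{e}_{1,2}=-\mu\,e_{1,1}$. Hence one has the exact identity
\begin{align*}
\iota_{\bar{e}_{1,1}}(\nabla^A)^{n+1}_{e_{1,1},n\EWzJhvMathTimes\bar{e}_{1,2}}a^{2,0}
={}& -\mu\,\iota_{e_{1,2}}(\nabla^A)^{n}_{n\EWzJhvMathTimes\bar{e}_{1,2}}a^{2,0}\\
&+ \mu\sum_{l=1}^{n}\iota_{\bar{e}_{1,1}}(\nabla^A)^{n}_{(l-1)\EWzJhvMathTimes\bar{e}_{1,2},e_{1,1},(n-l)\EWzJhvMathTimes\bar{e}_{1,2}}a^{2,0} .
\end{align*}
Each of the $n$ summands is $\approx -n!\,\mu^{n+1}\iota_{e_{1,2}}a^{2,0}$ by \eqref{eqLmaHigherDer3} at level $n-1$, for every position $m=l-1$. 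Adding the first term gives $-(n+1)!\,\mu^{n+1}\iota_{e_{1,2}}a^{2,0}$ with no non-curvature error.

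So the induction must carry \eqref{eqLmaHigherDer3} at all intermediate positions and run as follows:
\begin{enumerate}
\item \eqref{eqLmaHigherDer1} at $n$ together with \eqref{eqLmaHigherDer3} at $n-1$ give \eqref{eqLmaHigherDer2} at $n$.
\item Curvature-only commutations give \eqref{eqLmaHigherDer3} at $n$. Type considerations leave just the $(0,2)$-part of $(\nabla^l R)(e_{1,1},\bar{e}_{1,2})$ and the $(1,0)$-part of $(\nabla^l R)(e_{1,1},\bar{e}_{1,2})\bar{e}_{1,2}$.
\item The case $m=n$ of \eqref{eqLmaHigherDer3}, combined with $\mathrm{d}_A^*a^{2,0}=0$, gives \eqref{eqLmaHigherDer1} at $n+1$.
\end{enumerate}

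Note also that in \eqref{eqHigherCovDerivative} the first subscript is the outermost derivative. Applying $\nabla^A_{e_{1,1}}$ last therefore yields \eqref{eqLmaHigherDer2}, the case $m=0$. Differentiating $\mathrm{d}_A^*a^{2,0}=0$ yields the case $m=n$. You have these reversed, which matters because their error bounds differ.

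Finally, neither a $\phi$-adapted frame nor a normalisation of $\omega^+$ is needed. Contracting with $\iota_{e_{1,1}}$ already isolates the two relevant terms in any frame, and $\psi$ is already part of $\nabla^A_{\bar{e}_{1,2}}a^{2,0}$.
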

	\begin{proof}
		We induce on $n$.\vspace{-0.5ex}
		\begin{enumerate}[align=left, leftmargin=0pt, labelindent=0pt, itemindent=\parindent, listparindent=\parindent, labelsep=*]
			\item \EWzJhvEqRef{eqLmaHigherDer1} for~$n=0$ is trivial.\vspace{-0.25ex}
			
			\item Let $n_{\oldstylenums{0}} \geqslant 0$. Assume \EWzJhvEqRef{eqLmaHigherDer1} holds for~$n = n_{\oldstylenums{0}}$, and \EWzJhvEqRef{eqLmaHigherDer3} holds for~$n = n_{\oldstylenums{0}} - 1$ and any~$m$ with $0 \leqslant m \leqslant n_{\oldstylenums{0}} - 1$ (if $n_{\oldstylenums{0}} \geqslant 1$). Let's prove \EWzJhvEqRef{eqLmaHigherDer2} for~$n=n_{\oldstylenums{0}}$.
			
			First,
			\begin{align*}
				&(\nabla^A)^{n_{\oldstylenums{0}}+1}_{e_{1,1},n_{\oldstylenums{0}}\EWzJhvMathTimes\bar{e}_{1,2}}a^{2,0} \\={}& \nabla^A_{e_{1,1}}(\nabla^A)^{n_{\oldstylenums{0}}}_{n_{\oldstylenums{0}}\EWzJhvMathTimes\bar{e}_{1,2}}a^{2,0} - \sum_{l=1}^{n_{\oldstylenums{0}}}{}(\nabla^A)^{n_{\oldstylenums{0}}}_{(l-1)\EWzJhvMathTimes\bar{e}_{1,2},\nabla_{\!e_{1,1}}\bar{e}_{1,2},(n_{\oldstylenums{0}}-l)\EWzJhvMathTimes\bar{e}_{1,2}}a^{2,0} \EWzJhvDisplayMathPeriod.
			\end{align*}
			When applied $\iota_{\bar{e}_{1,1}}$, the $(2,0)$-part gives zero. Due to \autoref{propNablasBarS0}, the $(0,1)$-part of\/ $\nabla_{\!e_{1,1}}\bar{e}_{1,2}$ can be ignored, that is,
			\begin{align*}
				&\iota_{\bar{e}_{1,1}}(\nabla^A)^{n_{\oldstylenums{0}}+1}_{e_{1,1},n_{\oldstylenums{0}}\EWzJhvMathTimes\bar{e}_{1,2}}a^{2,0} \\={}& \iota_{\bar{e}_{1,1}}\Biggl(\nabla^A_{e_{1,1}}(\nabla^A)^{n_{\oldstylenums{0}}}_{n_{\oldstylenums{0}}\EWzJhvMathTimes\bar{e}_{1,2}}a^{2,0} - \sum_{l=1}^{n_{\oldstylenums{0}}}{}(\nabla^A)^{n_{\oldstylenums{0}}}_{(l-1)\EWzJhvMathTimes\bar{e}_{1,2},\raisebox{0pt}[1ex]{$\scriptstyle (\nabla_{\!e_{1,1}}\bar{e}_{1,2})\ifEWzJhvPDF{\rule{0pt}{1.25ex}}\fi^{1,0}$},(n_{\oldstylenums{0}}-l)\EWzJhvMathTimes\bar{e}_{1,2}}a^{2,0}\Biggr) \EWzJhvDisplayMathPeriodOverfull.
			\end{align*}
			Note $\bigl(\nabla_{\!e_{1,1}}\bar{e}_{1,2}\bigr)\ifEWzJhvPDF{\rule{0pt}{1.88ex}}\fi^{\mskip-2mu\relax 1,0} = \frac{\mathrm{id}-iJ_a}{2}\nabla_{\!e_{1,1}}\bar{e}_{1,2} = -\bigl(\nabla_{\!e_{1,1}}\frac{\mathrm{id}-iJ_a}{2}\bigr)\bar{e}_{1,2} = \frac{i}{2}\,\bigl(\iota_{\bar{e}_{1,2}}\nabla_{\!e_{1,1}}\omega^{J_a}\bigr)\ifEWzJhvPDF{\rule{0pt}{1.79ex}}\fi^{\mskip-2mu\relax \sharp} \ifEWzJhvPDF\\\fi= -\frac{i}{2}\,(\overline{\iota_{\bar{e}_{1,1}}\phi})\,e_{1,1}$. Use \autoref{lmaIotaNabla}, \EWzJhvEqRef{eqLmaHigherDer1} for~$n = n_{\oldstylenums{0}}$, and \EWzJhvEqRef{eqLmaHigherDer3} for~$n = n_{\oldstylenums{0}} - 1$ (if $n_{\oldstylenums{0}} \geqslant 1$).
			
			\item Let $n_{\oldstylenums{0}} \geqslant 0$. Assume \EWzJhvEqRef{eqLmaHigherDer2} holds for~$n = n_{\oldstylenums{0}}$, \EWzJhvEqRef{eqLmaHigherDer1} holds for any~$n < n_{\oldstylenums{0}}$, and \EWzJhvEqRef{eqLmaHigherDer3} holds for any~$n,m$ with $0 \leqslant m \leqslant n \leqslant n_{\oldstylenums{0}} - 2$. Let's prove \EWzJhvEqRef{eqLmaHigherDer3} for~$n = n_{\oldstylenums{0}}$ and any~$m$ with $0 \leqslant m \leqslant n_{\oldstylenums{0}}$. Comparing \EWzJhvEqRef{eqLmaHigherDer2} for~$n = n_{\oldstylenums{0}}$ and \EWzJhvEqRef{eqLmaHigherDer3} for~$n = n_{\oldstylenums{0}}$, we only need to prove for any~$m$ with $0 \leqslant m < n_{\oldstylenums{0}}$,
			\begin{align*}
				&\Bigl|\iota_{\bar{e}_{1,1}}\bigl((\nabla^A)^{n_{\oldstylenums{0}}+1}_{m\EWzJhvMathTimes\bar{e}_{1,2},e_{1,1},(n_{\oldstylenums{0}}-m)\EWzJhvMathTimes\bar{e}_{1,2}} - (\nabla^A)^{n_{\oldstylenums{0}}+1}_{(m+1)\EWzJhvMathTimes\bar{e}_{1,2},e_{1,1},(n_{\oldstylenums{0}}-m-1)\EWzJhvMathTimes\bar{e}_{1,2}}\bigr)a^{2,0}\Bigr| \\&\quad\leqslant C(n_{\oldstylenums{0}},s_{n_{\oldstylenums{0}}-1}) \Biggl(\sum_{l=0}^{n_{\oldstylenums{0}}-1}\bigl|\iota_{\bar{e}_{1,1}}\phi\bigr|^l\Biggr) \bigl|a^{2,0}\bigr| \EWzJhvDisplayMathPeriod.
			\end{align*}
			\indent We must be careful with the curvature. For any (complex) vector fields $X,Y$ and $Z_1,\ldots,Z_{n_{\oldstylenums{0}}-m-1}$, we have\EWzJhvFootnote{The minus or plus sign depends on your convention of the sign of $R$.}
			\begin{align*}
				&\bigl((\nabla^A)^{n_{\oldstylenums{0}}-m+1}_{\mskip-2mu\relax X,Y,Z_1,\ldots,Z_{n_{\oldstylenums{0}}-m-1}} - (\nabla^A)^{n_{\oldstylenums{0}}-m+1}_{Y,X,Z_1,\ldots,Z_{n_{\oldstylenums{0}}-m-1}}\bigr)a^{2,0}\\
				={}& \bigl((\nabla^A)^2_{\mskip-2mu\relax X,Y} - (\nabla^A)^2_{Y,X}\bigr)(\nabla^A)^{n_{\oldstylenums{0}}-m-1}_{Z_1,\ldots,Z_{n_{\oldstylenums{0}}-m-1}}a^{2,0} \\&- \sum_{t=1}^{n_{\oldstylenums{0}}-m-1}(\nabla^A)^{n_{\oldstylenums{0}}-m-1}_{Z_1,\ldots,Z_{t-1},(\nabla^2_{\!X,Y} - \nabla^2_{\mskip-1mu\relax Y,X})Z_t,Z_{t+1},\ldots,Z_{n_{\oldstylenums{0}}-m-1}}a^{2,0}\\
				={}& \bigl((F_A \oplus R)(X,Y)\bigr)\circledast(\nabla^A)^{n_{\oldstylenums{0}}-m-1}_{Z_1,\ldots,Z_{n_{\oldstylenums{0}}-m-1}}a^{2,0} \\&\mp \sum_{t=1}^{n_{\oldstylenums{0}}-m-1}(\nabla^A)^{n_{\oldstylenums{0}}-m-1}_{Z_1,\ldots,Z_{t-1},R(X,Y)Z_t,Z_{t+1},\ldots,Z_{n_{\oldstylenums{0}}-m-1}}a^{2,0} \EWzJhvDisplayMathPeriod.
			\end{align*}
			We apply $\iota_{\bar{e}_{1,1}}{\displaystyle ({\textstyle \nabla^A})^m_{m\EWzJhvMathTimes\bar{e}_{1,2}}}$ to this equation of tensors with parameters $X,Y$ and $Z_1,\ldots,Z_{n_{\oldstylenums{0}}-m-1}$, and then plug in $X:=e_{1,1}$ and $Y,Z_1,\ldots,Z_{n_{\oldstylenums{0}}-m-1}:=\bar{e}_{1,2}$. Note that $(2,0)$-forms vanish when applied $\iota_{\bar{e}_{1,1}}$. We get\ifEWzJhvPageBreak\pagebreak\fi
			\begin{align*}
				&\iota_{\bar{e}_{1,1}}\bigl((\nabla^A)^{n_{\oldstylenums{0}}+1}_{m\EWzJhvMathTimes\bar{e}_{1,2},e_{1,1},(n_{\oldstylenums{0}}-m)\EWzJhvMathTimes\bar{e}_{1,2}} - (\nabla^A)^{n_{\oldstylenums{0}}+1}_{(m+1)\EWzJhvMathTimes\bar{e}_{1,2},e_{1,1},(n_{\oldstylenums{0}}-m-1)\EWzJhvMathTimes\bar{e}_{1,2}}\bigr)a^{2,0}\\
				&\quad
				\begin{aligned}
					={}&\iota_{\bar{e}_{1,1}}\sum_{l=0}^m\binom{m}{l}\Biggl(\bigl((\nabla^l_{l\EWzJhvMathTimes\bar{e}_{1,2}} R)(e_{1,1},\bar{e}_{1,2})\bigr)^{\mskip-2mu\relax 0,2}\circledast(\nabla^A)^{n_{\oldstylenums{0}}-l-1}_{(n_{\oldstylenums{0}}-l-1)\EWzJhvMathTimes\bar{e}_{1,2}}a^{2,0} \\&\!\mp\ifEWzJhvPDF\hspace*{-2.6pt}\fi\sum_{t=1}^{n_{\oldstylenums{0}}-m-1}\ifEWzJhvPDF\hspace*{-2.6pt}\fi(\nabla^A)^{n_{\oldstylenums{0}}-l-1}_{(m-l+t-1)\EWzJhvMathTimes\bar{e}_{1,2},\raisebox{0pt}[1ex]{$\scriptstyle \mathopen{\text{$\bigl($}}(\nabla^l_{l\EWzJhvMathTimes\bar{e}_{1,2}} R)(e_{1,1},\bar{e}_{1,2})\bar{e}_{1,2}\mathclose{\text{$\bigr)$}}^{\!1,0}$},(n_{\oldstylenums{0}}-m-t-1)\EWzJhvMathTimes\bar{e}_{1,2}}a^{2,0}\Biggr)
				\end{aligned}
			\end{align*}
			where $\bigl((\nabla^l_{l\EWzJhvMathTimes\bar{e}_{1,2}} R)(e_{1,1},\bar{e}_{1,2})\bigr)\ifEWzJhvPDF{\rule{0pt}{1.88ex}}\fi^{\mskip-2mu\relax 0,2}$ is the $(0,2)$-part of $(\nabla^l_{l\EWzJhvMathTimes\bar{e}_{1,2}} R)(e_{1,1},\bar{e}_{1,2})$ regarded as a $2$-form, whose norm is either zero (if $N \geqslant 2$, due to \autoref{propCurvQK1304} and \autoref{corSTFormDer}) or bounded by $s_{n_{\oldstylenums{0}}-1}$ (if $N = 1$). Similarly, $\bigl((\nabla^l_{l\EWzJhvMathTimes\bar{e}_{1,2}} R)(e_{1,1},\bar{e}_{1,2})\bar{e}_{1,2}\bigr)\ifEWzJhvPDF{\rule{0pt}{1.88ex}}\fi^{\!1,0}$ is either zero (if $N \geqslant 2$) or some complex number times $e_{1,1}$ with norm bounded by $s_{n_{\oldstylenums{0}}-1}$ (if $N = 1$). Use \EWzJhvEqRef{eqLmaHigherDer1} for~$n=n_{\oldstylenums{0}}-l-1$ and \EWzJhvEqRef{eqLmaHigherDer3} for~$n=n_{\oldstylenums{0}}-l-2$ (if\/ $l \leqslant n_{\oldstylenums{0}} - 2$) to obtain the bound that we need.
			
			\item Let $n_{\oldstylenums{0}} \geqslant 1$. Assume \EWzJhvEqRef{eqLmaHigherDer3} holds for~$n=n_{\oldstylenums{0}}-1$ and any~$m$ with $0 \leqslant m \leqslant n_{\oldstylenums{0}} - 1$. Let's prove \EWzJhvEqRef{eqLmaHigherDer1} for~$n=n_{\oldstylenums{0}}$.
			
			We use the equation ${\displaystyle \mathrm{d}_A^*}a^{2,0}=0$. Because $\displaystyle -\mathrm{d}_A^*$ is a contraction of the total covariant derivative, applying $(\nabla^A)^{n_{\oldstylenums{0}}-1}_{(n_{\oldstylenums{0}}-1)\EWzJhvMathTimes\bar{e}_{1,2}}$, we get
			\[\sum_{j=1}^N\sum_{k=1}^2{}\bigl(\iota_{e_{j,k}}(\nabla^A)^{n_{\oldstylenums{0}}}_{(n_{\oldstylenums{0}}-1)\EWzJhvMathTimes\bar{e}_{1,2},\bar{e}_{j,k}} + \iota_{\bar{e}_{j,k}}(\nabla^A)^{n_{\oldstylenums{0}}}_{(n_{\oldstylenums{0}}-1)\EWzJhvMathTimes\bar{e}_{1,2},e_{j,k}}\bigr)a^{2,0}=0 \EWzJhvDisplayMathPeriod.\]
			Note that any (high-order) covariant derivative of $a^{2,0}$ is a section of\/ $\Omega^{2,+}(U,V \otimes_{\mathbb{R}} \mathbb{C})$. Therefore, when applied $\iota_{e_{1,1}}$, terms not beginning with $\iota_{e_{1,2}}$ or $\iota_{\bar{e}_{1,1}}$ vanish, so
			\[\bigl(\iota_{e_{1,1}}\iota_{e_{1,2}}(\nabla^A)^{n_{\oldstylenums{0}}}_{n_{\oldstylenums{0}}\EWzJhvMathTimes\bar{e}_{1,2}} + \iota_{e_{1,1}}\iota_{\bar{e}_{1,1}}(\nabla^A)^{n_{\oldstylenums{0}}}_{(n_{\oldstylenums{0}}-1)\EWzJhvMathTimes\bar{e}_{1,2},e_{1,1}}\bigr)a^{2,0}=0 \EWzJhvDisplayMathPeriod.\]
			Use \EWzJhvEqRef{eqLmaHigherDer3} for~$n=n_{\oldstylenums{0}}-1$ to get
			\begin{align*}
				&\biggl|\iota_{e_{1,1}}\iota_{e_{1,2}}(\nabla^A)^{n_{\oldstylenums{0}}}_{n_{\oldstylenums{0}}\EWzJhvMathTimes\bar{e}_{1,2}}a^{2,0} - n_{\oldstylenums{0}}!\,\biggl(\EWzJhvMathFracVCentered{i}{2}\,\overline{\iota_{\bar{e}_{1,1}}\phi}\biggr)^{\mskip-4mu\relax n_{\oldstylenums{0}}} \iota_{e_{1,1}}\iota_{e_{1,2}}a^{2,0}\biggr| \\\leqslant{}& C(n_{\oldstylenums{0}},s_{n_{\oldstylenums{0}}-2}) \Biggl(\sum_{l=0}^{n_{\oldstylenums{0}}-2}\bigl|\iota_{\bar{e}_{1,1}}\phi\bigr|^l\Biggr) \bigl|a^{2,0}\bigr| \EWzJhvDisplayMathPeriod.
			\end{align*}
			Note that both $(\nabla^A)^{n_{\oldstylenums{0}}}_{n_{\oldstylenums{0}}\EWzJhvMathTimes\bar{e}_{1,2}}a^{2,0}$ and $a^{2,0}$ are sections of\/ $\Omega^{2,0,+}(U,V \otimes_{\mathbb{R}} \mathbb{C})$, so we can remove both $\iota_{e_{1,1}}\iota_{e_{1,2}}$ and get the conclusion.\qedhere
		\end{enumerate}
	\end{proof}
	
	We finally get the bound:
	\begin{EWzJhvCorollary}\label{corHigherDer}
		Let\/ $s_n$ be as in \autoref{lmaHigherDer}. For any integer\/~$n \geqslant 1$, we have at\/ $p_{\oldstylenums{0}}$
		\[\bigl|\iota_{\bar{e}_{1,1}}\phi\bigr| \leqslant \max\left\{C(n,s_{n-3}),\: C(n)\Biggl(\frac{\bigl|\iota_{\bar{e}_{1,1}}{\displaystyle ({\textstyle \nabla^A})^n_{e_{1,1},(n-1)\EWzJhvMathTimes\bar{e}_{1,2}}}a^{2,0}\bigr|}{\bigl|\iota_{e_{1,2}}a^{2,0}\bigr|}\Biggr)^{\mskip-6mu\relax \frac{1}{n}}\right\}\]
		where\/ $C(n,s_{n-3})$ can be taken to be zero if\/\hspace{-0.08em} $s_{n-3}=0$.
		
		\medskip
		
		To put it more simply, it is possible to choose the local frame\/ $(e_{j,k})_{1 \leqslant j \leqslant N,1 \leqslant k \leqslant 2}$ so that\/ $\phi$ equals some complex number times\/ $\bar{e}^{1,1}$ at\/ $p_{\oldstylenums{0}}$, in which case we have at\/ $p_{\oldstylenums{0}}$
		\[\bigl|\nabla J_a\bigr| \leqslant \max\left\{C(n,s_{n-3}),\: C(N,n)\Biggl(\frac{\bigl|{\displaystyle ({\textstyle \nabla^A})^n} a^{2,0}\bigr|}{\bigl|a^{2,0}\bigr|}\Biggr)^{\mskip-6mu\relax \frac{1}{n}}\right\}\]
		where\EWzJhvFootnote{See the footnote\EWzJhvFootRef{ftnCIndepN} to \autoref{lmaHigherDer} on why unlike $C(N,n)$, we can make $C(n,s_{n-3})$ independent \ifEWzJhvPDF\nolinebreak\fi of \ifEWzJhvPDF\nolinebreak\fi $N$.} $C(n,s_{n-3})$ can be taken to be zero if\/\hspace{-0.08em} $s_{n-3}=0$.\ifEWzJhvPageBreak\pagebreak\fi
	\end{EWzJhvCorollary}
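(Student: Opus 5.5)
The bound comes from inequality \EWzJhvEqRef{eqLmaHigherDer2} of \autoref{lmaHigherDer}, applied with $n-1$ in place of $n$. Write $x := \bigl|\iota_{\bar{e}_{1,1}}\phi\bigr|$. That inequality gives, at $p_{\oldstylenums{0}}$,
\[\frac{n!}{2^n}\,x^n\,\bigl|\iota_{e_{1,2}}a^{2,0}\bigr| \leqslant \bigl|\iota_{\bar{e}_{1,1}}(\nabla^A)^{n}_{e_{1,1},(n-1)\EWzJhvMathTimes\bar{e}_{1,2}}a^{2,0}\bigr| + C(n,s_{n-3})\Biggl(\sum_{l=1}^{n-2}x^l\Biggr)\bigl|a^{2,0}\bigr| \EWzJhvDisplayMathPeriod.\]
Since $a^{2,0}=\lambda\,(v_1+iv_2)\otimes_{\mathbb{C}}\omega^+$ and $\omega^+=\sum_j e^{j,1}\wedge e^{j,2}$, we have $\bigl|\iota_{e_{1,2}}a^{2,0}\bigr| = c(N)\,\bigl|a^{2,0}\bigr|$ for a positive constant $c(N)$ (one checks $c(N)^2=1/N$ up to the normalisation convention). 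I would therefore divide by $\bigl|\iota_{e_{1,2}}a^{2,0}\bigr|$. In the correction term, any $N$-dependence coming from $c(N)$ only matters when $N=1$, because for $N\geqslant 2$ we may take $s=0$.

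The core is an elementary case split. Set $D$ to be the quotient appearing in the statement, so that $\frac{n!}{2^n}x^n \leqslant D + C'\sum_{l=1}^{n-2}x^l$.

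\emph{Case 1.} Suppose $C'\sum_{l=1}^{n-2}x^l \leqslant \frac{n!}{2^{n+1}}x^n$. Then $x^n \leqslant \frac{2^{n+1}}{n!}D$, which gives $x\leqslant C(n)D^{1/n}$.

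\emph{Case 2.} Otherwise $x^n < C''\sum_{l=1}^{n-2}x^l$. If $x\geqslant 1$, this forces $x^n< C''(n-2)x^{n-2}$, hence $x^2<C''(n-2)$. In either subcase $x\leqslant \max\{1,\sqrt{C''(n-2)}\}=:C(n,s_{n-3})$.

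When $s_{n-3}=0$ the correction term vanishes identically, so only Case 1 occurs and the first constant can be taken to be zero. The constant $1$ appearing above then does not enter. For $n=1,2$ the sum is empty, which is consistent with this.

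For the second formulation, I would rotate the unitary frame $(e_{j,k})$ compatible with $\omega^+$, using the $\mathrm{Sp}(N)$-type freedom that preserves $\sum_j e^{j,1}\wedge e^{j,2}$, so that $\phi\rvert_{p_{\oldstylenums{0}}}$ is a multiple of $\bar{e}^{1,1}$. Checking that such a rotation exists is, I expect, the main delicate point. $\mathrm{Sp}(N)$ acts transitively on the unit sphere of $\mathbb{C}^{2N}\cong\mathbb{H}^N$, so any unit $(0,1)$-covector can be moved to $\bar{e}^{1,1}$.

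With this frame, $|\phi| = x$. By \autoref{propIntegrabEq} and the description of $\nabla\omega^{J_a}$ through $\phi$, we have $\bigl|\nabla J_a\bigr| = \sqrt{2}\,\bigl|\nabla\omega^{J_a}\bigr| = \sqrt{2}\cdot\sqrt{2N}\,|\phi|$. We also bound the contracted numerator trivially by $\bigl|(\nabla^A)^n a^{2,0}\bigr|$. Together these turn the first inequality into the second one, with $C(N,n)$ absorbing the factors $\sqrt{4N}$ and $c(N)^{-1/n}$. The constant $C(n,s_{n-3})$ stays independent of $N$, as in the footnote to \autoref{lmaHigherDer}.
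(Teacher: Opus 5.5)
Your proposal is correct and follows the paper's proof. Like the paper, you apply \EWzJhvEqRef{eqLmaHigherDer2} of \autoref{lmaHigherDer} with $n-1$ in place of $n$, divide by $\bigl|\iota_{e_{1,2}}a^{2,0}\bigr| = N^{-1/2}\bigl|a^{2,0}\bigr|$, and absorb the lower-order terms $\sum_{l=1}^{n-2}x^l$ with a threshold beyond which they are at most half the leading term; your explicit case split is exactly this step. Your details for the second formulation (the $\mathrm{Sp}(N)$ frame rotation and $\bigl|\nabla J_a\bigr| = 2\sqrt{N}\,|\phi|$), which the paper leaves implicit, are also sound. The factor $2\sqrt{N}$ that lands on the first constant does no harm, because that constant vanishes for $N\geqslant 2$.
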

	\begin{proof}
		\autoref{lmaHigherDer}~\EWzJhvEqRef{eqLmaHigherDer2} for $n-1$ implies
		\[\bigl|\iota_{\bar{e}_{1,1}}\phi\bigr|^n \leqslant C_1(n)\,\frac{\bigl|\iota_{\bar{e}_{1,1}}{\displaystyle ({\textstyle \nabla^A})^n_{e_{1,1},(n-1)\EWzJhvMathTimes\bar{e}_{1,2}}}a^{2,0}\bigr|}{\bigl|\iota_{e_{1,2}}a^{2,0}\bigr|} + C_2(n,s_{n-3})\sum_{l=1}^{n-2}{}\bigl|\iota_{\bar{e}_{1,1}}\phi\bigr|^l \EWzJhvDisplayMathPeriod.\]
		Let $C_3(n,s_{n-3}) \geqslant 0$ satisfy for any~$t \geqslant C_3(n,s_{n-3})$, $C_2(n,s_{n-3})\sum_{l=1}^{n-2}t^l \leqslant \frac{1}{2}t^n$. Then
		\[\bigl|\iota_{\bar{e}_{1,1}}\phi\bigr| \leqslant \max\left\{C_3(n,s_{n-3}),\: \Biggl(2\,C_1(n)\,\frac{\bigl|\iota_{\bar{e}_{1,1}}{\displaystyle ({\textstyle \nabla^A})^n_{e_{1,1},(n-1)\EWzJhvMathTimes\bar{e}_{1,2}}}a^{2,0}\bigr|}{\bigl|\iota_{e_{1,2}}a^{2,0}\bigr|}\Biggr)^{\mskip-6mu\relax \frac{1}{n}}\right\} \EWzJhvDisplayMathPeriod.\qedhere\]
	\end{proof}
	
	\subsubsection*{Using the bound to prove \autoref{thm2}}
	
	The term
	\begingroup
	\ifEWzJhvPDF
	\newcommand*{\EWzJhvPDFSmallHeight}[1]{\raisebox{0pt}[1ex]{#1}}%
	\else
	\newcommand*{\EWzJhvPDFSmallHeight}[1]{#1}%
	\fi
	\EWzJhvPDFSmallHeight{$\bigl|a^{2,0}\bigr|^{-\frac{1}{n}}$}%
	\endgroup
	\space can be estimated with tools from \autoref{subsecFMinus1n}.
	
	To estimate the term $\bigl|{\displaystyle ({\textstyle \nabla^A})^n} a^{2,0}\bigr|$ near the zero locus of $a$, it might be tempting to try to relate $\bigl|{\displaystyle ({\textstyle \nabla^A})^n} a^{2,0}\bigr|$ to $\bigl|{\displaystyle ({\textstyle \nabla^A})^n} a\bigr|$. This works for~$n=1$ (\autoref{corNablaAA20}). This would also work if\/ $V$ is of rank~$2$, because in this case, the inner product on $V$ and a (local) orientation on $V$ determines a (local) linear complex structure on $V$ denoted by $J_V$, $\nabla^A J_V = 0$, and $a^{2,0}=\frac{\mathrm{id} \pm iJ_V}{2}a$. In general, however, it is not clear how to bound $\bigl|{\displaystyle ({\textstyle \nabla^A})^n} a^{2,0}\bigr|$ with $\bigl|{\displaystyle ({\textstyle \nabla^A})^n} a\bigr|$. Our strategy is instead to use elliptic regularity to bound $\bigl|{\displaystyle ({\textstyle \nabla^A})^n} a^{2,0}\bigr|$, as ${\displaystyle \mathrm{d}_A^*}a^{2,0}=0$. However, to do this near the zero locus of $a$, we must first prove that $a^{2,0}$ can be defined near the zero locus of $a$. This is the topic of \autoref{subsecA20ZeroLocus}.
	
	Afterwards, we prove that $\tilde{J}_a$ is a $W^{1,p}_{\mathrm{loc}}$ tensor over $M$ and then the final conclusion in \autoref{subsecFinishUp}.
	
	\subsection[\texorpdfstring{$a^{2,0}$}{𝑎²ʼ⁰} near zero locus]{$a^{2,0}$ near zero locus\EWzJhvFootnote{Jump to \autoref{lmaZeroLocusDim},~\ref{lmaJaZ2Choice} for the main results of this subsection.\EWzJhvPar Somehow some proof steps in this subsection require smoothness. For \autoref{lmaZeroLocusDim}, we might expect to be able to prove a variant with weaker assumption (e.g.\ $C^{100}$) using some more sophisticated techniques. For \autoref{lmaJaZ2Choice}, on the other hand, the reliance on the Taylor expansion in the proof feels quite essential.}}\label{subsecA20ZeroLocus}
	
	Under the assumptions of \autoref{thm2}, we would like to define $a^{2,0}$ (see \EWzJhvEqRef{eqA20}) locally on $M$, even near the zero locus of $a$. The major difficulty is that $\tilde{J}_a$ is only a $\mathbb{Z}_2$-complex structure on the complement of the zero locus of $a$, so we must make a choice to get a complex structure $J_a$ to define $a^{2,0}$, and it is not obvious whether this choice can be made consistently near the zero locus.
	
	It is helpful to study the zero locus of $a$ first. Because $a$ satisfies $\displaystyle \mathrm{d}_A^*a=0$ and is not identically zero, presu\EWzJhvText{mab}ly the results from \cite{ZeroSetsSolutionsElliptic} should apply and indicate that the zero locus of $a$ is ``countably $(4N-2)$-$C^\infty$-rectifiable''. Here, however, we will not apply those results directly, as we want to avoid the subtle problem of attempting to draw an equivalence between the existence of a common \emph{real} root of two polynomials and the vanishing of their resultant, which \cite{ZeroSetsSolutionsElliptic} seems to do in its ``Proof of Main Theorem'' on page~197. Instead, we will try to apply the proof techniques from \cite{ZeroSetsSolutionsElliptic} to our specific problem. We will see after some work that, in our specific setting, there is a way to work around this subtle problem with \cite{ZeroSetsSolutionsElliptic} and prove that the zero locus of $a$ is countably $(4N-2)$-$C^\infty$-rectifiable.\EWzJhvFootnote{Our \autoref{lmaResultant} below that shows the resultant is not identically zero only applies in our specific setting. It would be interesting to know how to fix this subtle problem with \cite{ZeroSetsSolutionsElliptic} in its original general setting. For common Laplacian-related first-order elliptic operators like ours, the proof of \cite[Proposition~2.2]{CritSetSolElli} can help to rule out nonconstant nonnegative common divisors. This should provide a mostly satisfactory solution, but may not be appli\EWzJhvText{cab}le to general ``uncommon'' first-order elliptic operators.} By working along the lines of \cite{ZeroSetsSolutionsElliptic}, we will also be able to prove that the ``tangent cone'' of the zero locus of $a$ is countably $(8N-4)$-$C^\infty$-rectifiable, which will be useful to us.
	
	We first do some preparation. The following lemma can be regarded as a special case of \autoref{thm2}.
	\begin{EWzJhvLemma}\label{lmaEuclideanQK4}
		Let\/ $(M,g,Q)$ be the standard Euclidean quaternion-Kähler space\/ $\mathbb{R}^4$. Let\/ $V \rightarrow \mathbb{R}^4$ be a trivial real vector bundle with inner product. Identify\/ $V$ with\/ $\mathtt{V}\times\mathbb{R}^4$ where\/ $\mathtt{V}$ is a finite-dimensional real inner product space. Denote by\/ $\Lambda^{2,+}$ the fibre of\/\hspace{0em plus -0.08em} $\Omega^{2,+}$ over\/ $\mathbb{R}^4$. Denote by\/ $\operatorname{Sym}_{\mathbb{R}}^d\bigl({\displaystyle ({\textstyle \mathbb{R}^4})^*}\bigr)$ the space of real homogeneous polynomial functions \ifEWzJhvPDF\linebreak\\*[-\baselineskip]\fi of degree\/~$d$ on\/ $\mathbb{R}^4$ for\/~$d \geqslant 0$. Let\/ $a \in \operatorname{Sym}_{\mathbb{R}}^d\bigl({\displaystyle ({\textstyle \mathbb{R}^4})^*}\bigr)\otimes_{\mathbb{R}}\mathtt{V}\otimes_{\mathbb{R}}\Lambda^{2,+} \subset \Omega^{2,+}(V)$ be a parascalar\/ $(2,0)$-form. If\/\hspace{0em plus -0.08em} $\displaystyle \mathrm{d}^*a=0$ and\/ $a$ is not the zero section, then the\/ $\mathbb{Z}_2$-complex structure\/ $\tilde{J}_a$ defined on the complement of the zero locus of\/\hspace{-0.08em} $a$ must come from some constant section of\/\hspace{0em plus -0.08em} $Q$ over\/ $\mathbb{R}^4$.
	\end{EWzJhvLemma}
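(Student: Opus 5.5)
The plan is to show that $\phi=0$ on some nonempty open subset of $\mathbb{R}^4\setminus a^{-1}(0)$, so that $\tilde{J}_a$ is locally constant there. Polynomial identities then propagate this to the whole complement of the zero locus.

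\emph{Setup.} Since $a$ is not the zero section and is polynomial, its zero locus $Z$ is a proper real algebraic subset, so $\mathbb{R}^4\setminus Z$ is open and dense. Fix $p_{\oldstylenums{0}}\notin Z$ and a neighbourhood $U$ as in \autoref{subsecIntegrability}. By \autoref{lmaJaIntegrability}, $J_a$ is integrable on $U$, so all the machinery of \autoref{subsecIntegrability} and \autoref{subsecHigherDerivatives} applies. Here the metric is flat and $V$, $A$ are trivial, so $R=0$ and $F_A=0$. Hence we may take $s_n=0$ for all $n$ in \autoref{lmaHigherDer}, and all constants $C(n,s_{\cdot})$ vanish. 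The three inequalities of \autoref{lmaHigherDer} then become exact identities at every point of $U$. In particular,
\[\iota_{\bar{e}_{1,1}}(\nabla)^{n+1}_{e_{1,1},n\EWzJhvMathTimes\bar{e}_{1,2}}a^{2,0} = -(n+1)!\,\Bigl(\tfrac{i}{2}\,\overline{\iota_{\bar{e}_{1,1}}\phi}\Bigr)^{n+1}\iota_{e_{1,2}}a^{2,0}.\]

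\emph{Main step.} Take $n=d$, so that $d+1$ derivatives appear. Since $a$ is a homogeneous polynomial of degree $d$ with constant coefficients in a parallel frame, $\nabla^{d+1}a=0$. I want to transfer this to $a^{2,0}$ by writing $a=a^{2,0}+\overline{a^{2,0}}$. This requires showing that the same contracted derivative of $\overline{a^{2,0}}$ only produces terms of lower order in $\phi$. Concretely, I would prove that
\[\iota_{\bar{e}_{1,1}}\nabla^{d+1}_{e_{1,1},d\EWzJhvMathTimes\bar{e}_{1,2}}\overline{a^{2,0}} = \sum_{l\leqslant d}P_l\,\bigl(\overline{\iota_{\bar{e}_{1,1}}\phi}\bigr)^l\,\iota_{e_{1,2}}a^{2,0}\]
up to contributions that vanish in the flat case. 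The tools are the conjugate of \autoref{propNablasBarS0}, the formula $(\nabla_{\!X}\overline{\omega^+})^{0,2}=-(\iota_X\psi)\,\overline{\omega^+}$, and the $(1,1)$-part of $\nabla\overline{\omega^+}$, which is again expressed via $\phi$. This is the step I expect to be the main obstacle.

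If this bookkeeping goes through, the identity $0=\iota_{\bar{e}_{1,1}}\nabla^{d+1}_{e_{1,1},d\EWzJhvMathTimes\bar{e}_{1,2}}a$ becomes a monic polynomial equation of degree $d+1$ in $\overline{\iota_{\bar{e}_{1,1}}\phi}$ with no lower-order remainder, because all $s_n=0$. As a fallback if the conjugate part resists, I would use the local frame choice of \autoref{corHigherDer}. I would exploit that $\phi$ and $a^{2,0}$ are real-analytic on $U$, since $J_a$ is an algebraic function of $a$. I would also use the homogeneity $\tilde{J}_a(tx)=\tilde{J}_a(x)$, which forces $|\nabla J_a|$ to be homogeneous of degree $-1$, in order to compare orders along rays. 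Either way, the target conclusion is $\phi\equiv0$ on $U$. Then $\nabla J_a=0$ by the description of $\nabla\omega^{J_a}$ via $\phi$, so $J_a$ is a constant section $J_1$ of $Q$ on $U$.

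\emph{Globalisation.} On $U$ we have $\langle a,\omega^{J_1}\rangle=0$, since $a$ takes values in $\mathtt{V}\otimes(\omega^{J_1})^\perp$. Because $\omega^{J_1}$ is constant, $\langle a,\omega^{J_1}\rangle$ is a $\mathtt{V}$-valued polynomial vanishing on an open set, hence identically zero on $\mathbb{R}^4$. At any $p\notin Z$, write $a\rvert_p=\lambda(v_1\otimes\omega^1+v_2\otimes\omega^2)$ with $\lambda>0$. Then $\langle a,\omega^{J_1}\rangle=0$ forces $\omega^{J_1}\perp\omega^1,\omega^2$, so $\tilde{J}_a\rvert_p=[\pm J_1]$. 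This avoids any connectedness issue for $\mathbb{R}^4\setminus Z$, and it yields the conclusion of \autoref{lmaEuclideanQK4}.
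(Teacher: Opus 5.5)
Your set-up and globalisation are fine; the globalisation is exactly the paper's final step. The main step, however, has a genuine gap, and it is the heart of the lemma.

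First, your claimed form of $\iota_{\bar{e}_{1,1}}\nabla^{d+1}_{e_{1,1},d\EWzJhvMathTimes\bar{e}_{1,2}}\overline{a^{2,0}}$ is unjustified. The equation $\mathrm{d}^*a^{2,0}=0$ (\autoref{lmaA20eq}) controls the $(0,1)$-derivatives of the coefficient $\lambda(v_1+iv_2)$, and hence only the $(1,0)$-derivatives of $\lambda(v_1-iv_2)$. So differentiating $\overline{a^{2,0}}$ $d$ times along $\bar{e}_{1,2}$ produces unconstrained derivatives of $\lambda(v_1-iv_2)$, in arbitrary directions of $\mathtt{V}$. These are multiplied by $\iota_{\bar{e}_{1,2}}\phi$, $\psi$ and their derivatives. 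The result is not merely powers of $\overline{\iota_{\bar{e}_{1,1}}\phi}$ times $\iota_{e_{1,2}}a^{2,0}$.

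Second, even granting your form, you would get an equation $(d+1)!\,x^{d+1}=\sum_{l\leqslant d}c_l\,x^l$ for $x=\frac{i}{2}\overline{\iota_{\bar{e}_{1,1}}\phi}$. That does not force $x=0$. Setting $s_n=0$ kills only the curvature error terms of \autoref{lmaHigherDer}, and your $P_l$ are not curvature terms.

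Third, this step uses only $\nabla^{d+1}a=0$ and never homogeneity. Your fallback is too vague, and dilation invariance of $J_a$ alone cannot force $\phi=0$. For example, pull back a constant compatible complex structure under $x\mapsto\sigma(x)/\lvert x\rvert^2$, with $\sigma$ a reflection. The result is dilation invariant on $\mathbb{R}^4\setminus\{0\}$ but not constant.

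The missing idea is to spend homogeneity on the choice of frame, so that the transfer from $a$ to $a^{2,0}$ becomes exact.
\begin{itemize}
\item The paper takes $e_{1,2}\rvert_p=\frac{\mathrm{id}-iJ_a}{\sqrt{2}\,\lvert p\rvert}\,p$ and the radial field $X\rvert_p=p$.
\item Homogeneity gives $\nabla_{\!X}J_a=0$. Since $\nabla_{\!e_{1,2}}\omega^{J_a}\in\Omega^{0,2}$ and $\nabla_{\!\bar{e}_{1,2}}\omega^{J_a}\in\Omega^{2,0}$, both vanish separately; in particular $\iota_{\bar{e}_{1,2}}\phi=0$.
\item A direct computation gives $\nabla_{\!\bar{e}_{1,2}}\bar{e}_{1,2}=\bar{e}_{1,2}/(\sqrt{2}\,\lvert X\rvert)$. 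Induction on the definition \EWzJhvEqRef{eqHigherCovDerivative} then yields $\nabla^{n}_{n\EWzJhvMathTimes\bar{e}_{1,2}}J_a=0$ for all $n\geqslant 1$.
\item By \EWzJhvEqRef{eqA20} and the Leibniz rule, $\nabla^{d+1}_{(d+1)\EWzJhvMathTimes\bar{e}_{1,2}}a^{2,0}$ is then just $\nabla^{d+1}_{(d+1)\EWzJhvMathTimes\bar{e}_{1,2}}a=0$ composed with the projections $\frac{\mathrm{id}-iJ_a}{2}$. No bookkeeping of $\overline{a^{2,0}}$ is needed.
\item \EWzJhvEqRef{eqLmaHigherDer1}, which is exact since $s_n=0$, gives $(d+1)!\bigl(\frac{i}{2}\overline{\iota_{\bar{e}_{1,1}}\phi}\bigr)^{d+1}a^{2,0}=0$. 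Hence $\phi=0$ near $p_0$.
\end{itemize}
Using pure $\bar{e}_{1,2}$-derivatives, i.e.\ \EWzJhvEqRef{eqLmaHigherDer1} rather than \EWzJhvEqRef{eqLmaHigherDer2}, is essential. Along $e_{1,1}$ the structure $J_a$ is not parallel: its derivative there is governed precisely by the unknown $\iota_{\bar{e}_{1,1}}\phi$. So no clean transfer from $a$ to $a^{2,0}$ is available in that direction.
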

	\begin{proof}
		The case $d=0$ is trivial.\vspace{-0.5pt}
		
		\begingroup
		\linespread{1.08}\selectfont
		Suppose $d>0$. Let $p_{\oldstylenums{0}} \in \mathbb{R}^4$ satisfy $a\rvert_{p_{\oldstylenums{0}}} \neq 0$. Then $p_{\oldstylenums{0}} \neq 0$. We work near $p_{\oldstylenums{0}}$. We use the notation from \autoref{subsecIntegrability} and make a choice of $J_a$ and the local frame $(e_{1,1}\,\,e_{1,2})$ such that \ifEWzJhvPDF\vspace{-2pt}\fi $e_{1,2}\rvert_p=\frac{\mathrm{id}-iJ_a}{\sqrt{2}\mskip2mu\relax\lvert p\rvert\,}p$ for $p$ near $p_{\oldstylenums{0}}$. Let $X$ be the vector field $X\rvert_p:=p$ for $p$ near $p_{\oldstylenums{0}}$. Because $a$ is a homogeneous function, \ifEWzJhvPDF\vspace{1pt}\fi $\nabla_{\!X} J_a=0$ and thus $\nabla_{\!e_{1,2}+\bar{e}_{1,2}} \omega^{J_a} = 0$. We know from \hyperlink{nablaOmegaJ}{the discussions} following \autoref{corSTFormDer} that $\nabla_{\!e_{1,2}} \omega^{J_a} \in \Omega^{0,2}$ and $\nabla_{\!\bar{e}_{1,2}} \omega^{J_a} \in \Omega^{2,0}$, so $\nabla_{\!e_{1,2}} \omega^{J_a} = \nabla_{\!\bar{e}_{1,2}} \omega^{J_a} = 0$. Noting $\nabla_{\!\bar{e}_{1,2}}J_a = 0$, we have
		\begin{align*}
			\nabla_{\!\bar{e}_{1,2}}\bar{e}_{1,2} &= \nabla_{\!\bar{e}_{1,2}}\biggl(\frac{\EWzJhvMathVCenter{\mathrm{id}+iJ_a}}{\sqrt{2}\,}\frac{\EWzJhvMathVCenter{X\vphantom{J_a}}}{\EWzJhvlvertRaised X\EWzJhvrvertRaised\vphantom{\sqrt{2}}}\biggr) = \frac{\EWzJhvMathVCenter{\mathrm{id}+iJ_a}}{\sqrt{2}\,}\,\nabla_{\!\bar{e}_{1,2}}\frac{\EWzJhvMathVCenter{X\vphantom{J_a}}}{\EWzJhvlvertRaised X\EWzJhvrvertRaised\vphantom{\sqrt{2}}}\\
			&= \frac{\mathrm{id}+iJ_a}{\sqrt{2}\,}\biggl(\frac{\nabla_{\!\bar{e}_{1,2}}X}{\EWzJhvlvertRaised X\EWzJhvrvertRaised\vphantom{\sqrt{2}}}+\Bigl(\nabla_{\!\bar{e}_{1,2}}\frac{\EWzJhvMathVCenter 1}{\EWzJhvlvertRaised X\EWzJhvrvertRaised\vphantom{\sqrt{2}}}\Bigr)X\biggr)\\
			&= \frac{\mathrm{id}+iJ_a}{\sqrt{2}\,}\biggl(\frac{\bar{e}_{1,2}}{\EWzJhvlvertRaised X\EWzJhvrvertRaised\vphantom{\sqrt{2}}}-\frac{\langle X, \bar{e}_{1,2} \rangle}{\EWzJhvlvertRaised X\EWzJhvrvertRaised{\vphantom{X}}^3\vphantom{\sqrt{2}}}X\biggr)\\
			&= \frac{\mathrm{id}+iJ_a}{\sqrt{2}\,}\biggl(\frac{\bar{e}_{1,2}}{\EWzJhvlvertRaised X\EWzJhvrvertRaised\vphantom{\sqrt{2}}}-\frac{\EWzJhvMathVCenter X}{\sqrt{2}\,\EWzJhvlvertRaised X\EWzJhvrvertRaised{\vphantom{X}}^2}\biggr)\\
			&= \frac{2\,\bar{e}_{1,2}}{\sqrt{2}\,\EWzJhvlvertRaised X\EWzJhvrvertRaised}-\frac{\bar{e}_{1,2}}{\sqrt{2}\,\EWzJhvlvertRaised X\EWzJhvrvertRaised}\\
			&= \frac{\bar{e}_{1,2}}{\sqrt{2}\,\EWzJhvlvertRaised X\EWzJhvrvertRaised \EWzJhvDisplayMathPeriod{\,.}}
		\end{align*}
		Therefore,\ifEWzJhvPDF\vspace{\glueexpr-\baselineskip/2\relax}\fi
		\begin{align*}
			&\nabla^{n+1}_{\makebox[-\width]{\hphantom{$\scriptstyle($}}(n+1)\EWzJhvMathTimes\bar{e}_{1,2}}J_a\\
			={}& \nabla_{\!\bar{e}_{1,2}}\nabla^n_{n\EWzJhvMathTimes\bar{e}_{1,2}}J_a - \sum_{k=1}^n \nabla^n_{\!(k-1)\EWzJhvMathTimes\bar{e}_{1,2},\nabla_{\!\bar{e}_{1,2}}\bar{e}_{1,2},(n-k)\EWzJhvMathTimes\bar{e}_{1,2}}J_a\\
			={}& \nabla_{\!\bar{e}_{1,2}}\nabla^n_{n\EWzJhvMathTimes\bar{e}_{1,2}}J_a - \EWzJhvMathFracVCentered{n}{\sqrt{2}\,\EWzJhvlvertRaised X\EWzJhvrvertRaised}\nabla^n_{n\EWzJhvMathTimes\bar{e}_{1,2}}J_a \EWzJhvDisplayMathPeriod.
		\end{align*}
		An argument by induction then shows $\nabla^n_{n\EWzJhvMathTimes\bar{e}_{1,2}}J_a=0$ for any~$n \geqslant 1$. Because $a$ is \ifEWzJhvPDF\linebreak\fi a vector-valued polynomial function of degree~$d$, \ifEWzJhvPDF\vspace{-1pt}\fi $\nabla^{d+1}_{\makebox[-\width]{\hphantom{$\scriptstyle($}}(d+1)\EWzJhvMathTimes\bar{e}_{1,2}}a=0$. Using \EWzJhvEqRef{eqA20}, we deduce $\nabla^{d+1}_{\makebox[-\width]{\hphantom{$\scriptstyle($}}(d+1)\EWzJhvMathTimes\bar{e}_{1,2}}a^{2,0}=0$. \autoref{lmaHigherDer}~\EWzJhvEqRef{eqLmaHigherDer1} then tells us $\iota_{\bar{e}_{1,1}}\phi=0$ and thus $\nabla_{\!\bar{e}_{1,1}} \omega^{J_a}=0$. We have already shown \ifEWzJhvPDF\vspace{0.5pt}\fi $\nabla_{\!\bar{e}_{1,2}}J_a = 0$, so $\nabla J_a=0$ near $p_{\oldstylenums{0}}$. This implies the vector-valued polynomial function $\bigl\langle a,\: \omega^{J_a}\rvert_{p_{\oldstylenums{0}}} \bigr\rangle$ vanishes near $p_{\oldstylenums{0}}$, so $\bigl\langle a,\: \omega^{J_a}\rvert_{p_{\oldstylenums{0}}} \bigr\rangle$ must vanish over the whole $\mathbb{R}^4$. As a result, $\tilde{J}_a$ comes from $J_a\rvert_{p_{\oldstylenums{0}}}$ over the whole complement of the zero locus of $a$ in $\mathbb{R}^4$.\qedhere\par
		\endgroup\ifEWzJhvPageBreak\pagebreak\fi
	\end{proof}
	
	Back to the setting of \autoref{thm2}, let $p \in M$ be a zero point of $a$. Due to the \ifEWzJhvPDF\linebreak\fi Weitzenböck formula {\small (\autoref{propWeitzen2plus})}, the equation $\displaystyle \mathrm{d}_A^* a = 0$ implies the equation \ifEWzJhvPDF\linebreak\fi $\frac{1}{2N}\bigl(\nabla^A\bigr)^{\mskip-5mu\relax *} \nabla^A a + (F_A \oplus R) \circledast a =0$, which then implies $a$ vanishes at $p$ of some finite order~$d$, that is, the $d$-th order Taylor expansion of $a$ at $p$ is nonzero, and any lower-order expansion vanishes (see e.g.\ \cite{UniqueContinuationGeometry}). Denote the $d$-th order Taylor expansion of $a$ at $p$ by $a_p \in \operatorname{Sym}_{\mathbb{R}}^d(\EWzJhvTpStar Tp M) \otimes_{\mathbb{R}} \Omega^{2,+}(V)\rvert_p$. We now study the resultant, as \cite{ZeroSetsSolutionsElliptic} suggests.
	
	\begin{EWzJhvLemma}\label{lmaResultant}
		Let\/ $a$ vanish at\/ $p \in M$ of some finite order\/ $d \geqslant 1$. Then there \ifEWzJhvPDF\linebreak\fi exist\/ $\nu_1,\nu_2 \in \Omega^{2,+}(V)\rvert_p$ \ifEWzJhvPDF\vspace{-1pt}\fi and an orthonormal basis\/ $(\mathsf{e}_j)_{j=1}^{4N}$ of\/ $T_pM$ such that, letting\/ $f_k(x_1,\ldots,x_{4N}):=\bigl\langle a_p\bigl(\sum_{j=1}^{4N} x_j\mathsf{e}_j\bigr),\: \nu_k \bigr\rangle$ \ifEWzJhvPDF\vspace{1pt}\fi for\/~$k=1,2$, regarding\/ $f_1,f_2$ as univariate polynomials of degree\/~$d$ with indeterminate\/~$x_1$ and coefficients in\/ $\mathbb{R}[x_2,\ldots,x_{4N}]$, the resultant\/ $\operatorname{res}_{x_1}^d(f_1,f_2) \in \mathbb{R}[x_2,\ldots,x_{4N}]$ is a nonzero polynomial.
	\end{EWzJhvLemma}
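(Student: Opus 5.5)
The plan is to reduce the statement to a property of the leading Taylor term $a_p$, which should behave like a holomorphic polynomial in flat quaternionic space, and then to pick $\nu_1,\nu_2$ so that $f_1,f_2$ are the real and imaginary parts of a nonzero homogeneous holomorphic polynomial. Such a pair cannot have a common factor of positive $x_1$-degree.

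\emph{Step 1 (the leading term).} Work in normal coordinates at $p$ and trivialise $V$ by $A$-parallel transport along radial geodesics. Rescale $a$ by $a_\varepsilon(x):=\varepsilon^{-d}a(\varepsilon x)$. Then $a_\varepsilon\to a_p$ in $C^\infty_{\mathrm{loc}}$, where $a_p$ is regarded as a homogeneous degree-$d$ section on the flat model $T_pM\cong\mathbb{R}^{4N}$ with the constant coefficient bundle $Q\rvert_p$. Two properties pass to the limit:
\begin{itemize}
\item Being a parascalar $(2,0)$-form is a closed pointwise condition (a cone in $V\rvert_p\otimes\Lambda^{2,+}$), so $a_p$ is parascalar.
\item The rescaled equation $\mathrm{d}^*_{A_\varepsilon,g_\varepsilon}a_\varepsilon=0$ converges to the flat equation, so $\mathrm{d}^*a_p=0$.
\end{itemize}
Next I would apply the argument of \autoref{lmaEuclideanQK4} to $a_p$. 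Its proof only used \autoref{lmaHigherDer}~\EWzJhvEqRef{eqLmaHigherDer1} with $s_n=0$ (flat space), homogeneity, and polynomiality, so it goes through verbatim on $\mathbb{R}^{4N}$. The conclusion is that $\tilde J_{a_p}$ comes from a constant $J_0\in Q\rvert_p$.

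\emph{Step 2 (holomorphic structure).} Fix $J_0$ and a constant unitary frame with $\omega^+=\sum_j e^{j,1}\wedge e^{j,2}$ constant. As in \EWzJhvEqRef{eqACoord} and \EWzJhvEqRef{eqA20}, write $a_p=u\otimes_{\mathbb{C}}\omega^+ +\bar u\otimes_{\mathbb{C}}\overline{\omega^+}$, where $u$ is a homogeneous degree-$d$ polynomial with values in $V\rvert_p\otimes\mathbb{C}$. Since everything is now constant, \autoref{lmaA20eq} gives $\mathrm{d}^*(u\,\omega^+)=0$, and \EWzJhvEqRef{eqDBarD2n0plus} with $\phi=\psi=0$ then says $\partial_{\bar e_{j,k}}u=0$. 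So $u$ is a holomorphic polynomial in the complex coordinates $z$ of $(T_pM,J_0)$.

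Choose $w\in V\rvert_p\otimes\mathbb{C}$ such that $h:=\langle u,\bar w\rangle$ is a nonzero polynomial; this is possible since $u\not\equiv0$. Take $\nu_1,\nu_2$ to be the real elements $\mathrm{Re}$ and $\mathrm{Im}$ of $w\otimes\overline{\omega^+}$, suitably normalised, so that $f_1=c\,\mathrm{Re}\,h$ and $f_2=c\,\mathrm{Im}\,h$. Then pick the orthonormal basis with $\mathsf{e}_1$ the real part of a unit $(1,0)$-direction along which $h$ has nonzero $z_1^d$-coefficient. This is possible because a nonzero homogeneous polynomial does not vanish on a generic unit direction, and a unitary change of frame preserves all the structure above. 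With this choice, $f_1$ or $f_2$ has exact degree $d$ in $x_1$, so $\operatorname{res}^d_{x_1}$ is meaningful.

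\emph{Step 3 (main obstacle: excluding a common factor).} Suppose the resultant vanished identically. Then $\mathrm{Re}\,h$ and $\mathrm{Im}\,h$ share a real irreducible factor $g\in\mathbb{R}[x_1,\ldots,x_{4N}]$ of positive $x_1$-degree. Consequently $g$ divides both $h$ and $\bar h$ in the UFD $\mathbb{C}[x,y]$.

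Now take an irreducible factor $q$ of $g$ over $\mathbb{C}$. Since $q$ divides $h$, the standard fact about factors of polynomials in $z$ alone shows that $q$ is (up to a unit) a polynomial in $z$ alone. Since $g$ is real, $\bar q$ also divides $g$, hence $\bar q$ divides $h$. But $\bar q$ is a nonconstant polynomial in $\bar z$ alone, and such a polynomial cannot divide a polynomial in $z$ alone. This contradiction shows the resultant is nonzero.

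The delicate points are, first, the justification in Step 1 that \autoref{lmaEuclideanQK4} extends to $\mathbb{R}^{4N}$ and applies to the blow-up limit; and second, the rigorous form of the fact that divisors of polynomials in $z$ are again polynomials in $z$. For the latter I would argue by restricting to complex lines, or by noting that if $q$ divides $h$ then $\partial_{\bar z}q$ satisfies a divisibility relation forcing it to vanish.
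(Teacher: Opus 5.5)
Your Steps~2 and~3 are sound; Step~3 is essentially the paper's gcd argument. Step~1, however, has a genuine gap: the proof of \autoref{lmaEuclideanQK4} does \emph{not} go through verbatim on $\mathbb{R}^{4N}$ when $N\geqslant 2$. In that proof, $e_{1,2}$ must be the $(1,0)$-part of the radial field, since that is where homogeneity enters, through $\nabla_X J_a=0$ and $\nabla_{\bar e_{1,2}}\bar e_{1,2}=\bar e_{1,2}/(\sqrt2\,\lvert X\rvert)$. Moreover, \autoref{lmaIotaNabla} and \autoref{lmaHigherDer} are formulated for a frame with $\omega^+=\sum_j e^{j,1}\wedge e^{j,2}$. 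This forces $e_{1,1}$ to be the quaternionic partner of $e_{1,2}$, lying in the complexified quaternionic line $\mathbb{H}\xi$ through the base point $\xi$. Consequently the argument only yields $\iota_{\bar e_{1,2}}\phi=0$ and $\iota_{\bar e_{1,1}}\phi=0$. For $N=1$ these are all the components of $\phi$, which is why the paper can conclude $\nabla J_a=0$ there. For $N\geqslant 2$, the components $\iota_{\bar e_{j,k}}\phi$ with $j\geqslant 2$ are never touched. What you actually obtain is that $\tilde J_{a_p}$ is constant along each quaternionic line through the origin, a priori with different values on different lines. That is not the single constant $J_0$ on all of $T_pM$ that Step~2 needs to make $u$ holomorphic in all $2N$ complex variables.

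The paper sidesteps this by never needing constancy on all of $T_pM$. Choose an $\mathbb{H}$-invariant $4$-plane $E\subset T_pM$ on which $a_p$ is not identically zero. Because the forms in \EWzJhvEqRef{eqOrthonBasis2plus} are block-diagonal, the restriction $a_p\rvert_E$ (in the point variable and as a $2$-form) is again a parascalar $(2,0)$-form with $\mathrm{d}^*(a_p\rvert_E)=0$. So the genuinely four-dimensional \autoref{lmaEuclideanQK4} applies and gives $J_p\in Q\rvert_p$ and a holomorphic $v$ on $(E,J_p\rvert_E)$.

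Next, take $\nu_1,\nu_2$ to be $\nu$ tensored with the unique elements of $\Omega^{2,+}\rvert_p$ whose restrictions to $E$ are $\mathrm{d}z^1\wedge\mathrm{d}z^2+\mathrm{d}\bar z^1\wedge\mathrm{d}\bar z^2$ and $i(\mathrm{d}z^1\wedge\mathrm{d}z^2-\mathrm{d}\bar z^1\wedge\mathrm{d}\bar z^2)$. Take $\mathsf e_1,\ldots,\mathsf e_4$ to be an orthonormal basis of $E$ with $\mathsf e_2=J_p\mathsf e_1$, $\mathsf e_4=J_p\mathsf e_3$ and $f_1(\mathsf e_1)f_2(\mathsf e_1)\neq 0$. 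The $x_1^d$-coefficients of $f_1,f_2$ are then nonzero constants, so setting $x_5=\cdots=x_{4N}=0$ commutes with forming $\operatorname{res}^d_{x_1}$. A vanishing resultant would therefore give a common factor of the real and imaginary parts of a nonzero polynomial in $x_1+ix_2,\,x_3+ix_4$, and your Step~3 finishes the argument.

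Alternatively, you could keep your route by additionally proving that an integrable compatible $J$ on flat $\mathbb{H}^N$ which is constant on quaternionic lines through $0$ must be constant. For instance, regular level sets of $J$, viewed as a holomorphic map to $S^2$, would be complex hypersurfaces that are unions of such lines, forcing their real-codimension-$2$ tangent spaces to be quaternionic. That is extra work the restriction trick makes unnecessary.

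A minor point: you want \emph{both} $f_1$ and $f_2$ to have degree exactly $d$ in $x_1$, not just one of them. This is arranged by rotating $\mathsf e_1$ within its complex line.
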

	\begin{proof}
		An appropriate identification of\/ $Q\rvert_p$ with the space of imaginary quaternions yields a left $\mathbb{H}$-module structure on $T_pM$, which makes $T_pM$ an ``Euclidean quaternion-Kähler space'', over which $a_p \in \operatorname{Sym}_{\mathbb{R}}^d(\EWzJhvTpStar Tp M) \otimes_{\mathbb{R}} \Omega^{2,+}(V)\rvert_p \subset \Omega^{2,+}(T_pM, V\rvert_p)$ is a parascalar $(2,0)$-form and satisfies $\displaystyle \mathrm{d}^*a_p=0$ as implied by the Taylor expansion of the equation $\displaystyle \mathrm{d}_A^* a = 0$ at $p$. Let $E \subset T_pM$ be an $\mathbb{H}$-invariant subspace of real dimension~$4$. Denote by $a_p\rvert_E$ the restriction of $a_p$ as a $V\rvert_p$-valued $2$-form on $T_pM$ to $E$. By looking at \EWzJhvEqRef{eqOrthonBasis2plus}, we see $a_p\rvert_E \in \operatorname{Sym}_{\mathbb{R}}^d({\displaystyle E^*}) \otimes_{\mathbb{R}} V\rvert_p \otimes_{\mathbb{R}} \Lambda^{2,+}_E \subset \Omega^{2,+}(E, V\rvert_p)$ is also a parascalar $(2,0)$-form and satisfies $\displaystyle \mathrm{d}^*(a_p\rvert_E)=0$ over $E$. \autoref{lmaEuclideanQK4} applies to $a_p\rvert_E$ and implies that there exists some compatible complex structure $J_p \in Q\rvert_p$ such that the $\mathbb{Z}_2$-complex structure induced by $a_p\rvert_E$ on the complement of the zero locus of $a_p\rvert_E$ in $E$ comes from $J_p\rvert_E$. Let $(z^1\,\,z^2)$ be some complex coordinates on $(E,J_p\rvert_E)$ that identify $(E,J_p\rvert_E,g\rvert_E)$ with $(\mathbb{C}^2,g_{\mathbb{C}^{2\mathstrut}})$. Similar to \EWzJhvEqRef{eqACoord}, write
		\begin{align*}
			a_p\rvert_E &= \operatorname{Re}(v)\otimes_{\mathbb{R}} \bigl(\mathrm{d}z^1 \wedge \mathrm{d}z^2 + \mathrm{d}\bar{z}^1 \wedge \mathrm{d}\bar{z}^2\bigr) + \operatorname{Im}(v)\otimes_{\mathbb{R}} i\hspace{0.0625em}\bigl(\mathrm{d}z^1 \wedge \mathrm{d}z^2 - \mathrm{d}\bar{z}^1 \wedge \mathrm{d}\bar{z}^2\bigr)\\&=v\otimes_{\mathbb{C}}(\mathrm{d}z^1 \wedge \mathrm{d}z^2) + \bar{v}\otimes_{\mathbb{C}}(\mathrm{d}\bar{z}^1 \wedge \mathrm{d}\bar{z}^2)
		\end{align*}
		where $v \in \operatorname{Sym}_{\mathbb{R}}^d({\displaystyle E^*}) \otimes_{\mathbb{R}} V\rvert_p \otimes_{\mathbb{R}} \mathbb{C}$ and $\operatorname{Re}(v), \operatorname{Im}(v)$ have equal norm everywhere on $E$. The equation $\displaystyle \mathrm{d}^*(a_p\rvert_E)=0$ now means that $v$ is holomorphic over $(E,J_p\rvert_E)$, and thus $v \in \operatorname{Sym}_{\mathbb{C}}^d\bigl({\displaystyle (E,J_p\rvert_E)^*}\bigr) \otimes_{\mathbb{R}} V\rvert_p$. Choose some $\nu \in V\rvert_p$ satisfying $\langle v, \nu \rangle \in \operatorname{Sym}_{\mathbb{C}}^d\bigl({\displaystyle (E,J_p\rvert_E)^*}\bigr)$ is a nonzero polynomial. Now, we let
		\[
		\nu_1:=\nu\otimes_{\mathbb{R}} \sigma_{\mskip-2mu\relax E}\bigl(\mathrm{d}z^1 \wedge \mathrm{d}z^2 + \mathrm{d}\bar{z}^1 \wedge \mathrm{d}\bar{z}^2\bigr),\quad
		\nu_2:=\nu\otimes_{\mathbb{R}} \sigma_{\mskip-2mu\relax E}\bigl(i\hspace{0.0625em}\bigl(\mathrm{d}z^1 \wedge \mathrm{d}z^2 - \mathrm{d}\bar{z}^1 \wedge \mathrm{d}\bar{z}^2\bigr)\bigr)
		\]
		where $\sigma_{\mskip-2mu\relax E}$ is the unique linear map from $\Lambda^{2,+}_E$ to $\Omega^{2,+}(M)\bigr|_p$ \ifEWzJhvPDF\vspace{-1pt}\fi satisfying $\sigma_{\mskip-2mu\relax E}(\omega)\bigr|_E=\omega$ for any~$\omega\in\Lambda^{2,+}_E$. Choose \ifEWzJhvPDF\vspace{1pt}\fi an orthonormal basis $(\mathsf{e}_j)_{j=1}^{4N}$ of\/ $T_pM$ satisfying $(\mathsf{e}_1\,\,\mathsf{e}_2\,\,\mathsf{e}_3\,\,\mathsf{e}_4)$ is an orthonormal basis of $E$ and
		\[
		\mathsf{e}_2=J_p\mathsf{e}_1,\quad
		\mathsf{e}_4=J_p\mathsf{e}_3,\quad
		\operatorname{Re}\bigl(\langle v(\mathsf{e}_1), \nu \rangle\bigr)\neq 0,\quad
		\operatorname{Im}\bigl(\langle v(\mathsf{e}_1), \nu \rangle\bigr)\neq 0 \EWzJhvDisplayMathPeriod.
		\]
		Then $f_1,f_2$ as defined in the statement of the lemma above satisfy
		\begin{gather*}
			\textstyle f_1(x_1,x_2,x_3,x_4,0,\ldots,0)=8\operatorname{Re}\bigl(\bigl\langle v\bigl(\sum_{j=1}^4 x_j\mathsf{e}_j\bigr),\: \nu \bigr\rangle\bigr)\\
			\textstyle f_2(x_1,x_2,x_3,x_4,0,\ldots,0)=8\operatorname{Im}\bigl(\bigl\langle v\bigl(\sum_{j=1}^4 x_j\mathsf{e}_j\bigr),\: \nu \bigr\rangle\bigr) \EWzJhvDisplayMathPeriod.
		\end{gather*}
		The condition $\operatorname{Re}\bigl(\langle v(\mathsf{e}_1), \nu \rangle\bigr)\neq 0$ guarantees the coefficient of term~$x_1^d$ in $f_1$ is nonzero. Similarly, the coefficient of term~$x_1^d$ in $f_2$ is also nonzero. As a consequence, $f_1$ and $f_2$ can indeed be regarded as univariate polynomials of degree~$d$ with indeterminate~$x_1$ and coefficients in $\mathbb{R}[x_2,\ldots,x_{4N}]$.
		
		Suppose the resultant $\operatorname{res}_{x_1}^d(f_1,f_2) = 0 \in \mathbb{R}[x_2,\ldots,x_{4N}]$. Denote by $f\rvert_{\scriptscriptstyle 1234}$ the polynomial $f(x_1,x_2,x_3,x_4,0,\ldots,0)$ in $x_1,x_2,x_3,x_4$ for a polynomial $f(x_1,\ldots,x_{4N})$. Then $\operatorname{res}_{x_1}^d(f_1\rvert_{\scriptscriptstyle 1234},f_2\rvert_{\scriptscriptstyle 1234}) = \operatorname{res}_{x_1}^d(f_1,f_2)\bigr|_{\scriptscriptstyle 234} = 0 \in \mathbb{R}[x_2,x_3,x_4]$, so $f_1\rvert_{\scriptscriptstyle 1234}$ and $f_2\rvert_{\scriptscriptstyle 1234}$ have a nonconstant common divisor $\gcd(f_1\rvert_{\scriptscriptstyle 1234},f_2\rvert_{\scriptscriptstyle 1234})$ in $\mathbb{R}[x_1,x_2,x_3,x_4]$. Note
		\[(f_1+if_2)\rvert_{\scriptscriptstyle 1234}=8\,\bigl\langle v\bigl((x_1+ix_2)\mathsf{e}_1+(x_3+ix_4)\mathsf{e}_3\bigr),\: \nu \bigr\rangle \in \mathbb{C}[x_1+ix_2,\: x_3+ix_4] \EWzJhvDisplayMathPeriod.\]
		As $\gcd(f_1\rvert_{\scriptscriptstyle 1234},f_2\rvert_{\scriptscriptstyle 1234})\bigm|(f_1+if_2)\rvert_{\scriptscriptstyle 1234}$, also $\gcd(f_1\rvert_{\scriptscriptstyle 1234},f_2\rvert_{\scriptscriptstyle 1234}) \in \mathbb{C}[x_1+ix_2,\: x_3+ix_4]$, but then $\gcd(f_1\rvert_{\scriptscriptstyle 1234},f_2\rvert_{\scriptscriptstyle 1234}) = \overline{\gcd(f_1\rvert_{\scriptscriptstyle 1234},f_2\rvert_{\scriptscriptstyle 1234})} \in \mathbb{C}[x_1-ix_2,\: x_3-ix_4]$. As a consequence, $\gcd(f_1\rvert_{\scriptscriptstyle 1234},f_2\rvert_{\scriptscriptstyle 1234})$ must be a constant, which is a contradiction.
	\end{proof}
	
	We use the definition that for integer~$n \geqslant 0$, a subset of a smooth manifold (of dimension~$\geqslant n$) is said to be \emph{countably\/ $n$-$C^\infty$-rectifiable}\EWzJhvInfixFootnote{Note this definition does not allow an extra part with zero $n$-dimensional Hausdorff measure, whereas those definitions used by geometric measure theory usually allow so. Without this flexibility, it seems the \ifEWzJhvPDF\linebreak\fi image of a countably $n$-$C^\infty$-rectifiable subset under a smooth map (into a smooth manifold of dimension~$\geqslant n$) is not necessarily countably $n$-$C^\infty$-rectifiable. For example, \ifEWzJhvPDF\vspace{0.4pt}\fi starting with some $f_1 \in C^\infty(\mathbb{R}, \mathbb{R}^2)$ satisfying \ifEWzJhvPDF\vspace{0.8pt}\fi $f_1([0,\frac{1}{8}])=\{(0,0)\}$, $f_1([\frac{1}{4},\frac{3}{8}])=\{(0,1)\}$, $f_1([\frac{1}{2},\frac{5}{8}])=\{(1,1)\}$, and $f_1([\frac{3}{4},\frac{7}{8}])=\{(1,0)\}$, one may try to construct \ifEWzJhvPDF\vspace{0.4pt}\fi $f \in C^\infty([0,1], \mathbb{R}^2)$ satisfying $f([0,1]) \supset \mathcal{C}^2$ where $\mathcal{C} \subset \mathbb{R}$ is some Cantor-like set. For any $C^1$ embedding $\gamma \EWzJhvMathSepColon [0,1] \hookrightarrow \mathbb{R}^2$, $\mathcal{C}^2 \cap \gamma([0,1])$ is a closed nowhere dense subset of\/ $\mathcal{C}^2$, so as a consequence of the Baire category theorem, $\mathcal{C}^2$ and thus $f([0,1])$ cannot be countably $1$-$C^\infty$-rectifiable.}, if it is contained in a countable union of $n$-dimensional smooth submanifolds \cite{ZeroSetsSolutionsElliptic}. In addition, we denote the zero locus of $a$ by $\mathcal{N}(a)$.
	
	\begin{EWzJhvLemma}\label{lmaZeroLocusDim}
		$\mathcal{N}(a)$ is countably $(4N-2)$-$C^\infty$-rectifiable.
	\end{EWzJhvLemma}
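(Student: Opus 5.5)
The plan follows the technique of \cite{ZeroSetsSolutionsElliptic}, with \autoref{lmaResultant} replacing the problematic step there. The argument is local, so fix $p \in \mathcal{N}(a)$. By unique continuation (via the Weitzenböck formula, \autoref{propWeitzen2plus}), $a$ vanishes at $p$ to some finite order $d \geqslant 1$. Stratify $\mathcal{N}(a)$ by vanishing order: let $\mathcal{N}_d$ be the set of zeros of order exactly $d$. It suffices to show that each $\mathcal{N}_d$ is locally contained in a countable union of $(4N-2)$-dimensional smooth submanifolds.

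Near $p$, take $\nu_1,\nu_2$ and the orthonormal basis $(\mathsf{e}_j)$ given by \autoref{lmaResultant}, and extend $\nu_1,\nu_2$ to local sections, for instance by parallel transport in normal coordinates $x_1,\ldots,x_{4N}$ adapted to $(\mathsf{e}_j)$. Consider the two scalar functions $F_k := \langle a, \nu_k\rangle$, $k = 1,2$. Their $d$-th order Taylor polynomials at $p$ are $f_1, f_2$, and the coefficient of $x_1^d$ in each is nonzero.

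The next step is to differentiate. Every zero of $a$ near $p$ of order at least $d$ is a common zero of all derivatives $\partial^\alpha F_k$ with $|\alpha| \leqslant d-1$. Following \cite{ZeroSetsSolutionsElliptic}, work with $G_k := \partial_{x_1}^{d-1} F_k$, whose Taylor expansion at $p$ begins with a nonzero multiple of $x_1$. By the implicit function theorem, $\{G_1 = 0\}$ is a smooth hypersurface $H$ near $p$, namely the graph $x_1 = h(x_2,\ldots,x_{4N})$. On $H$ we then need the common zero set of further derivatives to lie in a countable union of codimension-one submanifolds of $H$.

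This is where the resultant enters. Use Weierstrass-type preparation in $x_1$: by Malgrange preparation for smooth functions, after division each $F_k$ becomes a monic-type polynomial of degree $d$ in $x_1$ whose coefficients are smooth in $x' = (x_2,\ldots,x_{4N})$, and the lowest-order part of the resulting smooth resultant $r(x')$ is $\operatorname{res}_{x_1}^d(f_1,f_2)$. By \autoref{lmaResultant} this polynomial is nonzero, so $r$ vanishes to finite order at $0$. Applying the standard fact that the zero set of a smooth function vanishing to finite order is countably $(m-1)$-$C^\infty$-rectifiable (by induction on derivatives via the implicit function theorem), the set of $x'$ where $F_1, F_2$ have a common zero lies in a countable union of $(4N-2)$-dimensional submanifolds of $\mathbb{R}^{4N-1}$. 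Lifting back via the graph over $H$, or via the finitely many roots in $x_1$ stratified by multiplicity (again an implicit function argument applied to $\partial_{x_1}^j$ of the prepared polynomials), gives countable $(4N-2)$-rectifiability of $\mathcal{N}_d$ near $p$. Covering $M$ by countably many such neighbourhoods and taking the union over $d$ finishes the proof.

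I expect the main obstacle to be the lifting step. One must ensure that the points of $\mathcal{N}(a)$ over the bad set $\{r = 0\}$ are themselves contained in countably many $(4N-2)$-submanifolds, and not just that their projection is. The first thing I would try is to work throughout in the common-zero stratification by $x_1$-multiplicity, using $\partial_{x_1}^j$ of the prepared polynomials (each with a simple root on its stratum) as local defining functions, and to avoid the real-root issue altogether by relying only on $r \not\equiv 0$.
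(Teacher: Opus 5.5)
Your proposal is correct and follows the paper's argument. \autoref{lmaResultant} supplies $\operatorname{res}_{x_1}^d(f_1,f_2)\neq 0$, and Malgrange preparation then makes the smooth resultant $r(x')$ of the prepared polynomials vanish to finite order, since its lowest-order part is that resultant up to a constant. So the projection of $\mathcal{N}(a)$ to $x'$-space lies locally in finitely many hypersurfaces, and your lift by multiplicity stratification is exactly the paper's appeal to Yomdin's lemma on the real roots of the prepared $\langle a,\nu_2\rangle$.

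The lifting step you flag as the main obstacle is already closed by your other option. On the order-$d$ stratum, every zero near $p$ lies on $H=\{\partial_{x_1}^{d-1}\langle a,\nu_1\rangle=0\}=\{x_1=h(x')\}$, and $x'\mapsto(h(x'),x')$ is an embedding, so it carries those hypersurfaces to $(4N-2)$-dimensional submanifolds. This variant needs the stratification by vanishing order, which the paper's Yomdin route avoids.
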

	
	We shall only present a proof sketch of this lemma, as our proof is essentially the ``Proof of Main Theorem'' of \cite{ZeroSetsSolutionsElliptic}, with the only modification being that, to avoid the subtle problem of attempting to draw an equivalence between the existence of a common \emph{real} root of two polynomials and the vanishing of their resultant (seen on \cite[page~197]{ZeroSetsSolutionsElliptic}), we use \autoref{lmaResultant} instead to show that the resultant cannot be identically zero. For more details of the proof, please refer to \cite{ZeroSetsSolutionsElliptic}.
	
	\begin{proof}[Proof sketch]
		\autoref{lmaResultant} e\EWzJhvText{nab}les us to reuse the argument of the ``Proof of Main Theorem'' of \cite{ZeroSetsSolutionsElliptic} in our setting. Basically, one applies the Malgrange preparation theorem to $\langle a, \nu_1 \rangle, \langle a, \nu_2 \rangle$ at some zero point of $a$, where $\nu_1,\nu_2$ are from \autoref{lmaResultant}, smoothly extended to a neighbourhood of the zero point, and the local coordinates are chosen in accordance with $(\mathsf{e}_j)_{j=1}^{4N}$ from \autoref{lmaResultant}. Modulo a factor of a smooth nowhere \ifEWzJhvPDF\linebreak\fi vanishing function, $\langle a, \nu_1 \rangle, \langle a, \nu_2 \rangle$ are locally represented as univariate polynomials $a_1,a_2$ with indeterminate~$x_1$ whose coefficients are smooth functions in $x_2,\ldots,x_{4N}$, approximated by $f_1,f_2$ from \autoref{lmaResultant}. The resultant $\operatorname{res}_{x_1}^d(a_1,a_2)$ is also approximated by $\operatorname{res}_{x_1}^d(f_1,f_2)$. Using the fact that, to put it simply, the roots of a real polynomial form a subset of a certain countable set of smooth functions in the coefficients of the polynomial (\cite[Lemma~6]{SetOfZerosAlmostPolynomial}, used by \cite{ZeroSetsSolutionsElliptic} on page~194), we now have locally
		\begin{equation}\label{eqCodim2Resultant}
			\ifEWzJhvPDF
			\newcommand*{\EWzJhvEqCodimTwoResultantContent}[1]{\raisebox{0pt}[\height][1.5ex]{$\displaystyle#1$}}
			\else
			\newcommand*{\EWzJhvEqCodimTwoResultantContent}[1]{#1}
			\fi
			\EWzJhvEqCodimTwoResultantContent{\underbrace{\mathcal{N}\bigl(\operatorname{res}_{x_1}^d(a_1,a_2)\bigr)}_{\text{countably }(4N-1)\text{-}C^\infty\text{-rectifiable}}\supset
			\underbrace{\mathcal{N}\bigl(\operatorname{res}_{x_1}^d(a_1,a_2)\bigr)\cap\mathcal{N}(a_2)}_{\text{countably }(4N-2)\text{-}C^\infty\text{-rectifiable}}\supset\:
			\mathcal{N}(a_1)\cap\mathcal{N}(a_2)\:\supset\:
			\mathcal{N}(a) \EWzJhvDisplayMathPeriod.}\qedhere
		\end{equation}
	\end{proof}
	
	For our purposes, we need to prove slightly more about the zero locus of $a$.
	\begin{EWzJhvLemma}\label{lmaTZeroLocusDim}
		Let ``the tangent cone of\/\hspace{-0.08em} $\mathcal{N}(a)$''\EWzJhvFootnote{Presu\EWzJhvText{mab}ly it is related to some version of tangent space in the sense of geometric measure theory.} be
		\[T\mathcal{N}(a) := \Bigl\{(p,\xi) \,\Big|\: p \in M,\ \xi \in T_p M,\ a\rvert_p=a_p(\xi)=0 \in \Omega^{2,+}(V)\bigr|_p\Bigr\} \subset TM \EWzJhvDisplayMathPeriod.\]
		Then\/ $T\mathcal{N}(a)$ is countably $(8N-4)$-$C^\infty$-rectifiable.\ifEWzJhvPageBreak\pagebreak\fi
	\end{EWzJhvLemma}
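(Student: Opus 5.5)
Note first that the fibre of $T\mathcal{N}(a)$ over $p$ depends on the order $d$ at $p$, and $a_p$ is the leading Taylor term, which does not vary smoothly. So I would not treat $T\mathcal{N}(a)$ directly as a zero set in $TM$. Instead I would realise it as part of the zero set of a smooth object on a $(8N)$-dimensional manifold to which the argument of \autoref{lmaZeroLocusDim} applies.

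Split $\mathcal{N}(a)$ countably by vanishing order: $\mathcal{N}_d := \{p : a \text{ vanishes at } p \text{ to order exactly } d\}$, with $d \geqslant 1$ finite everywhere by unique continuation. Cover $M$ by countably many coordinate balls. It then suffices to show that, over each ball $B$ and each $d$, the set
\[
\{(p,\xi) \mid p\in \mathcal{N}_d\cap B,\ a_p(\xi)=0\}
\]
is countably $(8N-4)$-$C^\infty$-rectifiable.

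On $TB \cong B\times\mathbb{R}^{4N}$, consider the smooth section
\[
\hat a(p,\xi) := \frac{1}{d!}(\nabla^A)^d a\rvert_p(\xi,\ldots,\xi),
\]
where the $d$-th symmetrised covariant derivative (or coordinate derivative) is taken in a trivialisation of $V$ and $\Omega^{2,+}$ over $B$. This agrees with $a_p(\xi)$ whenever $p\in\mathcal{N}_d$. The set in question is then contained in
\[
\mathcal{N}(a\circ\pi)\cap\mathcal{N}(\hat a)\cap\{\,\nabla^d a\neq 0\,\},
\]
where $\pi$ is the projection $TB\to B$.

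The plan is to rerun the Malgrange-preparation/resultant argument of \eqref{eqCodim2Resultant} twice.

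First, in the base directions: by \autoref{lmaZeroLocusDim}, near each $p_0\in\mathcal{N}_d$ the set $\mathcal{N}(a)$ lies in the zero set of two Weierstrass-type polynomials $a_1,a_2$ in $x_1$. These have nonzero resultant, with leading behaviour $\operatorname{res}^d_{x_1}(f_1,f_2)$ from \autoref{lmaResultant}.

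Second, in the fibre directions at fixed $p$: $\xi\mapsto a_p(\xi)$ is a homogeneous degree-$d$ parascalar $(2,0)$-form on $T_pM$ with $\mathrm{d}^*a_p=0$. So \autoref{lmaResultant}, applied to $a_p$ itself (its own order at $0$ is $d$, and its Taylor expansion there is itself), gives $\nu_1',\nu_2'$ and a basis in which $\operatorname{res}_{\xi_1}^d(\langle a_p,\nu_1'\rangle,\langle a_p,\nu_2'\rangle)\not\equiv 0$. Since $\langle\hat a,\nu_k'\rangle$ are genuine polynomials in $\xi$ whose coefficients depend smoothly on $p$, and the leading coefficients in $\xi_1$ are nonzero at $p_0$, the resultant $\operatorname{res}_{\xi_1}^d$ is a smooth function on a neighbourhood in $TB$. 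It is nonvanishing identically on each fibre near $p_0$.

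Combining the two steps, I use \cite[Lemma~6]{SetOfZerosAlmostPolynomial} in each step. The roots $x_1$ are locally among countably many smooth functions of the remaining base variables restricted to the relevant zero set, and likewise the roots $\xi_1$ are among countably many smooth functions of $(p,\xi_2,\ldots,\xi_{4N})$. Counting dimensions as in \eqref{eqCodim2Resultant}, $p$ ranges over a countable union of $(4N-2)$-dimensional submanifolds $S$. Over each $S$, $\xi$ lies in the common zero set of two fibrewise polynomials with nonvanishing resultant, which is countably $(4N-2)$-rectifiable in the fibre, uniformly smoothly in the base parameters. Restricting $\hat a$ to $TB\rvert_S$, a $(8N-2)$-manifold, and running the resultant argument there gives countably many $(8N-4)$-dimensional pieces. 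Here I need the resultant to be nonzero not just at $p_0$ but on the $S$-pieces. I would handle this by restricting further to the countably many pieces where the restricted resultant is not identically zero, and using fibrewise nonvanishing at every point of $\mathcal{N}_d$, which holds by \autoref{lmaResultant} applied at each such point. I would then cover $\mathcal{N}_d$ by countably many neighbourhoods adapted to each point via Lindelöf.

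The main obstacle is this last uniformity. The basis and $\nu_k'$ from \autoref{lmaResultant} are chosen at one point, and I need the fibrewise resultant to stay not identically zero on $S\cap\mathcal{N}_d$ near that point, not merely at it. I would first try openness: the condition "the resultant has some nonzero coefficient, and the leading $\xi_1^d$ coefficients are nonzero" is open in $p$ for the continuous coefficients of $\hat a$, so it persists on a neighbourhood. Then I would check that the dimension count via \cite[Lemma~6]{SetOfZerosAlmostPolynomial} still goes through with smooth parameter dependence.
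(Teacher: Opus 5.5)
Your proposal is correct and follows essentially the same route as the paper's proof sketch. You split by vanishing order $d$, use \autoref{lmaZeroLocusDim} to reduce to $TM\rvert_\varSigma$ for $(4N-2)$-dimensional submanifolds $\varSigma$, extend $p\mapsto a_p$ smoothly (your $\hat a$), use openness to keep the resultant from \autoref{lmaResultant} a nonzero polynomial with nonzero leading $\xi_1^d$-coefficients on a small neighbourhood covered countably via Lindelöf, and run the argument of \eqref{eqCodim2Resultant} in the $(8N-2)$-dimensional manifold $TM\rvert_\varSigma$; the uniformity issue you flag is exactly what the paper handles with ``over a possibly smaller neighbourhood''.
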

	\begin{proof}[Proof sketch]
		We make the following decomposition:
		\[T\mathcal{N}(a) = \bigcup_{d=1}^\infty T\mathcal{N}^d(a),\quad T\mathcal{N}^d(a) := \bigl\{(p,\xi) \in T\mathcal{N}(a) \,\big|\: a\text{ vanishes at }p\text{ of order }d\,\bigr\}\]
		We only need to prove that each $T\mathcal{N}^d(a)$ is countably $(8N-4)$-$C^\infty$-rectifiable. Due to \autoref{lmaZeroLocusDim}, we only need to prove that for any $(4N-2)$-dimensional smooth submanifold $\varSigma \subset M$, $T\mathcal{N}^d(a) \cap TM\rvert_\varSigma$ is countably $(8N-4)$-$C^\infty$-rectifiable. The map $p \mapsto a_p$ from the order-$d$ zero locus of $a$ to a certain space of bundle-valued homogeneous polynomials on tangent spaces can be smoothly extended to a neighbourhood of the order-$d$ zero locus. Over a possibly smaller neighbourhood, the resultant from \autoref{lmaResultant} is everywhere a nonzero polynomial. In the manner of \EWzJhvEqRef{eqCodim2Resultant}, we get $T\mathcal{N}^d(a) \cap TM\rvert_\varSigma$ is a countably $(8N-4)$-$C^\infty$-rectifiable subset of the $(8N-2)$-dimensional smooth manifold $TM\rvert_\varSigma$.
	\end{proof}
	
	We can now make a choice of complex structure $J_a$ locally in accordance with the $\mathbb{Z}_2$-complex structure $\tilde{J}_a$, even near the zero locus of $a$.
	
	\begin{EWzJhvLemma}\label{lmaJaZ2Choice}
		For any open subset\/ $U \subset M$ with\/ $U$ diffeomorphic to\/ $\mathbb{R}^{4N}$, the\/ $\mathbb{Z}_2$-complex structure\/ $\tilde{J}_a$ can be lifted to a complex structure on\/ $U \setminus \mathcal{N}(a)$.
	\end{EWzJhvLemma}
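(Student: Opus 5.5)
The plan is to show that the $\mathbb{Z}_2$-bundle (double cover) $\{\pm J\}$ over $U \setminus \mathcal{N}(a)$ defined by $\tilde{J}_a$ is trivial. This amounts to showing that its monodromy along every loop $\gamma \subset U \setminus \mathcal{N}(a)$ is trivial. Since $U \cong \mathbb{R}^{4N}$ is simply connected, every such $\gamma$ bounds a smooth map of a disc $D \rightarrow U$. The monodromy along $\gamma$ is then the product of the local monodromies along small loops in $D$ around the finitely many (after perturbation) points where $D$ meets $\mathcal{N}(a)$. So the whole task is local: it suffices to show that for a generically chosen disc, the small loop in $D$ around each intersection point $p$ has trivial monodromy.

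\emph{Genericity of the disc.} Here I would use \autoref{lmaZeroLocusDim} and \autoref{lmaTZeroLocusDim}. Since $\mathcal{N}(a)$ is countably $(4N-2)$-$C^\infty$-rectifiable and $D$ is $2$-dimensional, a generic perturbation of $D$ (rel $\partial D$, within a finite-dimensional family of perturbations) meets $\mathcal{N}(a)$ in a countable and, I expect, actually finite set, by a Sard / transversality count against each of the countably many submanifolds. Moreover, at each intersection point $p$ the tangent plane $T_pD$ should meet the cone $\{\xi \in T_pM \mid a_p(\xi)=0\}$ only in $0$. The dimension count is as follows. The set of pairs $(p, \xi)$ in $T\mathcal{N}(a)$ is $(8N-4)$-rectifiable. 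The requirement ``$p \in D$ and $\xi \in T_pD$'' imposes $(4N-2)+(4N-2)=8N-4$ conditions on $(p,\xi)$ in the $8N$-dimensional $TM$, and then one projectivises $\xi$. This leaves a negative expected dimension, so generically no such nonzero $\xi$ exists. I expect this bookkeeping, including making the perturbation family rich enough and handling countable unions, to be the main technical obstacle.

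\emph{Local monodromy via the Taylor expansion.} Fix such an intersection point $p$, where $a$ vanishes to order $d$, and work in normal coordinates. On $T_pD$ we have $\lvert a_p(\xi)\rvert \geqslant c\lvert\xi\rvert^d$ for some $c>0$, while $a = a_p + O(\lvert x\rvert^{d+1})$. Hence on a small loop in $D$ around $p$, $a/\lvert x\rvert^d$ is uniformly close to $a_p(x)/\lvert x\rvert^d$, which is a nonzero parascalar form. The induced $\mathbb{Z}_2$-structure depends continuously on a nonzero parascalar form, so the monodromy of $\tilde{J}_a$ along the loop equals the monodromy of $\tilde{J}_{a_p}$ along the corresponding loop in $T_pD \setminus \{0\}$. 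For the homogeneous model I would argue as in \autoref{lmaResultant}: $T_pM$ carries a left $\mathbb{H}$-module structure in which $a_p$ is a coclosed parascalar $(2,0)$-form. The key claim is that $\tilde{J}_{a_p}$ comes from a single constant $J_p \in Q\rvert_p$ on all of $T_pM \setminus \mathcal{N}(a_p)$, which would immediately give trivial monodromy.
\begin{itemize}
\item For $N=1$ this claim is exactly \autoref{lmaEuclideanQK4}.
\item For $N \geqslant 2$, I would first try to rerun the proof of \autoref{lmaEuclideanQK4} verbatim in $\mathbb{R}^{4N}$, using homogeneity and \autoref{lmaHigherDer}~\EWzJhvEqRef{eqLmaHigherDer1} with $s_n=0$. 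That argument only used the radial direction and $\bar{e}_{1,2}$, so it should go through after choosing the frame with $e_{1,2}$ radial and rotating $\bar{e}_{1,1}$ among the remaining directions.
\item If that fails, the fallback is to combine constancy on each quaternionic $4$-plane $E$ (from \autoref{lmaEuclideanQK4}) with connectedness of $T_pM \setminus \mathcal{N}(a_p)$ and continuity to glue the planewise $J_p$'s.
\end{itemize}
Either route yields trivial local monodromy, hence trivial global monodromy, and so a lift of $\tilde{J}_a$ to a complex structure on $U \setminus \mathcal{N}(a)$.
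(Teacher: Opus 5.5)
Your global architecture matches the paper's. You reduce to loops and fill each loop by a disc. You perturb the disc in a finite-dimensional family so that, by \autoref{lmaTZeroLocusDim}, it meets $\mathcal{N}(a)$ only at isolated points $p$ whose tangent plane meets $\{a_p=0\}$ only in $0$. You then compare the small-circle monodromy with that of the Taylor model $a_p$ on $T_pD\setminus\{0\}$. The genuine gap is your treatment of this model when $N\geqslant 2$.

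Your first route cannot work as described. In the frame of \autoref{lmaHigherDer} one has $e^{1,1}=-\iota_{e_{1,2}}\omega^+$. So once $e_{1,2}$ is radial at a point $\xi$, $e_{1,1}$ is pinned (up to a phase) inside the complexification of the quaternionic line $\mathbb{H}\xi$, and there is no freedom to rotate it. Radial invariance together with \autoref{lmaHigherDer}~\EWzJhvEqRef{eqLmaHigherDer1} therefore kills $\phi$ only on $(\mathbb{H}\xi\otimes\mathbb{C})^{0,1}$. At best this shows that $\tilde{J}_{a_p}$ is constant along each quaternionic line through $0$, and it says nothing about the $4N-4$ transverse directions.

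Your fallback yields exactly the same information: constancy on each $\mathbb{H}$-invariant $4$-plane $E$, via \autoref{lmaEuclideanQK4}. It cannot be glued, because distinct quaternionic lines meet only at $0$. Continuity and connectedness only tell you that $E\mapsto\pm J_E$ is a continuous map on an open subset of $\mathbb{H}\mathrm{P}^{N-1}$, and such a map need not be constant. Constancy of $\tilde{J}_{a_p}$ is in fact true, but only a posteriori from \autoref{thm2}, which this lemma is a step towards. So for $N\geqslant 2$ your local monodromy claim is unproven.

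The missing idea is that no structure theorem for $a_p$ is needed at all, only homogeneity. Since $a_p(-\xi)=(-1)^d a_p(\xi)$, and the induced $\mathbb{Z}_2$-structure is unchanged under $a\mapsto -a$, the loop $\theta\mapsto\tilde{J}(\cos\theta,\sin\theta)$ of $\mathbb{Z}_2$-linear complex structures on $T_pM$ is $\pi$-periodic. If $J(\theta)$ is a continuous lift, then $J(\theta+\pi)=\pm J(\theta)$, with a sign that is constant by continuity. Hence $J(\theta+2\pi)=J(\theta)$. Equivalently, the loop factors through the double cover $S^1\rightarrow\mathbb{R}\mathrm{P}^1$, so its $\mathbb{Z}_2$-monodromy is a square and therefore trivial.

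This is how the paper concludes. It works uniformly in $N$ and uses neither $\mathrm{d}_A^*a=0$ nor \autoref{lmaEuclideanQK4} at this step; coclosedness enters only through the transversality input. With this replacement your argument goes through. Your $N=1$ argument via \autoref{lmaEuclideanQK4} is valid but becomes unnecessary.
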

	\begin{proof}
		For the convenience of notation, we simply assume $U=M=\mathbb{R}^{4N}$ endowed with metric $g$ that is possibly different from the standard Euclidean metric. Let $S^1 \subset \mathbb{R}^2$ be the unit circle. We only need to prove that for any smooth loop $\mskip1mu\relax \gamma \mskip1mu\relax\EWzJhvMathSepColon\mskip1mu\relax S^1 \mskip1mu\relax\rightarrow\mskip1mu\relax U \setminus \mathcal{N}(a) \mskip1mu\relax$, the $\mathbb{Z}_2$-complex structure $\tilde{J}_a$ can be lifted to a smooth section of linear complex structures along $\gamma$. Let $\gamma$ be such a loop. We may construct a smooth map \hspace{0.05em}$\mathtt{F} \EWzJhvMathSepColon \mathbb{R}^2 \rightarrow U$ satisfying $\mathtt{F}\rvert_{S^1}=\gamma$. We now make a transversality argument regarding $\mathtt{F}$ and $\mathcal{N}(a)$. Define
		\begin{align*}
			\mathtt{F}_{\xi,\xi_x,\xi_y}(x,y) :={}& \mathtt{F}(x,y) + \xi + x\hspace{0.0625em}\xi_x + y\hspace{0.0625em}\xi_y \quad \text{for \hspace{0.05em}}\xi,\xi_x,\xi_y \in \mathbb{R}^{4N}\ \text{\hspace{0.01em}and \hspace{0.01em}}(x,y) \in \mathbb{R}^2\\[1.5ex]
			\mathtt{TF}_{\xi,\xi_x,\xi_y}(x,y;\theta) :={}&\bigl(\mathtt{F}_{\xi,\xi_x,\xi_y}(x,y),\: (\cos\theta)\hspace{0.0625em}\partial_x\mathtt{F}_{\xi,\xi_x,\xi_y}(x,y) + (\sin\theta)\hspace{0.0625em}\partial_y\mathtt{F}_{\xi,\xi_x,\xi_y}(x,y)\bigr)\\ ={}&\bigl(\mathtt{F}_{\xi,\xi_x,\xi_y}(x,y),\: (\cos\theta)\bigl(\partial_x\mathtt{F}(x,y) + \xi_x\bigr) + (\sin\theta)\bigl(\partial_y\mathtt{F}(x,y) + \xi_y\bigr)\bigr)\\ \in{}& (\mathbb{R}^{4N})^2 \quad \text{for \hspace{0.05em}}\xi,\xi_x,\xi_y \in \mathbb{R}^{4N},\;(x,y) \in \mathbb{R}^2,\;\text{and \hspace{0.05em}}\theta \in \mathbb{R} \EWzJhvDisplayMathPeriod.
		\end{align*}
		We easily check that $\mskip2mu\relax \mathtt{TF}_{\EWzJhvCDotInd,\EWzJhvCDotInd,\EWzJhvCDotInd}(\EWzJhvCDotInd,\EWzJhvCDotInd;\EWzJhvCDotInd) \mskip2mu\relax\EWzJhvMathSepColon\mskip2mu\relax (\mathbb{R}^{4N})^3 \times \mathbb{R}^2 \times \mathbb{R} \mskip2mu\relax\rightarrow\mskip2mu\relax (\mathbb{R}^{4N})^2 \mskip2mu\relax$ is a submersion by differentiating in $\xi,\xi_x,\xi_y$. For the convenience of notation, we identify $T\mathbb{R}^{4N}$ with $(\mathbb{R}^{4N})^2$ where the second component is the tangent vector. $T\mathcal{N}(a)$ from \autoref{lmaTZeroLocusDim} is a subset of\/ $T\mathbb{R}^{4N}$, that is, of $(\mathbb{R}^{4N})^2$. \autoref{lmaTZeroLocusDim} says $T\mathcal{N}(a)$ is countably $(8N-4)$-$C^\infty$-rectifiable. Because $\mathtt{TF}_{\EWzJhvCDotInd,\EWzJhvCDotInd,\EWzJhvCDotInd}(\EWzJhvCDotInd,\EWzJhvCDotInd;\EWzJhvCDotInd)$ is a submersion, $\bigl(\mathtt{TF}_{\EWzJhvCDotInd,\EWzJhvCDotInd,\EWzJhvCDotInd}(\EWzJhvCDotInd,\EWzJhvCDotInd;\EWzJhvCDotInd)\bigr)\ifEWzJhvPDF{\rule{0pt}{1.88ex}}\fi^{\mskip-2mu\relax -1}\bigl(T\mathcal{N}(a)\bigr)$ must be countably $(12N-1)$-$C^\infty$-rectifiable, so its projection onto the $(\mathbb{R}^{4N})^3$ factor of $(\mathbb{R}^{4N})^3 \times \mathbb{R}^2 \times \mathbb{R}$ must have zero Lebesgue measure. Therefore, it is possible to choose small $\xi,\xi_x,\xi_y$ such that for any~$(x,y;\theta) \in \mathbb{R}^2 \times \mathbb{R}$, $\mathtt{TF}_{\xi,\xi_x,\xi_y}(x,y;\theta) \notin T\mathcal{N}(a)$. Because $\xi,\xi_x,\xi_y$ can be chosen to be arbitrarily small, we may assume the loop $\mathtt{F}_{\xi,\xi_x,\xi_y}\rvert_{S^1}$ is homotopic to $\gamma$ in $U \setminus \mathcal{N}(a)$. As it is equivalent to lift along homotopic loops, we only need to prove that the $\mathbb{Z}_2$-complex structure $\tilde{J}_a$ can be lifted to a smooth section of linear complex structures along $\mathtt{F}_{\xi,\xi_x,\xi_y}\rvert_{S^1}$.
		
		Suppose $(x,y) \in \mathbb{R}^2$ satisfies $\mathtt{F}_{\xi,\xi_x,\xi_y}(x,y) \in \mathcal{N}(a)$. Write $p:=\mathtt{F}_{\xi,\xi_x,\xi_y}(x,y)$. Let $a$ vanish at $p$ of order~$d$. The condition $\mathtt{TF}_{\xi,\xi_x,\xi_y}(x,y;\theta) \notin T\mathcal{N}(a)$ for any~$\theta \in \mathbb{R}$ guarantees $a \circ \mathtt{F}_{\xi,\xi_x,\xi_y}$ also vanishes at $(x,y)$ of order~$d$, and its $d$-th order Taylor \ifEWzJhvPDF\linebreak\fi expansion at $(x,y)$, denoted by $(a \circ \mathtt{F}_{\xi,\xi_x,\xi_y})_{x,y}$, satisfies \ifEWzJhvPDF\vspace{0.5pt}\fi $(a \circ \mathtt{F}_{\xi,\xi_x,\xi_y})_{x,y}(x_1,y_1) \neq 0$ for any~$(x_1,y_1)\in\mathbb{R}^2\setminus\{(0,0)\}$. This in turn guarantees \ifEWzJhvPDF\vspace{0.5pt}\fi $(\mathtt{F}_{\xi,\xi_x,\xi_y})^{-1}\bigl(\mathcal{N}(a)\bigr)$ is discrete. \ifEWzJhvPDF\linebreak\fi Let $D\subset\mathbb{R}^2$ be the closed unit disc with boundary~$S^1$. Choose a small circle centred at each point of $D \cap (\mathtt{F}_{\xi,\xi_x,\xi_y})^{-1}\bigl(\mathcal{N}(a)\bigr)$. The total of these small circles can be joined together to be homotopic to $S^1$ in $D \setminus (\mathtt{F}_{\xi,\xi_x,\xi_y})^{-1}\bigl(\mathcal{N}(a)\bigr)$, \ifEWzJhvPageBreak\else\ifEWzJhvPDF\vspace{-1pt}\fi\fi so we only need to prove \ifEWzJhvPageBreak\pagebreak\fi the $\mathbb{Z}_2$-complex structure $\tilde{J}_a$ can be lifted to a smooth section of linear complex structures along every such small circle. Due to the Taylor expansion, taking the limit that the radius of the small circle tends to zero, we only need to prove the ``$\mathbb{Z}_2$-linear\EWzJhvFootnote{not ``linear with respect to the finite field $\mathbb{Z}_2$'', just linear complex structures modulo $\{\pm 1\}$} complex structures'' $\tilde{J}(x_1,y_1) \in \widetilde{\mathcal{J}}(M)\rvert_p$ parametrised by $(x_1,y_1) \in S^1$ given by the ``parascalar \ifEWzJhvPDF\linebreak\fi $(2,0)$-form at a point\EWzJhvFootnote{that is, $(a \circ \mathtt{F}_{\xi,\xi_x,\xi_y})_{x,y}(x_1,y_1) \in \Omega^{2,+}(V)\bigr|_p$ in the form of $\lambda\hspace{0.0625em}\bigl(v_1 \otimes \omega^1 + v_2 \otimes \omega^2\bigr)$ as in \autoref{defn2plusParascalar}\ifEWzJhvPDF\vspace{-1pt}\fi}'' \ifEWzJhvPDF\vspace{-3pt}\fi $(a \circ \mathtt{F}_{\xi,\xi_x,\xi_y})_{x,y}(x_1,y_1) \in \Omega^{2,+}(V)\bigr|_p$ can be lifted to linear \ifEWzJhvPDF\linebreak\fi complex structures parametrised by $S^1$. Let $J(\theta) \in \mathcal{J}(M)\rvert_p$ be a lift of $\tilde{J}(\cos\theta,\sin\theta)$ parametrised by~$\theta\in\mathbb{R}$. Because \ifEWzJhvPDF\vspace{-1pt}\fi $(a \circ \mathtt{F}_{\xi,\xi_x,\xi_y})_{x,y}(x_1,y_1) = (-1)^d(a \circ \mathtt{F}_{\xi,\xi_x,\xi_y})_{x,y}(-x_1,-y_1)$, $\tilde{J}(x_1,y_1)=\tilde{J}(-x_1,-y_1)$, so $J(\theta+\pi) = \pm J(\theta)$ for~$\theta\in\mathbb{R}$.\EWzJhvFootnote{Here, the reliance on the Taylor expansion and thus smoothness (up to the order of zero) feels quite essential. We do not know whether this lemma would still hold if things were only assumed to be, say, $C^{100}$, for $a$ that possibly has zeros of order higher than, say, $1000$.} In both cases of the sign, we must have $J(\theta+2\pi) = J(\theta)$, so $J([\theta])$ is well-defined for~$[\theta]\in\mathbb{R}/2\pi\mathbb{Z}$, which means $\tilde{J}(x_1,y_1)$ can be lifted to linear complex structures parametrised by $(x_1,y_1) \in S^1$.
	\end{proof}
	
	\subsection{Bounding \texorpdfstring{$\bigl\|\lvert f\rvert^{-\frac{1}{n}}\bigr\|_{L^1}$}{‖|𝑓|⁻¹∕\unichar{"202F}ⁿ‖\_𝐿¹}}\label{subsecFMinus1n}
	
	In this subsection, we shall present \autoref{lmaFMinus1n}, which will be used to estimate the term $\bigl|a^{2,0}\bigr|^{-\frac{1}{n}}$ in the context of \autoref{corHigherDer}. Although this is a subsection of ``Case of $\mathrm{Hol}(g) \subset \mathrm{Sp}(N)\hspace{0.0625em}\mathrm{Sp}(1)$'', here, we do not really need anything ``$\mathrm{Hol}(g) \subset \mathrm{Sp}(N)\hspace{0.0625em}\mathrm{Sp}(1)$''. The results in this subsection are in fact rather general properties of (smooth) functions.
	
	We first prove some preparatory lemmas. For this purpose, we need the knowledge of the Chebyshev polynomials $T_n(x)\ (n=0,1,2,\ldots)$ given by $T_n(\cos\theta) = \cos n\theta$. From the recurrence relation $T_{n+2}(x) = 2x\,T_{n+1}(x) - T_n(x)$, we see that for~$n \geqslant 1$, $T_n(x)$ is a polynomial with leading term~$2^{n-1}x^n$. Note $T_n(x) \in [-1,1]$ for~$x \in [-1,1]$. We shall see that, among those functions $f$ satisfying\rule[1.2ex]{0.1em}{0.1em} \hspace{0.15em}$\inf{}\bigl|f^{(n)}\bigr| \geqslant 2^{n-1}n!$, $T_n$ maximises the Lebesgue measure $\EWzJhvMathOP\lambda\bigl(f^{-1}([-1,1])\bigr)$. Here, our main tool is the Lagrange interpolation formula.
	
	\begin{EWzJhvLemma}\label{lmaChebyshevLagrange}
		For any positive integer\/~$n$, \EWzJhvFootnote{
			The value of each summand on the left-hand side can actually be calculated.
			
			In particular, letting $L_{n,j}(\theta) := \prod_{\substack{k \in \mathbb{Z},\\0 \leqslant k \leqslant n,\,\,k \neq j}} \bigl(\cos\theta - \cos\frac{k\pi}{n}\bigr)$, we may verify $L_{n,0}(\theta) = 2^{1-n} \sin n\theta \cot\frac{\theta}{2}$ and $2 (-1)^j L_{n,j}(\theta) = L_{n,0}\bigl(\theta - \frac{j\pi}{n}\bigr) + L_{n,0}\bigl(\theta + \frac{j\pi}{n}\bigr)$ for any~$j$ satisfying $0 \leqslant j \leqslant n$ by checking that both-hand sides are trigonometric polynomials of period~$2\pi$, of equal degree, with equal (multiple) roots as many as the degree permits, and with equal leading coefficients.
			
			Alternatively, one may realise $\prod_{j=0}^n \bigl(x-\cos\frac{j\pi}{n}\bigr)$ as some variant of the Chebyshev polynomial, and then calculate its derivative at $\cos\frac{j\pi}{n}$ (see e.g.\ \cite{RieszBernstein} or \cite{LagrangianInterpolation}).\ifEWzJhvPDF\vspace{-0.8pt}\fi
			
			As a result, the summand equals to $\frac{2^{n-2}}{n}$~if~$j \in \{0,n\}$, or $\frac{2^{n-1}}{n}$~if~$0<j<n$.}
		\[\sum_{j=0}^n \prod_{\substack{k \in \mathbb{Z},\\0 \leqslant k \leqslant n,\,\,k \neq j}} \biggl|\cos\frac{j\pi}{n} - \cos\frac{k\pi}{n}\biggr|^{-1} = 2^{n-1} \EWzJhvDisplayMathPeriod.\]
	\end{EWzJhvLemma}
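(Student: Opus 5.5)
The cleanest route is Lagrange interpolation of the Chebyshev polynomial $T_n$ at the Chebyshev extremal nodes $x_j := \cos\frac{j\pi}{n}$, $0 \leqslant j \leqslant n$. These nodes are pairwise distinct and satisfy $T_n(x_j) = \cos j\pi = (-1)^j$.

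Since $T_n$ has degree $n$, it coincides with its Lagrange interpolant on the $n+1$ nodes:
\[T_n(x) = \sum_{j=0}^n (-1)^j \prod_{k \neq j} \frac{x - x_k}{x_j - x_k}.\]
We compare the coefficients of $x^n$ on both sides. On the left the leading coefficient is $2^{n-1}$, which follows from the recurrence relation. On the right it is
\[\sum_{j=0}^n (-1)^j \prod_{k\neq j}(x_j - x_k)^{-1}.\]

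Next we determine the sign of each summand. The nodes are strictly decreasing in $j$, so $x_j - x_k$ is negative exactly for $k < j$. Hence $\prod_{k\neq j}(x_j-x_k)$ has sign $(-1)^j$. It follows that
\[(-1)^j \prod_{k\neq j}(x_j-x_k)^{-1} = \prod_{k\neq j}\lvert x_j-x_k\rvert^{-1},\]
so every summand is positive. The sum therefore equals the left-hand side of the claim, and we get the identity
\[\sum_{j=0}^n \prod_{k\neq j}\lvert x_j-x_k\rvert^{-1} = 2^{n-1}.\]

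The only points needing care are the sign bookkeeping (the alternation of $T_n(x_j)$ matching the alternation of the node-difference products) and the case $n=1$. For $n=1$ the claim reads $\frac12 + \frac12 = 1 = 2^0$, consistent with the general argument. No step is a genuine obstacle. The footnote's explicit per-summand values are not needed for this proof.
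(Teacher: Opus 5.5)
Your proposal is correct and follows the paper's proof: Lagrange interpolation of $T_n$ at the nodes $\cos\frac{j\pi}{n}$, comparison of the $x^n$ coefficients with $2^{n-1}$, and positivity of every summand. Your sign count (exactly $j$ negative factors $x_j - x_k$, for $k<j$) supplies the detail that the paper simply asserts.
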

	\begin{proof}
		For any integer~$j$, $T_n\bigl(\cos \frac{j\pi}{n}\bigr) = \cos j\pi = (-1)^j$. The Lagrange interpolation formula tells us
		\[T_n(x) = \sum_{j=0}^n{}(-1)^j \prod_{\substack{k \in \mathbb{Z},\\0 \leqslant k \leqslant n,\,\,k \neq j}} \frac{x - \cos\frac{k\pi}{n}}{\cos\frac{j\pi}{n} - \cos\frac{k\pi}{n}}\]
		whose leading coefficients are
		\[2^{n-1} = \sum_{j=0}^n{}(-1)^j \prod_{\substack{k \in \mathbb{Z},\\0 \leqslant k \leqslant n,\,\,k \neq j}} \biggl(\cos\frac{j\pi}{n} - \cos\frac{k\pi}{n}\biggr)^{\mskip-4mu\relax -1} \EWzJhvDisplayMathPeriod.\]
		The summands on the right-hand side are positive for all~$j$, so we get the conclusion.\ifEWzJhvPageBreak\pagebreak\fi
	\end{proof}
	
	\begin{EWzJhvLemma}\label{lmaFMinus1n1D}
		Let\/ $n$ be a positive integer. Let\/ $f\EWzJhvMathSepColon(a,b)\rightarrow\mathbb{R}$ be\/ $n$-times differentiable where\/ $-\infty \leqslant a < b \leqslant +\infty$. Denote the Lebesgue measure by\/ $\EWzJhvMathOP\lambda$. For\/~$\delta>0$, we have
		\[\EWzJhvMathOP\lambda\bigl(f^{-1}([-\delta,\delta])\bigr) \leqslant 4\!\left(\frac{n!\,\delta}{\:2\inf\limits_{x\in(a,b)}{}\bigl|f^{(n)}(x)\bigr|}\right)^{\mskip-7mu\relax \frac{1}{n}} \EWzJhvDisplayMathPeriod.\]
	\end{EWzJhvLemma}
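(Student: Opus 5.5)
We prove the bound by combining Lemma~\ref{lmaChebyshevLagrange} with the Lagrange interpolation error formula. Write $m:=\inf_{x\in(a,b)}\bigl|f^{(n)}(x)\bigr|$ and $S:=f^{-1}([-\delta,\delta])$. If $m=0$ the right-hand side is $+\infty$ and there is nothing to prove, so assume $m>0$. Then $f^{(n)}$ never vanishes. By Darboux's theorem, a derivative has the intermediate value property, so $f^{(n)}$ has constant sign. We argue by contradiction: suppose $\lambda(S) > L$, where $L$ denotes the right-hand side. We aim to find $n+1$ points $x_0<\cdots<x_n$ in $S$ whose mutual spacings are comparable to those of the Chebyshev nodes $\cos\frac{j\pi}{n}$ scaled to an interval of length $\ell:=\lambda(S)$.

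To find the points, let $\ell=\lambda(S)$ (truncate $S$ to a subset of finite measure $>L$ if necessary). Consider the distribution function $F(t):=\lambda(S\cap(a,t))$, which is continuous and nondecreasing from $0$ to $\ell$. Define $c_j := \frac{\ell}{2}\bigl(1-\cos\frac{j\pi}{n}\bigr)\in[0,\ell]$ and choose $x_j\in\overline{S}$ with $F(x_j)=c_j$. If needed, approximate $x_j$ by points of $S$; $S$ is closed in $(a,b)$ since $f$ is continuous. This gives
\[|x_j-x_k|\ \geqslant\ \lambda\bigl(S\cap[x_k,x_j]\bigr)=|c_j-c_k|=\tfrac{\ell}{2}\Bigl|\cos\tfrac{j\pi}{n}-\cos\tfrac{k\pi}{n}\Bigr|.\]
So the gaps between the chosen points are at least the scaled Chebyshev gaps.

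Next, apply the divided-difference identity: for $n$-times differentiable $f$ and distinct nodes,
\[f[x_0,\ldots,x_n]=\sum_{j=0}^n\frac{f(x_j)}{\prod_{k\neq j}(x_j-x_k)}=\frac{f^{(n)}(\xi)}{n!}\]
for some $\xi\in(x_0,x_n)$. This is the mean value form of the divided difference. It needs only $n$-fold differentiability, via Rolle's theorem applied to $f$ minus its interpolating polynomial. Hence $|f[x_0,\ldots,x_n]|\geqslant m/n!$. On the other hand, $|f(x_j)|\leqslant\delta$, and each product is bounded below using the spacing estimate. Lemma~\ref{lmaChebyshevLagrange} then gives
\[\frac{m}{n!}\ \leqslant\ \delta\Bigl(\tfrac{2}{\ell}\Bigr)^{n}\sum_{j=0}^n\prod_{k\neq j}\Bigl|\cos\tfrac{j\pi}{n}-\cos\tfrac{k\pi}{n}\Bigr|^{-1}=\delta\,\frac{2^{2n-1}}{\ell^n}.\]
This rearranges to $\ell\leqslant 4\bigl(\frac{n!\,\delta}{2m}\bigr)^{1/n}$, which contradicts $\ell>L$.

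The main obstacle is the node-selection step when $S$ is not an interval. The measure-based choice via $F$ handles it, but it must deliver distinct points lying exactly in $S$. The degenerate case, where two $c_j$ correspond to a gap of $S$, is resolved by picking $x_j$ as an endpoint of $S\cap(a,x]$. Strictness of the inequality $\ell>L$ leaves room for an approximation argument, should infima not be attained. Also, $n$-times differentiability without continuity of $f^{(n)}$ requires the Rolle-based form of the divided-difference identity rather than the integral (Hermite–Genocchi) form.
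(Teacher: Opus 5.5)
Your proposal is correct and is essentially the paper's proof. The paper likewise reduces to a compact $K\subset f^{-1}([-\delta,\delta])$ and takes $x_0=\inf K$ and, for $j\geqslant 1$, $x_j$ the infimum of the $x$ at which $\lambda\bigl(K\cap(-\infty,x]\bigr)$ reaches $\frac{1-\cos(j\pi/n)}{2}\lambda(K)$; it then applies Rolle's theorem to $f$ minus its Lagrange interpolant together with Lemma~\ref{lmaChebyshevLagrange}. With this compact truncation and infimum choice the nodes lie in $K$ automatically, so your approximation step is not needed, and the appeal to Darboux's theorem and the contradiction framing are harmless but unnecessary.
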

	\begin{EWzJhvRemark}
		When $f = \delta\,T_n$, the equality holds.
	\end{EWzJhvRemark}
	\begin{proof}
		It is sufficient to prove $\EWzJhvMathOP\lambda(K) \leqslant 4\mskip2mu\relax({\cdots})^{\frac{1}{n}}$ for any compact~$K \subset f^{-1}([-\delta,\delta])$. Let $K$ be such a compact subset. We assume $\EWzJhvMathOP\lambda(K) > 0$. Note that $x \mapsto \EWzJhvMathOP\lambda\bigl(K \cap (-\infty, x]\bigr)$ is a nondecreasing continuous function over $\mathbb{R}$. Let
		\[x_0 := \inf K,\quad x_j := \inf{}\Biggl\{x \in \mathbb{R} \Biggm| \EWzJhvMathOP\lambda\bigl(K \cap (-\infty, x]\bigr) \geqslant \EWzJhvMathFracVCentered{1-\cos\frac{j\pi}{n}}{2}\EWzJhvMathOP\lambda(K)\Biggr\}\text{ for }j=1,2,\ldots,n \EWzJhvDisplayMathPeriodOverfull.\]
		Then $x_0<x_1<\cdots<x_n$, $x_j \in K$ for any~$j$, $\displaystyle \EWzJhvMathOP\lambda\bigl(K \cap (-\infty, x_j]\bigr) = \EWzJhvMathFracVCentered{1-\cos\frac{j\pi}{n}}{2}\EWzJhvMathOP\lambda(K)$ for any~$j$, and thus $\displaystyle \lvert x_j-x_k\rvert \geqslant \EWzJhvMathFracVCentered{\bigl|\cos\frac{j\pi}{n} - \cos\frac{k\pi}{n}\bigr|}{2}\EWzJhvMathOP\lambda(K)$ for any~$j,k$. Let
		\[g(x) := f(x) - \sum_{j=0}^n f(x_j) \prod_{k \neq j} \EWzJhvMathFracVCentered{x-x_k}{x_j-x_k \EWzJhvDisplayMathPeriod{\,.}}\]
		Then $g(x_j)=0$ for~$j=0,1,\ldots,n$. We use the mean value theorem to show that $g'$ has $n$ distinct zeros, and thus $g''$ has $n-1$ distinct zeros, \ldots, and thus $g^{(n)}$ has a zero. \ifEWzJhvPDF\linebreak\fi We then derive as follows:
		\begin{align*}
			0=\inf_{(a,b)}{}\bigl|g^{(n)}\bigr| &= \inf_{(a,b)}{}\Biggl|f^{(n)} - \sum_{j=0}^n \frac{n!\,f(x_j)}{\prod_{k \neq j} (x_j-x_k)}\Biggr|\\
			&\geqslant \inf_{(a,b)}{}\bigl|f^{(n)}\bigr|\, - \sum_{j=0}^n \frac{n!\,\bigl|f(x_j)\bigr|}{\prod_{k \neq j} \lvert x_j-x_k\rvert}\\
			&\geqslant \inf_{(a,b)}{}\bigl|f^{(n)}\bigr|\, - \sum_{j=0}^n \frac{n!\,\delta}{\displaystyle \,2{\vphantom{\bigr)}}^{-n}\,\bigl(\EWzJhvMathOP\lambda(K)\bigr)^{\mskip-2mu\relax n}\,\textstyle\prod_{k \neq j} \bigl|\cos\frac{j\pi}{n} - \cos\frac{k\pi}{n}\bigr|}\\
			&= \inf_{(a,b)}{}\bigl|f^{(n)}\bigr|\, - 2{\vphantom{\bigr)}}^{2n-1}\,n!\,\delta\,\bigl(\EWzJhvMathOP\lambda(K)\bigr)^{\mskip-2mu\relax -n} \qquad\text{(using \autoref{lmaChebyshevLagrange})}
		\end{align*}
		Rearrange the inequality to get the conclusion.
	\end{proof}
	
	Finally, we return to the world of smooth manifolds, where the bound is:
	
	\begin{EWzJhvLemma}\label{lmaFMinus1n}
		Let\/ $(M,g)$ be a smooth Riemannian manifold of dimension\/~$n$. Let\/ $m$ be a positive integer. Let\/ \ifEWzJhvPDF\vspace{0.5pt}\fi $f_0 \in C^m(M)$. Suppose\/ $f_0$ only vanishes at zeros of order\/~$\leqslant m$.\EWzJhvFootnote{i.e.\ $\sum_{j=0}^m{}\bigl|\nabla^j f_0\bigr| >0$} Let\/ $K \subset M$ be a compact subset. Then there exists\/ $\varepsilon>0$ satisfying for any\/~$f_1 \in C^m(M)$ with\/ $\sum_{j=0}^m{}\bigl\|\nabla^j (f_0-f_1)\bigr\|_{\text{$L^\infty(M)$}}\mskip-1mu\relax<\varepsilon$ and any\/~$\delta>0$, $\operatorname{Vol}_g\bigl(K \cap f_1^{-1}([-\delta,\delta])\bigr) \leqslant \varepsilon^{-1}\delta^{\frac{1}{m}}$. In particular, $\int_K{}\lvert f_1\rvert^{-\alpha}\,\mathrm{d}\mathrm{Vol}_g \leqslant C\bigl(\operatorname{Vol}_g(K),\varepsilon,\alpha m\bigr)$ for\/~$\alpha\in\bigl[0,\frac{1}{m}\bigr)$.
	\end{EWzJhvLemma}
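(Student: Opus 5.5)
The plan is to reduce the statement to the one-dimensional estimate of \autoref{lmaFMinus1n1D}, applied along short straight segments in local coordinates, and then integrate over the remaining variables with Fubini.

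\emph{Step 1: covering.} Since $K$ is compact, it suffices to work near each point $p\in K$ and then take a finite subcover; the constants $\varepsilon$ are combined by taking the minimum. Fix $p$ and choose normal-type coordinates $x$ on a ball $B$ around $p$, in which $g$ is comparable to the Euclidean metric.

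\emph{Step 2: a nonvanishing directional derivative near $p$.} By hypothesis there is some $j\leqslant m$ with $\nabla^j f_0(p)\neq0$. Take $j$ to be the least such order.
\begin{itemize}
\item If $j=0$, then $|f_0|\geqslant c>0$ on a neighbourhood of $p$. Choosing $\varepsilon<c/2$ gives $|f_1|\geqslant c/2$ there, so the set $f_1^{-1}([-\delta,\delta])$ is empty for $\delta<c/2$. For $\delta\geqslant c/2$ the volume bound is trivial once $\varepsilon$ is small, since then $\varepsilon^{-1}\delta^{1/m}$ exceeds the volume of the neighbourhood.
\item If $j\geqslant1$, then $\nabla^jf_0(p)$ is a nonzero symmetric $j$-tensor, because all lower derivatives vanish at $p$ and so the $j$-th derivative is the symmetric Taylor coefficient. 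A nonzero homogeneous polynomial is nonzero at some unit vector $\xi$. After a linear change of coordinates, $\xi=\partial_{x_1}$, so $|\partial_{x_1}^j f_0(p)|=2c>0$.
\end{itemize}

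\emph{Step 3: the one-dimensional estimate on lines.} Covariant and coordinate $j$-th derivatives differ by lower-order terms with bounded coefficients. Hence, shrinking $B$ to a coordinate cube $I\times I'$ and choosing $\varepsilon$ small, we get $|\partial_{x_1}^j f_1|\geqslant c$ on the cube for every $f_1$ that is $\varepsilon$-close to $f_0$ in $C^m$. Fix $x'\in I'$ and apply \autoref{lmaFMinus1n1D} with $n=j$ to $t\mapsto f_1(t,x')$ on $I$:
\[\EWzJhvMathOP\lambda\bigl(\{t\in I : |f_1(t,x')|\leqslant\delta\}\bigr)\leqslant C(j,c)\,\delta^{1/j}.\]
Integrating over $x'\in I'$ (Fubini) and comparing volume forms gives
\[\operatorname{Vol}_g\bigl(\text{cube}\cap f_1^{-1}([-\delta,\delta])\bigr)\leqslant C\,\delta^{1/j}.\]

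\emph{Step 4: converting to the exponent $1/m$.} For $\delta\leqslant1$ we have $\delta^{1/j}\leqslant\delta^{1/m}$, since $j\leqslant m$. For $\delta>1$ the left-hand side is at most the volume of a fixed neighbourhood of $K$, which is $\leqslant\varepsilon^{-1}\delta^{1/m}$ once $\varepsilon$ is small. Summing over the finite cover and shrinking $\varepsilon$ once more absorbs all constants into $\varepsilon^{-1}$.

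\emph{Step 5: the integral bound.} Write
\[\int_K|f_1|^{-\alpha}=\int_0^\infty \operatorname{Vol}_g\bigl(K\cap\{|f_1|^{-\alpha}>s\}\bigr)\,\mathrm{d}s.\]
Bound the integrand by $\operatorname{Vol}_g(K)$ for $s\leqslant1$, and by $\varepsilon^{-1}s^{-1/(\alpha m)}$ for $s>1$ (take $\delta=s^{-1/\alpha}$). The second piece is integrable because $\alpha m<1$, giving the stated constant $C(\operatorname{Vol}_g(K),\varepsilon,\alpha m)$.

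\emph{Main obstacle.} The delicate point is Step 2–3: making the lower bound $|\partial_{x_1}^jf_1|\geqslant c$ uniform both over the cube and over all $\varepsilon$-close $f_1$. This rests on continuity of $\partial_{x_1}^jf_0$, which holds because $j\leqslant m$ and $f_0\in C^m$, and on the fact that the $C^m$ closeness is measured using covariant derivatives of bounded geometry on the compact set.
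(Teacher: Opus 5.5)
Your proposal is correct and follows essentially the same route as the paper. Both arguments localise by compactness, pick a direction in which some derivative of order $\leqslant m$ of $f_0$ is nonzero, make it uniformly bounded below for all $C^m$-close $f_1$, apply \autoref{lmaFMinus1n1D} on parallel lines and integrate over the transverse variables, trade $\delta^{1/j}$ for $\delta^{1/m}$ using the trivial volume bound, and finish with the layer-cake formula; your use of the least nonvanishing order (so that $\nabla^j f_0(p)$ is a symmetric coordinate derivative) only makes explicit what the paper's reduction to a Euclidean ball takes for granted.
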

	\begin{proof}
		Because $K$ is compact, we only need to prove that for any~$p \in K$, there exist an open neighbourhood $U \ni p$ and $\varepsilon>0$ satisfying for any~$f_1 \in C^m(M)$ with $\sum_{j=0}^m{}\bigl\|\nabla^j (f_0-f_1)\bigr\|_{\text{$L^\infty(M)$}}\mskip-1mu\relax<\varepsilon$ and any~$\delta>0$, $\operatorname{Vol}_g\bigl(U \cap f_1^{-1}([-\delta,\delta])\bigr) \leqslant \varepsilon^{-1}\delta^{\frac{1}{m}}$. WLOG, we simply assume that $(M,g)$ is the unit ball in the Euclidean space $(\mathbb{R}^n,g_{\mathbb{R}^{n\mathstrut}})$ and $p$ is the origin. Because $f_0$ only vanishes at zeros of order~$\leqslant m$, there exists an \ifEWzJhvPDF\linebreak\fi integer~$m_1$ satisfying $0 \leqslant m_1 \leqslant m$ and $\nabla^{m_1}f_0(0) \neq 0$. By applying an orthogonal transform to the unit ball, we may assume \ifEWzJhvPDF\vspace{1pt}\fi $\partial_{x_1}^{m_1}f_0(0) \neq 0$. Choose $\varepsilon>0$ satisfying for any~$f_1 \in C^m(M)$ with \ifEWzJhvPDF\vspace{-1pt}\fi $\sum_{j=0}^m{}\bigl\|\nabla^j (f_0-f_1)\bigr\|_{\text{$L^\infty(M)$}}\mskip-1mu\relax<\varepsilon$ and any~$x \in B^n_\varepsilon(0) \subset \mathbb{R}^n$,\EWzJhvFootnote{$B^n_\varepsilon(0)$ denotes the open ball of radius~$\varepsilon$ centred at the origin in $\mathbb{R}^n$.} $\bigl|\partial_{x_1}^{m_1}f_1(x)\bigr| \geqslant \frac{1}{2}\hspace{0.0625em}\bigl|\partial_{x_1}^{m_1}f_0(0)\bigr|$. For any~$\delta>0$, \ifEWzJhvPDF\vspace{1pt}\fi \autoref{lmaFMinus1n1D} bounds the $1$-dimensional Lebesgue measure \ifEWzJhvPDF\vspace{-1pt}\fi of the intersection of $B^n_\varepsilon(0) \cap f_1^{-1}([-\delta,\delta])$ with any line parallel to the $x_1$-axis. We then obtain \ifEWzJhvPDF\vspace{1pt}\fi $\operatorname{Vol}\bigl(B^n_\varepsilon(0) \cap f_1^{-1}([-\delta,\delta])\bigr) \leqslant C\bigl(m_1,\bigl|\partial_{x_1}^{m_1}f_0(0)\bigr|,n,\varepsilon\bigr)\hspace{0.0625em}\delta^{\frac{1}{m_1}}$.\EWzJhvFootnote{In case $m_1=0$, we have $\operatorname{Vol}\bigl(B^n_\varepsilon(0) \cap f_1^{-1}([-\delta,\delta])\bigr) =0$ for~$\delta < \frac{1}{2}\hspace{0.0625em}\lvert f_0(0)\rvert$ instead.} As we also have \ifEWzJhvPDF\vspace{-1pt}\fi $\operatorname{Vol}\bigl(B^n_\varepsilon(0) \cap f_1^{-1}([-\delta,\delta])\bigr) \leqslant \operatorname{Vol}\bigl(B^n_\varepsilon(0)\bigr) = C(n,\varepsilon)$, we can therefore conclude $\operatorname{Vol}\bigl(B^n_\varepsilon(0) \cap f_1^{-1}([-\delta,\delta])\bigr) \leqslant C\bigl(m,\bigl|\partial_{x_1}^{m_1}f_0(0)\bigr|,n,\varepsilon\bigr)\hspace{0.0625em}\delta^{\frac{1}{m}}$.\ifEWzJhvPDF\vspace{1pt}\fi
		
		To prove $\int_K{}\lvert f_1\rvert^{-\alpha}\,\mathrm{d}\mathrm{Vol}_g \leqslant C\bigl(\operatorname{Vol}_g(K),\varepsilon,\alpha m\bigr)$ for~$\alpha\in\bigl(0,\frac{1}{m}\bigr)$, just note
		\begin{align*}
			\int_K{}\lvert f_1\rvert^{-\alpha}\,\mathrm{d}\mathrm{Vol}_g &= \int_0^\infty \operatorname{Vol}_g\bigl(K \cap f_1^{-1}([-t^{-\frac{1}{\alpha}},t^{-\frac{1}{\alpha}}])\bigr)\,\mathrm{d}t\\ &\leqslant \int_0^\infty \min\bigl\{\operatorname{Vol}_g(K),\:\varepsilon^{-1}t^{-\frac{1}{\alpha m}}\bigr\}\,\mathrm{d}t \EWzJhvDisplayMathPeriod.\qedhere
		\end{align*}
	\end{proof}
	
	\begin{EWzJhvRemark}
		Later in \autoref{subsecFinishUp}, we shall apply \autoref{lmaFMinus1n} to $\lvert a\rvert^2$ in the context of \autoref{thm2} and \autoref{corHigherDer}. Note that \autoref{lmaFMinus1n} works for rather general smooth functions, but $a$ satisfies a certain elliptic PDE, and its zero locus is countably $(4N-2)$-$C^\infty$-rectifiable by \autoref{lmaZeroLocusDim}. Suppose $a$ only vanishes at zeros of order~$\leqslant m$. \autoref{lmaFMinus1n} can only give us a bound on $\int_K{}\lvert a\rvert^{-\alpha}\,\mathrm{d}\mathrm{Vol}_g$ for~$\alpha\in\bigl[0,\frac{1}{m}\bigr)$, but as $a$ has these rather special properties, presu\EWzJhvText{mab}ly we should be able to derive a bound on $\int_K{}\lvert a\rvert^{-\alpha}\,\mathrm{d}\mathrm{Vol}_g$ for~$\alpha$ in the wider range $\bigl[0,\frac{2}{m}\bigr)$ with some more specialised method. As an illustration, $\lvert z\rvert^{-\alpha}$ is integ\EWzJhvText{rab}le over the unit disc $D \subset \mathbb{C}$ for~$\alpha \in [0,2)$.
	\end{EWzJhvRemark}
	
	\subsection{Finish up}\label{subsecFinishUp}
	
	In this subsection, we use the theory of\/ Sobolev spaces and elliptic PDEs to complete the proof of \autoref{thm2} (and of \autoref{thm2Continuity}). Notably, we take advantage of an elliptic PDE satisfied by $J_a$ {\small (\autoref{lmaPdeJa})}.
	
	\begin{EWzJhvLemma}\label{lmaW1pSingularCodim1Plus}
		Let\/ $M$ be a smooth manifold of dimension\/~$n$ (\/$n \geqslant 2$). Let\/ $V \rightarrow M$ be a smooth vector bundle. Let\/ $K \subset M$ be a closed subset with zero\/ $(n-1)$-dimensional Hausdorff measure. Let\/ $p\in[1,\infty]$. Let\/ $f$ be a\/ $W^{1,p}_{\mathrm{loc}}$ section of\/ $V$ over\/ $M \setminus K$. If the zero extensions of\/\hspace{-0.08em} $f$ and\/ $\nabla f$ (given some connection on\/ $V \rightarrow M$) from\/ $M \setminus K$ to\/ $M$ are actually\/ $L^p_{\mathrm{loc}}$ over\/ $M$, then\/ $f$ is\/ $W^{1,p}_{\mathrm{loc}}$ over\/ $M$.\vspace{-1pt}
	\end{EWzJhvLemma}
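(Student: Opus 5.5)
The statement is local, so I will fix a coordinate chart $\Omega\cong B\subset\mathbb{R}^n$ with $V$ trivialised over it. The given connection then differs from the flat one by a smooth matrix of $1$-forms, and that difference only adds a zeroth-order term, which is $L^p_{\mathrm{loc}}$ because $f$ is. So it suffices to show the following: the zero extension $\bar f$ of $f$ has weak partial derivatives $\partial_i\bar f$ on $B$, and these agree with the zero extensions $\overline{\partial_i f}$ of the classical weak derivatives on $B\setminus K$. Once that holds, $\bar f\in W^{1,p}_{\mathrm{loc}}(B)$ by hypothesis. Concretely, I need to verify that for every test function $\varphi\in C^\infty_c(B)$,
\[\int_B \bar f\,\partial_i\varphi = -\int_B \overline{\partial_i f}\,\varphi .\]
Here I note that $K$ has Lebesgue measure zero, since $\mathcal{H}^{n-1}(K)=0$ and $n\geqslant 2$, so the zero extensions are just the a.e.\ extensions.

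The main step is a cutoff argument that exploits $\mathcal{H}^{n-1}(K)=0$. Fix $\varphi$ and let $K'=K\cap\operatorname{supp}\varphi$, which is compact. Given $\epsilon>0$, I cover $K'$ by finitely many balls $B(x_k,r_k)$ with $\sum_k r_k^{n-1}<\epsilon$ and all $r_k<\epsilon$. For each ball I take a cutoff $\eta_k$ that equals $1$ on $B(x_k,r_k)$, is supported in $B(x_k,2r_k)$, and satisfies $|\nabla\eta_k|\leqslant 2/r_k$. Then I set $\eta_\epsilon=\max_k\eta_k$, which is Lipschitz (or I smooth it). Put $\psi_\epsilon=(1-\eta_\epsilon)\varphi$; it is compactly supported in $B\setminus K$. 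Testing the weak derivative on $B\setminus K$ against $\psi_\epsilon$ gives
\[\int \bar f\,\partial_i\psi_\epsilon=-\int\overline{\partial_i f}\,\psi_\epsilon .\]
As $\epsilon\to0$, the sets $\operatorname{supp}\eta_\epsilon$ shrink to a null set, so dominated convergence (using $\bar f,\overline{\partial_i f}\in L^1_{\mathrm{loc}}$) handles every term except the error term $\int \bar f\,\varphi\,\partial_i\eta_\epsilon$.

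That error term is the main obstacle, and I would bound it by Hölder's inequality. We have $\|\nabla\eta_\epsilon\|_{L^1}\lesssim\sum_k r_k^{-1}r_k^n=\sum_k r_k^{n-1}<\epsilon$, which handles the case $p=\infty$ immediately. For finite $p$, however, $\|\nabla\eta_\epsilon\|_{L^{p'}}$ need not be small, so I would avoid Hölder and instead use absolute continuity of the integral together with a one-dimensional argument. For a.e.\ line parallel to the $x_i$-axis, the line meets $K$ in a set of $\mathcal{H}^0$-measure zero (by the Eilenberg / coarea-type inequality for projections, $\mathcal{H}^{n-1}(K)=0$ implies the projection of $K$ onto $x_i^\perp$ is $\mathcal{H}^{n-1}$-null). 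So a.e.\ such line avoids $K$ entirely. By the ACL characterisation, $f$ is absolutely continuous on a.e.\ line segment in $B\setminus K$ with derivative $\partial_i f\in L^1$. Since a.e.\ line misses $K$ completely, $\bar f$ is absolutely continuous along a.e.\ full line in the $x_i$ direction within $B$, with derivative $\overline{\partial_i f}$. Integrating by parts on each such line and applying Fubini gives the desired identity directly. This ACL route is cleaner than the cutoff and works for every $p\in[1,\infty]$, so I would present it as the proof. The delicate point to check is precisely that projection statement, namely that an $\mathcal{H}^{n-1}$-null set projects to a Lebesgue-null set in $\mathbb{R}^{n-1}$. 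This holds because orthogonal projection is $1$-Lipschitz and Lipschitz maps do not increase $\mathcal{H}^{n-1}$.
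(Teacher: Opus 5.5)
Your proof is correct. Its key geometric input is the same as the paper's: orthogonal projection is $1$-Lipschitz, so $\pi(K\cap\operatorname{supp}\varphi)$ is a null set in the hyperplane $x_i^\perp$, and hence almost every line parallel to the $x_i$-axis misses $K$.

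The difference is in how you turn this into the weak-derivative identity. You go through the ACL characterisation of $W^{1,1}_{\mathrm{loc}}(B\setminus K)$, integrate by parts on each good line, and reassemble with Fubini. One small imprecision: ACL gives absolute continuity only on compact subintervals of $\ell\cap B$, not on the full chord. That is enough here, since $\varphi$ has compact support.

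The paper never looks at representatives on lines. It tests the weak-derivative identity on $\mathbb{R}^n\setminus K$ against $(\psi_k\circ\pi)\,\varphi$. Here $\psi_k\in C^\infty_{\mathrm{c}}(\mathbb{R}^{n-1})$ vanish near the compact null set $\pi(\overline{\operatorname{supp}(\varphi)}\cap K)$ and tend pointwise to $1$ off it. Because $\psi_k\circ\pi$ does not depend on $x_1$, the derivative $\partial_{x_1}$ never falls on the cutoff, and dominated convergence finishes the proof.

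This is exactly the repair of the ball-cutoff attempt you abandoned. If the cutoff depends only on the transverse variables, the error term $\int\bar f\,\varphi\,\partial_i\eta_\epsilon$ vanishes identically. No smallness of $\nabla\eta_\epsilon$ in $L^{p'}$ is then needed, for any $p$.

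The paper's version is more elementary: it uses only the definition of weak derivative and dominated convergence. Yours imports the ACL theorem, but it makes more visible why $\mathcal{H}^{n-1}(K)=0$, rather than a capacity condition, is the natural hypothesis once $\nabla f$ is assumed to be $L^p$ across $K$.
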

	\begin{proof}
		\linespread{1.1}\selectfont
		
		WLOG, we simply assume $M$ is $\mathbb{R}^n$. We also identify $V \rightarrow M$ with some trivial vector bundle. To prove that $f$ is $W^{1,p}_{\mathrm{loc}}$ over $M$, we only need to verify for any smooth compactly supported function $\varphi \in C_{\mathrm{c}}^\infty(\mathbb{R}^n)$ and any integer~$j$ satisfying $1 \leqslant j \leqslant n$, $\int_{\mathbb{R}^n \setminus K} f\,\partial_{x_j}\varphi\,\mathrm{d}x=-\int_{\mathbb{R}^n \setminus K} \varphi\,\partial_{x_j}f\,\mathrm{d}x$. WLOG, we assume $j=1$. Let $\pi \EWzJhvMathSepColon \mathbb{R}^n \rightarrow \mathbb{R}^{n-1}$ be the projection onto the $x_2x_3 \cdots x_n$-hyperplane. Let $\{\psi_k\}_{k=1,2,\ldots} \subset C_{\mathrm{c}}^\infty(\mathbb{R}^{n-1})$ be a sequence of functions satisfying $0 \leqslant \psi_k \leqslant 1$, $\overline{\operatorname{supp}(\psi_k)} \cap \pi\bigl(\overline{\operatorname{supp}(\varphi)} \cap K\bigr) = \varnothing$, and $\psi_k \rightarrow 1$ pointwise over $\mathbb{R}^{n-1} \setminus \pi\bigl(\overline{\operatorname{supp}(\varphi)} \cap K\bigr)$ as $k\rightarrow\infty$. Using $(\psi_k\circ\pi)\,\varphi$ as a test function, we get $\int_{\mathbb{R}^n \setminus K}{}(\psi_k\circ\pi)\hspace{0.0625em}\bigl(f\,\partial_{x_1}\varphi+\varphi\,\partial_{x_1}f\bigr)\,\mathrm{d}x=0$. Lebesgue's dominated convergence theorem implies $\int_{\mathbb{R}^n \setminus K}{}\bigl(\mathbf{1}_{\mathbb{R}^{n-1} \setminus \pi(\overline{\operatorname{supp}(\varphi)} \cap K)}\circ\pi\bigr)\hspace{0.0625em}\bigl(f\,\partial_{x_1}\varphi+\varphi\,\partial_{x_1}f\bigr)\,\mathrm{d}x=0$ by taking $k\rightarrow\infty$. Since $\pi\bigl(\overline{\operatorname{supp}(\varphi)} \cap K\bigr)$ has zero $(n-1)$-dimensional Lebesgue measure, we arrive at $\int_{\mathbb{R}^n \setminus K} f\,\partial_{x_1}\varphi\,\mathrm{d}x=-\int_{\mathbb{R}^n \setminus K} \varphi\,\partial_{x_1}f\,\mathrm{d}x$.\qedhere\par
	\end{proof}
	
	To prove \autoref{thm2}, we work over any open $U \subset M$ satisfying that $\overline{U}$ is compact and $U$ is diffeomorphic to $\mathbb{R}^{4N}$. Due to \autoref{lmaJaZ2Choice}, we can choose a complex structure $J_a$ from the $\mathbb{Z}_2$-complex structure $\tilde{J}_a$ over $U \setminus \mathcal{N}(a)$. We can then define $a^{2,0}$ given by \EWzJhvEqRef{eqA20} over $U \setminus \mathcal{N}(a)$.
	
	\begin{EWzJhvLemma}\label{lmaJaW1p}
		$J_a$ as a smooth section of\/ $TM \otimes_{\mathbb{R}} \EWzJhvTpStar T{\hphantom{p}} M$ over\/ $U \setminus \mathcal{N}(a)$ extends uniquely to a\/ $W^{1,p}_{\mathrm{loc}}$ section over\/ $U$ for any\/~$p\in[1,\infty)$.
	\end{EWzJhvLemma}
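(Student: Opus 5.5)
We work over a relatively compact $U$ (diffeomorphic to $\mathbb{R}^{4N}$) and use the choice of $J_a$ on $U\setminus\mathcal{N}(a)$ from \autoref{lmaJaZ2Choice}. Since $J_a$ is pointwise an isometry, $|J_a|$ is bounded, so the zero extension of $J_a$ lies in $L^\infty(U)$. By \autoref{lmaZeroLocusDim}, $\mathcal{N}(a)$ is closed and countably $(4N-2)$-rectifiable. In particular it has zero $(4N-1)$-dimensional Hausdorff measure and zero Lebesgue measure. So by \autoref{lmaW1pSingularCodim1Plus} it suffices to show that the zero extension of $\nabla J_a$ lies in $L^p_{\mathrm{loc}}(U)$ for every $p<\infty$. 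Uniqueness is automatic, since $\mathcal{N}(a)$ is Lebesgue-null.

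To bound $|\nabla J_a|$ we apply \autoref{corHigherDer} at each point of $U\setminus\mathcal{N}(a)$:
\[|\nabla J_a|\leqslant \max\Bigl\{C(n,s_{n-3}),\,C(N,n)\,|(\nabla^A)^n a^{2,0}|^{1/n}\,|a^{2,0}|^{-1/n}\Bigr\}.\]
We need both factors on the right controlled.

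For the numerator we use that $\mathrm{d}_A^*a^{2,0}=0$ (\autoref{lmaA20eq}). Define $a^{2,0}$ on $U\setminus\mathcal{N}(a)$ via the chosen $J_a$. It is bounded by $|a|$, so its zero extension is in $L^\infty$. We then want to show that this extension is a weak solution of $\mathrm{d}_A^*a^{2,0}=0$ on all of $U$ and is a section of $\Omega^{2,+}(V\otimes\mathbb{C})$. Combining $\mathrm{d}_A^*$ with the Weitzenböck identity (\autoref{propWeitzen2plus}), or noting that $\mathrm{d}_A^*$ restricted to $\Omega^{2,+}$ is an overdetermined elliptic first-order operator (by \autoref{propDDStar2plus} its symbol is injective), elliptic regularity would then make the extension smooth. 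Consequently $|(\nabla^A)^n a^{2,0}|$ is bounded on compact subsets for every $n$.

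The weak equation across $\mathcal{N}(a)$ is the delicate point. A distributional derivative concentrated on $\mathcal{N}(a)$ must be excluded. I would try to get this from a $W^{1,1}$-type argument: near $\mathcal{N}(a)$ we have $|\nabla^A a^{2,0}|\leqslant C|\nabla^A a|+|a|\,|\nabla J_a|$, and we would need $|a|\,|\nabla J_a|$ to be integrable. That is circular. A better route is a capacity argument: a closed set of codimension $2$ is removable for bounded weak solutions of first-order elliptic systems. Concretely, multiply the test function by cutoffs $\chi_\epsilon$ that vanish near $\mathcal{N}(a)$ with $\|\nabla\chi_\epsilon\|_{L^1}\to0$; this is possible because a compact subset of a countable union of $(4N-2)$-dimensional submanifolds has zero $(4N-1)$-Hausdorff measure, hence zero $W^{1,1}$-capacity. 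Boundedness of $a^{2,0}$ then kills the error term. This capacity step is the main obstacle I expect.

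For the denominator we use $|a^{2,0}|=|a|/\sqrt2$. Apply \autoref{lmaFMinus1n} to $f_0=|a|^2$, which is smooth and has zeros of finite order bounded on $\overline U$ by unique continuation and compactness (say order $\leqslant m$). This gives $\int_K|a|^{-2\alpha}<\infty$ for $\alpha<1/m$. Choose $n$ in \autoref{corHigherDer} large, namely $n>pm/2$. Then
\[\int_K|\nabla J_a|^p\leqslant C+C\int_K|a|^{-p/n}<\infty,\]
since $p/(2n)<1/m$. Hence $\nabla J_a\in L^p_{\mathrm{loc}}$ for every finite $p$, and \autoref{lmaW1pSingularCodim1Plus} finishes the proof.
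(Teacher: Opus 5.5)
Your proof is correct. From the denominator estimate onward it is the paper's argument: Lemma~\ref{lmaFMinus1n} applied to $\lvert a\rvert^2$, then Corollary~\ref{corHigherDer} with $n$ large, then Lemma~\ref{lmaW1pSingularCodim1Plus}. Where you differ is the step showing that $a^{2,0}$ is smooth across $\mathcal{N}(a)$.

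The route you discarded as circular is in fact the paper's route, and it is not circular. Corollary~\ref{corNablaAA20} gives the exact identity $\lvert\nabla^A a^{2,0}\rvert=\lvert\nabla^A a\rvert/\sqrt{2}$ on $U\setminus\mathcal{N}(a)$; the cross term dies because $\langle v_1+iv_2,v_1+iv_2\rangle=0$. So no information on $\nabla J_a$ is needed, and your product-rule bound $C\lvert\nabla^A a\rvert+\lvert a\rvert\lvert\nabla J_a\rvert$ simply discards this. The paper then argues as follows:
\begin{itemize}
\item the zero extensions of $a^{2,0}$ and $\nabla^A a^{2,0}$ are locally bounded, so Lemma~\ref{lmaW1pSingularCodim1Plus} gives $a^{2,0}\in W^{1,\infty}_{\mathrm{loc}}(U)$;
\item the equation $\mathrm{d}_A^*a^{2,0}=0$ then holds a.e., hence weakly;
\item Proposition~\ref{propWeitzen2plus} plus elliptic regularity give smoothness.
\end{itemize}

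Your capacity argument is also valid. Cover the compact set $\mathcal{N}(a)\cap\operatorname{supp}\varphi$ by finitely many balls with $\sum r_i^{4N-1}$ small. This produces Lipschitz cutoffs $\chi_\epsilon$ with $\lVert\nabla\chi_\epsilon\rVert_{L^1}\to0$ and $\chi_\epsilon\to1$ a.e. Both error terms are then controlled by $\lVert a^{2,0}\rVert_{L^\infty}$. The resulting distributional equation implies $(\mathrm{d}_A\mathrm{d}_A^*a^{2,0})^{2,+}=0$ in distributions, which is elliptic by Proposition~\ref{propWeitzen2plus}.

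The trade-off:
\begin{itemize}
\item Your route uses only boundedness of $a^{2,0}$, i.e.\ removability of closed $\mathcal{H}^{4N-1}$-null sets for $L^\infty$ solutions of first-order systems. It would survive without the algebraic identity behind Corollary~\ref{corNablaAA20}.
\item The paper's route is shorter. It reuses Lemma~\ref{lmaW1pSingularCodim1Plus}, which is the same removability mechanism implemented by projecting onto a hyperplane, and which is needed at the end anyway.
\end{itemize}
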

	\begin{proof}
		Due to $\lvert a\rvert=\sqrt{2}\mskip2mu\relax\bigl|a^{2,0}\bigr|$, $\bigl|\nabla^A a\bigr| = \sqrt{2}\mskip2mu\relax\bigl|\nabla^A a^{2,0}\bigr|$ (\autoref{corNablaAA20}), \autoref{lmaZeroLocusDim} and \autoref{lmaW1pSingularCodim1Plus}, $a^{2,0}$ is $W^{1,\infty}_{\mathrm{loc}}$ over $U$. Furthermore, $a^{2,0}$ satisfies ${\displaystyle \mathrm{d}_A^*}a^{2,0}=0$ (\autoref{lmaA20eq}) weakly over $U$. The Weitzenböck formula {\small (\autoref{propWeitzen2plus})} and the regularity theory of weak solutions to second-order linear elliptic PDEs now imply $a^{2,0}$ is smooth over $U$.
		
		As $\overline{U}$ is assumed to be compact, there exists some positive integer~$m$ satisfying $a\rvert_U$ only vanishes at zeros of order~$\leqslant m$ {\small \cite{UniqueContinuationGeometry}}. \autoref{lmaFMinus1n} now applies to $\lvert a\rvert^2$ over $U$ and implies $\lvert a\rvert^{-\alpha} \in L^1_{\mathrm{loc}}(U)$ for~$\alpha>0$ small. Note $\lvert a\rvert=\sqrt{2}\mskip2mu\relax\bigl|a^{2,0}\bigr|$. Then \autoref{corHigherDer} with $n$ large\EWzJhvFootnote{Curiously, depending on how high $N$ and the orders of zeros of $a$ are (we only need $J_a \in W^{1,p}_{\mathrm{loc}}$ for a certain value of $p$ that only depends on $N$ to proceed further), the existence of derivatives of accordingly arbitrarily high order is needed here, due to the nature of \autoref{corHigherDer}. It would be interesting to know whether the results would still hold if things were only assumed to be, say, $C^{100}$, for $a$ that possibly has zeros of order higher than, say, $1000$.} implies that the zero extension of\/ $\nabla J_a$ from $U \setminus \mathcal{N}(a)$ to $U$ is $L^p_{\mathrm{loc}}$ \ifEWzJhvPDF\vspace{-1pt}\fi over $U$ for any~$p\in[1,\infty)$. Finally, \autoref{lmaW1pSingularCodim1Plus} implies $J_a$ is $W^{1,p}_{\mathrm{loc}}$ over $U$ for any~$p\in[1,\infty)$.
	\end{proof}
	
	Recall the notation $R$ and $\EWzJhvCDotInd\circledast\EWzJhvCDotInd$ from the description before \autoref{propWeitzen2plus}.
	
	\begin{EWzJhvLemma}[A PDE satisfied by $J_a$]\label{lmaPdeJa}\hspace*{-0.5em}\EWzJhvFootnote{There are \hyperref[subsubsecLmaPdeJaNotes]{additional notes} to this lemma at the end of this subsection.}
		Let\/ $\omega^{J_a} := g(J_a\EWzJhvCDotInd,\EWzJhvCDotInd)$ be regarded as a\/ $W^{1,p}_{\mathrm{loc}}$ section of\/\hspace{0em plus -0.08em} $\Omega^{2,+}(U)$, where\/ $p\in[1,\infty)$. If\/\hspace{0em plus -0.08em} $b \in \Omega^{2,+}(U)$ has compact support in\/ $U$, then
		\[\int_U{}\bigl\langle \nabla{\textstyle \omega^{J_a}}, \nabla b \bigr\rangle\,\mathrm{d}\mathrm{Vol}_g = \int_U{}\biggl(\EWzJhvMathFracVCentered{1}{2N}\hspace{0.0625em}\bigl|\nabla{\textstyle \omega^{J_a}}\bigr|^2\langle {\textstyle \omega^{J_a}}, b \rangle + R \circledast \bigl({\textstyle \omega^{J_a}} \oplus ({\textstyle \omega^{J_a}})^{\circledast\EWzJhvMathVCenter 3}\bigr) \circledast b\biggr)\,\mathrm{d}\mathrm{Vol}_g\]
		where the operators\EWzJhvFootnote{$R \circledast \bigl(\omega^{J_a} \oplus (\omega^{J_a})^{\circledast\EWzJhvMathVCenter 3}\bigr) \circledast b$ actually vanishes if $N \geqslant 2$ (or $N = 1$ and anti-self-dual). To see this, we can use the facts from the footnote\EWzJhvFootRef{ftnPropWeitzen2plus} to \autoref{propWeitzen2plus}, or alternatively, use \EWzJhvEqRef{eqDBarD2n0plus} and the fact that $(\Omega^{2,0,+}, \bar{\partial}^{\mathtt{D}}) \otimes_{\mathbb{C}} {\displaystyle ({\textstyle \Omega^{2,0,+}, \bar{\partial}^{\mathtt{LC}}})^*}$ is holomorphic to get $\mathrm{d}{\displaystyle \mathrm{d}^*}\omega^{J_a} \in \Omega^{1,1}$ and thus \EWzJhvEqRef{eqPfPdeJa} vanishes. In particular, when $(M,g,Q)$ is hyper\EWzJhvText{käh}ler, this PDE is just the harmonic map equation for \hspace{0.1em}$\omega^{J_a} \EWzJhvMathSepColon (M,g) \rightarrow S^2$.} $\EWzJhvCDotInd\circledast\EWzJhvCDotInd$ only depend on\/ $N$.
	\end{EWzJhvLemma}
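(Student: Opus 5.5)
The plan is to derive the identity first on $U \setminus \mathcal{N}(a)$, where $J_a$ is a smooth integrable compatible complex structure. It then extends across $\mathcal{N}(a)$ by a cutoff argument that uses the $W^{1,p}_{\mathrm{loc}}$ regularity from \autoref{lmaJaW1p} and the small size of $\mathcal{N}(a)$ from \autoref{lmaZeroLocusDim}.

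\textbf{Pointwise identity away from $\mathcal{N}(a)$.} Take $b \in \Omega^{2,+}$ and split it pointwise as $b = \langle b,\omega^{J_a}\rangle\,\omega^{J_a}/\lvert\omega^{J_a}\rvert^2 + b^\perp$, with $\langle b^\perp,\omega^{J_a}\rangle=0$.
\begin{itemize}
\item For the orthogonal part, \autoref{lmaJaEqPre} gives $\langle\nabla\omega^{J_a},\nabla b^\perp\rangle = N\langle \mathrm{d}^*\omega^{J_a},\mathrm{d}^* b^\perp\rangle$.
\item For the parallel part $h\,\omega^{J_a}$ with $h=\langle b,\omega^{J_a}\rangle/\lvert\omega^{J_a}\rvert^2$, constancy of $\lvert\omega^{J_a}\rvert$ gives $\langle\nabla\omega^{J_a},\nabla(h\,\omega^{J_a})\rangle = h\,\lvert\nabla\omega^{J_a}\rvert^2$. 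Here $\lvert\omega^{J_a}\rvert^2=2N$ by \eqref{eqOrthonBasis2plus}, which accounts for the factor $\frac{1}{2N}$.
\item By \autoref{propIntegrabEq}, $N\lvert\mathrm{d}^*\omega^{J_a}\rvert^2=\lvert\nabla\omega^{J_a}\rvert^2$, so $h\lvert\nabla\omega^{J_a}\rvert^2 = N\langle\mathrm{d}^*\omega^{J_a},\mathrm{d}^*(h\omega^{J_a})\rangle$ up to the cross terms $h\,\mathrm{d}^*\omega^{J_a}$ versus $\iota_{\nabla h}\omega^{J_a}$. These need to be tracked.
\end{itemize}
To handle this cleanly, I would work globally: write $\int\langle\nabla\omega^{J_a},\nabla b\rangle$ and compare it with $N\int\langle\mathrm{d}^*\omega^{J_a},\mathrm{d}^*b\rangle$. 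The difference is supported in the $\omega^{J_a}$-direction, and that direction produces exactly $\frac{1}{2N}\lvert\nabla\omega^{J_a}\rvert^2\langle\omega^{J_a},b\rangle$. Integrating by parts then gives
\[\int N\langle \mathrm{d}^*\omega^{J_a},\mathrm{d}^*b\rangle = \int N\langle(\mathrm{d}\mathrm{d}^*\omega^{J_a})^{2,+},b\rangle.\]
The Weitzenböck formula \autoref{propWeitzen2plus} (with trivial $V$) turns this into
\[\int\Bigl(\tfrac12\langle\nabla\omega^{J_a},\nabla b\rangle + R\circledast\omega^{J_a}\circledast b\Bigr).\]
Combining, the $\langle\nabla\omega^{J_a},\nabla b\rangle$ terms appear with different coefficients, so the equation closes up.

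\textbf{Source of the cubic term.} A cleaner route is to apply the Weitzenböck identity together with the pointwise equality $N(\mathrm{d}^*\omega^{J_a}) = $ (a projection of $\nabla\omega^{J_a}$). This projection is read off from \autoref{propIntegrabEq} and the $\phi$-description of $\nabla\omega^{J_a}$ in \autoref{subsecCplxStr}. Projecting $\mathrm{d}\mathrm{d}^*\omega^{J_a}$ onto the $\omega^{J_a}$ and orthogonal directions yields $R\circledast\omega^{J_a}$ terms. Re-projecting them via $\omega^{J_a}\otimes\omega^{J_a}$ to separate the normal and tangential components produces the $R\circledast(\omega^{J_a})^{\circledast3}$ term. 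The tangential part is the harmonic-map-to-$S^2$ structure, which gives $-\Delta\omega^{J_a}=\frac{1}{2N}\lvert\nabla\omega^{J_a}\rvert^2\omega^{J_a} +$ curvature. I expect the bookkeeping of the $\mathrm{d}^*$-projection here to be routine but messy.

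\textbf{Extension across the zero locus.} This is the main obstacle, because the integrand $\lvert\nabla\omega^{J_a}\rvert^2$ is only $L^{p/2}$. Choose cutoffs $\chi_\epsilon$ that vanish near $\mathcal{N}(a)\cap\operatorname{supp}b$ and tend to $1$ pointwise. Since $\mathcal{N}(a)$ has codimension $2$ (\autoref{lmaZeroLocusDim}), for $4N\ge4$ we can take capacity-type cutoffs with $\lVert\nabla\chi_\epsilon\rVert_{L^{q}}\to0$ for some $q<2$ (any $q<2$). Apply the smooth identity to $\chi_\epsilon b$. The extra term $\int\langle\nabla\omega^{J_a},\nabla\chi_\epsilon\otimes b\rangle$ is bounded by $\lVert\nabla\omega^{J_a}\rVert_{L^{q'}}\lVert\nabla\chi_\epsilon\rVert_{L^q}$, which tends to $0$ because $\nabla\omega^{J_a}\in L^{p}_{\mathrm{loc}}$ for every finite $p$. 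The remaining terms converge by dominated convergence.

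If such cutoffs are awkward to construct for a merely countably rectifiable set, I would first try covering the compact set $\mathcal{N}(a)\cap\operatorname{supp}b$ by balls $B_{r_i}$ with $\sum r_i^{4N-2}$ small. This uses zero $(4N-1)$-dimensional Hausdorff measure together with finite-or-small $(4N-2)$-content, which should follow from \autoref{lmaZeroLocusDim} and compactness. Failing that, I would fall back on the $L^1$ argument of \autoref{lmaW1pSingularCodim1Plus} with $p$ large.
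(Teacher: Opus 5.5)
Your splitting $b = h\,\omega^{J_a} + b^\perp$ with $h=\frac{1}{2N}\langle\omega^{J_a},b\rangle$ is the right starting point, and so are the two pointwise facts you record for the pieces. The way you then assemble them does not work.

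The claim that $\int\langle\nabla\omega^{J_a},\nabla b\rangle - N\int\langle\mathrm{d}^*\omega^{J_a},\mathrm{d}^*b\rangle$ equals $\frac{1}{2N}\int\lvert\nabla\omega^{J_a}\rvert^2\langle\omega^{J_a},b\rangle$ is false. Once \autoref{lmaJaEqPre} removes $b^\perp$, insert $\mathrm{d}^*(h\,\omega^{J_a}) = h\,\mathrm{d}^*\omega^{J_a} - \iota_{\nabla h}\omega^{J_a}$ and \autoref{propIntegrabEq}. The two copies of $h\lvert\nabla\omega^{J_a}\rvert^2$ then cancel exactly. What remains is precisely the cross term $N\int\langle\mathrm{d}^*\omega^{J_a},\iota_{\nabla h}\omega^{J_a}\rangle$ that you postponed. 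This is a Lee-form pairing with $\mathrm{d}h$; evaluating it needs another second-order identity, and it turns out to be only $\frac12\int h\lvert\nabla\omega^{J_a}\rvert^2$ up to curvature terms.

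If you run your ``combining'' step with your claimed value, you get $\int\langle\nabla\omega^{J_a},\nabla b\rangle = \frac1N\int\lvert\nabla\omega^{J_a}\rvert^2\langle\omega^{J_a},b\rangle + \int R\circledast\omega^{J_a}\circledast b$. This has the wrong coefficient and no cubic term. Testing with $b=\chi\,\omega^{J_a}$ in a flat region shows the coefficient must be $\frac{1}{2N}$: the left side is exactly $\int\chi\lvert\nabla\omega^{J_a}\rvert^2$, and $\lvert\omega^{J_a}\rvert^2=2N$. Your ``cleaner route'' paragraph only describes the bookkeeping you hope for and does not repair this.

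The missing idea is to apply the Weitzenböck identity to $b^\perp$ only, and never to convert the parallel part into a $\mathrm{d}^*$-pairing. Since $b^\perp$ is a compactly supported $W^{1,p}$ section orthogonal to $\omega^{J_a}$, \autoref{lmaJaEqPre} and the integrated form of \autoref{propWeitzen2plus} give $\int\langle\nabla\omega^{J_a},\nabla b^\perp\rangle = N\int\langle\mathrm{d}^*\omega^{J_a},\mathrm{d}^*b^\perp\rangle = \frac12\int\langle\nabla\omega^{J_a},\nabla b^\perp\rangle + \int R\circledast\omega^{J_a}\circledast b^\perp$. Hence $\int\langle\nabla\omega^{J_a},\nabla b^\perp\rangle = \int R\circledast\omega^{J_a}\circledast b^\perp$. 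Substituting $b^\perp = b-\frac{1}{2N}\langle\omega^{J_a},b\rangle\,\omega^{J_a}$ is exactly where the $(\omega^{J_a})^{\circledast 3}$ term comes from. The parallel part contributes $\frac{1}{2N}\int\lvert\nabla\omega^{J_a}\rvert^2\langle\omega^{J_a},b\rangle$ by the constant-norm identity alone, so \autoref{propIntegrabEq} enters only through \autoref{lmaJaEqPre}.

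Your capacity-cutoff argument across $\mathcal{N}(a)$ is sound, but the paper does not need it. $\mathcal{N}(a)$ is Lebesgue-null, so the pointwise identities hold a.e.\ on $U$. The integrated Weitzenböck identity extends by density to $\omega^{J_a}\in W^{1,2}_{\mathrm{loc}}$ paired with compactly supported $b^\perp\in W^{1,2}$, so one can integrate over all of $U$ directly.
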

	\begin{proof}
		Over $U \setminus \mathcal{N}(a)$, we have $b = \bigl(b - \frac{1}{2N}\langle \omega^{J_a}, b \rangle\,\omega^{J_a}\bigr) + \frac{1}{2N}\langle \omega^{J_a}, b \rangle\,\omega^{J_a}$. The first part satisfies $\bigl\langle \omega^{J_a},\: b - \frac{1}{2N}\langle \omega^{J_a}, b \rangle\,\omega^{J_a}\bigr\rangle=0$. $b - \frac{1}{2N}\langle \omega^{J_a}, b \rangle\,\omega^{J_a}$ is also $W^{1,p}_{\ifEWzJhvPDF\raisebox{0pt}[\dimexpr\height-1pt\relax]{$\scriptstyle\mathrm{loc}$}\else\mathrm{loc}\fi}$ over $U$ for any~$p\in[1,\infty)$ and has compact support. \autoref{lmaJaEqPre} applied to $b - \frac{1}{2N}\langle \omega^{J_a}, b \rangle\,\omega^{J_a}$ over $U \setminus \mathcal{N}(a)$ together with the integral form of the Weitzenböck formula {\small (\autoref{propWeitzen2plus})} implies
		\begin{align}
			&\int_U{}\bigl\langle \nabla{\textstyle \omega^{J_a}},\: \nabla \bigl(b - \EWzJhvMathFracVCentered{1}{2N}\langle {\textstyle \omega^{J_a}}, b \rangle\,{\textstyle \omega^{J_a}}\bigr) \bigr\rangle\,\mathrm{d}\mathrm{Vol}_g\notag\\
			={}& N\int_U{}\bigl\langle \mathrm{d}^*{\textstyle \omega^{J_a}},\: \mathrm{d}^* \bigl(b - \EWzJhvMathFracVCentered{1}{2N}\langle {\textstyle \omega^{J_a}}, b \rangle\,{\textstyle \omega^{J_a}}\bigr) \bigr\rangle\,\mathrm{d}\mathrm{Vol}_g\label{eqPfPdeJa}\\
			={}& \EWzJhvMathFracVCentered{1}{2} \int_U{}\Bigl(\bigl\langle \nabla{\textstyle \omega^{J_a}},\: \nabla \bigl(b - \EWzJhvMathFracVCentered{1}{2N}\langle {\textstyle \omega^{J_a}}, b \rangle\,{\textstyle \omega^{J_a}}\bigr) \bigr\rangle + R \circledast {\textstyle \omega^{J_a}} \circledast \bigl(b - \EWzJhvMathFracVCentered{1}{2N}\langle {\textstyle \omega^{J_a}}, b \rangle\,{\textstyle \omega^{J_a}}\bigr)\Bigr)\,\mathrm{d}\mathrm{Vol}_g \EWzJhvDisplayMathPeriod.\notag
		\end{align}
		Then
		\[\int_U{}\bigl\langle \nabla{\textstyle \omega^{J_a}},\: \nabla \bigl(b - \EWzJhvMathFracVCentered{1}{2N}\langle {\textstyle \omega^{J_a}}, b \rangle\,{\textstyle \omega^{J_a}}\bigr) \bigr\rangle\,\mathrm{d}\mathrm{Vol}_g = \int_U{}\Bigl(R \circledast \bigl({\textstyle \omega^{J_a}} \oplus ({\textstyle \omega^{J_a}})^{\circledast\EWzJhvMathVCenter 3}\bigr) \circledast b\Bigr)\,\mathrm{d}\mathrm{Vol}_g \EWzJhvDisplayMathPeriod.\]
		
		\ifEWzJhvPageBreak\pagebreak\fi
		
		Over $U \setminus \mathcal{N}(a)$, because $\langle \omega^{J_a}, \omega^{J_a} \rangle$ is constant, $\bigl\langle \nabla\omega^{J_a}, \omega^{J_a} \bigr\rangle=0$. Therefore,
		\[\int_U{}\bigl\langle \nabla{\textstyle \omega^{J_a}},\: \nabla \bigl(\EWzJhvMathFracVCentered{1}{2N}\langle {\textstyle \omega^{J_a}}, b \rangle\,{\textstyle \omega^{J_a}}\bigr) \bigr\rangle\,\mathrm{d}\mathrm{Vol}_g = \EWzJhvMathFracVCentered{1}{2N}\int_U{}\bigl|\nabla{\textstyle \omega^{J_a}}\bigr|^2 \langle {\textstyle \omega^{J_a}}, b \rangle\,\mathrm{d}\mathrm{Vol}_g \EWzJhvDisplayMathPeriod.\]
		
		We conclude by adding the two equations above.
	\end{proof}
	
	\begin{proof}[\bfseries Proof of \autoref{thm2}]
		We have already proved in \autoref{lmaJaIntegrability} that $J_a$ is integ\EWzJhvText{rab}le over $U \setminus \mathcal{N}(a)$, so we only need to verify that $J_a$ is smooth over $U$. We consider the PDE from \autoref{lmaPdeJa} and invoke elliptic regularity. More specifically, $\omega^{J_a}$ is a $W^{1,2}_{\mathrm{loc}}$ weak solution to the second-order linear elliptic PDE in divergence form over $U$ with term on the right-hand side $\frac{1}{2N}\hspace{0.0625em}\bigl|\nabla\omega^{J_a}\bigr|\ifEWzJhvPDF{\rule{0pt}{1.88ex}}\fi^2\,\omega^{J_a} + R \circledast \bigl(\omega^{J_a} \oplus (\omega^{J_a})^{\circledast\EWzJhvMathVCenter 3}\bigr) \in L^2_{\mathrm{loc}}$, which implies \ifEWzJhvPDF\linebreak\\*[-\baselineskip]\fi $\omega^{J_a}$ is $W^{2,2}_{\mathrm{loc}}$ over $U$. We can now rewrite the PDE as follows:
		\begin{equation}\label{eqCompatibleCplxStr}
			\nabla^*\nabla{\textstyle \omega^{J_a}} = \EWzJhvMathFracVCentered{1}{2N}\hspace{0.0625em}\bigl|\nabla{\textstyle \omega^{J_a}}\bigr|^2\,{\textstyle \omega^{J_a}} + R \circledast \bigl({\textstyle \omega^{J_a}} \oplus ({\textstyle \omega^{J_a}})^{\circledast\EWzJhvMathVCenter 3}\bigr)
		\end{equation}
		We already know $\omega^{J_a}$ is $W^{1,p}_{\mathrm{loc}}$ over $U$ for any~$p\in[1,\infty)$. If\/ $\omega^{J_a}$ is $W^{k,p}_{\mathrm{loc}}$ \ifEWzJhvPDF\vspace{-1pt}\fi over $U$ for any~$p\in(1,\infty)$ for some integer~$k \geqslant 1$, then the right-hand side of \EWzJhvEqRef{eqCompatibleCplxStr} is $W^{k-1,p}_{\mathrm{loc}}$ \ifEWzJhvPDF\vspace{-1pt}\fi over $U$ for any~$p\in(1,\infty)$, so by elliptic regularity, $\omega^{J_a}$ is $W^{k+1,p}_{\mathrm{loc}}$ \ifEWzJhvPDF\vspace{1pt}\fi over $U$ for any~$p\in(1,\infty)$. The smoothness of\/ $\omega^{J_a}$ now follows from induction and Sobolev embedding.
	\end{proof}
	
	The continuity statement can be proved with similar methods:\vspace{-1pt}
	\begin{proof}[\bfseries Proof of \autoref{thm2Continuity}]
		\linespread{1.1}\selectfont
		
		We still work over any open $U \subset M$ satisfying that $\overline{U}$ is compact and $U$ is diffeomorphic to $\mathbb{R}^{4N}$. We choose a complex structure $J_{a_j}$ from the $\mathbb{Z}_2$-complex structure $\tilde{J}_{a_j}$ over $U$ for each~$j$. We can then define $a_j^{2,0}$ given by \EWzJhvEqRef{eqA20} over $U$. Because $\lvert a_j\rvert_{g_j}=\sqrt{2}\mskip2mu\relax\bigl|a_j^{2,0}\bigr|_{g_j}$ \ifEWzJhvPDF\vspace{-2pt}\fi and ${\displaystyle \mathrm{d}_{A_j,g_j}^*}a_j^{2,0}=0$ (\autoref{lmaA20eq}), elliptic regularity implies $a_j^{2,0}$ is bounded\EWzJhvFootnote{The $C^k,L^p,W^{k,p}$ norms over $U'$ are defined with respect to some fixed Riemannian metric on $M$ that is independent of $j$, e.g.\ $g_\infty$.} in $C^k$ over $U'$ for any integer~$k$ and any open\EWzJhvFootnote{$U' \subset\subset U$ means $\overline{U'}$ is a compact subset of\/ $U$.} $U' \subset\subset U$.
		
		As $\overline{U}$ is assumed to be compact, there exists some positive integer~$m$ satisfying $a_\infty\rvert_U$ only vanishes at zeros of order~$\leqslant m$ {\small \cite{UniqueContinuationGeometry}}. \autoref{lmaFMinus1n} now applies to \ifEWzJhvPDF\vspace{-3pt}\fi $\lvert a_j\rvert_{g_j}^2$ over $U$ and implies that for any open $U' \subset\subset U$, $\bigl\{\lvert a_j\rvert_{g_j}^{-\frac{1}{2m}}\bigr\}_{j \geqslant j_{\oldstylenums{0}}}$ \ifEWzJhvPDF\vspace{-1pt}\fi is bounded in $L^1$ over $U'$ for some $j_{\oldstylenums{0}} \in \mathbb{N}$. Note $\lvert a_j\rvert_{g_j}=\sqrt{2}\mskip2mu\relax\bigl|a_j^{2,0}\bigr|_{g_j}$. \autoref{corHigherDer} with $n$ large now implies $\{J_{a_j}\}_j$ is bounded in $W^{1,p}$ over $U'$ for any~$p \in [1,\infty)$ and any open $U' \subset\subset U$. If for some positive integer~$k$, $\{J_{a_j}\}_j$ is bounded in $W^{k,p}$ over $U'$ for any~$p \in (1,\infty)$ and any open $U' \subset\subset U$, then the right-hand side of \EWzJhvEqRef{eqCompatibleCplxStr} for $J_{a_j}$ with $j\in\mathbb{N}\cup\{\infty\}$ is bounded in $W^{k-1,p}$ over $U'$ for any~$p \in (1,\infty)$ and any open $U' \subset\subset U$, so by elliptic regularity, $\{J_{a_j}\}_j$ is bounded in $W^{k+1,p}$ over $U'$ for any~$p \in (1,\infty)$ and any open $U' \subset\subset U$. By induction and Sobolev embedding, $\{J_{a_j}\}_j$ is compact in $C^\infty$ over $U'$ for any open $U' \subset\subset U$.
		
		By the definition of $\tilde{J}_{a_j}$, $\tilde{J}_{a_j}$ converges to $\tilde{J}_{a_\infty}$ in $\displaystyle C^\infty_{\mathrm{loc}}$ over $M \setminus \mathcal{N}(a_\infty)$. Due to the compactness in $C^\infty$ over $U'$ for any open $U' \subset\subset U$, $\tilde{J}_{a_j}$ must converge to $\tilde{J}_{a_\infty}$ in $C^\infty$ over $U'$ for any open $U' \subset\subset U$.\qedhere\par
	\end{proof}
	
	\hypersetup{next-anchor=notesPdeJa}\subsubsection*{Additional notes to \autoref{lmaPdeJa}}\label{subsubsecLmaPdeJaNotes}
	
	\autoref{lmaPdeJa} only describes a PDE \emph{satisfied by} $J_a$. It is not a \emph{sufficient} condition for a compatible almost complex structure on $(M,g,Q)$ with $\mathrm{Hol}(g) \subset \mathrm{Sp}(N)\hspace{0.0625em}\mathrm{Sp}(1)$ to be integ\EWzJhvText{rab}le, even if $M$ is assumed to be compact. The logic behind this is that, basically, this PDE follows from the fact implied by \autoref{lmaJaEqPre} that $\omega^{J_a}$ is a critical point \ifEWzJhvPageBreak\pagebreak\fi of the functional given by the integral of $\displaystyle \bigl|\nabla \omega\bigr|\ifEWzJhvPDF{\rule{0pt}{1.88ex}}\fi^2 - N\hspace{0.0625em}\bigl|\mathrm{d}^* \omega\bigr|\ifEWzJhvPDF{\rule{0pt}{1.88ex}}\fi^2$ for~$\omega \in \Omega^{2,+}$ with $\lvert\omega\rvert=\sqrt{2N}$, but the integ\EWzJhvText{rab}ility of some $J$, in light of \autoref{propIntegrabEq}, means $\omega^J$ is a zero (and minimum) point of this functional. For a concrete example, on a flat $4N$\hspace*{-0.03em}-torus denoted by $T^{4N}$, this PDE is just the harmonic map equation from $T^{4N}$ to $S^2$, and nonconstant harmonic maps from $T^{4N}$ to $S^2$ can be given by nonconstant meromorphic functions with respect to a Kähler structure on $T^{4N}$, so nonparallel solutions to this PDE may exist on a suitable $T^{4N}$, but as the remark following \autoref{propIntegrabEq} indicates, $T^{4N}$ as a compact hyper\EWzJhvText{käh}ler manifold admits no nonparallel compatible complex structure.
	
	On the other hand, by working backwards, we can easily see that, when $N = 1$, if a compatible almost complex structure $J$ on $(M,g,Q)$ satisfies this PDE \emph{for any Riemannian metric conformal to\/ $g$}, then $\omega^J$ must be\EWzJhvFootnote{Here, we may use \EWzJhvTextNormal{(23)} of \cite[Remark~2]{GauduchonCplxStrCConfMNegT} to express $\bigl|\nabla \omega^J\bigr|\ifEWzJhvPDF{\rule{0pt}{1.88ex}}\fi^2 - \bigl|{\displaystyle \mathrm{d}^*}\omega^J\bigr|\ifEWzJhvPDF{\rule{0pt}{1.88ex}}\fi^2$ in terms of the norm of the Nijenhuis tensor of $J$.} a critical point of the functional given by the integral of $\displaystyle \bigl|\nabla \omega\bigr|\ifEWzJhvPDF{\rule{0pt}{1.88ex}}\fi^2 - \bigl|\mathrm{d}^* \omega\bigr|\ifEWzJhvPDF{\rule{0pt}{1.88ex}}\fi^2$ times any positive function over $M$, so the conclusion of \autoref{lmaJaEqPre} must hold at every point in $M$, and thus $J$ must be integ\EWzJhvText{rab}le.
	
	As a side note, this method of deriving an elliptic PDE\EWzJhvFootnote{If we forget about \autoref{lmaJaEqPre} for a moment, the integ\EWzJhvText{rab}ility condition of a compatible almost complex structure over a smooth Riemannian manifold itself is also an (injectively) elliptic PDE (almost-holomorphicity as a map from the almost hermitian manifold to the twistor space). It is possible that such a PDE can replace \autoref{lmaPdeJa} in the proof of \autoref{thm2},~\ref{thm2Continuity}.} satisfied by a compatible complex structure through \autoref{lmaJaEqPre} does not seem to generalise directly to general smooth Riemannian manifolds. Although \autoref{lmaJaEqPre} can be generalised to show that $\omega^J$ is a critical point of the functional given by the integral of $\bigl|\nabla \omega\bigr|\ifEWzJhvPDF{\rule{0pt}{1.88ex}}\fi^2-\bigl|\mathrm{d} \omega\bigr|\ifEWzJhvPDF{\rule{0pt}{1.88ex}}\fi^2$ (see the footnote\EWzJhvFootRef{ftnLmaJaEqPre} to \autoref{lmaJaEqPre}), it is not clear how this leads to an elliptic PDE---we do not expect something like \autoref{propWeitzen2plus} that implies the ellipticity of\/ $\displaystyle \nabla^*\nabla - \mathrm{d}^*\mathrm{d} \approx \mathrm{d}\mathrm{d}^*$ to hold in general. We note as a consequence of \EWzJhvTextNormal{(23)} of \cite[Remark~2]{GauduchonCplxStrCConfMNegT} that, in the case of $\mathrm{Hol}(g) \subset \mathrm{Sp}(N)\hspace{0.0625em}\mathrm{Sp}(1)$, $\bigl|\nabla \omega^J\bigr|\ifEWzJhvPDF{\rule{0pt}{1.88ex}}\fi^2 - \bigl|\mathrm{d} \omega^J\bigr|\ifEWzJhvPDF{\rule{0pt}{1.88ex}}\fi^2$ is proportional to the square of the norm \ifEWzJhvPDF\linebreak\\*[-\baselineskip]\fi of the Nijenhuis tensor, but in general, the relation between $\bigl|\nabla \omega^J\bigr|\ifEWzJhvPDF{\rule{0pt}{1.88ex}}\fi^2 - \bigl|\mathrm{d} \omega^J\bigr|\ifEWzJhvPDF{\rule{0pt}{1.88ex}}\fi^2$ and the Nijenhuis tensor is more convoluted. In light of this, to generalise \autoref{lmaPdeJa}, we may try calculating the variation of the square of the norm of the Nijenhuis tensor instead. Note how remar\EWzJhvText{kab}le the relation between $\bigl|\nabla \omega^J\bigr|\ifEWzJhvPDF{\rule{0pt}{1.88ex}}\fi^2 - \bigl|\mathrm{d} \omega^J\bigr|\ifEWzJhvPDF{\rule{0pt}{1.88ex}}\fi^2$ and the Nijenhuis tensor is: the operators $\nabla, \mathrm{d}$ are linear, but the Nijenhuis tensor is rather nonlinear in $J$.
	
	\section{Case of general even-dimensional manifolds}\label{secGeneral}
	
	Contrary to the conclusive results in the previous section, our study of the case of general even-dimensional smooth manifolds in this section is rather rudimentary. Let $M$ be a $2N$\hspace*{-0.03em}-dimensional smooth manifold. Let $V \rightarrow M$ be a smooth real vector bundle with inner product. Let $a \in \Omega^n(V)$ be a smooth $V$\hspace*{-0.03em}-valued $n$-form. We make the following definition, which is a generalisation\EWzJhvFootnote{The exact relation between \autoref{defn2plusParascalar},~\ref{defnParascalarN} is described in \autoref{corParascalar2plusVsN}, which also confirms the consistency between the two definitions. Still, to be rigorous, we declare that, in this section, the phrase ``parascalar $({\cdots},0)$-form'' refers to \autoref{defnParascalarN}, unless otherwise specified.} of \autoref{defn2plusParascalar}:
	\begin{EWzJhvDefinition}\label{defnParascalarN}
		Differential form\/ $a \in \Omega^n(V)$ is said to be a \emph{parascalar $(n,0)$-form}, if for every point\/ $p \in M$ with\/ $a\rvert_p \neq 0$, there exist a linear complex structure on\/ $T_p M$, \ifEWzJhvPDF\linebreak\fi a real\/ $2$-dimensional subspace\/ $\hat{V}_p$ of\/ $V\rvert_p$, and a linear complex structure on\/ $\hat{V}_p$ compatible with the restricted inner product, such that\/ $a\rvert_p \in \Lambda^n(\EWzJhvTpStar Tp M) \otimes_{\mathbb{R}} \hat{V}_p$ and furthermore, $a\rvert_p \in \Lambda^{n,0}(\EWzJhvTpStar Tp M \otimes_{\mathbb{R}} \mathbb{C}) \otimes_{\mathbb{C}} \hat{V}_p \subset \Lambda^n_{\mathbb{C}}(\EWzJhvTpStar Tp M \otimes_{\mathbb{R}} \mathbb{C}) \otimes_{\mathbb{C}} \hat{V}_p = \Lambda^n(\EWzJhvTpStar Tp M) \otimes_{\mathbb{R}} \hat{V}_p$.
		
		In case\/ $M$ is endowed with a Riemannian metric {\small (or\/ $(g,Q)$ with\/ $\mathrm{Hol}(g) \subset \mathrm{Sp}(\frac{N}{2})\hspace{0.0625em}\mathrm{Sp}(1)$, etc.)}, the parascalar\/ $(n,0)$-form is said to be compatible with the metric {\small (or \ldots)}, if for every point\/ $p \in M$ with\/ $a\rvert_p \neq 0$, the linear complex structure on\/ $T_p M$ satisfying the above condition can be chosen to be compatible with the metric {\small (or \ldots)}.\ifEWzJhvPageBreak\pagebreak\fi
	\end{EWzJhvDefinition}
	
	\begin{EWzJhvProposition}\label{propParascalarNInducesCplxStr}
		Let\/ $a$ be a parascalar\/ $(n,0)$-form. Let\/ $p \in M$. If the map from\/ $T_p M$ to\/ $\Lambda^{n-1}(\EWzJhvTpStar Tp M) \otimes_{\mathbb{R}} V\rvert_p$ given by\/ $v \mapsto \iota_v a\rvert_p$ is injective, then the linear complex structure on\/ $T_p M$ from \autoref{defnParascalarN} is unique up to sign.
	\end{EWzJhvProposition}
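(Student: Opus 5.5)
The plan is to recover the complex structure from the kernel of the contraction map, where this kernel is computed after complexifying the tangent space. Fix $p$ with $a\rvert_p \neq 0$ and choose a linear complex structure $J$ on $T_pM$, together with $\hat{V}_p$ and its complex structure, as in \autoref{defnParascalarN}. Identifying $\hat{V}_p$ with $\mathbb{C}$ via a unit vector, write $a\rvert_p = \operatorname{Re}(\alpha)\otimes v_1 + \operatorname{Im}(\alpha)\otimes v_2$, where $(v_1\,\,v_2)$ is an orthonormal basis of $\hat{V}_p$ and $\alpha \in \Lambda^{n,0}_J$. Equivalently, after extending $\mathbb{C}$-linearly, $a\rvert_p = \tfrac12\bigl(\alpha\otimes(v_1-iv_2) + \bar\alpha\otimes(v_1+iv_2)\bigr)$.

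Next, extend the map $v\mapsto\iota_v a\rvert_p$ $\mathbb{C}$-linearly to $T_pM\otimes\mathbb{C}$. Injectivity is preserved, because the map commutes with complex conjugation. For $v\in T^{0,1}_J$ we have $\iota_v\alpha=0$, so $\iota_v a = \tfrac12\,\iota_v\bar\alpha\otimes(v_1+iv_2)$. Now pair with $v_1+iv_2$ using the $\mathbb{C}$-bilinear extension of the inner product on $V$. Since $\langle v_1+iv_2, v_1+iv_2\rangle = 0$, we get $\langle \iota_v a, v_1+iv_2\rangle = 0$ for $v \in T^{0,1}_J$. For $v\in T^{1,0}_J$, the same pairing gives $\langle \iota_v a, v_1+iv_2\rangle = \iota_v\alpha$, and this is nonzero for every nonzero such $v$ by injectivity (note $\iota_v a=\tfrac12\iota_v\alpha\otimes(v_1-iv_2)$ there). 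The key intrinsic object is therefore
\[
K := \bigl\{\, v\in T_pM\otimes\mathbb{C} \,\bigm|\, \iota_v a\rvert_p \text{ lies in } \Lambda^{n-1}_{\mathbb{C}}\otimes L \text{ for a fixed isotropic line } L\subset V\rvert_p\otimes\mathbb{C}\,\bigr\}.
\]
For the isotropic line $L_+ = \mathbb{C}(v_1+iv_2)$ we get $K = T^{0,1}_J$, and for its conjugate $L_- = \mathbb{C}(v_1-iv_2)$ we get $K = T^{1,0}_J$.

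The uniqueness argument then runs as follows. Suppose $J'$ with data $\hat V'_p$ and $\alpha'$ is another admissible choice. First, $\hat V_p$ is intrinsic: it is the span of the $V$-components of $a\rvert_p$, that is, the image of $a\rvert_p$ viewed as a map $\Lambda^n T_pM\to V\rvert_p$, which has rank exactly $2$ because $\operatorname{Re}\alpha$ and $\operatorname{Im}\alpha$ are linearly independent. Hence $\hat V'_p=\hat V_p$. The compatible complex structure on a $2$-plane is unique up to sign, so the pair of isotropic lines $\{L_+,L_-\}$ in $\hat V_p\otimes\mathbb{C}$ coincides for both data; these are the only two isotropic lines there. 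Replacing $J'$ by $-J'$ if necessary, we may assume $L'_+=L_+$.

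The remaining step is to show that $T^{0,1}_{J'}=T^{0,1}_J$, and this is the part I expect to need the most care. Both $T^{0,1}_{J'}$ and $T^{0,1}_J$ lie inside the space $\{v : \iota_v a \in \Lambda^{n-1}_{\mathbb{C}}\otimes L_+\}$, and this space meets $T^{1,0}_J$ only in $0$ by the injectivity computation above. Since it contains $T^{0,1}_J$, which has complex dimension $N$ (half of the total dimension $2N$), the space has dimension exactly $N$ and equals $T^{0,1}_J$. The same argument with $J'$ shows that it also equals $T^{0,1}_{J'}$, so $T^{0,1}_J = T^{0,1}_{J'}$. A linear complex structure is determined by its $(0,1)$-space, hence $J'=J$, and so the original $J'$ equals $\pm J$.
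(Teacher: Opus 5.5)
Your proof is correct and is essentially the paper's argument in complexified form. The paper notes that $v\mapsto\iota_v a\rvert_p$ is $\mathbb{C}$-linear from $(T_pM,J)$ to $\Lambda^{n-1}(T_p^*M)\otimes_{\mathbb{R}}\hat V_p$ (with the complex structure of $\hat V_p$), so injectivity determines $J$ from that complex structure; your isotropic lines $L_\pm$ are exactly its $\mp i$-eigenlines, and your description of $T^{0,1}_J$ is the $-i$-eigenspace form of this intertwining, combined, as in the paper, with $\hat V_p$ being intrinsic (the image of $a\rvert_p$) and its compatible complex structure being unique up to sign.
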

	Then we say this linear complex structure on $T_p M$ is \emph{induced} by $a$ (up to sign).\ifEWzJhvPageBreak\vspace{-0pt plus -6pt}\fi
	\begin{proof}
		For any linear complex structure on $T_p M$ from \autoref{defnParascalarN}, the injection from $T_p M$ to $\Lambda^{n-1}(\EWzJhvTpStar Tp M) \otimes_{\mathbb{R}} V\rvert_p$ given by $v \mapsto \iota_v a\rvert_p$ is actually a $\mathbb{C}$-linear embedding from $T_p M$ as a complex vector space to $\Lambda^{n-1}(\EWzJhvTpStar Tp M) \otimes_{\mathbb{R}} \hat{V}_p$ whose linear complex structure comes from the one on $\hat{V}_p$ (not the one on $\EWzJhvTpStar Tp M$). The real $2$-dimensional subspace $\hat{V}_p \subset V\rvert_p$ is the image of the map $(v_1,v_2,\ldots,v_n) \mapsto \iota_{v_1}\iota_{v_2}\cdots\iota_{v_n} a\rvert_p$ where $v_j \in T_p M$. The linear complex structure on $\hat{V}_p$ compatible with the restricted inner product is unique up to sign. Therefore, the linear complex structure on $T_p M$ from \autoref{defnParascalarN} is unique up to sign.
	\end{proof}
	
	Now, we present a sufficient condition for the integ\EWzJhvText{rab}ility of the induced $\mathbb{Z}_2$-almost complex structure. Let $V \rightarrow M$ be endowed with a connection $A$, compatible with the inner product on $V$.
	
	\begin{EWzJhvProposition}\label{propParascalarNClosedToIntegrab}
		Let\/ $a$ be a parascalar\/ $(n,0)$-form with\/ $\mathrm{d}_A a = 0$. Let\/ $p \in M$. If the map from\/ $T_p M$ to\/ $\Lambda^{n-1}(\EWzJhvTpStar Tp M) \otimes_{\mathbb{R}} V\rvert_p$ given by\/ $v \mapsto \iota_v a\rvert_p$ is injective, then the\/ $\mathbb{Z}_2$-almost complex structure induced by\/ $a$ over some open neighbourhood of\/\hspace{-0.08em} $p$ in\/ $M$ is integrable.
	\end{EWzJhvProposition}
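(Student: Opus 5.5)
First, I make the setup local. Injectivity of $v \mapsto \iota_v a\rvert_p$ is an open condition, and it forces $a\rvert_p \neq 0$. So on a small neighbourhood $U$ of $p$, \autoref{propParascalarNInducesCplxStr} gives an induced $\mathbb{Z}_2$-almost complex structure.

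Second, I show this structure varies smoothly. Near $p$, the plane $\hat{V}_q$ is the image of the map $(v_1,\ldots,v_n) \mapsto \iota_{v_1}\cdots\iota_{v_n} a\rvert_q$. I expect it to depend smoothly on $q$, because it is spanned by two components of $a$ in a fixed frame that are linearly independent at $p$. After shrinking $U$, choose a smooth orthonormal frame $(v_1\,\,v_2)$ of $\hat{V}$ and put $J_V v_1 = v_2$. Then write
\[a = \operatorname{Re}\bigl(\Psi \otimes_{\mathbb{C}} (v_1 - i v_2)\bigr) \quad\text{equivalently}\quad a = v_1 \otimes \alpha + v_2 \otimes \beta,\]
where $\Psi := \alpha + i\beta$ is a complex $n$-form, pointwise of type $(n,0)$ for $J$. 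The condition that $v \mapsto \iota_v \Psi$ is injective (over $\mathbb{R}$) recovers $J$, since $T^{0,1} = \{X \in TM\otimes\mathbb{C} : \iota_X \Psi = 0\}$. This shows $J$ is smooth, and it also gives $J$ a fixed sign on $U$.

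Third, I derive $\mathrm{d}\Psi$ from $\mathrm{d}_A a = 0$. Write $\nabla^A v_1 = \theta\, v_2 + (\perp)$ and $\nabla^A v_2 = -\theta\, v_1 + (\perp)$, where $\theta$ is a real $1$-form and $(\perp)$ takes values in $\hat{V}^\perp$. Taking the components of $\mathrm{d}_A a = 0$ along $v_1$ and $v_2$ gives
\[\mathrm{d}\alpha - \theta\wedge\beta = 0, \qquad \mathrm{d}\beta + \theta\wedge\alpha = 0.\]
Combining these, $\mathrm{d}\Psi = -i\,\theta\wedge\Psi$ (up to sign conventions). Hence $\mathrm{d}\Psi \in \Omega^{n+1,0}\oplus\Omega^{n,1} = \Omega^{n,1}$. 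The components along $\hat{V}^\perp$ are not needed.

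Fourth, I get integrability by the standard criterion: a nondegenerate $(n,0)$-form $\Psi$ with $\mathrm{d}\Psi \in \Omega^{n,1}$ makes $J$ integrable. Since $\Psi$ is nowhere zero, at each point $\Lambda^{n,0}$ is spanned by $\Psi$. For $(0,1)$-vectors $X, Y$ we have $\iota_X\Psi = \iota_Y\Psi = 0$, so
\[\mathrm{d}\Psi(X,Y,\ldots) = -\Psi([X,Y],\ldots)\]
up to terms that vanish. The $(n-1,2)$-part of $\mathrm{d}\Psi$ is zero, and therefore $\iota_{[X,Y]}\Psi = 0$, i.e.\ $[X,Y]^{1,0} = 0$. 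Newlander--Nirenberg then concludes, as in \autoref{lmaJaIntegrability}.

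The main obstacle I anticipate is the second step. I need the smoothness of $\hat{V}$ and $J$, and the claim that nondegeneracy forces $\Psi$ to be a nonzero decomposable $(n,0)$-form whose annihilator is exactly $T^{0,1}$. This is needed so that the step $\iota_{[X,Y]}\Psi = 0 \Rightarrow [X,Y] \in T^{0,1}$ holds. For that I would use the description of $\hat{V}$ and $J$ from the proof of \autoref{propParascalarNInducesCplxStr}, together with the injectivity hypothesis.
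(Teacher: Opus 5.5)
Your proof is correct and is essentially the paper's argument written in a unitary frame of $\hat{V}$. With $\hat{a} = \Psi \otimes_{\mathbb{C}} v_1$ and $\nabla^{\hat{A}} v_1 = i\,\theta\, v_1$, your identity $\mathrm{d}\Psi = -i\,\theta\wedge\Psi$ is exactly the paper's $\mathrm{d}_{\hat{A}}\hat{a} = 0$, and your contraction with two $(0,1)$-vectors is its Cartan-formula computation $0 = \iota_X\iota_Y\mathrm{d}_{\hat{A}}\hat{a} = \iota_{[X,Y]}\hat{a}$ (your explicit check that $\hat{V}$ and $J$ are smooth fills in a detail the paper leaves implicit).

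One correction: the degree $n$ need not equal the complex dimension $N$. So in general $\Omega^{n+1,0} \neq 0$, $\Psi$ does not span $\Lambda^{n,0}$, and $\Psi$ need not be decomposable (e.g.\ $\mathrm{d}z^1\wedge\mathrm{d}z^2 + \mathrm{d}z^3\wedge\mathrm{d}z^4$ on $\mathbb{C}^4$ satisfies the injectivity hypothesis). None of this is used, though. You only need that $\mathrm{d}\Psi$ has no $(n-1,2)$-part and that $\{Z : \iota_Z\Psi = 0\} = T^{0,1}$. The latter follows at once from the injectivity hypothesis, because for real $v$ one has $\iota_v a = 0$ if and only if $\iota_{v^{1,0}}\Psi = 0$.
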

	\begin{proof}
		Fix an almost complex structure induced by $a$ over a small open neighbourhood $U \ni p$. Note $a\rvert_{p'} \in \Lambda^{n,0}(\EWzJhvTpStar T{p'} \mskip-2mu\relax M \otimes_{\mathbb{R}} \mathbb{C}) \otimes_{\mathbb{C}} \hat{V}_{p'}$ in \autoref{defnParascalarN} for~$p' \in U$. Denote by $\hat{V}$ the real rank-$2$ subbundle formed by $\hat{V}_{p'}$ for~$p' \in U$. For clarity, let $\hat{a}$ be $a$, but $\hat{a}$ is regarded as an $(n,0)$-form taking value in the complex line bundle $\hat{V}$. Through the orthogonal decomposition $V = \hat{V} \oplus \hat{V}^\perp$, the connection $A$ restricts to a $\mathbb{C}$-linear connection $\hat{A}$ over $\hat{V}$, and $\mathrm{d}_{\hat{A}} \hat{a} = 0$. For any $(0,1)$-vector fields $X,Y$ over $U$, we compute with the help of Cartan's magic formula:
		\[0 = \iota_X \iota_Y \mathrm{d}_{\hat{A}} \hat{a} = \iota_X \bigl(\mathscr{L}^{\hat{A}}_Y \hat{a} - \mathrm{d}_{\hat{A}} \iota_Y \hat{a}\bigr) = \iota_X \mathscr{L}^{\hat{A}}_Y \hat{a} = \mathscr{L}^{\hat{A}}_Y \iota_X \hat{a} - \iota_{[Y,X]} \hat{a} = \iota_{[X,Y]} \hat{a}\]
		Because for~$p' \in U$, $v \mapsto \iota_v a\rvert_{p'}$ is a $\mathbb{C}$-linear embedding from $T_{p'} \mskip-2mu\relax M$ as a complex vector space to $\Lambda^{n-1}(\EWzJhvTpStar T{p'} \mskip-2mu\relax M) \otimes_{\mathbb{R}} \hat{V}_{p'}$, $[X,Y]$ must be of type~$(0,1)$ over $U$. This shows that the Lie bracket of any two $(0,1)$-vector fields over $U$ is still of type~$(0,1)$, then we can apply the Newlander--Nirenberg theorem to show the integ\EWzJhvText{rab}ility of the almost complex structure over $U$.
	\end{proof}
	
	When $a$ is a nonvanishing parascalar $(N,0)$-form (recall $N$ is the ``complex \ifEWzJhvPDF\linebreak\fi dimension'' of the even-dimensional manifold), the injectivity of the map from $T_p M$ to $\Lambda^{N-1}(\EWzJhvTpStar Tp M) \otimes_{\mathbb{R}} V\rvert_p$ given by $v \mapsto \iota_v a\rvert_p$ is automatic. As a result, we get
	
	\begin{EWzJhvCorollary}\label{corParaVolumeForm}
		Let\/ $a$ be a parascalar\/ $(N,0)$-form. Let\/ $p \in M$. If\/\hspace{-0.08em} $a\rvert_p \neq 0$, then the linear complex structure on\/ $T_p M$ from \autoref{defnParascalarN} is unique up to sign. It then forms a\/ $\mathbb{Z}_2$-almost complex structure on the complement of the zero locus of\/\hspace{-0.08em} $a$. If furthermore, $\mathrm{d}_A a = 0$, then this\/ $\mathbb{Z}_2$-almost complex structure is integrable.
	\end{EWzJhvCorollary}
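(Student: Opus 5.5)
The corollary follows from \autoref{propParascalarNInducesCplxStr} and \autoref{propParascalarNClosedToIntegrab} once we check, at every $p$ with $a\rvert_p \neq 0$, that the map $v \mapsto \iota_v a\rvert_p$ from $T_pM$ to $\Lambda^{N-1}(T_p^*M)\otimes_{\mathbb{R}} V\rvert_p$ is injective. The plan is therefore to prove this injectivity by a pointwise linear-algebra computation, and then to verify smoothness of the resulting $\mathbb{Z}_2$-almost complex structure, since \autoref{propParascalarNInducesCplxStr} only gives uniqueness up to sign pointwise.

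\textbf{Step 1: injectivity.} Fix $p$ with $a\rvert_p \neq 0$ and take the data of \autoref{defnParascalarN}: a linear complex structure on $T_pM$, the plane $\hat{V}_p$ with its complex structure, and $a\rvert_p \in \Lambda^{N,0}\otimes_{\mathbb{C}}\hat{V}_p$. Since $\Lambda^{N,0}$ is one-dimensional over $\mathbb{C}$, we can write $a\rvert_p = \theta\otimes_{\mathbb{C}} w$ with $\theta = e^1\wedge\cdots\wedge e^N$ for a basis $(e^j)$ of $\Lambda^{1,0}$, and $w \in \hat{V}_p$ nonzero.

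Let $v \in T_pM$ be real and write $v = v^{1,0} + \overline{v^{1,0}}$. Then $\iota_v\theta = \iota_{v^{1,0}}\theta$, because $(0,1)$-vectors annihilate $(N,0)$-forms. Contracting a complex volume form of $\Lambda^{N,0}$ with a nonzero $(1,0)$-vector gives a nonzero $(N-1,0)$-form. Hence $\iota_v\theta \neq 0$ whenever $v \neq 0$, and since $w \neq 0$ we get $\iota_v a\rvert_p = (\iota_v\theta)\otimes_{\mathbb{C}} w \neq 0$. (This is the $\mathbb{C}$-linearity already observed in the proof of \autoref{propParascalarNInducesCplxStr}.) \autoref{propParascalarNInducesCplxStr} then says the complex structure at $p$ is unique up to sign.

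\textbf{Step 2: smoothness.} This is the main obstacle. Injectivity is an open condition, so the nonzero set of $a$ is an open set on which the pointwise structure $\pm J_p$ is defined; we must show it varies smoothly. I would characterise $J$ locally by the kernel: $\ker J_p^{0,1} \subset T_pM\otimes\mathbb{C}$ is the $N$-dimensional space of complex vectors $X$ with $\iota_X a^{2,0\text{-like}} = 0$. More concretely, $\hat{V}$ is locally the image of the iterated contraction $(v_1,\dots,v_N)\mapsto \iota_{v_1}\cdots\iota_{v_N} a$, which has constant rank $2$ and is therefore a smooth subbundle. Choose a local orientation of $\hat{V}$; this gives a smooth complex structure $J_{\hat V}$ on it. Then $T^{0,1}$ is $\{X : \iota_X(\mathrm{id} - iJ_{\hat V})a = 0\}$ (with the appropriate sign choice), which is the kernel of a smooth bundle map of constant rank $N$ and is therefore smooth. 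Changing the orientation of $\hat V$ replaces $J$ by $-J$, so the induced $\tilde J$ is well defined and smooth.

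\textbf{Step 3: integrability.} When $\mathrm{d}_A a = 0$, apply \autoref{propParascalarNClosedToIntegrab} at each point of the nonzero set, using Step 1 for the injectivity hypothesis.
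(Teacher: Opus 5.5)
Your proposal is correct and follows the paper's route. The paper simply observes that for a nonzero parascalar $(N,0)$-form the injectivity of $v \mapsto \iota_v a\rvert_p$ is automatic, and then invokes \autoref{propParascalarNInducesCplxStr} and \autoref{propParascalarNClosedToIntegrab}, exactly as in your Steps~1 and~3. Your Step~2 adds a correct smoothness argument for $\tilde J$, which the paper leaves implicit: you realise $\hat V$ as the constant-rank image of $a\colon \Lambda^N TM \to V$, and $T^{0,1}$ as the constant-rank kernel of $X \mapsto \iota_X(\mathrm{id}-iJ_{\hat V})a$.
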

	
	In real dimension~$4$, this is basically \autoref{lmaJaIntegrability}.
	
	\ifEWzJhvPageBreak\flushbottom\enlargethispage{-1.5cm}\pagebreak\raggedbottom\fi
	
	\subsection{A counterexample to smooth extensibility}\label{subsecCounterexampleSmoothExtensib}
	
	We have just obtained a sufficient condition for the integ\EWzJhvText{rab}ility of the $\mathbb{Z}_2$-almost complex structure defined wherever the injectivity holds. However, assuming $\mathrm{d}_A a = 0$, or even assuming compatibility with a Riemannian metric\EWzJhvFootnote{Without the assumption of compatibility with a Riemannian metric, wilder counterexamples exist. For example, let $a := \exp\bigl(-x^{-1}-iy\bigr)\,\mathrm{d}\bigl(x^{-1}+iy\bigr)$ for~$(x,y) \in (0,\infty)\times\mathbb{R}$, with $V \rightarrow \mathbb{R}^2$ being the trivial rank-$2$ real vector bundle with fibres identified with $\mathbb{C}$. We easily check $a$ is a closed parascalar $(1,0)$-form that zero extends to $\mathbb{R}^2$ smoothly, but its induced $\mathbb{Z}_2$-complex structure does not extend \ifEWzJhvPDF\nolinebreak\fi to \ifEWzJhvPDF\nolinebreak\fi $\mathbb{R}^2$.} in addition to $\mathrm{d}_A a = 0$, it is not true in general that this integ\EWzJhvText{rab}le $\mathbb{Z}_2$-(almost) complex structure extends smoothly to the whole $M$. For example, let $M$ be $\mathbb{C}^2 \times \mathbb{R}^4$ with the standard metric. We use the coordinates $(z^1,z^2,x^1,x^2,x^3,x^4) \in \mathbb{C}^2 \times \mathbb{R}^4$. Similar to \EWzJhvEqRef{eqOrthonBasis2plus}, write
	\begin{gather*}
		\omega^1 := \mathrm{d}x^2 \wedge \mathrm{d}x^3 + \mathrm{d}x^1 \wedge \mathrm{d}x^4\\
		\omega^2 := \mathrm{d}x^3 \wedge \mathrm{d}x^1 + \mathrm{d}x^2 \wedge \mathrm{d}x^4\\
		\omega^3 := \mathrm{d}x^1 \wedge \mathrm{d}x^2 + \mathrm{d}x^3 \wedge \mathrm{d}x^4 \EWzJhvDisplayMathPeriod.
	\end{gather*}
	Our counterexample is given by the following expression:
	\begin{equation}
		a := \mathrm{d}z^1 \wedge \mathrm{d}z^2 \wedge \Bigl(i\bigl((z^1)^2+(z^2)^2\bigr)\,\omega^1 - \bigl((z^1)^2-(z^2)^2\bigr)\,\omega^2 + 2z^1z^2\,\omega^3\Bigr)
	\end{equation}
	Note that the sum of the squares of the coefficients of\/ $\omega^1, \omega^2, \omega^3$ is zero, so the real parts and the imaginary parts of these three coefficients form two orthogonal vectors in $\mathbb{R}^3$ with equal norm. This guarantees that for any~$(z^1,z^2) \in \mathbb{C}^2$, there are $\lambda \in [0,\infty)$ and some positively-oriented orthonormal coordinates of\/ $\mathbb{R}^4$ denoted by $(x'^1\,\,x'^2\,\,x'^3\,\,x'^4)$ such that for these particular values of $z_1,z_2$,
	\begin{align*}
		a\rvert_{z^1,z^2} &= \lambda\,\mathrm{d}z^1 \wedge \mathrm{d}z^2 \wedge \bigl(i\,(\mathrm{d}x'^2 \wedge \mathrm{d}x'^3 + \mathrm{d}x'^1 \wedge \mathrm{d}x'^4) - (\mathrm{d}x'^3 \wedge \mathrm{d}x'^1 + \mathrm{d}x'^2 \wedge \mathrm{d}x'^4)\bigr)\\&= \lambda\,\mathrm{d}z^1 \wedge \mathrm{d}z^2 \wedge (\mathrm{d}x'^1 + i\,\mathrm{d}x'^2) \wedge (\mathrm{d}x'^3 + i\,\mathrm{d}x'^4) \EWzJhvDisplayMathPeriod.
	\end{align*}
	Therefore, $a$ is a parascalar $(4,0)$-form compatible with the standard metric. We easily verify $\mathrm{d}a=0$.
	
	For~$z^2=0$,\ifEWzJhvPDF\vspace{\glueexpr-\baselineskip/4\relax}\fi
	\[a\rvert_{z^1,z^2:=0} = (z^1)^2\,\mathrm{d}z^1 \wedge \mathrm{d}z^2 \wedge (\mathrm{d}x^1 + i\,\mathrm{d}x^2) \wedge (\mathrm{d}x^3 + i\,\mathrm{d}x^4) \EWzJhvDisplayMathPeriod.\]
	On the other hand, for~$z^2=z^1$,
	\[a\rvert_{z^1,z^2:=z^1} = -2i(z^1)^2\,\mathrm{d}z^1 \wedge \mathrm{d}z^2 \wedge (\mathrm{d}x^3 + i\,\mathrm{d}x^1) \wedge (\mathrm{d}x^2 + i\,\mathrm{d}x^4) \EWzJhvDisplayMathPeriod.\]
	So the induced $\mathbb{Z}_2$-complex structure is not continuous at points with $z^1=z^2=0$.
	
	This is a counterexample in complex dimension~$4$. The zero locus of this $a$ is of complex codimension~$2$. The author does not know if there is any counterexample in complex dimension~$3$.
	
	\subsection{Criterion given a prespecified almost complex structure}
	
	In this short supplementary subsection, we prove a criterion for a bundle-valued \ifEWzJhvPDF\linebreak\fi $n$-form to be a parascalar $(n,0)$-form, assuming the form satisfies a certain condition with respect to a prespecified almost complex structure. This is sometimes useful.
	
	\begin{EWzJhvProposition}\label{propParascalarNAlCplxMfld}
		Let\/ $(M,J)$ be an almost complex manifold. Let\/ $V \rightarrow M$ be a smooth real vector bundle with inner product. Let\/ $\omega$ be a smooth nowhere vanishing section of\/\hspace{0em plus -0.08em} $\Omega^{n,0}$ with\/ $n \geqslant 2$. Let\/ $\xi_1,\xi_2 \in \Gamma(V)$. Let\/ $a := \xi_1 \otimes_{\mathbb{R}} (\omega + \overline{\omega}) + \xi_2 \otimes_{\mathbb{R}} i(\omega - \overline{\omega}) \in \Omega^n(V)$. Then\/ $a$ is a parascalar\/ $(n,0)$-form {\small (\autoref{defnParascalarN})} (not assumed to induce the almost complex structure\/ $J$) if and only if\/\hspace{-0.08em} $\lvert\xi_1\rvert=\lvert\xi_2\rvert$ and\/ $\langle \xi_1, \xi_2 \rangle=0$.\ifEWzJhvPageBreak\pagebreak\fi
	\end{EWzJhvProposition}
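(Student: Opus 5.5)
Both directions are pointwise, so I fix $p\in M$ with $a\rvert_p\neq0$, equivalently $(\xi_1,\xi_2)\neq(0,0)$ at $p$. The plan is to rewrite $a$ with complex coefficients, $a=(\xi_1+i\xi_2)\otimes_{\mathbb{C}}\overline{\omega}+(\xi_1-i\xi_2)\otimes_{\mathbb{C}}\omega$, where $\otimes_{\mathbb{C}}$ is taken in $V\otimes_{\mathbb{R}}\mathbb{C}$. Set $w:=\xi_1-i\xi_2$, so that $a=w\otimes\omega+\overline{w}\otimes\overline{\omega}$. The condition $\lvert\xi_1\rvert=\lvert\xi_2\rvert$, $\langle\xi_1,\xi_2\rangle=0$ is then exactly $\langle w,w\rangle=0$, using the $\mathbb{C}$-bilinear extension as in \autoref{subsecIntegrability}.

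\emph{Sufficiency.} Suppose $\langle w,w\rangle=0$ and $w\neq0$. Put $\hat V_p:=\operatorname{span}_{\mathbb{R}}\{\xi_1,\xi_2\}$, which is $2$-dimensional with $\xi_1,\xi_2$ orthogonal of equal norm. Define the complex structure $J_V$ on $\hat V_p$ by $J_V\xi_1=-\xi_2$ (sign to be adjusted), so that the complex-linear extension of $J_V$ acts as $i$ on $w$. Using $J_p$ on $T_pM$, one checks that $a\rvert_p$ corresponds to $\omega\otimes_{\mathbb{C}}\xi_1$ under the identification $\Lambda^n(T^*_pM)\otimes_{\mathbb{R}}\hat V_p=\Lambda^n_{\mathbb{C}}(T^*_pM\otimes\mathbb{C})\otimes_{\mathbb{C}}\hat V_p$. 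This holds because $\operatorname{Re}$ of the $\mathbb{C}$-tensor $\omega\otimes\xi_1$ reproduces $\xi_1\otimes 2\operatorname{Re}\omega+\xi_2\otimes(\pm 2\operatorname{Im}\omega)$. Hence $a\rvert_p\in\Lambda^{n,0}\otimes_{\mathbb{C}}\hat V_p$, and $a$ is parascalar. The only work here is getting the signs right.

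\emph{Necessity.} This is the main obstacle, because the complex structure $J'$ on $T_pM$ provided by \autoref{defnParascalarN} need not be $J_p$. Assume $a\rvert_p\in\Lambda^{n,0}_{J'}\otimes_{\mathbb{C}}\hat V_p$ for some $J'$, $\hat V_p$ and $J_V$. For every covector $\eta\in V^*\rvert_p$, the scalar form $\eta(a)$ is then the real part of a $J'$-$(n,0)$-form. More precisely, if $u_1,u_2$ is an orthonormal basis of $\hat V_p$ with $J_Vu_1=u_2$, then $\langle a,u_1\rangle=\operatorname{Re}\beta$ and $\langle a,u_2\rangle=\pm\operatorname{Im}\beta$ for some $\beta\in\Lambda^{n,0}_{J'}$. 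On the other hand, $a$ takes values in $\operatorname{span}\{\xi_1,\xi_2\}$, which must therefore equal $\hat V_p$ if it is $2$-dimensional. The plan is to expand $\langle a,u_k\rangle=c_k\omega+\bar c_k\overline{\omega}$ with $c_k=\langle w,u_k\rangle$. Then $\langle a,u_1\rangle+ i\langle a,u_2\rangle=\beta$, up to the sign convention, becomes $(c_1+ic_2)\omega+(\bar c_1+i\bar c_2)\overline{\omega}=\beta$, a decomposable-type form of type $(n,0)$ for $J'$.

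The key claim is that for $n\geqslant2$, a nonzero combination $\alpha\omega+\beta'\overline{\omega}$ of a nonzero $J$-$(n,0)$-form and its conjugate can be of $J'$-type $(n,0)$ only if $\alpha\beta'=0$. To prove it, note that a $J'$-$(n,0)$-form $\beta$ with $n\geqslant 2$ satisfies $\beta\wedge\overline{\beta}$-type conditions, or more directly that its kernel, the set of $v$ with $\iota_v\beta=0$, is a complex subspace for $J'$. Compute the annihilator of $\alpha\omega+\beta'\overline{\omega}$ in $T_pM\otimes\mathbb{C}$. It contains $\ker\omega\cap\ker\overline{\omega}$. If both coefficients are nonzero, the form is "real-like", and the space of vectors $v$ with $\iota_v(\cdot)$ decomposable fails; concretely, restricting to a $J$-complex $2$-plane gives $\alpha\,dz^1\wedge dz^2+\beta'\,d\bar z^1\wedge d\bar z^2$, which is not decomposable when $\alpha\beta'\neq0$, since its square is nonzero. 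A $J'$-$(n,0)$-form, however, is decomposable on each complex line count, so its restrictions have vanishing square. This is where $n\geqslant2$ is used.

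Applying the claim gives $\bar c_1+i\bar c_2=0$ or $c_1+ic_2=0$. Either way $w$ lies in an isotropic line of $\hat V_p\otimes\mathbb{C}$, since $w$ has no component orthogonal to $\hat V_p$. Hence $\langle w,w\rangle=0$, which yields $\lvert\xi_1\rvert=\lvert\xi_2\rvert$ and $\langle\xi_1,\xi_2\rangle=0$. The degenerate case where $\xi_1,\xi_2$ span only a line is handled by the same claim: there $a=\xi\otimes(\text{real form})$ with a real nonzero $(\alpha\omega+\bar\alpha\overline{\omega})$, which the claim excludes.
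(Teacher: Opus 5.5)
Your sufficiency direction is fine, and so is the reduction in the necessity direction. The values of $a\rvert_p$ span exactly $\operatorname{span}\{\xi_1,\xi_2\}$, so $\xi_1,\xi_2\in\hat V_p$. Writing $a\rvert_p=\beta\otimes_{\mathbb{C}}u_1$ with $\beta\in\Lambda^{n,0}_{J'}$ gives $\beta=(c_1+ic_2)\,\omega+(\bar c_1+i\bar c_2)\,\overline{\omega}$. Once one of these two coefficients is known to vanish, $w$ is isotropic as you say. (Small slip: $a=(\xi_1+i\xi_2)\otimes_{\mathbb{C}}\omega+(\xi_1-i\xi_2)\otimes_{\mathbb{C}}\overline{\omega}$, so $w$ should be $\xi_1+i\xi_2$. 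This is harmless, since the target condition is invariant under $\xi_2\mapsto-\xi_2$.)

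\textbf{The gap.} The problem is the proof of your key claim, which carries the whole necessity direction. Your argument restricts $\gamma=\alpha\omega+\beta'\overline{\omega}$ to a $J$-complex $2$-plane $E$, notes $(\gamma\rvert_E)^2\neq0$, and asserts that restrictions of a $J'$-$(n,0)$-form have vanishing square. This fails outside small special cases.
\begin{itemize}
\item For $n\geqslant3$, the restriction of $\omega$ to a $J$-complex $2$-plane is zero, so the displayed form never arises.
\item For odd $n$, the square of any $n$-form vanishes identically, so the test carries no information.
\item For $n=2$ and complex dimension at least $4$, the asserted property is false. Take $\mathbb{C}^4$ with its standard structure as $J'$. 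The $J'$-$(2,0)$-form $\mathrm{d}z^1\wedge\mathrm{d}z^2+\mathrm{d}z^3\wedge\mathrm{d}z^4$ restricts to the totally real plane $\operatorname{span}_{\mathbb{R}}\{\partial_{x^1},\ldots,\partial_{x^4}\}$ as $\mathrm{d}x^1\wedge\mathrm{d}x^2+\mathrm{d}x^3\wedge\mathrm{d}x^4$, whose square is nonzero. Since nothing relates $J'$ to $J$, your $J$-complex plane may well be of this kind.
\end{itemize}
The remark that the real kernel of a $J'$-$(n,0)$-form is $J'$-invariant does not help either: when $n=\dim_{\mathbb{C}}M$, the real kernel of $\gamma$ is $\{0\}$.

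\textbf{The fix.} The claim itself is true. The right tool is the one you mention only in passing, the \emph{complex} annihilator $\operatorname{Ann}(\gamma)=\{v\in T_pM\otimes_{\mathbb{R}}\mathbb{C}\mid\iota_v\gamma=0\}$. Split $v=v^{1,0}+v^{0,1}$ with respect to $J$. Then $\iota_v\gamma=\alpha\,\iota_{v^{1,0}}\omega+\beta'\,\iota_{v^{0,1}}\overline{\omega}$, and the two terms have $J$-types $(n-1,0)$ and $(0,n-1)$, which differ precisely because $n\geqslant2$. Hence, if $\alpha\beta'\neq0$, we get $\operatorname{Ann}(\gamma)=K\oplus\overline{K}$ with $K=\{u\in T^{1,0}_pM\mid\iota_u\omega=0\}$, and this is closed under conjugation. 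If $\gamma$ were of $J'$-type $(n,0)$, then $\operatorname{Ann}(\gamma)$ would contain $T^{0,1}_{J'}$, hence also its conjugate $T^{1,0}_{J'}$, hence all of $T_pM\otimes_{\mathbb{R}}\mathbb{C}$. That forces $\gamma=0$, contradicting the fact that its $J$-$(n,0)$-part is $\alpha\omega\neq0$.

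\textbf{Comparison with the paper.} With this lemma your route is valid, but longer than the paper's, which skips the classification step entirely. The paper contracts $a\rvert_p$ with $v_2,\ldots,v_n\in T^{1,0}_pM$, which kills $\overline{\omega}$ (the same use of $n\geqslant2$). This gives $(\xi_1+i\xi_2)\otimes_{\mathbb{C}}\iota_{v_2}\cdots\iota_{v_n}\omega$. It then reads off $\langle\xi_1+i\xi_2,\xi_1+i\xi_2\rangle=0$ from $\iota_{J'v_1}\iota_{v_2}\cdots\iota_{v_n}a=J_V\,\iota_{v_1}\iota_{v_2}\cdots\iota_{v_n}a$, together with $\langle x,J_Vx\rangle=0$ for the $\mathbb{C}$-bilinear extension of the inner product.
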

	\begin{proof}
		Assuming $\lvert\xi_1\rvert=\lvert\xi_2\rvert$ and $\langle \xi_1, \xi_2 \rangle=0$, it is straightforward to verify that $a$ is a parascalar $(n,0)$-form.
		
		Assume $a$ is a parascalar $(n,0)$-form. Let $p \in M$. If $a\rvert_p=0$, then $\lvert\xi_1\rvert=\lvert\xi_2\rvert$ and $\langle \xi_1, \xi_2 \rangle=0$ at $p$ are trivially true. Therefore, we assume $a\rvert_p \neq 0$. Then $\xi_1,\xi_2$ cannot both \ifEWzJhvPDF\linebreak\\*[-\baselineskip]\fi vanish at $p$. As $\omega$ is assumed to be nowhere vanishing, there exist $v_1,v_2,\ldots,v_n \in T_p^{1,0}\mskip-2mu\relax M$ satisfying $\iota_{v_1}\iota_{v_2}\cdots\iota_{v_n} \omega\rvert_p \neq 0$. Let $J_a$ be a linear complex structure on $T_pM$ and $J_V$ be a linear complex structure on $\hat{V}_p \subset V\rvert_p$ from \autoref{defnParascalarN} due to $a$ being a parascalar $(n,0)$-form. As $n \geqslant 2$, $\iota_{v_2}\cdots\iota_{v_n} a\rvert_p = (\xi_1 + i\xi_2) \otimes_{\mathbb{C}} \iota_{v_2}\cdots\iota_{v_n} \omega\rvert_p$. Then $\iota_{v_1}\iota_{v_2}\cdots\iota_{v_n} a\rvert_p \neq 0$. \autoref{defnParascalarN} implies $\iota_{J_a v_1}\iota_{v_2}\cdots\iota_{v_n} a\rvert_p = J_V \iota_{v_1}\iota_{v_2}\cdots\iota_{v_n} a\rvert_p$, so $\iota_{J_a v_1}\iota_{v_2}\cdots\iota_{v_n} a\rvert_p \neq 0$ and thus $\iota_{J_a v_1}\iota_{v_2}\cdots\iota_{v_n} \omega\rvert_p \neq 0$. Also as a consequence, $\langle \iota_{v_1}\iota_{v_2}\cdots\iota_{v_n} a,\: \iota_{J_a v_1}\iota_{v_2}\cdots\iota_{v_n} a \rangle\rvert_p =0$, where $\displaystyle \langle\EWzJhvCDotInd,\EWzJhvCDotInd\rangle \in (\hat{V}_p^*)^{\otimes\EWzJhvMathVCenter 2}\otimes_{\mathbb{R}}\mathbb{C}$ is \emph{the\/ $\mathbb{C}$-linear extension} of the inner product on $\hat{V}_p$. Then $\bigl\langle (\xi_1 + i\xi_2)\,\iota_{v_1}\iota_{v_2}\cdots\iota_{v_n} \omega,\: (\xi_1 + i\xi_2)\,\iota_{J_a v_1}\iota_{v_2}\cdots\iota_{v_n} \omega \bigr\rangle\bigr|_p =0$, so $\langle \xi_1 + i\xi_2,\: \xi_1 + i\xi_2 \rangle\rvert_p =0$, \ifEWzJhvPDF\linebreak\\*[-\baselineskip]\fi which is just $\lvert\xi_1\rvert=\lvert\xi_2\rvert$ and $\langle \xi_1, \xi_2 \rangle=0$ at $p$.
	\end{proof}
	
	As a corollary, we can now confirm the consistency between the two definitions of ``parascalar $({\cdots},0)$-forms'', \autoref{defn2plusParascalar},~\ref{defnParascalarN}:
	
	\begin{EWzJhvCorollary}\label{corParascalar2plusVsN}
		Let\/ $(M,g,Q)$ be a smooth\/ $4N$\hspace*{-0.08em}-dimensional Riemannian manifold with holonomy\/ $\mathrm{Hol}(g) \subset \mathrm{Sp}(N)\hspace{0.0625em}\mathrm{Sp}(1)$. Let\/ $V \rightarrow M$ be a smooth real vector bundle with inner product. Let\/ $a \in \Omega^{2,+}(V)$. The following are equivalent:\vspace{-\parskip}
		\begin{enumerate}[topsep=1ex plus 0.25ex minus 0.25ex, itemsep=1ex plus 0.25ex minus 0.25ex, parsep=0pt, font=\fontshape{ui}\selectfont]
			\item $a$ is a parascalar\/ $(2,0)$-form according to \autoref{defn2plusParascalar};
			\item $a$ is a parascalar\/ $(2,0)$-form according to \autoref{defnParascalarN};
			\item $a$ is a parascalar\/ $(2,0)$-form compatible with\/ $(g,Q)$ according to \autoref{defnParascalarN}.
		\end{enumerate}
	\end{EWzJhvCorollary}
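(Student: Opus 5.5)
The equivalence is pointwise, so I fix $p\in M$ with $a\rvert_p\neq 0$; at a zero all three conditions hold trivially. I will prove (i)$\Rightarrow$(iii)$\Rightarrow$(ii)$\Rightarrow$(i). The step (iii)$\Rightarrow$(ii) is immediate, since compatibility is only an extra requirement in \autoref{defnParascalarN}.

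\emph{(i)$\Rightarrow$(iii).} Write $a\rvert_p=\lambda(v_1\otimes\omega^1+v_2\otimes\omega^2)$ as in \autoref{defn2plusParascalar}. After possibly replacing $\omega^2$ by $-\omega^2$ and $v_2$ by $-v_2$, I may complete $(\omega^1\,\,\omega^2)$ to an oriented orthonormal basis $(\omega^1\,\,\omega^2\,\,\omega^3)$ of $\Omega^{2,+}\rvert_p$. Then \EWzJhvEqRef{eqOrthonBasis2plus} holds in some orthonormal coframe $e^{j,k}$. Let $J\in Q\rvert_p$ be the complex structure corresponding to $\omega^3$; it is compatible with $(g,Q)$. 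With respect to $J$ and the complex coframe $\epsilon^{j,1}=e^{j,1}+ie^{j,2}$, $\epsilon^{j,2}=e^{j,3}+ie^{j,4}$ (up to sign and ordering conventions), the form $\omega^+:=\sum_j\epsilon^{j,1}\wedge\epsilon^{j,2}$ satisfies $\omega^++\overline{\omega^+}\propto\omega^1$ and $i(\omega^+-\overline{\omega^+})\propto\pm\omega^2$, which a direct check of \EWzJhvEqRef{eqOrthonBasis2plus} confirms. Hence
\[
a\rvert_p=c\,(v_1\pm iv_2)\otimes_{\mathbb{C}}\omega^+ + \overline{(\cdots)},
\]
and, as in \EWzJhvEqRef{eqACoord}, $a\rvert_p\in\Lambda^{2,0}\otimes_{\mathbb{C}}\hat V_p$, where $\hat V_p=\operatorname{span}\{v_1,v_2\}$ carries the complex structure $v_1\mapsto \pm v_2$. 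This gives (iii).

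\emph{(ii)$\Rightarrow$(i)} is the main step. By (ii) there are a linear complex structure $J$ on $T_pM$, unrelated a priori to $g$ or $Q$, and a complex line $\hat V_p$ such that $a\rvert_p=\xi\otimes_{\mathbb{C}}\beta$ with $\beta\in\Lambda^{2,0}_J$ and $\xi\in\hat V_p$. Choose an orthonormal basis $(u_1\,\,u_2)$ of $\hat V_p$ with $J_Vu_1=u_2$. Then
\[
a\rvert_p=u_1\otimes\operatorname{Re}\beta' + u_2\otimes\operatorname{Im}\beta'
\]
for some $\beta'\in\Lambda^{2,0}_J$. Since $a\in\Omega^{2,+}(V)$, both $\alpha_1=\operatorname{Re}\beta'$ and $\alpha_2=\operatorname{Im}\beta'$ lie in $\Lambda^{2,+}$. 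From $\beta'\in\Lambda^{2,0}_J$ I get $\alpha_2=-\alpha_1(J\cdot,\cdot)$ and $\alpha_k(J\cdot,J\cdot)=-\alpha_k$. What remains is to show $|\alpha_1|=|\alpha_2|$ and $\langle\alpha_1,\alpha_2\rangle=0$, which is exactly (i) after normalising.

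I will work in $\Lambda^{2,+}\cong\operatorname{span}Q$. Write $\alpha_k=g(q_k\cdot,\cdot)$ with $q_k\in Q\rvert_p$, so each $q_k$ is a multiple of a unit imaginary quaternion, $q_k^2=-|q_k|^2\mathrm{id}$ up to normalisation. The relation $\alpha_2=-\alpha_1(J\cdot,\cdot)$ becomes $q_2=\pm q_1J$, and the $J$-anti-invariance becomes $J^{\mathsf T}q_kJ=q_k$. The complex-valued form $\beta'=g((q_1+iq_2)\cdot,\cdot)$ is of type $(2,0)$, so $(q_1+iq_2)J=\mp i(q_1+iq_2)$. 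If $q_1,q_2$ were not orthogonal of equal length, then $q_1+iq_2$ would be invertible, since for imaginary quaternions $(q_1+iq_2)^2=-(|q_1|^2-|q_2|^2+2i\langle q_1,q_2\rangle)$. In that case $J=\mp i\,\mathrm{id}$, which is impossible for a real operator. Therefore $\langle q_1+iq_2,q_1+iq_2\rangle_{\mathbb{C}}=0$. I expect this algebraic step to be the main obstacle; it plays the role of $\langle v_1+iv_2,v_1+iv_2\rangle=0$ in the earlier lemmas. Once it holds, $\alpha_1,\alpha_2$ are orthogonal of equal norm, and setting $\lambda=|\alpha_1|$, $\omega^k=\alpha_k/\lambda$, $v_k=u_k$ gives (i).

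A fallback for this step is to imitate the proof of \autoref{propParascalarNAlCplxMfld} with the roles of $V$ and the form side swapped, and obtain the vanishing of the complex square from the orthogonality $\langle\iota_x\beta',\iota_{Jx}\beta'\rangle_{\mathbb{C}}=0$ combined with the quaternionic structure of $\Lambda^{2,+}$.
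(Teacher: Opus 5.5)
Your proof is correct, and for the key implication (ii)$\Rightarrow$(i) it takes a genuinely different route from the paper's.

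\textbf{The paper's route.} Each of the three conditions puts $a\rvert_p$ in $\hat V_p\otimes W_p$ with $W_p\subset\Lambda^{2,+}$ of dimension $\leqslant 2$. The paper then chooses a \emph{compatible} $J'$ with $\omega^{J'}\perp W_p$ and rewrites $a=\xi_1\otimes(\omega^++\overline{\omega^+})+\xi_2\otimes i(\omega^+-\overline{\omega^+})$. It then applies \autoref{propParascalarNAlCplxMfld}, whose contraction argument ($\iota_{J_av_1}\iota_{v_2}\cdots a=J_V\iota_{v_1}\iota_{v_2}\cdots a$) yields the nullity $\langle\xi_1+i\xi_2,\xi_1+i\xi_2\rangle=0$ on the $V$ side. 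The ``if'' half of that criterion gives (i)$\Rightarrow$(iii).

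\textbf{Your route.} You fix the $V$-frame $u_2=J_Vu_1$ supplied by the definition and prove nullity on the form side by quaternion algebra. The element $P=q_1+iq_2$ satisfies $P^2=c\,\mathrm{id}$ and $PJ=iP$. Hence $cJ=P^2J=iP^2=ic\,\mathrm{id}$, and since $J$ is real this forces $c=0$. So the step you flag as the main obstacle is already settled by the squaring identity you wrote down, and you do not even need to invert $P$.

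\textbf{Comparison.} Your version is shorter and self-contained: it needs no preliminary compatible structure and no contraction argument. However, it rests on the fact that complexified imaginary quaternions square to scalars, so it is specific to $\Lambda^{2,+}$ in the $\mathrm{Sp}(N)\mathrm{Sp}(1)$ setting. The paper's route instead reduces the corollary to a criterion valid on any almost complex manifold and for any $n\geqslant 2$, which it reuses in \autoref{secVW}. Your fallback is essentially the paper's argument, and no role swap is needed there.

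\textbf{Two harmless slips.} First, $\alpha_k(J\cdot,J\cdot)=-\alpha_k$ translates to $J^{\mathsf T}q_kJ=-q_k$, not $+q_k$; this identity is unused anyway. Second, with your coframe one gets $\operatorname{Re}\omega^+\propto-\omega^2$ and $\operatorname{Im}\omega^+\propto\omega^1$, which your ``up to sign and ordering'' already covers.
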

	\begin{proof}
		We assume that at least one of these three conditions holds, and restrict our attention to the complement of the zero locus of $a$. Any of these three conditions implies that $a$ takes value in a rank-$2$ subbundle of\/ $V$, so $a$ also takes value in a rank-$2$ subbundle of\/ $\Omega^{2,+}$. As a consequence, locally, we can write $a = \xi_1 \otimes_{\mathbb{R}} (\omega^+ + \overline{\omega^+}) + \xi_2 \otimes_{\mathbb{R}} i(\omega^+ - \overline{\omega^+})$ for some $\xi_1,\xi_2 \in \Gamma(V)$ where $\omega^+ \in \Omega^{2,0,+}$ is as in \EWzJhvEqRef{eqACoord} with respect to some compatible almost complex structure. Then we use \autoref{propParascalarNAlCplxMfld} to conclude.
	\end{proof}
	
	\section{Implications for Vafa--Witten equation}\label{secVW}
	
	In this section, we shall observe a close relation between ``$\mathrm{U}(1)$-invariant'' or ``nilpotent'' solutions to the Vafa--Witten equation and the notion of parascalar $(N,0)$-forms described in the previous sections, and finish with some discussions.
	
	\EWzJhvMiniSubsection*{Preliminaries: The Vafa--Witten equation}
	
	Let $(M,g)$ be a smooth oriented Riemannian $4$-manifold. Let $G$ be a compact Lie group, $\mathrm{SU}(2)$ and $\mathrm{SO}(3)$ being two simple cases. Let $P \rightarrow M$ be a $G$-principal bundle over $M$. Denote the space of principal connections on $P \rightarrow M$ by $\mathscr{A}(P)$. \ifEWzJhvPDF\linebreak\fi Denote the space of self-dual $2$-forms valued in the adjoint bundle by $\Omega^{2,+}(\mathrm{ad}(P))$. The Vafa--Witten equation \cite{VAFA19943}, following \cite{MaresPhD}, is the following equation system on a pair\EWzJhvFootnote{\label{ftnEqVWIgnore}We ignore the seemingly less significant $\Gamma(\mathrm{ad}(P))$ component of a solution, which vanishes in many interesting cases anyway \cite[Remark~2.1.2]{MaresPhD}.} $(A,a) \in \mathscr{A}(P) \times \Omega^{2,+}(\mathrm{ad}(P))$
	\begin{subequations}\label{eqVW}
		\renewcommand*{\theequation}{\theparentequation\EWzJhvOptionallyFixedWidth{0.52em}{\alph{equation}}}
		\let\EWzJhvOptionallyFixedWidth=\EWzJhvOptionallyFixedWidthEnabled
		\begin{align}
			\mathrm{d}_A^* a &= 0\label{eqVWa}\\
			F_A^+ + \EWzJhvMathFracVCentered{1}{8}\hspace{0.0625em}[a \centerdot a] &= 0
		\end{align}
	\end{subequations}
	where $F_A^+ \in \Omega^{2,+}(\mathrm{ad}(P))$ is the self-dual part of the curvature of $A$, $[\EWzJhvCDotInd\centerdot\EWzJhvCDotInd]$ is the \ifEWzJhvPageBreak\pagebreak\fi following antisymmetric linear map $\mskip2mu\relax \EWzJhvCDotInd\centerdot\EWzJhvCDotInd \mskip2mu\relax\EWzJhvMathSepColon\mskip2mu\relax \Omega^{2,+}\rvert_p \otimes \Omega^{2,+}\rvert_p \mskip2mu\relax\rightarrow\mskip2mu\relax \Omega^{2,+}\rvert_p \mskip2mu\relax$ tensored with the Lie bracket on the adjoint bundle: given an oriented orthonormal basis $(e^1\,\,e^2\,\,e^3\,\,e^4)$ of\/ $\EWzJhvTpStar Tp M$, which induces the following\EWzJhvFootnote{This is a special case of \EWzJhvEqRef{eqOrthonBasis2plus}.} orthonormal basis $(\omega^1\,\,\omega^2\,\,\omega^3)$ of\/ $\Omega^{2,+}\rvert_p$
	\begin{equation}
		\begin{gathered}
			\omega^1=\EWzJhvMathFracVCentered{1}{\sqrt{2}\,}\bigl(e^2 \wedge e^3 + e^1 \wedge e^4\bigr)\\
			\omega^2=\EWzJhvMathFracVCentered{1}{\sqrt{2}\,}\bigl(e^3 \wedge e^1 + e^2 \wedge e^4\bigr)\\
			\omega^3=\EWzJhvMathFracVCentered{1}{\sqrt{2}\,}\bigl(e^1 \wedge e^2 + e^3 \wedge e^4\bigr) \EWzJhvDisplayMathPeriod,
		\end{gathered}
	\end{equation}
	the antisymmetric linear map $\EWzJhvCDotInd\centerdot\EWzJhvCDotInd$ is given by
	\[
		\omega^1 \centerdot \omega^2 = -\sqrt{2}\,\omega^3,\quad
		\omega^2 \centerdot \omega^3 = -\sqrt{2}\,\omega^1,\quad
		\omega^3 \centerdot \omega^1 = -\sqrt{2}\,\omega^2 \EWzJhvDisplayMathPeriod.
	\]
	
	In case $(M,g)$ is a Kähler surface, $a \in \Omega^{2,+}(\mathrm{ad}(P))$ splits into $a = \beta - \beta^* + \gamma \otimes_{\mathbb{R}} \omega$, where $\beta \in \Omega^{2,0}(\mathrm{ad}(P) \otimes_{\mathbb{R}} \mathbb{C})$, $\gamma \in \Gamma(\mathrm{ad}(P))$, $\omega = \frac{i}{2}(\mathrm{d}z^1\wedge\mathrm{d}\bar{z}^1 + \mathrm{d}z^2\wedge\mathrm{d}\bar{z}^2)$ is the Kähler form, and $\beta^*$ makes $\beta - \beta^* \in \Omega^{2,+}(\mathrm{ad}(P))$ and $i(\beta + \beta^*) \in \Omega^{2,+}(\mathrm{ad}(P))$.
	
	\begin{EWzJhvProposition}[{\cite[Theorem~7.1.2]{MaresPhD}}]\label{propVWequivalenceKaehler}
		If\/\hspace{-0.08em} $(M,g)$ is a compact Kähler surface, then \EWzJhvEqRef{eqVW} is equivalent to\EWzJhvFootnote{$\gamma^*$ ($=-\gamma$ in our setting) is a remnant of the ignored\EWzJhvFootRef{ftnEqVWIgnore} $\Gamma(\mathrm{ad}(P))$ component of a solution, which when considered would make $\gamma \in \Gamma(\mathrm{ad}(P) \otimes_{\mathbb{R}} \mathbb{C})$ instead of\/ $\Gamma(\mathrm{ad}(P))$.}\ifEWzJhvPDF\vspace{\glueexpr-\baselineskip/2\relax}\fi
		\begin{subequations}
			\renewcommand*{\theequation}{\theparentequation\EWzJhvOptionallyFixedWidth{0.52em}{\alph{equation}}}
			\let\EWzJhvOptionallyFixedWidth=\EWzJhvOptionallyFixedWidthEnabled
			\begin{gather}
				F_A^{0,2} = 0\label{eqVWKaehlerA}\\
				\mathrm{d}_A\beta = 0\label{eqVWKaehlerB}\\
				\omega \wedge i F_A + \EWzJhvMathFracVCentered{1}{2}\hspace{0.0625em}[\beta \wedge \beta^*] = 0\label{eqVWKaehlerC}\\
				\mathrm{d}_A\gamma = 0\\
				[\gamma,\gamma^*] = 0\\
				[\gamma, \beta + \beta^*] = 0 \EWzJhvDisplayMathPeriod.
			\end{gather}
		\end{subequations}
		where\/ $F_A \ifEWzJhvPDF\hspace*{0pt minus 1.6pt}\fi\in\ifEWzJhvPDF\hspace*{0pt minus 0.8pt}\fi \Omega^2(\mathrm{ad}(P))$ is the curvature of\/\hspace{-0.08em} $A$, and\/ $F_A^{0,2} \ifEWzJhvPDF\hspace*{0pt minus 1.6pt}\fi\in\ifEWzJhvPDF\hspace*{0pt minus 0.8pt}\fi \Omega^{0,2}(\mathrm{ad}(P) \otimes_{\mathbb{R}} \mathbb{C})$ is its\/ \ifEWzJhvPDF\nolinebreak\fi $(0,2)$-part.
	\ifEWzJhvPDF\exhyphenpenalty=10000\relax\par\fi
	\end{EWzJhvProposition}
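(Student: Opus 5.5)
This is a pointwise algebraic decomposition followed by a comparison of types. Since it is cited from \cite[Theorem~7.1.2]{MaresPhD}, I would reprove it directly.

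\emph{Splitting the first equation.} On a Kähler surface, write $a=\beta-\beta^*+\gamma\otimes_{\mathbb{R}}\omega$. First split \EWzJhvEqRef{eqVWa} by type, using that $\omega$ is parallel and $\mathrm{d}_A^*=-\sum\iota_{e_j}\nabla^A_{e_j}$ (compare \EWzJhvEqRef{eq8}).
\begin{itemize}
\item For the $(2,0)$-form $\beta$, $\mathrm{d}_A^*\beta$ is (up to $\pm *$, since $(2,0)$-forms are self-dual) $*\mathrm{d}_A\beta$, in the same way as in \autoref{prop2N0plusDCanonical}. Hence $\mathrm{d}_A^*\beta$ is of type $(1,0)$ and vanishes iff $\bar\partial_A\beta=0$, i.e.\ iff $\mathrm{d}_A\beta=0$.
\item Likewise $\mathrm{d}_A^*\beta^*$ is of type $(0,1)$ and is the conjugate-adjoint counterpart.
\item $\mathrm{d}_A^*(\gamma\omega)=\pm J\,\mathrm{d}_A\gamma$, whose $(1,0)$-part is $\mp i\,\partial_A\gamma$ and whose $(0,1)$-part is $\pm i\,\bar\partial_A\gamma$.
\end{itemize}
So \EWzJhvEqRef{eqVWa} alone reads $\bar\partial_A^*$-type identities mixing $\beta$ and $\gamma$: $(\mathrm{d}_A^*\beta)^{1,0}\pm i\partial_A\gamma=0$ and its conjugate. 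This does not yet split the two.

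\emph{Splitting the second equation.} Decompose $F_A^+=F_A^{2,0}+F_A^{0,2}+\tfrac12\Lambda F_A\,\omega$. Compute the bracket $[a\centerdot a]$ in the basis $\omega^1,\omega^2,\omega^3$ with $\omega^3\propto\omega$ and $\omega^1,\omega^2$ spanning $\operatorname{Re}/\operatorname{Im}$ of $\Omega^{2,0}$. It gives
\begin{itemize}
\item an $\omega$-component proportional to $[\beta\wedge\beta^*]$,
\item a $(0,2)$-component proportional to $[\gamma,\beta^*]$ (and conjugate).
\end{itemize}
The second equation therefore becomes $F_A^{0,2}+c[\gamma,\beta^*]=0$ together with $\omega\wedge iF_A+\tfrac12[\beta\wedge\beta^*]=0$, after fixing constants from $\omega^j\centerdot\omega^k=-\sqrt2\,\omega^l$.

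\emph{Integration by parts on compact $M$ (the main obstacle).} To decouple, I would use a Weitzenböck/Bianchi argument as in Mares's thesis. Apply $\bar\partial_A$ to the $(0,1)$-equation and pair with the $\beta$ terms; use $\bar\partial_A F_A^{0,2}=0$ (Bianchi) and Kähler identities. Integrate $\bigl|\bar\partial_A^*\beta^*\bigr|^2$-type cross terms to show that the mixed quantity $\int\langle\bar\partial_A\gamma,\bar\partial_A^*\beta^*\rangle$ equals $\int\langle[\gamma,\beta^*],F_A^{0,2}\rangle$-type terms, which by the curvature equation become $-c\int|[\gamma,\beta^*]|^2$. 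The resulting identity has the shape
\[\int\Bigl(|\mathrm{d}_A\beta|^2+|\mathrm{d}_A\gamma|^2+C|[\gamma,\beta+\beta^*]|^2+|F_A^{0,2}|^2\Bigr)=0.\]
Each term must vanish, giving \EWzJhvEqRef{eqVWKaehlerA}, \EWzJhvEqRef{eqVWKaehlerB}, $\mathrm{d}_A\gamma=0$ and $[\gamma,\beta+\beta^*]=0$. With $\gamma\in\Gamma(\mathrm{ad}(P))$ real, $[\gamma,\gamma^*]=0$ is automatic. Feeding these back gives \EWzJhvEqRef{eqVWKaehlerC}.

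Getting the signs and constants right so that all cross terms cancel and a sum of squares results is the delicate step. If it fails, I would first try adding $\|\mathrm{d}_A^*a\|^2$ and $\|F_A^++\frac18[a\centerdot a]\|^2$, expanding, and integrating by parts to exhibit this sum of squares directly.

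\emph{Converse.} This is direct: the listed equations give each type-component of \EWzJhvEqRef{eqVW} by the pointwise computations above.
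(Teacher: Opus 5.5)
The paper gives no proof of this proposition; it quotes it from Mares. Your outline is the standard decoupling argument, and your type splitting is correct. In an adapted oriented frame ($\mathrm{d}z^1=e^1+ie^2$, $\mathrm{d}z^2=e^3+ie^4$, so $\omega=\sqrt2\,\omega^3$ and $\mathrm{d}z^1\wedge\mathrm{d}z^2\in\mathbb{C}\,(\omega^1+i\omega^2)$), with $\beta^*=-\bar\beta$, one finds $\mathrm{d}_A^*(\gamma\omega)=i\,\partial_A\gamma-i\,\bar\partial_A\gamma$ and that the $(2,0)$-part of $[a\centerdot a]$ is $4i\,[\gamma,\beta]$. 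So the system reads
\[
\mathrm{d}_A^*\beta=-i\,\partial_A\gamma,\qquad F_A^{2,0}=-\tfrac{i}{2}\,[\gamma,\beta],
\]
together with the conjugate equations and the $\omega$-component, which is precisely the third equation of the list.

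The gap is that you stop at the one step that carries all the content. You assert that integration by parts yields a sum of squares, but you leave the sign open (``if it fails \ldots''). That sign is not a formality. With the opposite convention for $\centerdot$, the identical manipulation gives $\lVert\mathrm{d}_A^*\beta\rVert^2=+\tfrac12\lVert[\gamma,\beta]\rVert^2$, and nothing decouples.

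With the paper's convention $\omega^1\centerdot\omega^2=-\sqrt2\,\omega^3$ it does work. Use the $L^2$ Hermitian product (complex-linear in the first slot), compactness, the identity $\partial_A\partial_A\gamma=[F_A^{2,0},\gamma]$, and the fact that $\mathrm{ad}_\gamma$ is skew-Hermitian (since $\gamma$ is real and the inner product is invariant). Then
\[
\lVert\mathrm{d}_A^*\beta\rVert^2
= i\,\langle\beta,\partial_A\partial_A\gamma\rangle
= i\,\langle\beta,[F_A^{2,0},\gamma]\rangle
= -\tfrac12\,\langle\beta,[[\gamma,\beta],\gamma]\rangle
= -\tfrac12\,\lVert[\gamma,\beta]\rVert^2 .
\]
The consequences follow in order:
\begin{enumerate}
\item Both sides vanish, so $\mathrm{d}_A^*\beta=0$, hence $\mathrm{d}_A\beta=0$ by self-duality (as you noted), and $[\gamma,\beta]=0$.
\item From $\mathrm{d}_A^*\beta=-i\,\partial_A\gamma$ we get $\partial_A\gamma=0$, hence $\mathrm{d}_A\gamma=0$ because $\gamma$ is real.
\item From $F_A^{2,0}=-\tfrac{i}{2}[\gamma,\beta]$ and its conjugate, $F_A^{2,0}=F_A^{0,2}=0$.
\item By conjugation, $[\gamma,\beta+\beta^*]=0$.
\end{enumerate}
The Bianchi identity plays no role.

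Your fallback is the same computation repackaged. For arbitrary $(A,\beta,\gamma)$, after integrating by parts the cross terms cancel and
\[
\lVert(\mathrm{d}_A^*a)^{1,0}\rVert^2+2\,\lVert(F_A^++\tfrac18[a\centerdot a])^{2,0}\rVert^2
=\lVert\mathrm{d}_A^*\beta\rVert^2+\lVert\partial_A\gamma\rVert^2+2\,\lVert F_A^{2,0}\rVert^2+\tfrac12\,\lVert[\gamma,\beta]\rVert^2 .
\]
A positive weight (here $2$) making this cancellation work exists only because of the favourable sign. With the opposite sign the required weight would be $-2$.
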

	
	In many interesting cases $\gamma = 0$, for example if $A$ is an irreducible $\mathrm{SU}(2)$-connection.
	
	The equations \EWzJhvEqRef{eqVWKaehlerA}\EWzJhvEqRef{eqVWKaehlerB}\EWzJhvEqRef{eqVWKaehlerC} can be generalised to compact Kähler manifolds of arbitrary dimension, in which case $\beta \in \Omega^{N,0}(\mathrm{ad}(P) \otimes_{\mathbb{R}} \mathbb{C})$ where $N := \dim_{\mathbb{C}}M$. This was the setting of \cite{chen2024VWeqKaehler}.
	
	The space of solutions $(A,\beta)$ to \EWzJhvEqRef{eqVWKaehlerA}\EWzJhvEqRef{eqVWKaehlerB}\EWzJhvEqRef{eqVWKaehlerC} modulo the gauge group action \ifEWzJhvPDF\linebreak\fi admits a $\mathrm{U}(1)$-action\EWzJhvFootnote{If one considers the other side of the Kobayashi--Hitchin correspondence, it is actually a $\mathbb{C}^*$-action. More precisely, it is actually this $\mathbb{C}^*$-action that was considered in \cite{Tanaka_2019}\cite{Tanaka_2017}\cite{chen2024VWeqKaehler}.}: $\beta \mapsto e^{i\theta}\beta$, whose invariant locus is used to define Vafa--Witten invariants for projective surfaces \cite{Tanaka_2019}\cite{Tanaka_2017}.
	
	\EWzJhvMiniSubsection*{Relation between $\mathrm{U}(1)$-invariancy, nilpotency, and parascalar $(N,0)$-forms}
	
	We work with some fixed invariant inner product on the associated Lie algebra $\mathfrak{g}$, which then induces an invariant inner product on the adjoint bundle $\mathrm{ad}(P)$. Recall our notion of ``parascalar $(N,0)$-form'' (\autoref{defnParascalarN}, generalising \autoref{defn2plusParascalar}), which is defined without referring to any prespecified (almost) complex structure.
	
	\ifEWzJhvPageBreak\pagebreak\fi
	
	\begin{EWzJhvProposition}\label{propU1invToParascalar}
		Let\/ $M$ be an almost complex manifold of complex dimension\/~$N$. \ifEWzJhvPDF\linebreak\fi Let\/ $G$ be a Lie group with invariant inner product on associated Lie algebra. Let\/ $P \rightarrow M$ \ifEWzJhvPDF\linebreak\fi be a\/ $G$-principal bundle. If\/\hspace{-0.08em} $\beta \in \Omega^{N,0}(\mathrm{ad}(P) \otimes_{\mathbb{R}} \mathbb{C})$ is\/ $\mathrm{U}(1)$-invariant modulo the gauge group action (that is, for any\/~$e^{i\theta} \in \mathrm{U}(1)$, $e^{i\theta}\beta$ and\/ $\beta$ are related by some gauge transformation of\/\hspace{-0.08em} $P \rightarrow M$), then\/ $a := \beta - \beta^* \in \Omega^N(\mathrm{ad}(P))$ must be a parascalar\/ $(N,0)$-form.
	\end{EWzJhvProposition}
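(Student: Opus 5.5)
Fix a point $p\in M$ and work pointwise. Since $\Omega^{N,0}$ is a complex line bundle, write $\beta\rvert_p=\omega\otimes_{\mathbb{C}}\xi$, where $\omega$ spans $\Lambda^{N,0}T^*_pM$ and $\xi\in\mathfrak{g}_{\mathbb{C}}:=\mathrm{ad}(P)\rvert_p\otimes_{\mathbb{R}}\mathbb{C}$. If $\xi=0$ there is nothing to prove, so assume $\xi\neq0$. Writing $\xi=\xi_1+i\xi_2$ with $\xi_1,\xi_2$ real, the definition of $\beta^*$ (extended from the Kähler surface case, i.e.\ $\beta^*$ combines complex conjugation on forms with the conjugate-linear anti-involution on $\mathfrak{g}_{\mathbb{C}}$) should give, up to a harmless normalisation of constants, $a=\beta-\beta^*=\xi_1\otimes_{\mathbb{R}}(\omega+\overline{\omega})+\xi_2\otimes_{\mathbb{R}}i(\omega-\overline{\omega})$ in the notation of \autoref{propParascalarNAlCplxMfld}. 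The first step is to verify this bookkeeping carefully, since sign conventions for $\beta^*$ could swap or negate $\xi_2$, which does not affect the criterion.

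By \autoref{propParascalarNAlCplxMfld}, when $N\geqslant2$ it then suffices to show $\lvert\xi_1\rvert=\lvert\xi_2\rvert$ and $\langle\xi_1,\xi_2\rangle=0$, i.e.\ $\langle\xi,\xi\rangle_{\mathbb{C}}=0$ for the $\mathbb{C}$-bilinear extension of the invariant inner product. When $N=1$ I would check \autoref{defnParascalarN} directly instead: with $\langle\xi,\xi\rangle_{\mathbb{C}}=0$, the plane $\hat V_p=\operatorname{span}\{\xi_1,\xi_2\}$ carries the complex structure $\xi_1\mapsto\xi_2$, and $a$ is visibly of type $(1,0)$ valued in it. The same sufficiency direction works for all $N$ anyway.

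The key step is deriving $\langle\xi,\xi\rangle_{\mathbb{C}}=0$ from $\mathrm{U}(1)$-invariance. For each $\theta$ there is a gauge transformation $g_\theta$ with $e^{i\theta}\beta=\mathrm{Ad}_{g_\theta}\beta$, so at $p$ we have $e^{i\theta}\xi=\mathrm{Ad}_{g_\theta(p)}\xi$. The quadratic form $\langle\xi,\xi\rangle_{\mathbb{C}}$ is $\mathrm{Ad}$-invariant, since the inner product is invariant and $\mathrm{Ad}$ is real, hence commutes with complexification. Therefore $e^{2i\theta}\langle\xi,\xi\rangle_{\mathbb{C}}=\langle\xi,\xi\rangle_{\mathbb{C}}$ for all $\theta$; taking $\theta=\pi/2$ gives $\langle\xi,\xi\rangle_{\mathbb{C}}=0$. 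Note that only a single gauge transformation ($\theta=\pi/2$) is needed, and no continuity in $\theta$. Invoking \autoref{propParascalarNAlCplxMfld} (locally, with $\omega$ a nowhere-vanishing local section) then finishes the argument.

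The main obstacle is the first step: pinning down the precise meaning of $\beta^*$ in arbitrary dimension so that $a$ really takes the form required by \autoref{propParascalarNAlCplxMfld}. The invariance argument itself is immediate.
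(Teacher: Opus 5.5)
Your proposal is correct and is essentially the paper's proof: locally write $\beta=(\xi_1+i\xi_2)\otimes_{\mathbb{C}}\nu$, so that $a=\xi_1\otimes_{\mathbb{R}}(\nu+\bar\nu)+\xi_2\otimes_{\mathbb{R}}i(\nu-\bar\nu)$. Then reduce parascalarity, via the easy sufficiency direction (valid for every $N$), to $\langle\xi_1+i\xi_2,\xi_1+i\xi_2\rangle=0$ for the $\mathbb{C}$-bilinear extension of the inner product, and deduce this from the gauge invariance of that quadratic form under multiplication by $e^{i\theta}$; your observations that $N=1$ needs no separate treatment and that the single value $\theta=\pi/2$ already suffices are accurate refinements of the same argument.
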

	\begin{proof}
		Let $p \in M$. Let $\nu$ be a smooth nowhere vanishing section of\/ $\Omega^{N,0}$ over some open neighbourhood of $p$. Write $\beta = (\xi_1 + i\xi_2) \otimes_{\mathbb{C}} \nu$ where $\xi_1,\xi_2$ are sections of\/ $\mathrm{ad}(P)$, over this open neighbourhood. Then $a = \xi_1 \otimes_{\mathbb{R}} (\nu + \bar{\nu}) + \xi_2 \otimes_{\mathbb{R}} i(\nu - \bar{\nu})$. To show that $a$ is a parascalar $(N,0)$-form, we just need to show $\lvert\xi_1\rvert=\lvert\xi_2\rvert$ and $\langle \xi_1, \xi_2 \rangle=0$ over the open neighbourhood, which together are equivalent to $\langle \xi_1 + i\xi_2,\: \xi_1 + i\xi_2 \rangle=0$ where $\displaystyle \langle\EWzJhvCDotInd,\EWzJhvCDotInd\rangle \in \Gamma\bigl(\bigl((\mathrm{ad}(P))^*\bigr)\ifEWzJhvPDF{\rule{0pt}{1.88ex}}\fi^{\mskip-2mu\relax \otimes\EWzJhvMathVCenter 2}\otimes_{\mathbb{R}}\mathbb{C}\bigr)$ is \emph{the\/ $\mathbb{C}$-linear extension} of the inner product on $\mathrm{ad}(P)$. As the inner product on $\mathrm{ad}(P)$ is invariant under gauge transformations, its $\mathbb{C}$-linear extension is also invariant under gauge transformations. We assumed for any~$e^{i\theta} \in \mathrm{U}(1)$, $e^{i\theta}\beta$ and $\beta$ are related by some gauge transformation. As a consequence, for any~$e^{i\theta}$, $\bigl\langle e^{i\theta}(\xi_1 + i\xi_2),\: e^{i\theta}(\xi_1 + i\xi_2) \bigr\rangle = \langle \xi_1 + i\xi_2,\: \xi_1 + i\xi_2 \rangle$, so $\langle \xi_1 + i\xi_2,\: \xi_1 + i\xi_2 \rangle=0$.
	\end{proof}
	
	We can also draw a connection between our notion of ``parascalar $(N,0)$-form'' and the notion of ``nilpotent solutions'' of \cite{chen2024VWeqKaehler}. Recall \cite{chen2024VWeqKaehler} showed that for~$G = \mathrm{U}(n)$, if $(A,\beta)$ is a solution to \EWzJhvEqRef{eqVWKaehlerA}\EWzJhvEqRef{eqVWKaehlerB}\EWzJhvEqRef{eqVWKaehlerC} (or their generalisation to compact Kähler manifolds of arbitrary dimension) that is $\mathrm{U}(1)$-invariant modulo the gauge group action, then $\beta$ must be pointwise nilpotent (that is, at every point in $M$, any value of $\beta$ in $\mathrm{ad}(P) \otimes_{\mathbb{R}} \mathbb{C}$ can be identified with some nilpotent matrix in $\mathfrak{gl}(n,\mathbb{C}) = \mathfrak{u}(n) \otimes_{\mathbb{R}} \mathbb{C}$ under some local trivialisation of $P$) {\small \cite[Lemma~2.12]{chen2024VWeqKaehler}}, and solutions $(A,\beta)$ where $\beta$ is pointwise nilpotent\EWzJhvFootnote{or more generally, solutions with uniformly bounded spectral covers} satisfy a $C^0$ a priori estimate and, as a consequence, a version of Uhlenbeck compactness {\small \cite[section~5]{chen2024VWeqKaehler}}.
	
	\begin{EWzJhvProposition}\label{propNilpotentParascalar}
		Let\/ $M$ be an almost complex manifold of complex dimension\/~$N$. \ifEWzJhvPDF\linebreak\fi Let\/ $G$ be a Lie group with invariant inner product on associated Lie algebra. Let\/ $P \rightarrow M$ be a\/ $G$-principal bundle. Let\/ $\beta \in \Omega^{N,0}(\mathrm{ad}(P) \otimes_{\mathbb{R}} \mathbb{C})$. Let\/ $a := \beta - \beta^* \in \Omega^N(\mathrm{ad}(P))$.
		\begin{enumerate}[topsep=\glueexpr(\topsep+\parsep)/2\relax, itemsep=\glueexpr\itemsep/2\relax, font=\fontshape{ui}\selectfont]
			\item \EWzJhvPrefixFootnote{\label{ftnPropNilpotentParascalarInvInnPd}The invariant inner product on $\mathfrak{g}$ remains arbitrary.}For\/~$\mathfrak{g} = \mathfrak{u}(n)$, if\/\hspace{-0.08em} $\beta$ is pointwise nilpotent, then\/ $a$ is a parascalar\/ $(N,0)$-form.
			\item \EWzJhvPrefixFootRef{ftnPropNilpotentParascalarInvInnPd}For\/~$\mathfrak{g} = \mathfrak{su}(2)$ and\/ $N \geqslant 2$, $\beta$ is pointwise nilpotent if and only if\/\hspace{-0.08em} $a$ is a parascalar\/ $(N,0)$-form.
		\end{enumerate}
	\end{EWzJhvProposition}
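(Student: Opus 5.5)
The plan is to reduce both parts to the algebraic criterion of \autoref{propParascalarNAlCplxMfld}. Pointwise near any $p \in M$ we fix a nowhere vanishing local section $\nu$ of $\Omega^{N,0}$ and write $\beta = (\xi_1 + i\xi_2) \otimes_{\mathbb{C}} \nu$, exactly as in the proof of \autoref{propU1invToParascalar}. Then $a = \xi_1 \otimes_{\mathbb{R}} (\nu+\bar\nu) + \xi_2 \otimes_{\mathbb{R}} i(\nu-\bar\nu)$.

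For $N \geqslant 2$, \autoref{propParascalarNAlCplxMfld} says that $a$ is parascalar if and only if $\langle \xi, \xi\rangle = 0$, where $\xi := \xi_1 + i\xi_2 \in \mathfrak{g}\otimes_{\mathbb{R}}\mathbb{C}$ and $\langle\EWzJhvCDotInd,\EWzJhvCDotInd\rangle$ is the $\mathbb{C}$-bilinear extension of the invariant inner product. For $N=1$ we only need the sufficiency direction, and that can be checked directly. Nilpotency of $\beta$ at a point is the statement that $\xi$ is nilpotent in $\mathfrak{g}\otimes\mathbb{C}$. So both parts reduce to Lie-algebraic statements about the quadratic form $\xi \mapsto \langle \xi,\xi\rangle$ on $\mathfrak{g}_{\mathbb{C}}$.

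For part 1, take $\mathfrak{g} = \mathfrak{u}(n)$, so $\mathfrak{g}_{\mathbb{C}} = \mathfrak{gl}(n,\mathbb{C})$. Every invariant inner product on $\mathfrak{u}(n)$ has the form $\langle X, Y\rangle = -c_1 \operatorname{tr}(XY) - c_2 \operatorname{tr}X\operatorname{tr}Y$. This holds because $\mathfrak{u}(n) = \mathfrak{su}(n) \oplus \mathfrak{u}(1)$, $\mathfrak{su}(n)$ is simple, and the two summands are forced to be orthogonal by invariance. The decomposition must be stated carefully for $n=1$ and for general $n$. Its $\mathbb{C}$-bilinear extension is given by the same formula. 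If $\xi$ is nilpotent, then $\operatorname{tr}(\xi^2) = 0$ and $\operatorname{tr}\xi = 0$, so $\langle\xi,\xi\rangle = 0$. This gives parascalarity for $N\geqslant 2$ via \autoref{propParascalarNAlCplxMfld}. For $N=1$ we instead check \autoref{defnParascalarN} directly from $|\xi_1|=|\xi_2|$ and $\langle\xi_1,\xi_2\rangle=0$; this is the easy direction noted in that proof and does not use $n \geqslant 2$.

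For part 2, take $\mathfrak{g} = \mathfrak{su}(2)$. Every invariant inner product is a positive multiple of $-\operatorname{tr}(XY)$, since $\mathfrak{su}(2)$ is simple. So $\langle\xi,\xi\rangle = 0$ if and only if $\operatorname{tr}(\xi^2)=0$ for $\xi\in\mathfrak{sl}(2,\mathbb{C})$. Since $\operatorname{tr}\xi = 0$, Cayley--Hamilton gives $\xi^2 = -\det(\xi)\,\mathrm{id} = \tfrac12\operatorname{tr}(\xi^2)\,\mathrm{id}$. Hence $\operatorname{tr}(\xi^2)=0$ if and only if $\xi^2=0$, which holds if and only if $\xi$ is nilpotent. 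Combining this with the equivalence from \autoref{propParascalarNAlCplxMfld}, which is where $N\geqslant 2$ is used, gives the claim at each point. At points where $\beta = 0$, both conditions hold trivially.

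The main obstacle is mild. It is justifying the classification of invariant inner products on $\mathfrak{u}(n)$, and making sure that ``pointwise nilpotent'' is read in $\mathfrak{gl}(n,\mathbb{C})$ through a trivialisation, so that nilpotency does not depend on the choice. Conjugation by $\mathrm{U}(n)$ preserves nilpotency, which settles the second point.
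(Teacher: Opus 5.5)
Your proposal is correct and follows essentially the paper's route: you reduce parascalarity to $\langle \xi_1+i\xi_2,\,\xi_1+i\xi_2\rangle=0$ via \autoref{propParascalarNAlCplxMfld} (the easy sufficiency direction holds for every $N$, and the converse needs $N\geqslant 2$), and you then identify the invariant form with a multiple of $-\operatorname{tr}(XY)$ on the relevant simple part. The differences are cosmetic. The paper avoids classifying all invariant inner products on $\mathfrak{u}(n)$ by noting that a nilpotent $\xi$ lies in $\mathfrak{sl}(n,\mathbb{C})$, so $\xi_1,\xi_2\in\mathfrak{su}(n)$ and only the restriction to $\mathfrak{su}(n)$ matters. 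For $\mathfrak{sl}(2,\mathbb{C})$ it argues with eigenvalues rather than Cayley--Hamilton.
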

	\begin{proof}
		For~$p \in M$, let $\nu$ be a smooth nowhere vanishing section of\/ $\Omega^{N,0}$ over some open neighbourhood of $p$, and write $\beta = (\xi_1 + i\xi_2) \otimes_{\mathbb{C}} \nu$ where $\xi_1,\xi_2$ are sections of\/ $\mathrm{ad}(P)$, over this open neighbourhood. As in the proof of \autoref{propU1invToParascalar}, $a$ is a parascalar $(N,0)$-form over the open neighbourhood if $\langle \xi_1 + i\xi_2,\: \xi_1 + i\xi_2 \rangle=0$. The converse holds when $N \geqslant 2$, due to \autoref{propParascalarNAlCplxMfld}. We trivialise $P$ over the open neighbourhood, shrinking the open neighbourhood if necessary.
		
		For~$\mathfrak{g} = \mathfrak{u}(n)$, suppose $\beta$ is pointwise nilpotent over the open neighbourhood, that is, $(\xi_1 + i\xi_2)(p')$ is nilpotent as a matrix in $\mathfrak{gl}(n,\mathbb{C})$ for any point $p'$ in the open neighbourhood. Then all eigenvalues of $(\xi_1 + i\xi_2)(p')$ must be zero, so $(\xi_1 + i\xi_2)(p') \in \mathfrak{sl}(n,\mathbb{C})$ and $\xi_1(p'),\xi_2(p') \in \mathfrak{su}(n)$. As $\mathfrak{su}(n)$ is simple, the restriction of the given invariant \ifEWzJhvPDF\linebreak\fi inner product on $\mathfrak{g} = \mathfrak{u}(n)$ to $\mathfrak{su}(n)$ must be proportional to the invariant inner \ifEWzJhvPDF\linebreak\fi product $(X,Y) \mapsto -\operatorname{tr}(XY)$ for~$X,Y \in \mathfrak{su}(n)$. Because $(\xi_1 + i\xi_2)(p')$ is nilpotent, $-\operatorname{tr}\bigl(((\xi_1 + i\xi_2)(p'))^2\bigr)=0$ and thus $\bigl\langle (\xi_1 + i\xi_2)(p'),\: (\xi_1 + i\xi_2)(p') \bigr\rangle = 0$. This implies $a$ is a parascalar $(N,0)$-form.
		
		\ifEWzJhvPageBreak\pagebreak\fi
		
		To prove the converse for~$\mathfrak{g} = \mathfrak{su}(2)$ and $N \geqslant 2$, suppose $a$ is a parascalar $(N,0)$-form. Then $\bigl\langle (\xi_1 + i\xi_2)(p'),\: (\xi_1 + i\xi_2)(p') \bigr\rangle = 0$ and thus $-\operatorname{tr}\bigl(((\xi_1 + i\xi_2)(p'))^2\bigr)=0$. As in this case $(\xi_1 + i\xi_2)(p') \in \mathfrak{sl}(2,\mathbb{C})$, this implies that both eigenvalues of $(\xi_1 + i\xi_2)(p')$ are zero, so $\beta$ is pointwise nilpotent.
	\end{proof}
	
	A variant\EWzJhvFootnote{\label{ftnC0EstimateVariant}A closer imitation of \cite[Proposition~5.1]{chen2024VWeqKaehler} would be to equip $(M,g)$ with the $\mathbb{Z}_2$-complex structure $\tilde{J}_a$ induced by $a$, consider $\Omega^{2,+} \cap (\Omega^{2,0} \oplus \Omega^{0,2})$ as a holomorphic line bundle, there establish an Hermite--Einstein metric (possible on a compact hermitian manifold due to e.g.\ \cite[(2.1.6)~Corollary]{KobayashiHitchinCorrespondence}), and then compute the Weitzenböck formula for $a$ with this metric and its associated Chern connection, instead of with the Riemannian metric and the Levi-Civita connection on $\Omega^{2,+}$. It would be less straightforward to see why the estimate can be made only depend on $(M,g)$ and $C(\mathfrak{g})$ but not $\tilde{J}_a$, though.} of the $C^0$ a priori estimate of \cite[Proposition~5.1]{chen2024VWeqKaehler} also holds:
	\begin{EWzJhvProposition}\label{propParascalarC0Estimate}
		Let\/ $(M,g)$ be a compact smooth oriented Riemannian\/ $4$-manifold. Let\/ $G$ be a Lie group with\/ $\mathfrak{g} = \mathfrak{su}(2)$. Let\/ $\mathfrak{g}$ be endowed with an invariant inner product. Let\/ $P \rightarrow M$ be a\/ $G$-principal bundle over\/ $M$. Let\/ $(A,a) \in \mathscr{A}(P) \times \Omega^{2,+}(\mathrm{ad}(P))$ be a solution to \EWzJhvEqRef{eqVW}. If\/\hspace{-0.08em} $a$ is a parascalar\/ $(2,0)$-form, then\/ $\lVert a\rVert_{L^\infty}^2 \leqslant C(\mathfrak{g})\lVert R^+\rVert_{L^\infty}$,\EWzJhvFootnote{$C(\mathfrak{g})$ denotes a constant that only depends on the invariant inner product on $\mathfrak{g} = \mathfrak{su}(2)$. A more detailed calculation of the term involving the curvature of $g$ in the Weitzenböck formula would allow the $\lVert R^+\rVert_{L^\infty}$ on the right-hand side of the inequality to be replaced by the norm of the negative part of some kind of curvature tensor of $g$, compa\EWzJhvText{rab}le to \cite[Theorem~2.1.3]{MaresPhD} or \cite[Corollary~3.3]{chen2024VWeqKaehler}.} where\/ $R^+ \in \operatorname{Sym}^2(\Omega^{2,+})$ is the self-dual part of the Riemann curvature tensor.
	\end{EWzJhvProposition}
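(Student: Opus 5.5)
Following the proof of \cite[Proposition~5.1]{chen2024VWeqKaehler}, I will derive a differential inequality for $\lvert a\rvert^2$ from the Weitzenböck formula and then apply the maximum principle at a maximum point of $\lvert a\rvert^2$.

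\emph{Step 1: Weitzenböck identity.} Since $\mathrm{d}_A^* a = 0$, \autoref{propWeitzen2plus} with $N=1$ gives
\[\tfrac12\bigl(\nabla^A\bigr)^{*}\nabla^A a + F_A^+ \circledast a + R^+ \circledast a = 0.\]
Only $F_A^+$ enters, because the curvature term acting on a self-dual form involves only the self-dual part; the explicit form is \cite[Theorem~B.2.1]{MaresPhD}. Pairing with $a$ and using $\tfrac12\Delta\lvert a\rvert^2 = \langle(\nabla^A)^*\nabla^A a, a\rangle - \lvert\nabla^A a\rvert^2$ with $\Delta = \mathrm{d}^*\mathrm{d}$, I get at a maximum point $p$ of $\lvert a\rvert^2$, where $\Delta\lvert a\rvert^2 \geqslant 0$,
\[0 \leqslant \langle(\nabla^A)^*\nabla^A a, a\rangle = -2\langle F_A^+\circledast a, a\rangle - 2\langle R^+\circledast a, a\rangle.\]

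\emph{Step 2: the quartic term, which is the main obstacle.} Substitute $F_A^+ = -\tfrac18[a\centerdot a]$. The resulting term $\langle [[a\centerdot a]\centerdot a], a\rangle$ (schematically) is, for the sign conventions of \cite{MaresPhD}, a nonnegative multiple of $\bigl|[a\centerdot a]\bigr|^2$. For general $a$ this only gives a useless inequality, because $[a\centerdot a]$ can vanish while $a$ does not. The parascalar hypothesis fixes this. At $p$, write $a = \lambda(v_1\otimes\omega^1 + v_2\otimes\omega^2)$ with $v_1,v_2$ orthonormal in $\mathfrak{su}(2)$ and $\omega^1,\omega^2$ orthonormal in $\Omega^{2,+}$. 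Then
\[[a\centerdot a] = 2\lambda^2\,[v_1,v_2]\otimes(\omega^1\centerdot\omega^2) = -2\sqrt{2}\,\lambda^2\,[v_1,v_2]\otimes\omega^3.\]
For $\mathfrak{su}(2)$ with any invariant inner product, the orthonormal pair $v_1,v_2$ satisfies $\lvert[v_1,v_2]\rvert = c(\mathfrak{g}) > 0$ exactly, since $\mathfrak{su}(2)\cong(\mathbb{R}^3,\times)$ up to scaling. Hence $\bigl|[a\centerdot a]\bigr|^2 = c'(\mathfrak{g})\lambda^4 = c''(\mathfrak{g})\lvert a\rvert^4$, and the quartic term contributes $-c(\mathfrak{g})\lvert a\rvert^4$ on the right-hand side.

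I will verify the sign by computing $\langle[[a\centerdot a]\centerdot a],a\rangle$ directly in this basis. The $\omega^3$-component $[v_1,v_2]$ pairs with $\omega^1$ and $\omega^2$ through the structure constants $\omega^3\centerdot\omega^1 = -\sqrt{2}\,\omega^2$ and $\omega^2\centerdot\omega^3 = -\sqrt{2}\,\omega^1$. I expect a definite sign, consistent with the known $C^0$ estimate in \cite{MaresPhD}.

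\emph{Step 3: conclusion.} The curvature term satisfies $\bigl|\langle R^+\circledast a, a\rangle\bigr| \leqslant C\lVert R^+\rVert_{L^\infty}\lvert a\rvert^2$. Combining with Step 2, at $p$ I obtain
\[c(\mathfrak{g})\lvert a\rvert^4 \leqslant C\lVert R^+\rVert_{L^\infty}\lvert a\rvert^2.\]
Dividing by $\lvert a\rvert^2$ (if $a(p)=0$ the claim is trivial) gives $\lVert a\rVert_{L^\infty}^2 \leqslant C(\mathfrak{g})\lVert R^+\rVert_{L^\infty}$.
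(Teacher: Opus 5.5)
Your proposal is correct and follows the paper's route: the Weitzenböck formula with $\mathrm{d}_A^*a=0$, the maximum principle at a maximum of $\lvert a\rvert^2$, substitution of $F_A^+=-\frac18[a\centerdot a]$, and the parascalar identity $\bigl\lvert[a\centerdot a]\bigr\rvert=C(\mathfrak g)\lvert a\rvert^2$, which comes from $\lvert[v_1,v_2]\rvert$ being constant for orthonormal $v_1,v_2\in\mathfrak{su}(2)$. The sign of the quartic term, which you leave as an expectation and which your schematic $F_A^+\circledast a$ hides, is fixed in the paper by the explicit coefficient $2\langle a,[F_A^+\centerdot a]\rangle$ in \cite[(B.19)]{MaresPhD} together with the invariance identity $\langle a,[F_A^+\centerdot a]\rangle=\langle F_A^+,[a\centerdot a]\rangle$ \cite[(A.25)]{MaresPhD}; these give $\Delta\lvert a\rvert^2+2\lvert\nabla^A a\rvert^2\leqslant C\lvert R^+\rvert\lvert a\rvert^2-\frac14\bigl\lvert[a\centerdot a]\bigr\rvert^2$, and you should state the sign in this explicit form rather than appeal to conventions.
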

	As a consequence of such a $C^0$ a priori estimate, \cite[Theorem~3.5.2]{MaresPhD} applies and implies that such ``parascalar'' solutions also satisfy a version of Uhlenbeck compactness. In other words, our notion of ``parascalar'' is quite similar to the notion of ``nilpotent'' of \cite{chen2024VWeqKaehler}, but does not require a Kähler structure.
	\begin{proof}
		The proof is similar\EWzJhvFootnote{See the footnote\EWzJhvFootRef{ftnC0EstimateVariant} above.} to that of \cite[Proposition~5.1]{chen2024VWeqKaehler}.
		
		\EWzJhvTextNormal{(B.19)} of \cite{MaresPhD} tells us\EWzJhvFootnote{This is a consequence of the Weitzenböck formula {\scriptsize (\autoref{propWeitzen2plus})}, when the operator $\EWzJhvCDotInd\circledast\EWzJhvCDotInd$ there is written out more explicitly. The sign of $\Delta$ is the one that makes $\Delta = \nabla^*\nabla$. About the constants, $C$ or $C({\cdots})$ denote possibly different constants from place to place.}
		\[\Delta\lvert a\rvert^2 + 2\hspace{0.0625em}\bigl|\nabla^A a\bigr|^2 \leqslant 4\hspace{0.0625em}\bigl\langle a, \mathrm{d}_A\mathrm{d}_A^* a\bigr\rangle + C\hspace{0.0625em}\lvert R^+\rvert\lvert a\rvert^2 + 2\hspace{0.0625em}\bigl\langle a, [F_A^+ \centerdot a] \bigr\rangle \EWzJhvDisplayMathPeriod.\]
		\EWzJhvTextNormal{(A.25)} of \cite{MaresPhD} tells us $\bigl\langle a, [F_A^+ \centerdot a] \bigr\rangle = \bigl\langle F_A^+, [a \centerdot a] \bigr\rangle$. We then use \EWzJhvEqRef{eqVW} to get
		\[\Delta\lvert a\rvert^2 + 2\hspace{0.0625em}\bigl|\nabla^A a\bigr|^2 \leqslant C\hspace{0.0625em}\lvert R^+\rvert\lvert a\rvert^2 - \smash{\EWzJhvMathFracVCentered{1}{4}}\hspace{0.0625em}\bigl|[a \centerdot a]\bigr|^2 \EWzJhvDisplayMathPeriod.\]
		Let $p \in M$ be a maximum point of $\lvert a\rvert$. At $p$, we get $\bigl|[a \centerdot a]\bigr|\ifEWzJhvPDF{\rule{0pt}{1.88ex}}\fi^2 \leqslant C\hspace{0.0625em}\lvert R^+\rvert\lvert a\rvert^2$. Because $a$ is a parascalar $(2,0)$-form, using \autoref{defn2plusParascalar}, we easily check $\bigl|[a \centerdot a]\bigr| = C(\mathfrak{g})\lvert a\rvert^2$ for some positive $C(\mathfrak{g})$. As a consequence, $\lVert a\rVert_{L^\infty}^4 \leqslant C(\mathfrak{g})\lVert R^+\rVert_{L^\infty}\lVert a\rVert_{L^\infty}^2$, which simplifies to $\lVert a\rVert_{L^\infty}^2 \leqslant C(\mathfrak{g})\lVert R^+\rVert_{L^\infty}$.
	\end{proof}
	
	\begin{EWzJhvRemark}
		For~$\mathfrak{g} = \mathfrak{su}(2)$, $a \in \Omega^{2,+}(\mathrm{ad}(P))$ that is a (nonzero) parascalar $(2,0)$-form corresponds to the family ``$(B_1\,\,B_1\,\,0)$'' from \EWzJhvText{Tab}le~4.2 ``Stratification of\/ $3 \times 3$ matrices'' on page~39 of subsection~4.1.1 ``Matrix representations of $\Lambda^{2,+} \otimes \mathfrak{sp}(1)$'' of \cite{MaresPhD}.
	\end{EWzJhvRemark}
	
	\EWzJhvMiniSubsection*{Discussions}
	
	We have just shown a close relation between $\mathrm{U}(1)$-invariant (or nilpotent) solutions to \EWzJhvEqRef{eqVWKaehlerA}\EWzJhvEqRef{eqVWKaehlerB}\EWzJhvEqRef{eqVWKaehlerC} and the notion of parascalar $(2,0)$-forms. Note that the condition of being a parascalar $(2,0)$-form makes sense on general smooth $4$-manifolds with no Kähler structure specified. This hints at a possibility to generalise the notion of $\mathrm{U}(1)$-invariant solutions from the Kähler setting to general smooth oriented Riemannian $4$-manifolds. However, \autoref{thm2} indicates that for a nontrivial parascalar $(2,0)$-form forming part of a solution to \EWzJhvEqRef{eqVWa} to exist, it is necessary that the smooth oriented Riemannian $4$-manifold admits a compatible $\mathbb{Z}_2$-complex structure. Therefore, it rather seems that the notion of $\mathrm{U}(1)$-invariant solutions of the Kähler setting cannot be generalised to very general smooth oriented Riemannian $4$-manifolds in this seemingly quite natural manner.
	
	One may ask some further questions around the nonexistence of solutions with a nontrivial parascalar $(2,0)$-form on general smooth oriented Riemannian $4$-manifolds. If one starts with a (compact) Kähler {\small (or only $\mathbb{Z}_2$-hermitian)} surface and perturbs its Riemannian metric, it may no longer admit any compatible $\mathbb{Z}_2$-complex structure. \ifEWzJhvPDF\linebreak\fi Do those solutions with a nontrivial parascalar $(2,0)$-form just disappear, or do they deform into something else? Over a smooth oriented Riemannian $4$-manifold with no compatible $\mathbb{Z}_2$-complex structure, what happens if one tries to minimise some functional that measures how far a solution deviates from parascalar $(2,0)$-forms?
	
	Alternatively, if the existence of a compatible $\mathbb{Z}_2$-complex structure is somehow necessary, one may consider studying the Vafa--Witten equation on (compact) ($\mathbb{Z}_2$-)\ifEWzJhvPDF\linebreak\fi hermitian surfaces. However, it should be noted that in this case, while \EWzJhvEqRef{eqVWKaehlerA}\EWzJhvEqRef{eqVWKaehlerB}\EWzJhvEqRef{eqVWKaehlerC} are still equivalent to \EWzJhvEqRef{eqVW} for~$a = \beta - \beta^*$, a full, decoupling equivalence that includes $\gamma$ like \autoref{propVWequivalenceKaehler} might not hold.
	
	One may also consider higher-dimensional generalisations or variants of the Vafa--Witten equation, like in \cite{chen2024VWeqKaehler}. Without $\mathrm{Hol}(g) \subset \mathrm{Sp}(\frac{N}{2})\hspace{0.0625em}\mathrm{Sp}(1)$, our rather rudimentary results from \autoref{secGeneral} do not preclude potential singularities of the $\mathbb{Z}_2$-(almost) complex structure induced by $a$, if $a$ is assumed to be a parascalar $(N,0)$-form.
	
	\appendix\stepcounter{section}
	\hypersetup{next-anchor=appendix}\section*{Appendix}\addtocontents{toc}{\EWzJhvTocBeginGroupSectionNoPageNumber}\addcontentsline{toc}{section}{Appendix}\addtocontents{toc}{\EWzJhvTocEndGroup}
	
	\subsection{Additional results to \autoref*{subsecCplxStr}}\label{subsecCplxStrAddResults}
	
	Following \autoref{subsecCplxStr}, we prove some additional results about $\Omega^{2n,0,+}$ and the several $\bar{\partial}$-operators. The assumptions and notation from \autoref{subsecCplxStr} are kept.
	
	\subsubsection*{Normal bundle in twistor space, the $\bar{\partial}$-operator $\bar{\partial}^{\mathtt{tw}}$}
	
	Recall that given $(M,g,Q)$ with $\mathrm{Hol}(g) \subset \mathrm{Sp}(N)\hspace{0.0625em}\mathrm{Sp}(1)$, we can construct its twistor space $Z$ (\cite[section~4]{SelfDuality4dimRiemGeometry} for~$N=1$, \cite{QKManifoldsSalamon} for~$N \geqslant 2$). It is an $S^2$-fibre bundle over $M$ whose (local) sections correspond to compatible almost complex structures on $M$. \ifEWzJhvPDF\linebreak\fi $Z$ is endowed with a natural almost complex structure satisfying the property that, the graph of a (local) section of $Z$ is an almost complex submanifold (isomorphic to $M$ with the corresponding almost complex structure) if and only if the section corresponds to an integ\EWzJhvText{rab}le almost complex structure on $M$. If (and only if) $N \geqslant 2$, or $N = 1$ and the Weyl curvature tensor of $(M,g,Q)$ is anti-self-dual, then this natural almost complex structure on $Z$ is integ\EWzJhvText{rab}le, making $Z$ a complex manifold.
	
	Here\EWzJhvInfixFootnote{$\Omega^{\boldsymbol{\cdots}}$ with various superscripts without an explicitly specified base manifold continue to denote the various bundles over $M$, not $Z$.}, we have chosen the compatible complex structure $J$ on $M$. This gives a section of $Z$ whose graph is an almost complex submanifold. This almost complex submanifold of $Z$ has a normal bundle, whose (almost-holomorphic) sections should correspond to first-order variations of the (integ\EWzJhvText{rab}le) compatible almost complex structure $J$ on $M$. Let $J_t\ (t \in \mathbb{R})$ be a smooth variation of compatible almost complex structure on $M$ with $J_0 = J$. Let $\omega^{J_t} := \langle J_t\EWzJhvCDotInd,\EWzJhvCDotInd \rangle \in \Omega^{2,+}$. Let $\frac{\mathrm{d}}{\mathrm{d}t}\omega^{J_t}\bigr|_{t=0} = u\,\omega^+ + \overline{u\,\omega^+}$ where $u$ is some complex-valued function. Let $\overline{\pi}_t := \frac{\mathrm{id}+iJ_t}{2}$, then $\overline{\pi}_t\bar{e}_{j,k}$ are $(0,1)$-vector fields under $J_t$ with\ifEWzJhvPDF\vspace{-\baselineskip}\fi
	\begin{align*}
		\frac{\EWzJhvMathVCenter{\mathrm{d}\rule{0pt}{2ex}}}{\mathrm{d}t}\bigl(\overline{\pi}_t\bar{e}_{j,1}\bigr)\biggr|_{t=0} = \EWzJhvMathFracVCentered{i}{2}\,\bigl(\iota_{\bar{e}_{j,1}}(u\,\omega^+ + \overline{u\,\omega^+})\bigr)^{\mskip-2mu\relax \sharp} &= \EWzJhvMathFracVCentered{i}{2}\,\bar{u}\,e_{j,2}\\
		\frac{\EWzJhvMathVCenter{\mathrm{d}\rule{0pt}{2ex}}}{\mathrm{d}t}\bigl(\overline{\pi}_t\bar{e}_{j,2}\bigr)\biggr|_{t=0} = \EWzJhvMathFracVCentered{i}{2}\,\bigl(\iota_{\bar{e}_{j,2}}(u\,\omega^+ + \overline{u\,\omega^+})\bigr)^{\mskip-2mu\relax \sharp} &= -\EWzJhvMathFracVCentered{i}{2}\,\bar{u}\,e_{j,1} \EWzJhvDisplayMathPeriod.
	\end{align*}
	The integ\EWzJhvText{rab}ility of $J_t$ would imply $\nabla_{\!\overline{\pi}_t\bar{e}_{j,k}} \omega^{J_t}$ is of type~$(2,0)$ under $J_t$ and therefore $\iota_{\overline{\pi}_t\bar{e}_{1,2}}\iota_{\overline{\pi}_t\bar{e}_{1,1}} \nabla_{\!\overline{\pi}_t\bar{e}_{j,1}} \omega^{J_t}=0$. We then compute
	\begin{subequations}\label{eqDBarTW}
		\begin{align}
			&\frac{\EWzJhvMathVCenter{\mathrm{d}\rule{0pt}{2ex}}}{\mathrm{d}t}\Bigl(\iota_{\overline{\pi}_t\bar{e}_{1,2}}\iota_{\overline{\pi}_t\bar{e}_{1,1}} \nabla_{\!\overline{\pi}_t\bar{e}_{j,1}} \omega^{J_t}\Bigr)\biggr|_{t=0}\notag\\
			={}&\iota_{\bar{e}_{1,2}}\iota_{\bar{e}_{1,1}}\biggl(\EWzJhvMathFracVCentered{i}{2}\,\bar{u}\,\nabla_{\!e_{j,2}}\omega^J + \nabla_{\!\bar{e}_{j,1}}\bigl(u\,\omega^+ + \overline{u\,\omega^+}\bigr)\biggr)\notag\\
			={}&\partial_{\bar{e}_{j,1}}\bar{u}-\bar{u}\,\iota_{\bar{e}_{j,1}}\psi + \EWzJhvMathFracVCentered{i}{2}\,\overline{u\,\iota_{\bar{e}_{j,2}} \phi}
		\end{align}
		and\ifEWzJhvPDF\vspace{\glueexpr-\baselineskip/2\relax}\fi
		\begin{align}
			&\frac{\EWzJhvMathVCenter{\mathrm{d}\rule{0pt}{2ex}}}{\mathrm{d}t}\Bigl(\iota_{\overline{\pi}_t\bar{e}_{1,2}}\iota_{\overline{\pi}_t\bar{e}_{1,1}} \nabla_{\!\overline{\pi}_t\bar{e}_{j,2}} \omega^{J_t}\Bigr)\biggr|_{t=0}\notag\\
			={}&\iota_{\bar{e}_{1,2}}\iota_{\bar{e}_{1,1}}\biggl(-\EWzJhvMathFracVCentered{i}{2}\,\bar{u}\,\nabla_{\!e_{j,1}}\omega^J + \nabla_{\!\bar{e}_{j,2}}\bigl(u\,\omega^+ + \overline{u\,\omega^+}\bigr)\biggr)\notag\\
			={}&\partial_{\bar{e}_{j,2}}\bar{u}-\bar{u}\,\iota_{\bar{e}_{j,2}}\psi - \EWzJhvMathFracVCentered{i}{2}\,\overline{u\,\iota_{\bar{e}_{j,1}} \phi} \EWzJhvDisplayMathPeriod.
		\end{align}
	\end{subequations}
	We see that the expressions~\EWzJhvEqRef{eqDBarTW} define a $\bar{\partial}$-operator, denoted by $\bar{\partial}^{\mathtt{tw}}$, on the dual line bundle $\displaystyle ({\textstyle \Omega^{2,0,+}})^*$ (identified with $\overline{\Omega^{2,0,+}}$) such that $\bigl({\displaystyle ({\textstyle \Omega^{2,0,+}})^*},\bar{\partial}^{\mathtt{tw}}\bigr)$ is naturally isomorphic to the normal bundle of the graph in $Z$ corresponding to $J$. If $N \geqslant 2$, the natural almost complex structure on $Z$ is integ\EWzJhvText{rab}le, so in this case $\bigl({\displaystyle ({\textstyle \Omega^{2,0,+}})^*},\bar{\partial}^{\mathtt{tw}}\bigr)$ is a holomorphic line bundle.
	
	Compare \EWzJhvEqRef{eqDBarD2n0plus} and \EWzJhvEqRef{eqDBarTW}, we get
	\begin{EWzJhvProposition}\label{prop20plusStarTW}
		There is an isomorphism of complex line bundles with\/ $\bar{\partial}$-operator\/ $(\Omega^{2,0,+}, \bar{\partial}^{\mathtt{D}}) \cong {\displaystyle ({\textstyle \Omega^{2,0,+}, \bar{\partial}^{\mathtt{LC}}})^{\otimes_{\mathbb{C}}\mskip1.5mu\relax\EWzJhvMathVCenter 2}} \otimes_{\mathbb{C}} \bigl({\displaystyle ({\textstyle \Omega^{2,0,+}})^*},\bar{\partial}^{\mathtt{tw}}\bigr)$, where the underlying isomorphism of complex line bundles is the natural one:\/ $\Omega^{2,0,+} \cong {\displaystyle ({\textstyle \Omega^{2,0,+}})^{\otimes_{\mathbb{C}}\mskip1.5mu\relax\EWzJhvMathVCenter 2}} \otimes_{\mathbb{C}} {\displaystyle ({\textstyle \Omega^{2,0,+}})^*}$
	\end{EWzJhvProposition}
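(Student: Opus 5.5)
The plan is to compare the two local formulas for the $\bar{\partial}$-operators in the frame $\omega^+$ and its dual, since all three operators have already been written out explicitly. Locally I write a section of $\Omega^{2,0,+}$ as $u\,\omega^+$ and use \EWzJhvEqRef{eqDBarD2n0plus} with $n=1$. This gives the $\bar{e}^{j,1}$-component of $\bar{\partial}^{\mathtt{D}}(u\,\omega^+)$ as $\partial_{\bar{e}_{j,1}}u+u\,\iota_{\bar{e}_{j,1}}\psi+\frac{i}{2}u\,\overline{\iota_{\bar{e}_{j,2}}\phi}$, and similarly for $\bar{e}^{j,2}$ with the opposite sign on the $\phi$-term. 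For $(\Omega^{2,0,+},\bar{\partial}^{\mathtt{LC}})$ I use $\nabla_{\bar{e}_{j,k}}(u\,\omega^+)=(\partial_{\bar{e}_{j,k}}u+u\,\iota_{\bar{e}_{j,k}}\psi)\,\omega^+$. By the Leibniz rule, $(\omega^+)^{\otimes 2}$ therefore carries connection form $2\psi$ in the $(0,1)$-directions.

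Next I identify the local frame of $(\Omega^{2,0,+})^*\cong\overline{\Omega^{2,0,+}}$ used in \EWzJhvEqRef{eqDBarTW}. There, a section is encoded by $\bar{u}$, the coefficient of $\overline{\omega^+}$ in $\overline{u\,\omega^+}$, and pairing with $\omega^+$ up to the normalising constant $N$ gives the dual frame $\theta$ with $\theta(\omega^+)=1$. Reading off \EWzJhvEqRef{eqDBarTW}, we get $\bar{\partial}^{\mathtt{tw}}(w\,\theta)$ with $\bar{e}^{j,1}$-component $\partial_{\bar{e}_{j,1}}w-w\,\iota_{\bar{e}_{j,1}}\psi+\frac{i}{2}w\,\overline{\iota_{\bar{e}_{j,2}}\phi}$ (with $w=\bar{u}$), and the analogous $\bar{e}^{j,2}$-component. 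Some care is needed with the conjugations here: $w$ appears together with $\overline{u\,\iota\phi}=w\,\overline{\iota\phi}$, so the term is linear in $w$, as a $\bar{\partial}$-operator must be. The connection form of $\bar{\partial}^{\mathtt{tw}}$ in the frame $\theta$ is then $-\psi$ plus the $\phi$-twist, and the $\phi$-twist is exactly the one appearing in $\bar{\partial}^{\mathtt{D}}$.

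The tensor product $(\omega^+)^{\otimes 2}\otimes\theta$ corresponds to $\omega^+$ under the natural isomorphism. Its induced $\bar{\partial}$-operator has connection form $2\psi-\psi$ plus the $\phi$-twist, which is $\psi$ plus the $\phi$-twist. This matches \EWzJhvEqRef{eqDBarD2n0plus} term by term, so the isomorphism intertwines the operators. Independence of the local choices is automatic, since both sides are globally defined operators and the underlying bundle map is canonical.

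The main obstacle I expect is sign and conjugation bookkeeping in identifying $\bar{u}$ with the coefficient in the dual frame. In particular, I need to check that the $-\psi$ in \EWzJhvEqRef{eqDBarTW} really is the dual connection form, rather than $+\overline{\psi}$ or a similar variant. To settle this I would verify $\langle\nabla_X\overline{\omega^+},\omega^+\rangle=-N\,\iota_X\psi$, which is the relation already derived for $(\nabla_X\overline{\omega^+})^{0,2}$. That relation shows that $\overline{\omega^+}/N$ transforms as the dual frame with connection form $-\psi$.
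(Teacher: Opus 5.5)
Your proposal is correct and follows the paper's own argument, which simply compares \EWzJhvEqRef{eqDBarD2n0plus} (with $n=1$) with \EWzJhvEqRef{eqDBarTW}. Your bookkeeping is exactly what that comparison amounts to: connection form $2\psi$ on $(\omega^+)^{\otimes 2}$, and $-\psi$ plus the same $\phi$-twist as in $\bar{\partial}^{\mathtt{D}}$ on the dual frame, with the constant $N$ from the pairing $\overline{\Omega^{2,0,+}}\cong(\Omega^{2,0,+})^*$ playing no role.
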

	\begin{EWzJhvRemark}
		The proposition above holds whether $N \geqslant 2$ or not. However, when $N=1$, we do not claim $(\Omega^{2,0,+}, \bar{\partial}^{\mathtt{LC}})$ or $\bigl({\displaystyle ({\textstyle \Omega^{2,0,+}})^*},\bar{\partial}^{\mathtt{tw}}\bigr)$ is always a holomorphic line bundle.
	\end{EWzJhvRemark}
	\begin{EWzJhvRemark}
		When $(M,g,J)$ is Kähler, $\bar{\partial}^{\mathtt{D}}$, $\bar{\partial}^{\mathtt{LC}}$ and $\bar{\partial}^{\mathtt{tw}}$ all coincide with $\bar{\partial}^{\mathtt{CM}}$, in which case the results in \autoref{subsecCplxStr} and the current subsection become mostly trivial.
	\end{EWzJhvRemark}
	\begin{EWzJhvRemark}
		The difference between the $\bar{\partial}$-operator of $(\Omega^{2,0,+}, \bar{\partial}^{\mathtt{LC}}) \otimes_{\mathbb{C}} \bigl({\displaystyle ({\textstyle \Omega^{2,0,+}})^*},\bar{\partial}^{\mathtt{tw}}\bigr)$ (or equivalently, $(\Omega^{2,0,+}, \bar{\partial}^{\mathtt{D}}) \otimes_{\mathbb{C}} {\displaystyle ({\textstyle \Omega^{2,0,+}, \bar{\partial}^{\mathtt{LC}}})^*}$) and the standard $\bar{\partial}$-operator on the trivial line bundle is related to ${\displaystyle \mathrm{d}^*}\omega^J$ or the Lee form $\theta := - {\displaystyle \mathrm{d}^*}\omega^J \circ J$. When $N \geqslant 2$, $\mathrm{d}\theta$ is known to be $Q$-hermitian (and harmonic) \cite{CompatibleAlmostCplxStrsOnQKManifolds}, which is connected to the fact that in this case $(\Omega^{2,0,+}, \bar{\partial}^{\mathtt{LC}}) \otimes_{\mathbb{C}} \bigl({\displaystyle ({\textstyle \Omega^{2,0,+}})^*},\bar{\partial}^{\mathtt{tw}}\bigr)$ is a holomorphic line bundle.
	\end{EWzJhvRemark}
	\begin{EWzJhvRemark}
		Subsection~3.3 of \cite{CompatibleAlmostCplxStrsOnQKManifolds} considered the linear connection (\hspace*{-0.08em}``the first canonical connection of the almost Hermitian structure''\hspace*{-0.08em}) $\nabla'_{\!X} := \nabla_{\!X} - \frac{1}{2}\,J\,\nabla_{\!X} J$, in particular on the orthogonal complement of $J$ in $Q$, which in our notation is basically equivalent to considering the Chern connection on $(\Omega^{2,0,+}, \bar{\partial}^{\mathtt{LC}})$. Notably, they considered its curvature and first Chern class, and then developed some consequences.
	\end{EWzJhvRemark}
	
	\ifEWzJhvPageBreak\pagebreak\fi
	
	\subsubsection*{The inclusion\hspace{0.5em plus 0.166667em minus 0.066667em}$\Omega^{2n,0,+} \subset \Omega^{2n,0}$}
	
	It is obvious that there is an inclusion of complex vector bundles $\Omega^{2n,0,+} \subset \Omega^{2n,0}$. Regarding the relation between $\Omega^{2n,0,+}$ and $(\Omega^{2n,0},\bar{\partial}^{\mathtt{CM}})$, we have
	\begin{EWzJhvProposition}\label{prop2n0plusDBar}
		For any integer\/~$n$ satisfying\/ $1 \leqslant n < N$, any\/~$b \in \Omega^{2,0,+}$ and any\/~$p \in M$ satisfying that\/ $b$ does not vanish at\/ $p$, the following are equivalent:\vspace{-\parskip}
		\begin{enumerate}[topsep=1ex plus 0.25ex minus 0.25ex, itemsep=1ex plus 0.25ex minus 0.25ex, parsep=0pt, font=\fontshape{ui}\selectfont]
			\item the\/ $(2n,1)$-part of\/\hspace{0em plus -0.08em} $\mathrm{d}(b^n)$ vanishes at\/ $p$;
			\item both\/ $\nabla J$ and\/ $\displaystyle \mathrm{d}^*b$ vanish at\/ $p$;
			\item both\/ $\nabla J$ and\/ $\bar{\partial}^{\mathtt{LC}}b$ vanish at\/ $p$.
		\end{enumerate}
	\end{EWzJhvProposition}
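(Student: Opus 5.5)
Everything can be worked out locally in the frame of \autoref{subsecCplxStr}. Write $b=u\,\omega^+$ with $u$ a complex function, $u(p)\neq 0$, so that $b^n=u^n(\omega^+)^n$. The plan is to express all three conditions at $p$ through the same local quantities: $\phi$ (which encodes $\nabla J$, because $\nabla_{\!X}\omega^J=(\iota_X\phi)\,\omega^+$ for $(0,1)$-vectors and $\lvert\nabla\omega^J\rvert^2=2N\lvert\phi\rvert^2$), and the $(0,1)$-form $\mathrm{D}u:=\bar\partial u+u\psi$, which is precisely the coefficient of $\bar\partial^{\mathtt{LC}}b$.

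First, (ii)$\Leftrightarrow$(iii). By \EWzJhvEqRef{eqDBarD2n0plus} with $n=1$, $\bar\partial^{\mathtt{D}}b$ equals $\bar\partial^{\mathtt{LC}}b$ plus a zeroth-order term linear in $u\,\overline{\phi}$. \autoref{propDBarD2n0plus} says that $\mathrm{d}^*b$ vanishes at $p$ exactly when $\bar\partial^{\mathtt{D}}b$ does. If $\phi(p)=0$, which is the same as $\nabla J(p)=0$, then $\bar\partial^{\mathtt{D}}b$ and $\bar\partial^{\mathtt{LC}}b$ agree at $p$. So under the common hypothesis $\nabla J=0$ at $p$ the two conditions coincide.

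Next, compute the $(2n,1)$-part of $\mathrm{d}(b^n)=\sum_{\alpha} e^\alpha\wedge\nabla_{e_\alpha}(b^n)$, splitting the sum over the frame into $\bar e^{j,k}\wedge\nabla_{\bar e_{j,k}}$ and $e^{j,k}\wedge\nabla_{e_{j,k}}$. By \autoref{corSTFormDer}, $\nabla_{\bar e_{j,k}}(b^n)$ is of type $(2n,0)$ and equals $n\,u^{n-1}(\omega^+)^n\,\iota_{\bar e_{j,k}}\mathrm{D}u$. Hence the first group contributes $n\,u^{n-1}\,\mathrm{D}u\wedge(\omega^+)^n$, with the sign chosen appropriately. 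For the second group, $\nabla_{e_{j,k}}(b^n)$ has a $(2n,0)$-part and a $(2n-1,1)$-part, and only the latter matters after wedging with $e^{j,k}$. That $(2n-1,1)$-part is $n\,u^n(\omega^+)^{n-1}\wedge(\nabla_{e_{j,k}}\omega^+)^{1,1}$, and $(\nabla_{e_{j,k}}\omega^+)^{1,1}=-\tfrac12\overline{\iota_{\bar e_{j,k}}\phi}\,\omega^J$. The second group therefore contributes $-\tfrac n2 u^n(\omega^+)^{n-1}\wedge\bar\phi^\flat\wedge\omega^J$ up to normalization, where $\bar\phi^{\flat}:=\sum_{j,k}\overline{\iota_{\bar e_{j,k}}\phi}\,e^{j,k}$ is a $(1,0)$-form.

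This gives
\[\mathrm{d}(b^n)^{2n,1}=n\,u^{n-1}\Bigl(\mathrm{D}u\wedge(\omega^+)^n-\tfrac{u}{2}\,(\omega^+)^{n-1}\wedge\bar\phi^{\flat}\wedge\omega^J\Bigr).\]
The main obstacle is to show that these two terms lie in independent subspaces and that each map involved is injective when $n<N$. The first term lies in $(\omega^+)^n\wedge\Omega^{0,1}$. The second, after expanding $\omega^J=i\sum e^{l,m}\wedge\bar e^{l,m}$, has a component along each $e^{j,k}\wedge e^{l,m}\wedge\bar e^{l,m}$ that cannot be absorbed into $(\omega^+)^n$. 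I would check independence directly in the standard quaternionic model $\mathbb{C}^{2N}$. Decompose by the $\mathrm{Sp}(N)$-type: wedging with $(\omega^+)^{n-1}$ sends $\bar\phi^\flat\wedge\omega^J$ to a component orthogonal to $(\omega^+)^n\wedge\Lambda^{0,1}$. Because $n<N$, there remains an index $l$ not saturated by $(\omega^+)^{n-1}\wedge e^{j,k}$, which makes the map $\bar\phi\mapsto(\omega^+)^{n-1}\wedge\bar\phi^\flat\wedge\omega^J$ injective; for $n=N$ this fails, since $\Omega^{2N+1,0}$-type terms collapse. The map $\mathrm{D}u\mapsto(\omega^+)^n\wedge\mathrm{D}u$ is injective for every $n\le N$.

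Granting this, (i) is equivalent to $\phi(p)=0$ together with $\mathrm{D}u(p)=0$, which is exactly (iii). Combined with the first step, this also gives (ii).
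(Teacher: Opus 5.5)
Your route is sound and differs from the paper's. However, the step that carries all the content is only asserted, and the reasons you sketch for it are wrong. That step is the independence in $\Lambda^{2n,1}$, for $n<N$, of the images of $\beta\mapsto\beta\wedge(\omega^+)^n$ ($\beta\in\Lambda^{0,1}$) and $\alpha\mapsto\alpha\wedge(\omega^+)^{n-1}\wedge\omega^J$ ($\alpha\in\Lambda^{1,0}$).

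Two things in your sketch fail. First, the second image is not orthogonal to $(\omega^+)^n\wedge\Lambda^{0,1}$. For $\alpha=e^{1,1}$, its $\bar e^{1,2}$-coefficient is $i\,e^{1,1}\wedge e^{1,2}\wedge\varpi^{n-1}$, where $\varpi:=\sum_{j\geqslant2}e^{j,1}\wedge e^{j,2}$, and this pairs nontrivially with $(\omega^+)^n$. Second, injectivity of the second map does not fail at $n=N$. What fails there is independence, since $(\omega^+)^N\wedge\Lambda^{0,1}$ is all of $\Lambda^{2N,1}$.

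The claim itself is true, and here is a correct argument.
\begin{itemize}
\item Use $\Lambda^{2n,1}\cong\Lambda^{2n,0}\otimes\Lambda^{0,1}$ and expand $\omega^J=i\sum_{l,m}e^{l,m}\wedge\bar e^{l,m}$. Then $\beta\wedge(\omega^+)^n=\alpha\wedge(\omega^+)^{n-1}\wedge\omega^J$ forces $\alpha\wedge(\omega^+)^{n-1}\wedge\gamma\in\mathbb{C}\,(\omega^+)^n$ for every $\gamma\in\Lambda^{1,0}$.
\item Suppose $\alpha\neq0$. An $\mathrm{Sp}(N)$ change of unitary frame keeps $\omega^+=\sum_j e^{j,1}\wedge e^{j,2}$ and is transitive on unit $(1,0)$-covectors, so we may take $\alpha$ to be a multiple of $e^{1,1}$.
\item Taking $\gamma=e^{1,2}$ gives a nonzero multiple of $e^{1,1}\wedge e^{1,2}\wedge\varpi^{n-1}$.
\item This is not proportional to $(\omega^+)^n=\varpi^n+n\,e^{1,1}\wedge e^{1,2}\wedge\varpi^{n-1}$, precisely because $\varpi^n\neq0$ when $n\leqslant N-1$. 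So $\alpha=0$, and then $\beta=0$.
\end{itemize}
With this in place, and using $u(p)\neq0$, your formula gives (i)$\iff$($\phi(p)=0$ and $\mathrm{D}u(p)=0$)$\iff$(iii). Your (ii)$\iff$(iii) step is the same as the paper's use of \eqref{eqDBarD2n0plus} once $\phi(p)=0$.

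The paper never expands $\mathrm{d}(b^n)$ in terms of $\phi$. It proceeds in three moves.
\begin{itemize}
\item It writes $(\mathrm{d}(b^n))^{2n,1}=n\,(\mathrm{d}b)^{2,1}\wedge b^{n-1}$, wedges up to $b^{N-2}$, and uses the Lefschetz-type \autoref{lmaSTOmegaPlusIsom} to reduce (i) to $(\mathrm{d}b)^{2,1}\rvert_p=0$.
\item To get $\nabla J\rvert_p=0$, it writes $\mathrm{d}b\rvert_p=\alpha\wedge b\rvert_p$ and twists $b$ by a function $u$ with $\mathrm{d}u\rvert_p=-\alpha$, so that $\mathrm{d}(ub)\rvert_p=0$.
\item It then applies \autoref{propDDStar2plus}, whose coefficient $\frac{N-1}{N}$ must be positive, to conclude $\nabla(ub)\rvert_p=0$ and hence $\phi(p)=0$.
\end{itemize}

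Your version is more explicit and self-contained. It produces a closed formula for $(\mathrm{d}(b^n))^{2n,1}$ in terms of $\bar\partial^{\mathtt{LC}}b$ and $\phi$, which makes every direction of the equivalence visible at once. The paper's version reuses earlier results and needs no frame normalisation.

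The two approaches combine well. After the paper's reduction to $n=1$, your independence claim becomes trivial: for $\alpha\neq0$ the space $\alpha\wedge\Lambda^{1,0}$ has dimension $2N-1>1$, so it cannot lie in $\mathbb{C}\,\omega^+$.
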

	
	As a consequence, for any integer~$n$ satisfying $1 \leqslant n < N$, $\Omega^{2n,0,+}$ is a holomorphic subbundle of $(\Omega^{2n,0},\bar{\partial}^{\mathtt{CM}})$ if and only if $(M,g,J)$ is Kähler. (For~$n=N$, $\Omega^{2N,0,+}$ is just $\Omega^{2N,0}$.)
	
	\medskip
	
	Before proving \autoref{prop2n0plusDBar}, we first prove a lemma that resembles a property of the Lefschetz operator:
	\begin{EWzJhvLemma}\label{lmaSTOmegaPlusIsom}
		For any integers\/ $s$ and\/ $t$ satisfying\/ $0 \leqslant s \leqslant N$, the wedge product gives an isomorphism of complex vector bundles\/ $\Omega^{s,t} \otimes_{\mathbb{C}} \Omega^{2(N-s),0,+} \cong \Omega^{2N-s,t}$.
	\end{EWzJhvLemma}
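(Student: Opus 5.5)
The whole statement is pointwise linear algebra, so I fix $p\in M$ and a frame $(e^{j,k})_{1\leqslant j\leqslant N,\,1\leqslant k\leqslant 2}$ of $\Lambda^{1,0}_p$ with $\omega^+=\sum_{j=1}^N e^{j,1}\wedge e^{j,2}$, as in \autoref{subsecCplxStr}. The map $\Omega^{s,t}\otimes_{\mathbb{C}}\Omega^{2(N-s),0,+}\to\Omega^{2N-s,t}$ is $\beta\otimes(\omega^+)^{N-s}\mapsto\beta\wedge(\omega^+)^{N-s}$, since $\Omega^{2(N-s),0,+}$ is the line spanned by $(\omega^+)^{N-s}$. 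The $\Lambda^{0,t}$ factor is a spectator: writing $\Lambda^{s,t}=\Lambda^{s,0}\otimes\Lambda^{0,t}$ and $\Lambda^{2N-s,t}=\Lambda^{2N-s,0}\otimes\Lambda^{0,t}$, the map is (wedge with $(\omega^+)^{N-s}$ on the $(\cdot,0)$ factor)$\otimes\mathrm{id}$. So it suffices to treat $t=0$, i.e.\ to show that $L^{N-s}\colon\Lambda^{s}(W)\to\Lambda^{2N-s}(W)$, $\beta\mapsto\beta\wedge(\omega^+)^{N-s}$, is an isomorphism, where $W=\Lambda^{1,0}_p\cong\mathbb{C}^{2N}$ and $\omega^+$ is a nondegenerate complex $2$-form on $W^*$.

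Dimensions agree, since $\binom{2N}{s}=\binom{2N}{2N-s}$. It therefore suffices to prove injectivity, and this is exactly the hard Lefschetz theorem for a nondegenerate (holomorphic symplectic) $2$-form on $\mathbb{C}^{2N}$. I would prove it with the $\mathfrak{sl}_2$-triple. Let $L=\omega^+\wedge\cdot$, let $\Lambda=\sum_j\iota_{e_{j,2}}\iota_{e_{j,1}}$ be its dual contraction in the dual frame, and let $H$ act on $\Lambda^{s}W$ by $s-N$. The commutators $[L,\Lambda]=H$, $[H,L]=2L$, $[H,\Lambda]=-2\Lambda$ are checked on decomposable forms. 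Since the situation is a tensor product of $N$ copies of the case $N=1$, it is enough to verify them on $\Lambda^\bullet\mathbb{C}^2$, where they are immediate. Then $\Lambda^\bullet W$ is a finite-dimensional $\mathfrak{sl}_2(\mathbb{C})$-module. By the representation theory of $\mathfrak{sl}_2$, $L^{k}$ maps the $H$-weight $-k$ space isomorphically onto the weight $k$ space. With $k=N-s$, this says $L^{N-s}\colon\Lambda^sW\to\Lambda^{2N-s}W$ is an isomorphism.

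An alternative that avoids representation theory is to use the same tensor decomposition $W=\bigoplus_j W_j$ with $W_j=\operatorname{span}\{e^{j,1},e^{j,2}\}$ and do an explicit basis computation, as in the usual linear-algebra proof of Lefschetz. Either way, the only step requiring care is the $\mathfrak{sl}_2$ commutator check, which is the main (mild) obstacle. Finally, the pointwise isomorphisms depend smoothly on $p$, because the map is given by wedging with the smooth local section $(\omega^+)^{N-s}$. So they assemble into an isomorphism of complex vector bundles, independent of the local choice of $\omega^+$ up to the identification of $\Omega^{2(N-s),0,+}$ with its generator.
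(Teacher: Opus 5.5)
Your proof is correct, but it takes a different route from the paper. The paper neither reduces to $t=0$ nor uses any $\mathfrak{sl}_2$ theory.

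\textbf{The paper's argument.} It inducts on $s$. The base case $s=0$ is immediate from $\Omega^{2N,0,+}=\Omega^{2N,0}$. For the step $s\to s+1$, it counts dimensions and proves fibrewise \emph{surjectivity}. The fibre $\Omega^{2N-s-1,t}\rvert_p$ is spanned by contractions $\iota_X\alpha$ with $X\in T^{1,0}_pM$ and $\alpha\in\Omega^{2N-s,t}\rvert_p$. By the inductive hypothesis one writes $\alpha=\beta\wedge(\omega^+)^{N-s}$, and the Leibniz rule then gives
\[\iota_X\alpha=\bigl((\iota_X\beta)\wedge\omega^+ + (-1)^{s+t}(N-s)\,\beta\wedge\iota_X\omega^+\bigr)\wedge(\omega^+)^{N-s-1},\]
which visibly lies in the image. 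The only real input is that $(\omega^+)^N$ spans $\Lambda^{2N,0}$. The $(0,t)$-part needs no separate treatment, because $\iota_X$ with $X$ of type $(1,0)$ never touches it.

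\textbf{Your argument.} You split off the $\Lambda^{0,t}$ factor and prove bijectivity of $L^{N-s}$ on $\Lambda^{s,0}$ via the Lefschetz $\mathfrak{sl}_2$-triple. Your commutator check is right: the operators $L_j$ and $\Lambda_j$ are even and commute across different $j$, so everything reduces to $\Lambda^\bullet\mathbb{C}^2$. The weight theory of finite-dimensional $\mathfrak{sl}_2$-modules then finishes the proof.

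\textbf{Comparison.} Your route yields more than is needed, namely the whole primitive decomposition of $\Lambda^{\bullet,0}$. The cost is importing the structure theory of $\mathfrak{sl}_2$-modules. The paper's surjectivity induction is a self-contained two-line computation.

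Your closing remark about independence from the choice of $\omega^+$ is unnecessary. The map is the intrinsic wedge product, so being an isomorphism is purely a fibrewise check.
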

	\begin{proof}
		We induce on $s$.
		\begin{enumerate}[align=left, leftmargin=0pt, labelindent=0pt, itemindent=\parindent, listparindent=\parindent, labelsep=*, topsep=\parsep, itemsep=0pt]
			\item For~$s=0$, as $\Omega^{2N,0,+}=\Omega^{2N,0}$, we have $\Omega^{0,t} \otimes_{\mathbb{C}} \Omega^{2N,0,+} \cong \Omega^{2N,t}$.
			
			\item Assume $0 \leqslant s < N$ and $\Omega^{s,t} \otimes_{\mathbb{C}} \Omega^{2(N-s),0,+} \cong \Omega^{2N-s,t}$. To prove the bundle map given by the wedge product from $\Omega^{s+1,t} \otimes_{\mathbb{C}} \Omega^{2(N-s-1),0,+}$ to $\Omega^{2N-s-1,t}$ is an isomorphism, by counting dimensions, we only need to prove it is a fibrewise surjection. For any~$p \in M$, the fibre $\Omega^{2N-s-1,t}\rvert_p$ is spanned by $\iota_X \alpha$ where $X \in T_p^{1,0}\mskip-2mu\relax M$ and $\alpha \in \Omega^{2N-s,t}\rvert_p$, so we only need to prove that any such $\iota_X \alpha$ is in the image of the wedge product. We already know the bundle map given by the wedge product from $\Omega^{s,t} \otimes_{\mathbb{C}} \Omega^{2(N-s),0,+}$ to $\Omega^{2N-s,t}$ is a fibrewise surjection, so we can write $\alpha = \beta \wedge (\omega^+)^{N-s}$ where $\beta \in \Omega^{s,t}\rvert_p$. Now,
			\begin{align*}
				\iota_X \alpha &= (\iota_X \beta) \wedge (\omega^+)^{N-s} + (-1)^{s+t}(N-s)\,\beta \wedge (\iota_X \omega^+) \wedge (\omega^+)^{N-s-1}\\
				&= \bigl((\iota_X \beta) \wedge \omega^+ + (-1)^{s+t}(N-s)\,\beta \wedge (\iota_X \omega^+)\bigr) \wedge (\omega^+)^{N-s-1}
			\end{align*}
			which shows $\iota_X \alpha$ is in the image of the wedge product from $\bigl(\Omega^{s+1,t} \otimes_{\mathbb{C}} \Omega^{2(N-s-1),0,+}\bigr)\bigr|_p$ to $\Omega^{2N-s-1,t}\rvert_p$.\qedhere
		\end{enumerate}
	\end{proof}
	\begin{proof}[Proof of \autoref{prop2n0plusDBar}]
		\leavevmode\vspace{-0.5pt}
		\begin{enumerate}[align=left, leftmargin=0pt, labelindent=0pt, itemindent=\parindent, listparindent=\parindent, labelsep=*, topsep=\glueexpr(\topsep+\parsep)/2\relax, itemsep=\glueexpr\itemsep/2\relax]
			\linespread{1.08}\selectfont
			
			\ifEWzJhvPDF
			\newcommand*{\EWzJhvPDFZeroDepth}[1]{\raisebox{0pt}[\height][0pt]{#1}}
			\newcommand*{\EWzJhvPDFSmallHeightMath}[1]{\raisebox{0pt}[1ex]{$#1$}}
			\else
			\newcommand*{\EWzJhvPDFZeroDepth}[1]{#1}
			\newcommand*{\EWzJhvPDFSmallHeightMath}[1]{#1}
			\fi
			
			\item We first show \EWzJhvPDFZeroDepth{$\displaystyle \bigl(\mathrm{d}(b^n)\bigr)\ifEWzJhvPDF{\rule{0pt}{1.88ex}}\fi^{\mskip-2mu\relax 2n,1}\bigr|_p=0$} implies both $(\mathrm{d}b)^{2,1}\rvert_p = 0$ and $\nabla J\rvert_p=0$.
			
			We have $\displaystyle \mathrm{d}(b^n) = n\,(\mathrm{d}b) \wedge (b^{n-1})$ and therefore $\displaystyle \EWzJhvPDFSmallHeightMath{\displaystyle \bigl(\mathrm{d}(b^n)\bigr)\ifEWzJhvPDF{\rule{0pt}{1.88ex}}\fi^{\mskip-2mu\relax 2n,1}} = n\,(\mathrm{d}b)^{2,1} \wedge (b^{n-1})$. If $\displaystyle \bigl(\mathrm{d}(b^n)\bigr)\ifEWzJhvPDF{\rule{0pt}{1.88ex}}\fi^{\mskip-2mu\relax 2n,1}\bigr|_p=0$, then $\bigl((\mathrm{d}b)^{2,1} \wedge (b^{N-2})\bigr)\bigr|_p = 0$, so with the help of \autoref{lmaSTOmegaPlusIsom} we get $(\mathrm{d}b)^{2,1}\rvert_p = 0$. Then we have $\mathrm{d}b\rvert_p = (\mathrm{d}b)^{3,0}\rvert_p = \bigl(\sum_{j,k} e^{j,k} \wedge (\nabla_{\!e_{j,k}}b)^{2,0}\bigr)\bigr|_p$. \ifEWzJhvPDF\linebreak\fi Note $(\nabla_{\!e_{j,k}}b)^{2,0}\rvert_p \in \Omega^{2,0,+}\rvert_p$, so $(\nabla_{\!e_{j,k}}b)^{2,0}\rvert_p$ is a multiple of\/ $b\rvert_p$, and as a consequence, we can write $\mathrm{d}b\rvert_p = \alpha \wedge b\rvert_p$ where $\alpha \in \Omega^{1,0}\rvert_p$. Choose a smooth function $u \EWzJhvMathSepColon M \rightarrow \mathbb{C}$ satisfying $u(p)=1$ and $\mathrm{d}u\rvert_p=-\alpha$. Then $\mathrm{d}(ub)\rvert_p=0$. \autoref{propDDStar2plus} then implies $\nabla(ub)\rvert_p=0$, so $\bigl\langle \nabla_{\!\bar{e}_{j,k}}\omega^J, \overline{ub} \bigr\rangle\bigr|_p = -\bigl\langle \omega^J, \nabla_{\!\bar{e}_{j,k}}(\overline{ub}) \bigr\rangle\bigr|_p=0$. As a result, $\nabla J\rvert_p=0$.
			
			\item Assuming $\nabla J\rvert_p=0$, we show $(\mathrm{d}b)^{2,1}\rvert_p=0 \Longleftrightarrow \bar{\partial}^{\mathtt{LC}}b\rvert_p=0 \Longleftrightarrow {\displaystyle \mathrm{d}^*b}\rvert_p=0$.
			
			Because $\nabla J\rvert_p=0$, we have $(\nabla_{\!X} e_{j,k})^{0,1}\rvert_p = \bigl(\frac{\mathrm{id}+iJ}{2}\nabla_{\!X} e_{j,k}\bigr) \bigr|_p = \nabla_{\!X} \bigl(\frac{\mathrm{id}+iJ}{2}e_{j,k}\bigr) \bigr|_p = 0$ and similarly $(\nabla_{\!X} \bar{e}_{j,k})^{1,0}\rvert_p = 0$ for any (complex) vector field $X$. As a consequence, we have $(\mathrm{d}b)^{2,1}\rvert_p = \bigl(\sum_{j,k} \bar{e}^{j,k} \wedge \nabla_{\!\bar{e}_{j,k}}b\bigr)\bigr|_p$. Then $(\mathrm{d}b)^{2,1}\rvert_p=0 \Longleftrightarrow \bar{\partial}^{\mathtt{LC}}b\rvert_p=0$. To see \ifEWzJhvPDF\linebreak\\*[-\baselineskip]\fi $\bar{\partial}^{\mathtt{LC}}b\rvert_p=0 \Longleftrightarrow {\displaystyle \mathrm{d}^*b}\rvert_p=0$, check \EWzJhvEqRef{eqDBarD2n0plus}, where $\phi\rvert_p$ is now zero.\qedhere\par
		\end{enumerate}
	\end{proof}
	
	\subsubsection*{Relation between $\bar{\partial}^{\mathtt{LC}}$ and some holomorphic vector bundles over $Z$}
	
	Assume the natural almost complex structure on $Z$ is integ\EWzJhvText{rab}le ($N \geqslant 2$, or $N = 1$ and anti-self-dual). Identify $M$ with the graph in $Z$ corresponding to $J$. The adjunction formula tells us
	\[\bigl(\Omega^{2N+1,0}(Z),\bar{\partial}^{\mathtt{CM}}\bigr)\bigr|_M \cong (\Omega^{2N,0}, \bar{\partial}^{\mathtt{CM}}) \otimes_{\mathbb{C}} \bigl((\Omega^{2,0,+})^*,\bar{\partial}^{\mathtt{tw}}\bigr)^{\mskip-2mu\relax *} \EWzJhvDisplayMathPeriod.\]
	Combining this with \autoref{prop2N0plusDCanonical},~\ref{propDBarD2n0plus},~\ref{prop20plusStarTW}, we simply have
	\begin{equation}\label{eqCanonicalBdlZRestrict}
		\bigl(\Omega^{2N+1,0}(Z),\bar{\partial}^{\mathtt{CM}}\bigr)\bigr|_M \cong (\Omega^{2,0,+}, \bar{\partial}^{\mathtt{LC}})^{\otimes_{\mathbb{C}}(N+1)}
	\end{equation}
	that is, $(\Omega^{2,0,+}, \bar{\partial}^{\mathtt{LC}})^{\otimes_{\mathbb{C}}(N+1)}$ is naturally isomorphic to the restriction of the canonical bundle of $Z$.
	
	We can also construct a complex line bundle with $\bar{\partial}$-operator over $Z$ whose restriction is naturally isomorphic to $(\Omega^{2,0,+}, \bar{\partial}^{\mathtt{LC}})$, by pulling back the bundle $\Omega^{2,+} \otimes_{\mathbb{R}} \mathbb{C} \rightarrow M$ with hermitian metric and Levi-Civita connection given by $g$ along the projection $Z \rightarrow M$, then defining a subbundle that pointwise corresponds to $\Omega^{2,0,+}$ with respect to the complex structure at one point given by the base point in $Z$, and finally taking the $(0,1)$-part of the connection restricted to the subbundle. If the natural almost complex structure on $Z$ is integ\EWzJhvText{rab}le, the bundle constructed is also a holomorphic line bundle, whose $(N+1)$-th power is naturally isomorphic to the canonical bundle of $Z$.
	
	In fact, we can say a little more.
	
	The metric $g$ induces a Levi-Civita-like connection on the fibre bundle $Z \rightarrow M$. Holomorphic structure aside, this splits the tangent bundle $TZ$ into the direct sum of the vertical subbundle $T_{\mathtt{v}}Z$ and the horizontal subbundle $T_{\mathtt{h}}Z$. This splitting is compatible with the complex structure on $Z$, so we have a similar splitting into vertical and horizontal subbundles $\Omega^{1,0}_{\vphantom{\mathtt{h}}}(Z) = \Omega_{\mathtt{v}}^{1,0}(Z) \oplus \Omega_{\mathtt{h}}^{1,0}(Z)$, where $\Omega_{\mathtt{v}}^{1,0}(Z)$ vanishes on $T_{\mathtt{h}}Z$ and $\Omega_{\mathtt{h}}^{1,0}(Z)$ vanishes on $T_{\mathtt{v}}Z$. If we take into account the holomorphic structure, assuming $N \geqslant 2$ (or $N = 1$ and anti-self-dual and Einstein\EWzJhvFootnote{
		An oriented anti-self-dual Einstein $4$-dimensional Riemannian manifold is what one may call a (real-)$4$-dimensional quaternion-Kähler manifold, as the condition of being anti-self-dual and Einstein ensures that its Riemann curvature tensor behaves like that of a quaternion-Kähler manifold of higher dimension. \cite[Theorem~4.2]{QKManifoldsSalamon} is in the context of $N \geqslant 2$, but its proof also works for a $4$-dimensional quaternion-Kähler manifold (also see \cite[13.81~Theorem~(1)]{EinsteinManifolds}).
	}), we have a short exact sequence \cite[Theorem~4.2]{QKManifoldsSalamon}
	\[0 \longrightarrow \bigl(\Omega_{\mathtt{v}}^{1,0}(Z),\bar{\partial}^{\mathtt{CM}}\bigr) \longrightarrow \bigl(\Omega^{1,0}(Z),\bar{\partial}^{\mathtt{CM}}\bigr) \longrightarrow \bigl(\Omega^{1,0}(Z) / \Omega_{\mathtt{v}}^{1,0}(Z),\bar{\partial}^{\mathtt{CM}}\bigr) \longrightarrow 0\]
	where $\bigl(\Omega_{\mathtt{v}}^{1,0}(Z),\bar{\partial}^{\mathtt{CM}}\bigr)$ is a holomorphic subbundle of $\bigl(\Omega^{1,0}(Z),\bar{\partial}^{\mathtt{CM}}\bigr)$, and the fourth term $\bigl(\Omega^{1,0}(Z) / \Omega_{\mathtt{v}}^{1,0}(Z),\bar{\partial}^{\mathtt{CM}}\bigr)$ is the quotient holomorphic vector bundle. Taking determinant, we get\ifEWzJhvPDF\makebox[0.5\linewidth]{}\vspace{\glueexpr-\baselineskip/2\relax}\fi
	\[\bigl(\Omega^{2N+1,0}(Z),\bar{\partial}^{\mathtt{CM}}\bigr) \cong \bigl(\Omega_{\mathtt{v}}^{1,0}(Z),\bar{\partial}^{\mathtt{CM}}\bigr) \otimes_{\mathbb{C}} \det{}\bigl(\Omega^{1,0}(Z) / \Omega_{\mathtt{v}}^{1,0}(Z),\bar{\partial}^{\mathtt{CM}}\bigr) \EWzJhvDisplayMathPeriod.\]
	By inspecting the expression for ``$d\mskip2mu\relax\eta_i$'' on page~153 of \cite{QKManifoldsSalamon}, we see the restriction of $\bigl(\Omega^{1,0}(Z) / \Omega_{\mathtt{v}}^{1,0}(Z),\bar{\partial}^{\mathtt{CM}}\bigr)$ is naturally isomorphic to $(\Omega^{1,0}, \bar{\partial}^{\mathtt{LC}})$. The determinant line bundle $\det{}(\Omega^{1,0}, \bar{\partial}^{\mathtt{LC}})$ is naturally isomorphic to $\displaystyle ({\textstyle \Omega^{2,0,+}, \bar{\partial}^{\mathtt{LC}}})^{\otimes_{\mathbb{C}}\mskip0.5mu\relax\EWzJhvMathVCenter N}$. By comparing this to \EWzJhvEqRef{eqCanonicalBdlZRestrict}, we see the restriction of $\bigl(\Omega_{\mathtt{v}}^{1,0}(Z),\bar{\partial}^{\mathtt{CM}}\bigr)$ is naturally isomorphic to $(\Omega^{2,0,+}, \bar{\partial}^{\mathtt{LC}})$.
	
	\hypersetup{next-anchor=acknowledgements}\section*{Acknowledgements}\addcontentsline{toc}{section}{Acknowledgements}
	
	I would like to thank my doctoral supervisor Prof.~Dr.~Thomas Walpuski for introducing me to the topic of the Vafa--Witten equation, guiding me through the research, and giving me advice on the writing of this article. I would also like to thank Siqi He (AMSS, Chinese Academy of Sciences) for some comments on the results.
	
	\ifEWzJhvPageBreak\pagebreak\fi
	
	Funded by the Deutsche Forschungsgemeinschaft (DFG, German Research Foundation) under Germany's Excellence Strategy -- The Berlin Mathematics Research Center MATH+ (EXC-2046/1, EXC-2046/2, project ID: 390685689).
	
	\begingroup
	\ifEWzJhvPageBreak\linespread{0.92}\selectfont\enlargethispage*{11pt}\fi
	\makeatletter
	\EWzJhvLetNew\EWzJhvOriginalBibitem=\bibitem
	\EWzJhvProtectedDefNew\EWzJhvBibitem@C#1.{%
		\EWzJhvOriginalBibitem[Bär99]{ZeroSetsSolutionsElliptic}
		Christian Bär.}
	\EWzJhvDefNew\EWzJhvBibitem@B@Comparand{%
		\EWzJhvOriginalBibitem[B{\"a}r99]{ZeroSetsSolutionsElliptic}
		Christian B{\"a}r}
	\EWzJhvProtectedDefNew\EWzJhvBibitem@B#1.{%
		\begingroup
			\edef\@tempa{\unexpanded{#1}}%
			\ifx\@tempa\EWzJhvBibitem@B@Comparand
				\aftergroup\EWzJhvBibitem@C
			\else
				\errmessage{Unexpected \string\bibitem\space ZeroSetsSolutionsElliptic.}%
			\fi
		\endgroup
		#1.}
	\EWzJhvDefNew\EWzJhvBibitem@A@Comparand{[B{\"a}r99}
	\EWzJhvProtectedDefNew\EWzJhvBibitem@A#1]{%
		\begingroup
			\edef\@tempa{\unexpanded{#1}}%
			\ifx\@tempa\EWzJhvBibitem@A@Comparand
				\aftergroup\EWzJhvBibitem@B
			\fi
		\endgroup
		\EWzJhvOriginalBibitem#1]}
	\protected\def\bibitem{\@ifnextchar[\EWzJhvBibitem@A\EWzJhvOriginalBibitem}
	\EWzJhvLetNew\EWzJhvOriginalSection=\section
	\newcommand*{\EWzJhvSectionWithToc}[1]{\hypersetup{next-anchor=references}\EWzJhvOriginalSection*{#1}\addtocontents{toc}{\EWzJhvTocNoIndentBeginMinipage}\addcontentsline{toc}{section}{#1}\addtocontents{toc}{\EWzJhvTocEndMinipage}\small}
	\protected\def\section{\@ifstar\EWzJhvSectionWithToc\EWzJhvSectionWithToc}
	\makeatother
	\bibliography{refs}

\begin{thebibliography}{HHHN99}

\bibitem[AHS78]{SelfDuality4dimRiemGeometry}
M.~F. Atiyah, N.~J. Hitchin, and I.~M. Singer.
\newblock Self-duality in four-dimensional {Riemannian} geometry.
\newblock {\em Proceedings of the Royal Society of London. Series A,
  Mathematical and Physical Sciences}, 362(1711):425--461, 1978.

\bibitem[AMP98]{CompatibleAlmostCplxStrsOnQKManifolds}
D.~V. Alekseevsky, S.~Marchiafava, and M.~Pontecorvo.
\newblock Compatible almost complex structures on quaternion {Kähler}
  manifolds.
\newblock {\em Annals of Global Analysis and Geometry}, 16:419--444, October
  1998.

\bibitem[B{\"a}r99]{ZeroSetsSolutionsElliptic}
Christian B{\"a}r.
\newblock Zero sets of solutions to semilinear elliptic systems of first order.
\newblock {\em Inventiones mathematicae}, 138:183--202, 1999.

\bibitem[Bes87]{EinsteinManifolds}
Arthur~L. Besse.
\newblock {\em Einstein Manifolds}, volume~10 of {\em Ergebnisse der Mathematik
  und ihrer Grenzgebiete}.
\newblock Springer-Verlag Berlin Heidelberg, 1987.

\bibitem[Che24]{chen2024VWeqKaehler}
Xuemiao Chen.
\newblock On {Vafa--Witten} equations over {Kähler} manifolds.
\newblock {\em Journal für die reine und angewandte Mathematik (Crelles
  Journal)}, 2024(814):135--163, 2024.

\bibitem[DK90]{Geometry4Manifolds}
S.~K. Donaldson and P.~B. Kronheimer.
\newblock {\em The Geometry of Four-Manifolds}.
\newblock Oxford Mathematical Monographs. Oxford University Press, September
  1990.

\bibitem[Gau96]{GauduchonCplxStrCConfMNegT}
Paul Gauduchon.
\newblock Complex structures on compact conformal manifolds of negative type.
\newblock In Vincenzo Ancona, Edoardo Ballico, and Alessandro Silva, editors,
  {\em Complex Analysis and Geometry: Proceedings of the Conference at Trento},
  volume 173 of {\em Lecture notes in pure and applied mathematics}, pages
  201--212. Marcel Dekker, 1996.

\bibitem[HHHN99]{CritSetSolElli}
R.~Hardt, M.~Hoffmann{-}Ostenhof, T.~Hoffmann{-}Ostenhof, and N.~Nadirashvili.
\newblock Critical sets of solutions to elliptic equations.
\newblock {\em Journal of Differential Geometry}, 51(2):359--373, 1999.

\bibitem[Ish74]{QKManifoldsIshihara}
Shigeru Ishihara.
\newblock {Quaternion Kählerian manifolds}.
\newblock {\em Journal of Differential Geometry}, 9(4):483--500, 1974.

\bibitem[Kaz88]{UniqueContinuationGeometry}
Jerry~L. Kazdan.
\newblock Unique continuation in geometry.
\newblock {\em Communications on Pure and Applied Mathematics}, 41(5):667--681,
  1988.

\bibitem[LT95]{KobayashiHitchinCorrespondence}
Martin Lübke and Andrei Teleman.
\newblock {\em The Kobayashi--Hitchin Correspondence}.
\newblock World Scientific, 1995.

\bibitem[Mar10]{MaresPhD}
Bernard~A. Mares.
\newblock {\em Some analytic aspects of {Vafa--Witten} twisted\/ $\mathcal{N} =
  4$ supersymmetric {Yang--Mills} theory}.
\newblock PhD thesis, Massachusetts Institute of Technology, 2010.

\bibitem[Pon94]{CplxStrsOnQuarternionicManifolds}
Massimiliano Pontecorvo.
\newblock Complex structures on quaternionic manifolds.
\newblock {\em Differential Geometry and its Applications}, 4(2):163--177,
  1994.

\bibitem[Rie16]{RieszBernstein}
Marcel Riesz.
\newblock Über einen {Satz} des {Herrn} {Serge} {Bernstein}.
\newblock {\em Acta Mathematica}, 40:337--347, 1916.

\bibitem[Sal72]{LagrangianInterpolation}
H.~E. Salzer.
\newblock Lagrangian interpolation at the {Chebyshev} points $x_{n,\nu} \equiv
  \cos(\nu\pi/n)$, $\nu = 0(1)n$; some unnoted advantages.
\newblock {\em The Computer Journal}, 15(2):156--159, May 1972.

\bibitem[Sal82]{QKManifoldsSalamon}
Simon Salamon.
\newblock Quaternionic {Kähler} manifolds.
\newblock {\em Inventiones mathematicae}, 67:143--171, February 1982.

\bibitem[SV09]{OrthogonalCplxStrR4}
Simon Salamon and Jeff Viaclovsky.
\newblock Orthogonal complex structures on domains in $\mathbb{R}^4$.
\newblock {\em Mathematische Annalen}, 343:853--899, April 2009.

\bibitem[TT17]{Tanaka_2017}
Yuuji Tanaka and Richard~P. Thomas.
\newblock {Vafa--Witten} invariants for projective surfaces {II}: semistable
  case.
\newblock {\em Pure and Applied Mathematics Quarterly}, 13(3):517--562, 2017.

\bibitem[TT19]{Tanaka_2019}
Yuuji Tanaka and Richard~P. Thomas.
\newblock {Vafa--Witten} invariants for projective surfaces {I}: stable case.
\newblock {\em Journal of Algebraic Geometry}, 29(4):603--668, October 2019.

\bibitem[VW94]{VAFA19943}
Cumrun Vafa and Edward Witten.
\newblock A strong coupling test of\/ {S}-duality.
\newblock {\em Nuclear Physics B}, 431(1):3--77, 1994.

\bibitem[Yom84]{SetOfZerosAlmostPolynomial}
Y.~Yomdin.
\newblock The set of zeroes of an ``almost polynomial'' function.
\newblock {\em Proceedings of the American Mathematical Society},
  90(4):538--542, 1984.

\end{thebibliography}
	\endgroup
\end{document}